\numberwithin{equation}{section}
\newtheorem{theorem}{Theorem}[section]
\newtheorem{lemma}{Lemma}[section]
\newtheorem{proposition}{Proposition}[section]
\newtheorem{corollary}{Corollary}[section]
\newtheorem{remark}{Remark}[section]
\newtheorem{assumption}{Assumption}[section]
\def\ba{\boldsymbol{a}}
\def\bc{\boldsymbol{c}}
\def\bg{\boldsymbol{g}}
\def\bi{\boldsymbol{i}}
\def\bu{\boldsymbol{u}}
\def\bv{\boldsymbol{v}}
\def\bx{\boldsymbol{x}}
\def\bJ{\boldsymbol{J}}
\def\bX{\boldsymbol{X}}
\def\bY{\boldsymbol{Y}}
\def\bvarphi{\boldsymbol{\varphi}}
\def\bnu{\boldsymbol{\nu}}
\def\bzero{\mathbf{0}}
\def\calD{\mathcal{D}}
\def\calE{\mathcal{E}}
\def\calG{\mathcal{G}}
\def\scrI{\mathscr{I}}
\def\spr{\mbox{\rm spr}}
\def\cp{\mbox{\rm cp}}
\def\diag{\mbox{\rm diag}}
\def\adj{\mbox{\rm adj}}
\def\Ker{\mbox{\rm Ker}}
\def\vec{\mbox{\rm vec}}
\title{Exact asymptotics of the stationary tail probabilities in an arbitrary direction in a two-dimensional discrete-time QBD process}
\author{Toshihisa Ozawa  \\ 
Faculty of Business Administration, Komazawa University \\
1-23-1 Komazawa, Setagaya-ku, Tokyo 154-8525, Japan \\
E-mail: toshi@komazawa-u.ac.jp
}
\date{}
\begin{document}

\maketitle

\begin{abstract}
We deal with a discrete-time two-dimensional quasi-birth-and-death process (2d-QBD process for short) on $\mathbb{Z}_+^2\times S_0$, where $S_0$ is a finite set, and give a complete expression for the asymptotic decay function of the stationary tail probabilities in an arbitrary direction. The 2d-QBD process is a kind of random walk in the quarter plane \textit{with a background process}.
In our previous paper (\textit{Queueing Systems, vol.\,102, pp.\,227--267, 2022}), we have obtained the asymptotic decay rate of the stationary tail probabilities in an arbitrary direction and clarified that if the asymptotic decay rate $\xi_{\bc}$, where $\bc$ is a direction vector in $\mathbb{N}^2$, is less than a certain value $\theta_{\bc}^{max}$, the sequence of the stationary tail probabilities in the direction $\bc$ geometrically decays without power terms, asymptotically. 
In this paper, we give the function according to which the sequence asymptotically decays, including the case where $\xi_{\bc}=\theta_{\bc}^{max}$. When  $\xi_{\bc}=\theta_{\bc}^{max}$, the function is given by an exponential function with power term $k^{-\frac{1}{2}}$ except for two boundary cases, where it is given by just an exponential function without power terms.  This result coincides with the existing result for a random walk in the quarter plane \textit{without background processes}, obtained by Malyshev (\textit{Siberian Math.\,J., vol.\,12, ,pp.\,109--118, 1973}).

\smallskip
\textit{Note}. 
In this version of the paper, the complete expression of the asymptotic decay function in the direction $\bc$ is given. In the previous version, it contained an unknown parameter $l$. We have clarified that $l=1$. 

\smallskip
\textit{Keywards}: quasi-birth-and-death process, Markov modulated reflecting random walk, Markov additive process, asymptotic decay rate, asymptotic decay function, stationary distribution, matrix analytic method

\smallskip
{\it Mathematics Subject Classification}: 60J10, 60K25
\end{abstract}

%
%
\section{Introduction} \label{sec:intro}

We deal with a discrete-time two-dimensional quasi-birth-and-death process (2d-QBD process for short) $\{\bY_n\}=\{(\boldsymbol{X}_n,J_n)\}$ on $\mathbb{Z}_+^2\times S_0$, where $S_0$ is a finite set. This model is a Markov modulated reflecting random walk (MMRRW for short) whose transitions are skip free, and the MMRRW is a kind of reflecting random walk (RRW for short) \textit{with a background process}, where the transition probabilities of the RRW vary depending on the state of the background process. 
One-dimensional QBD processes have been introduced by Macel Neuts and studied in the literature as one of the essential stochastic models in the queueing theory (see, for example, \cite{Bini05,Latouche99,Neuts94,Neuts89}). The 2d-QBD process is a two-dimensional version of one-dimensional QBD process, and it enable us to analyze, for example, two-node queueing networks and two-node polling models. 

Assume the 2d-QBD process $\{\bY_n\}$ is positive recurrent and denote by $\bnu=(\bnu_{\bx}, \bx\in\mathbb{Z}_+^2)$ the stationary distribution, where $\bnu_{\bx}=(\nu_{(\bx,j)}, j\in S_0)$ and $\nu_{(\bx,j)}$ is the stationary probability that the process is in the state $(\bx,j)$. Our interest is asymptotics of the stationary distribution $\bnu$, especially, tail asymptotics in an arbitrary direction. Let an integer vector $\bc=(c_1,c_2)$ be nonzero and nonnegative. Two typical objects of our study are the asymptotic decay rate $\xi_{\bc}$ and asymptotic decay function $h_{\bc}(k)$ defined as, for $j\in S_0$,  
\begin{align*}
&\xi_{\bc} = - \lim_{k\to\infty} \frac{1}{k} \log \nu_{(k \bc,j)}, \\
&\lim_{k\to\infty} \frac{\nu_{(k\bc,j)}}{h_{\bc}(k)} = g_{\bc,j},
\end{align*}
where $g_{\bc,j}$ is a positive constant. Under a certain condition, the asymptotic decay rate of the probability sequence $\{\nu_{(k\bc+\bx,j)}: k\ge 0\}$ does not depend on $\bx$ and $j$ if it exists, see Proposition 2.3 of Ozawa \cite{Ozawa22}. 
In the case where $\bc=(1,0)$ or $\bc=(0,1)$, the asymptotic decay rate $\xi_{\bc}$ has been obtained in Ozawa \cite{Ozawa13}, see Corollary 4.3 therein, and the asymptotic decay function $h_{\bc}(k)$ in Ozawa and Kobayashi \cite{Ozawa18}, see Theorem 2.1 therein. The results in the case where $\bc=(c,0)$ or $\bc=(0,c)$ for $c\ge 2$ can automatically be obtained from those in \cite{Ozawa13,Ozawa18}. 
In the case where $\bc=(c_1,c_2)\ge(1,1)$, the asymptotic decay rate $\xi_{\bc}$ has been obtained in \cite{Ozawa22}, see Theorem 3.2 therein. We state that result of \cite{Ozawa22} in Theorem \ref{th:decay_rate} of this paper. In \cite{Ozawa22}, it has also been clarified that the asymptotic decay function is given by $h_{\bc}(k) = e^{-\xi_{\bc} k}$ if $\xi_{\bc}$ is less than a certain value $\theta_{\bc}^{max}$.
For other related works on asymptotics of the stationary distributions in 2d-RRWs with and without background processes, see Section 1 of \cite{Ozawa22} and references therein. 

In this paper, we give a complete expression for the asymptotic decay function $h_{\bc}(k)$ when $\bc=(c_1,c_2)\ge(1,1)$, including the case where $\xi_{\bc}=\theta_{\bc}^{max}$. 
To this end, we clarify the analytic properties of the vector generating function of the stationary probabilities along the direction $\bc$, given by $\bvarphi^{\bc}(z)=\sum_{k=0}^\infty z^k \bnu_{k \bc}$. The point $z=e^{\xi_{\bc}}$ is a singular point of the vector function $\bvarphi^{\bc}(z)$, and if $\xi_{\bc}=\theta_{\bc}^{max}$, the point $z=e^{\theta_{\bc}^{max}}$ is a branch point of $\bvarphi^{\bc}(z)$ with order one. 
From this result, we obtain the expression of $h_{\bc}(k)$ when $\xi_{\bc}=\theta_{\bc}^{max}$, which is given by an exponential function with power term $k^{-\frac{1}{2}}$, i.e., $h_{\bc}(k) = k^{-\frac{1}{2}} e^{-\xi_{\bc} k}$, except for two boundary cases, see Proposition \ref{pr:power_term} of Section \ref{sec:mainresults}. In the boundary cases, it is given as $h_{\bc}(k) = e^{-\xi_{\bc} k}$. We state the whole expression of the asymptotic decay function in Theorem \ref{th:main_theorem} of Section \ref{sec:mainresults}.
This result coincides with that for a 2d-RRW \textit{without background processes}, obtained by Malyshev \cite{Malyshev73}. 
The asymptotic decay function of the probability sequence $\{\nu_{(k\bc+\bx,j)}: k\ge 0\}$ does not depend on $\bx$ and $j$, see Corollary \ref{co:decay_function_homogeneous} of Section \ref{sec:mainresults}. 
We also generalize a part of the existing our results. One crucial point in analyzing the asymptotic decay function is how to analytically extend the G-matrix function appeared in the vector generating function of the stationary probabilities. The G-matrix function is a solution to a matrix quadratic equation each entry of whose coefficient matrices is a Laurent polynomial.
In \cite{Ozawa18}, analytic extension of the G-matrix function has been done under the assumption that all the eigenvalues of the G-matrix function are distinct, see Assumption 4.1 and Lemma 4.5 of \cite{Ozawa18}. This assumption is not easy to verify in general. We, therefore, remove the assumption and give a general formula of the Jordan decomposition of the G-matrix function, see Section \ref{sec:G_analyticextension}. The G-matrix function can analytically be extended through the Jordan decomposition.

%
The rest of the paper is organized as follows.
In Section \ref{sec:mainresults}, we describe the 2d-QBD process in detail and state assumptions and main results. 
In Section \ref{sec:preliminary}, an analytic extension of the G-matrix function is given in a general setting. The definition of G-matrix \textit{in the reverse direction} and its property are also given in the same section. 
The proof of the main results is given in Section \ref{sec:gf_analytic_properties}, where we demonstrate that the vector function $\bvarphi^{\bc}(z)$ is element-wise analytic in the open disk with radius $e^{\xi_{\bc}}+\varepsilon$ for some $\varepsilon>0$, except for the point $z=e^{\xi_{\bc}}$, and clarify its singularity at the point $z=e^{\xi_{\bc}}$. The asymptotic decay function is obtained from those results.
The paper concludes with a remark in Section \ref{sec:conclusion}.

%
%
\section{Model description and main results} \label{sec:mainresults}

%
\subsection{Model description}

We consider the same model as that described in \cite{Ozawa22} and use the same notation. 

Denote by $\scrI_2$ the set of all the subsets of $\{1,2\}$, i.e., $\scrI_2=\{\emptyset,\{1\},\{2\},\{1,2\}\}$, and we use it as an index set. Divide $\mathbb{Z}_+^2$ into $2^2=4$ exclusive subsets defined as 
\[
\mathbb{B}^\alpha=\{\bx=(x_1,x_2)\in\mathbb{Z}_+^2: \mbox{$x_i>0$ for $i\in\alpha$, $x_i=0$ for $i\in\{1,2\}\setminus \alpha$} \},\ \alpha\in\scrI_2.  
\]
%
%
Let $\{\bY_n\}=\{(\bX_n,J_n)\}$ be a 2d-QBD process on $\mathbb{Z}_+^2\times S_0$, where $S_0=\{1,2,...,s_0\}$. 
Let $P$ be the transition probability matrix of $\{\bY_n\}$ and represent it in block form as $P=\left( P_{\bx,\bx'}; \bx,\bx'\in\mathbb{Z}_+^2 \right)$, where $P_{\bx,\bx'}=(p_{(\bx,j),(\bx',j')}; j,j'\in S_0)$ and $p_{(\bx,j),(\bx',j')}=\mathbb{P}(\bY_1=(\bx',j')\,|\,\bY_0=(\bx,j))$. For $\alpha\in\scrI_2$ and $i_1,i_2\in\{-1,0,1\}$, let $A^\alpha_{i_1,i_2}$ be a one-step transition probability block from a state in $\mathbb{B}^\alpha$, 
where we assume the blocks corresponding to impossible transitions are zero (see Fig.\ \ref{fig:fig11}). %
%
\begin{figure}[t]
\begin{center}
\includegraphics[width=55mm,trim=0 0 0 0]{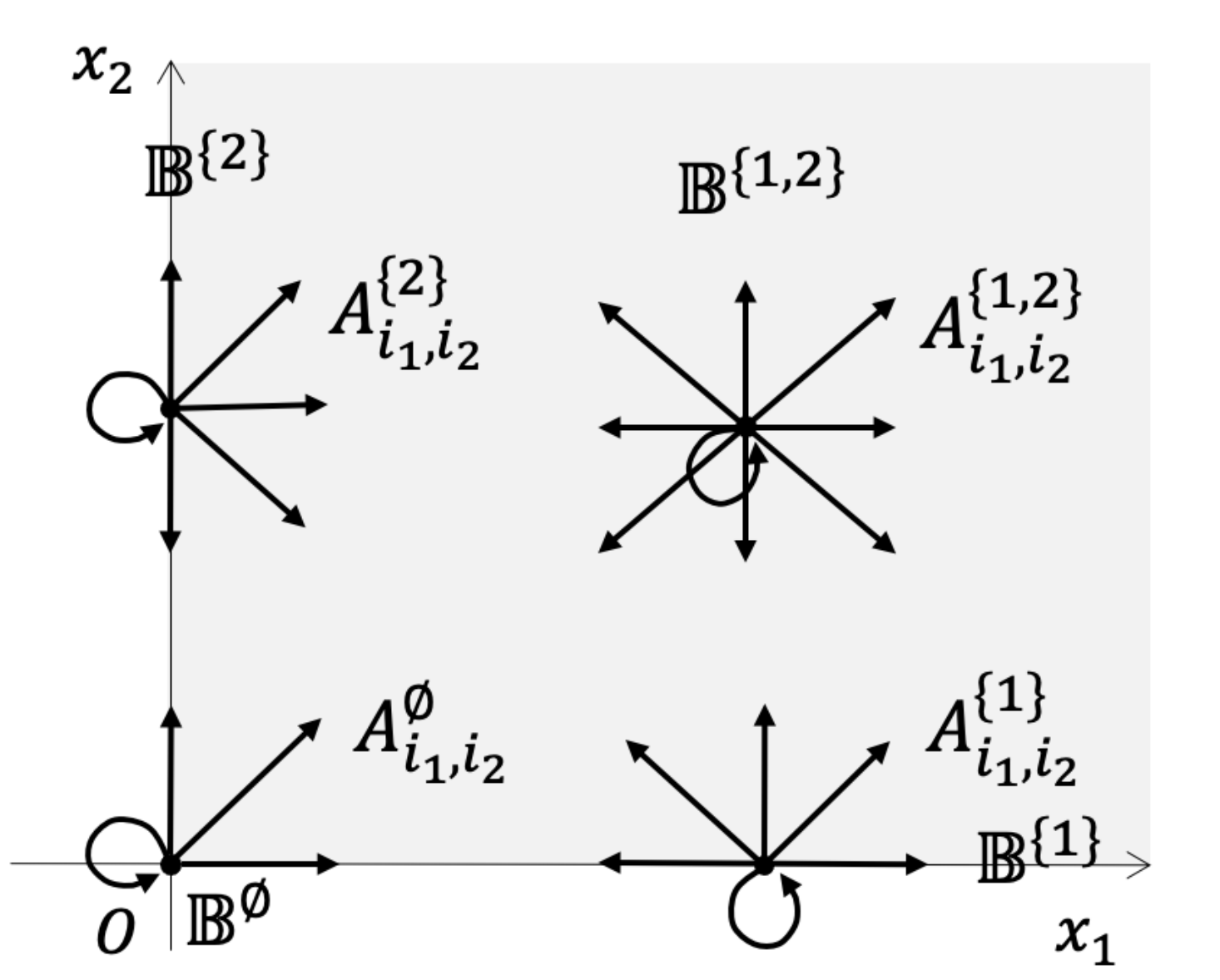} 
\caption{Transition probability blocks}
\label{fig:fig11}
\end{center}
\end{figure}
%
Since the level process is skip free, for every $\bx,\bx'\in\mathbb{Z}_+^2$, $P_{\bx,\bx'}$ is given by 
\begin{equation}
P_{\bx,\bx'} 
= \left\{ \begin{array}{ll} 
A^\alpha_{\bx'-\bx}, & \mbox{if $\bx\in\mathbb{B}^\alpha$ for some $\alpha\in\scrI_2$ and $\bx'-\bx\in\{-1,0,1\}^2$}, \cr
O, & \mbox{otherwise}.
\end{array} \right.
\end{equation}
We assume the following condition throughout the paper. 
\begin{assumption} \label{as:QBD_irreducible}
The 2d-QBD process $\{\bY_n\}$ is irreducible and aperiodic. 
\end{assumption}

%
Next, we define several Markov chains derived from the 2d-QBD process.
For a nonempty set $\alpha\in\scrI_2$, let $\{\bY^\alpha_n\}=\{(\bX^\alpha_n,J^\alpha_n)\}$ be a process derived from the 2d-QBD process $\{\bY_n\}$ by removing the boundaries that are orthogonal to the $x_i$-axis for each $i\in\alpha$. 
The process $\{\bY^{\{1\}}_n\}$ is a Markov chain on $\mathbb{Z}\times\mathbb{Z}_+\times S_0$ whose transition probability matrix $P^{\{1\}}=(P^{\{1\}}_{\bx,\bx'};\bx,\bx'\in\mathbb{Z}\times\mathbb{Z}_+)$ is given as
\begin{equation}
P^{\{1\}}_{\bx,\bx'} 
= \left\{ \begin{array}{ll} 
A^{\{1\}}_{\bx'-\bx}, & \mbox{if $\bx\in\mathbb{Z}\times\{0\}$ and $\bx'-\bx\in\{-1,0,1\}\times\{0,1\}$}, \cr
A^{\{1,2\}}_{\bx'-\bx}, & \mbox{if $\bx\in\mathbb{Z}\times\mathbb{N}$ and $\bx'-\bx\in\{-1,0,1\}^2$}, \cr
O, & \mbox{otherwise},
\end{array} \right.
\end{equation}
where $\mathbb{N}$ is the set of all positive integers.
The process $\{\bY^{\{2\}}_n\}$ on $\mathbb{Z}_+\times\mathbb{Z}\times S_0$ and its transition probability matrix $P^{\{2\}}=(P^{\{2\}}_{\bx,\bx'};\bx,\bx'\in\mathbb{Z}_+\times\mathbb{Z})$ are analogously defined. The process $\{\bY^{\{1,2\}}_n\}$ is a Markov chain on $\mathbb{Z}^2\times S_0$, whose transition probability matrix $P^{\{1,2\}}=(P^{\{1,2\}}_{\bx,\bx'};\bx,\bx'\in\mathbb{Z}^2)$ is given as
\begin{equation}
P^{\{1,2\}}_{\bx,\bx'} 
= \left\{ \begin{array}{ll} 
A^{\{1,2\}}_{\bx'-\bx}, & \mbox{if $\bx'-\bx\in\{-1,0,1\}^2$}, \cr
O, & \mbox{otherwise}. 
\end{array} \right.
\end{equation}
Regarding $X^{\{1\}}_{1,n}$ as the additive part, we see that the process $\{\bY^{\{1\}}_n\}=\{(X^{\{1\}}_{1,n},(X^{\{1\}}_{2,n},J^{\{1\}}_n))\}$ is a Markov additive process (MA-process for short) with the background state $(X^{\{1\}}_{2,n},J^{\{1\}}_n)$ (with respect to MA-processes, see, for example, Ney and Nummelin \cite{Ney87}). The process $\{\bY^{\{2\}}_n\}=\{(X^{\{2\}}_{2,n},(X^{\{2\}}_{1,n},J^{\{2\}}_n))\}$ is also an MA-process, where $X^{\{2\}}_{2,n}$ is the additive part and $(X^{\{2\}}_{1,n},J^{\{2\}}_n)$ the background state, and $\{\bY^{\{1,2\}}_n\}=\{(X^{\{1,2\}}_{1,n},X^{\{1,2\}}_{2,n}),J^{\{1,2\}}_n)\}$ an MA-process, where $(X^{\{1,2\}}_{1,n},X^{\{1,2\}}_{2,n})$ the additive part and $J^{\{1,2\}}_n$ the background state. We call them the induced MA-processes derived from the original 2d-QBD process. 
%
Let $\{ \bar A^{\{1\}}_i: i\in\{-1,0,1\} \}$ be the Markov additive kernel (MA-kernel for short) of the induced MA-process $\{\bY^{\{1\}}_n\}$, which is the set of transition probability blocks and defined as, for $i\in\{-1,0,1\}$,  
\begin{align*}
& \bar A^{\{1\}}_i = \left( \bar A^{\{1\}}_{i,(x_2,x_2')}; x_2,x_2'\in\mathbb{Z}_+ \right),\\
&\bar A^{\{1\}}_{i,(x_2,x_2')} = \left\{ \begin{array}{ll}
 A^{\{1\}}_{i,x_2'-x_2}, & \mbox{if $x_2=0$ and $x_2'-x_2\in\{0,1\}$}, \cr
 A^{\{1,2\}}_{i,x_2'-x_2}, & \mbox{if $x_2\ge 1$ and $x_2'-x_2\in\{-1,0,1\}$}, \cr
 O, & \mbox{otherwise}. 
 \end{array} \right.
\end{align*}
Let $\{ \bar A^{\{2\}}_i: i\in\{-1,0,1\} \}$ be the MA-kernel of $\{\bY^{\{2\}}_n\}$, defined in the same way. With respect to $\{\bY^{\{1,2\}}_n\}$, the MA-kernel is given by $\{ A^{\{1,2\}}_{i_1,i_2}: i_1,i_2\in\{-1,0,1\} \}$. 
We assume the following condition throughout the paper. 
\begin{assumption} \label{as:MAprocess_irreducible}
The induced MA-processes $\{\bY^{\{1\}}_n\}$, $\{\bY^{\{2\}}_n\}$ and $\{\bY^{\{1,2\}}_n\}$ are irreducible and aperiodic. 
\end{assumption}

%
%
According to \cite{Ozawa22}, we assume several other technical conditions for the induced MA-process $\{\bY_n^{\{1,2\}}\}$, concerning irreducibility and aperiodicity on subspaces. 
Let $\{\bY^+_{n}\}=\{(\bX^+_n,J^+_n)\}$ be a lossy Markov chain derived from the induced MA-process $\{\bY^{\{1,2\}}_n\}$ by restricting the state space of the additive part to $\mathbb{N}^2$. The process $\{\bY^+_n\}$ is a Markov chain on the state space $\mathbb{N}^2\times S_0$ whose transition probability matrix $P^+$ is given as $P^+=(P^{\{1,2\}}_{\bx,\bx'};\bx,\bx'\in\mathbb{N}^2)$, where  $P^+$ is strictly substochastic. 
We assume the following condition throughout the paper.
\begin{assumption} \label{as:Y12_onZpZp_irreducible}
$\{\bY^+_n\}$ is irreducible and aperiodic. 
\end{assumption}
%
%
For $k\in\mathbb{Z}$, let $\mathbb{Z}_{\le k}$ and $\mathbb{Z}_{\ge k}$ be the set of integers less than or equal to $k$ and that of integers greater than or equal to $k$, respectively. We assume the following condition throughout the paper. For what this assumption implies, see Remark 3.1 of \cite{Ozawa22}. 
\begin{assumption} \label{as:Y12_onZpmZmp_irreducible}
\begin{itemize}
\item[(i)] The lossy Markov chain derived from the induced MA-process $\{\bY^{\{1,2\}}_n\}$ by restricting the state space to $\mathbb{Z}_{\le 0}\times \mathbb{Z}_{\ge 0}\times S_0$ is irreducible and aperiodic. 
\item[(ii)] The lossy Markov chain derived from $\{\bY^{\{1,2\}}_n\}$ by restricting the state space to $\mathbb{Z}_{\ge 0}\times \mathbb{Z}_{\le 0}\times S_0$ is irreducible and aperiodic. 
\end{itemize}
\end{assumption}

%
The stability condition of the 2d-QBD process has already been obtained in \cite{Ozawa19}. 
Let $a^{\{1\}}$, $a^{\{2\}}$ and $\ba^{\{1,2\}}=(a^{\{1,2\}}_1,a^{\{1,2\}}_2)$ be the mean drifts of the additive part in the induced MA-processes $\{\bY^{\{1\}}_n\}$, $\{\bY^{\{2\}}_n\}$ and $\{\bY^{\{1,2\}}_n\}$, respectively.
By Corollary 3.1 of \cite{Ozawa19}, the stability condition of the 2d-QBD process $\{\bY_n\}$ is given as follows:
\begin{lemma} \label{le:stability_cond}
\begin{itemize}
\item[(i)] In the case where $a^{\{1,2\}}_1<0$ and $a^{\{1,2\}}_2<0$, the 2d-QBD process $\{\bY_n\}$ is positive recurrent if $a^{\{1\}}<0$ and $a^{\{2\}}<0$, and it is transient if either $a^{\{1\}}>0$ or $a^{\{2\}}>0$. 
\item[(ii)] In the case where $a^{\{1,2\}}_1\ge 0$ and $a^{\{1,2\}}_2<0$, $\{\bY_n\}$ is positive recurrent if $a^{\{1\}}<0$, and it is transient if $a^{\{1\}}>0$. 
\item[(iii)] In the case where $a^{\{1,2\}}_1<0$ and $a^{\{1,2\}}_2\ge 0$, $\{\bY_n\}$ is positive recurrent if $a^{\{2\}}<0$, and it is transient if $a^{\{2\}}>0$. 
\item[(iv)] If one of $a^{\{1,2\}}_1$ and $a^{\{1,2\}}_2$ is positive and the other is non-negative, then $\{\bY_n\}$ is transient.
\end{itemize}
\end{lemma}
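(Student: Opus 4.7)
The plan is to prove the four cases by a Foster--Lyapunov argument using a piecewise linear test function, exploiting the decomposition of $\mathbb{Z}_+^2\times S_0$ into the four regions $\mathbb{B}^\emptyset, \mathbb{B}^{\{1\}}, \mathbb{B}^{\{2\}}, \mathbb{B}^{\{1,2\}}$ on which $\{\bY_n\}$ uses three distinct MA-kernels. Specifically, I would construct a nonnegative function $V(\bx,j) = b_1(\alpha) x_1 + b_2(\alpha) x_2 + h_\alpha(j)$ on $\mathbb{B}^\alpha$, where $b_i(\alpha)\ge 0$ are chosen so that the dominant linear part of the one-step conditional drift is strictly negative on each region, and $h_\alpha:S_0\to\mathbb{R}$ is a bounded Poisson-type correction that absorbs the background modulation of the drift (such $h_\alpha$ exists because $S_0$ is finite and the restricted background chain on each region is finite-state by Assumption \ref{as:MAprocess_irreducible}).

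For case (i), negativity of all four drifts $a^{\{1,2\}}_1, a^{\{1,2\}}_2, a^{\{1\}}, a^{\{2\}}$ allows positive coefficients $b_i(\alpha)$ on each region to be chosen so that the linear-drift contribution is uniformly strictly negative; Foster's criterion then yields positive recurrence. In cases (ii) and (iii), the interior drift is negative in only one coordinate, so one assigns a sufficiently large coefficient to the coordinate whose interior drift is nonnegative so as to push the process toward the relevant axis, on which the induced one-dimensional QBD becomes stable under the hypothesis $a^{\{1\}}<0$ or $a^{\{2\}}<0$ respectively. The transience halves of (i)--(iii) are handled by a complementary instability criterion (nonnegative test function with nonnegative drift off a compact set, together with a positive probability of escape to infinity along suitable trajectories), and case (iv) follows directly from the strong law of large numbers for the unconstrained MA-process $\{\bY^{\{1,2\}}_n\}$ coupled to $\{\bY_n\}$ as long as both stay in $\mathbb{B}^{\{1,2\}}$.

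The main obstacle will be the careful verification that the drift of $V$ is uniformly negative across the boundary strips $\{x_2=1,x_1\ge 1\}$ and $\{x_1=1,x_2\ge 1\}$ where $V$ switches its linear form: the jumps in the slopes $b_i(\alpha)$ produce $O(1)$ corrections in the one-step drift, which must be dominated by the bulk negative term. Choosing the slopes and the Poisson corrections $h_\alpha$ jointly so that the sufficient drift condition holds across all regions is the delicate bookkeeping where the precise mean-drift inequalities of Lemma \ref{le:stability_cond} enter quantitatively.
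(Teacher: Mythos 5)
First, a point of comparison: the paper offers no proof of Lemma \ref{le:stability_cond} at all — it is imported verbatim from Corollary 3.1 of \cite{Ozawa19} — so there is no in-paper argument to measure yours against. Judged on its own terms, your Foster--Lyapunov sketch with a piecewise linear test function is the right family of ideas (it is essentially the Fayolle--Malyshev--Menshikov strategy that \cite{Ozawa19} adapts to the modulated setting), but as written it has two concrete gaps.

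The first is your claim that the bounded correction $h_\alpha$ exists ``because the restricted background chain on each region is finite-state.'' That is false for the faces: the drifts $a^{\{1\}}$ and $a^{\{2\}}$ are by definition the mean drifts of the additive parts of the induced MA-processes $\{\bY^{\{1\}}_n\}$ and $\{\bY^{\{2\}}_n\}$, whose background state spaces are $\mathbb{Z}_+\times S_0$ and $\mathbb{Z}_+\times S_0$, i.e.\ countably infinite. A correction indexed only by $j\in S_0$ cannot absorb the modulation by the pair $(X_2,J)$ near the $x_1$-axis; one must solve a Poisson equation for a countable-state QBD background, and the boundedness (or controlled growth) of that solution is precisely the nontrivial step. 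The second gap is in cases (ii) and (iii): your slope assignment is backwards, and more importantly the two requirements conflict. To make $b_1 a^{\{1,2\}}_1+b_2 a^{\{1,2\}}_2<0$ on $\mathbb{B}^{\{1,2\}}$ when $a^{\{1,2\}}_1\ge 0$ you need $b_2/b_1$ large, whereas near $\mathbb{B}^{\{1\}}$ the one-step drift of $x_2$ is typically positive (reflection pushes the process back up), so a large $b_2$ destroys the drift condition there. This is exactly why the face condition is stated in terms of $a^{\{1\}}$, a stationary average over the whole background $(X_2,J)$, and why the standard resolution goes through the induced-chain/second-vector-field machinery rather than a single piecewise-linear $V$ with $O(1)$ corrections. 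Finally, ``nonnegative drift off a compact set'' for a nonnegative function is not by itself a transience criterion (it is consistent with null recurrence); the bounded-supermartingale form of the criterion is needed. None of this is unfixable, but the proposal as stated does not close cases (ii), (iii), or the transience halves.
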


For the explicit expression of the mean drifts, see Section 3.1 of \cite{Ozawa19} and its related parts. 
We assume the following condition throughout the paper.
\begin{assumption} \label{as:2dQBD_stable}
The condition in Lemma \ref{le:stability_cond} that ensures the 2d-QBD process $\{\bY_n\}$ is positive recurrent holds.
\end{assumption}

Denote by $\bnu$ the stationary distribution of $\{\bY_n\}$, where $\bnu=(\bnu_{\bx},\bx\in\mathbb{Z}_+^2)$, $\bnu_{\bx}=(\nu_{(\bx,j)}, j\in S_0)$ and $\nu_{(\bx,j)}$ is the stationary probability that the 2d-QBD process is in the state $(\bx,j)$.

%
\subsection{Main results}
%
Let $\bar A^{\{1\}}_*(z)$ and  $\bar A^{\{2\}}_*(z)$ be the matrix generating functions of the MA-kernels of $\{\bY^{\{1\}}_n\}$ and $\{\bY^{\{2\}}_n\}$, respectively, defined as
\[
\bar A^{\{1\}}_*(z) = \sum_{i\in\{-1,0,1\}} z^i \bar A^{\{1\}}_i,\quad 
\bar A^{\{2\}}_*(z) = \sum_{i\in\{-1,0,1\}} z^i \bar A^{\{2\}}_i. 
\]
The matrix generating function of the MA-kernel of $\{\bY^{\{1,2\}}_n\}$ is given by $A^{\{1,2\}}_{*,*}(z_1,z_2)$, defined as
\[
A^{\{1,2\}}_{*,*}(z_1,z_2) = \sum_{i_1,i_2\in\{-1,0,1\}} z_1^{i_1} z_2^{i_2} A^{\{1,2\}}_{i_1,i_2}. 
\]
%
Let $\Gamma^{\{1\}}$, $\Gamma^{\{2\}}$ and $\Gamma^{\{1,2\}}$ be domains in which  the convergence parameters of $\bar A^{\{1\}}_*(e^{\theta_1})$, $\bar A^{\{2\}}_*(e^{\theta_2})$ and $A^{\{1,2\}}_{*,*}(e^{\theta_1},e^{\theta_2})$ are greater than $1$, respectively, i.e., 
\begin{align*}
&\Gamma^{\{1\}} = \{(\theta_1,\theta_2)\in\mathbb{R}^2: \cp(\bar A^{\{1\}}_*(e^{\theta_1}))>1 \},\quad 
\Gamma^{\{2\}} = \{(\theta_1,\theta_2)\in\mathbb{R}^2: \cp(\bar A^{\{2\}}_*(e^{\theta_2}))>1 \}, \\
&\Gamma^{\{1,2\}} = \{(\theta_1,\theta_2)\in\mathbb{R}^2: \cp(A^{\{1,2\}}_{*,*}(e^{\theta_1},e^{\theta_2}))>1 \}, 
\end{align*}
where, for a nonnegative square matrix $A$ with a finite or countable dimension, $\cp(A)$ denote the convergence parameter of $A$, i.e., $\cp(A) = \sup\{r\in\mathbb{R}_+: \sum_{n=0}^\infty r^n A^n<\infty,\ \mbox{entry-wise} \}$. 
We have $\cp(A^{\{1,2\}}_{*,*}(e^{\theta_1},e^{\theta_2}))=\spr(A^{\{1,2\}}_{*,*}(e^{\theta_1},e^{\theta_2}))^{-1}$, where for a square complex matrix $A$, $\spr(A)$ is the spectral radius of $A$. 
By Lemma A.1 of  Ozawa \cite{Ozawa21}, $\cp(\bar A^{\{1\}}_*(e^\theta))^{-1}$ and $\cp(\bar A^{\{2\}}_*(e^\theta))^{-1}$ are log-convex in $\theta$, and the closures of $\Gamma^{\{1\}}$ and $\Gamma^{\{2\}}$ are convex sets; $\spr(A^{\{1,2\}}_*(e^{\theta_1},e^{\theta_2}))$ is also log-convex in $(\theta_1,\theta_2)$, and the closure of $\Gamma^{\{1,2\}}$ is a convex set. Furthermore, by Proposition B.1 of Ozawa \cite{Ozawa21}, $\Gamma^{\{1,2\}}$ is bounded under Assumption \ref{as:MAprocess_irreducible}. 
We depict an example of the domains $\Gamma^{\{1,2\}}$, $\Gamma^{\{1\}}$ and $\Gamma^{\{2\}}$ in Fig.\ \ref{fig:fig12}.
%
\begin{figure}[t]
\begin{center}
\includegraphics[width=130mm,trim=0 0 0 0]{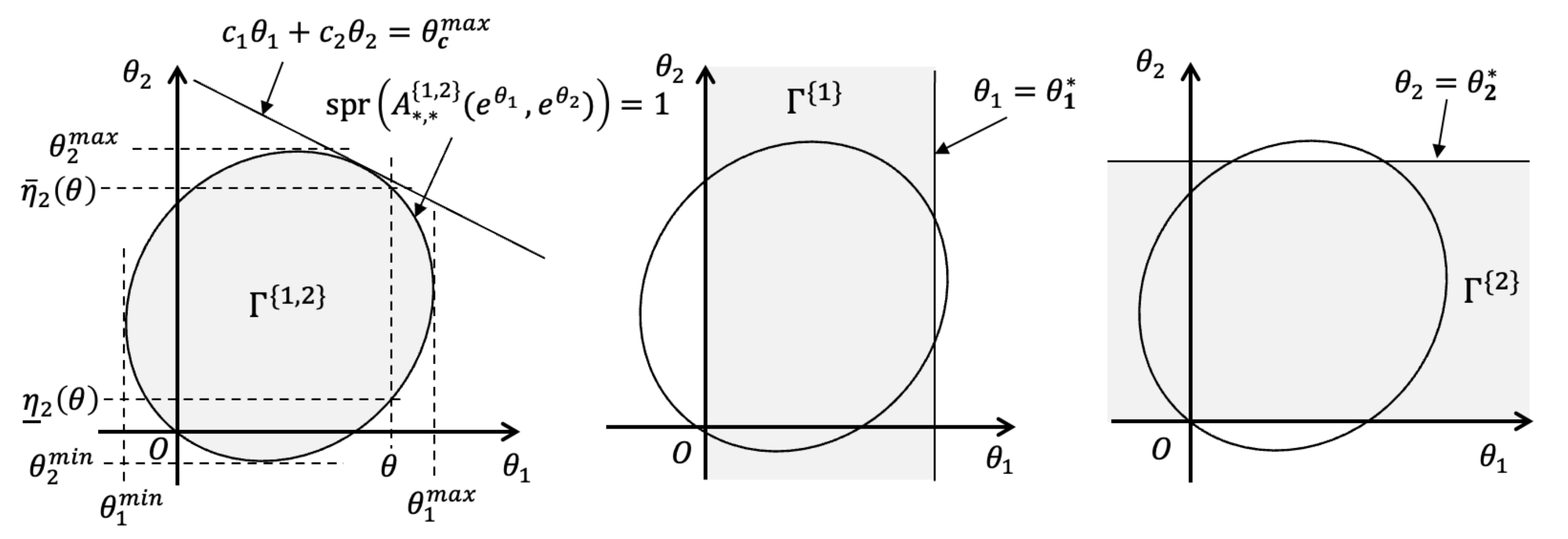} 
\caption{Domains $\Gamma^{\{1,2\}}$, $\Gamma^{\{1\}}$ and $\Gamma^{\{2\}}$}
\label{fig:fig12}
\end{center}
\end{figure}

%
We define several extreme values and functions with respect to the domains. For $i\in\{1,2\}$, define $\theta_i^{min}$ and $\theta_i^{max}$ as 
\[
\theta_i^{min} = \inf\{ \theta_i\in\mathbb{R}: (\theta_1,\theta_2)\in\Gamma^{\{1,2\}} \},\quad 
\theta_i^{max} = \sup\{ \theta_i\in\mathbb{R}: (\theta_1,\theta_2)\in\Gamma^{\{1,2\}} \}, 
\]
and for a direction vector $\bc=(c_1,c_2)\in\mathbb{N}^2$, $\theta_{\bc}^{max}$ as 
\[
\theta_{\bc}^{max} = 
\sup\{c_1\theta_1+c_2\theta_2: (\theta_1,\theta_2)\in\Gamma^{\{1,2\}} \}. 
\]
By Lemma 2.3 of \cite{Ozawa18}, under Assumption \ref{as:2dQBD_stable}, the domain $\Gamma^{\{1,2\}}$ includes positive points, i.e., $\{(\theta_1,\theta_2)\in\Gamma^{\{1,2\}}: \theta_1>0,\,\theta_2>0 \}\ne\emptyset$, and this implies $\theta_{\bc}^{max}>0$ for every direction vector $\bc\in\mathbb{N}^2$. 
For $\theta_1\in[\theta_1^{min},\theta_1^{max}]$, there exist two real solutions to equation $\spr(A^{\{1,2\}}_{*,*}(e^{\theta_1},e^{\theta_2}))=1$, counting multiplicity. Denote them by $\theta_2=\underline{\eta}_2(\theta_1)$ and $\theta_2=\bar{\eta}_2(\theta_1)$, respectively,  where $\underline{\eta}_2(\theta_1)\le\bar{\eta}_2(\theta_1)$. For $\theta_2\in[\theta_2^{min},\theta_2^{max}]$, also denote by $\theta_1=\underline{\eta}_1(\theta_2)$ and $\theta_1=\bar{\eta}_1(\theta_2)$ the two real solutions to the equation $\spr(A^{\{1,2\}}_{*,*}(e^{\theta_1},e^{\theta_2}))=1$, where $\underline{\eta}_1(\theta_2)\le\bar{\eta}_1(\theta_2)$. 
For $i\in\{1,2\}$, define $\theta_i^*$ as
\[
\theta_i^* = \sup\{\theta_i\in\mathbb{R}: (\theta_1,\theta_2)\in\Gamma^{\{i\}} \}. 
\]
For another characterization of $\theta_i^*$, see Proposition 3.7 of Ozawa \cite{Ozawa13}, where $\theta_i^*$ is denoted by $z_0$. 
By Lemma 2.5  of \cite{Ozawa18} and its related parts, under Assumption \ref{as:2dQBD_stable}, we have $\theta_i^*>0$ for $i\in\{1,2\}$. 

%
In terms of these points and functions, we geometrically classify the model into four types according to Section 4.1 of \cite{Ozawa22}. 
Define two points $\mathrm{Q}_1$ and $\mathrm{Q}_2$ as $\mathrm{Q}_1=(\theta_1^*,\bar{\eta}_2(\theta_1^*))$ and $\mathrm{Q}_2=(\bar{\eta}_1(\theta_2^*),\theta_2^*)$, respectively. Using these points, we define the following classification (see Fig.\ \ref{fig:classification}). 
\begin{itemize}
\item[] Type 1: $\theta_1^* \ge \bar{\eta}_1(\theta_2^*)$ and $\bar{\eta}_2(\theta_1^*) \le \theta_2^*$, \\
Type 2: $\theta_1^* < \bar{\eta}_1(\theta_2^*)$ and $\bar{\eta}_2(\theta_1^*) > \theta_2^*$, \\
Type 3: $\theta_1^* \ge \bar{\eta}_1(\theta_2^*)$ and $\bar{\eta}_2(\theta_1^*) > \theta_2^*$, \\
Type 4: $\theta_1^* < \bar{\eta}_1(\theta_2^*)$ and $\bar{\eta}_2(\theta_1^*) \le \theta_2^*$.
\end{itemize}
%
\begin{figure}[tb]
\begin{center}
\includegraphics[width=160mm,trim=0 0 0 0]{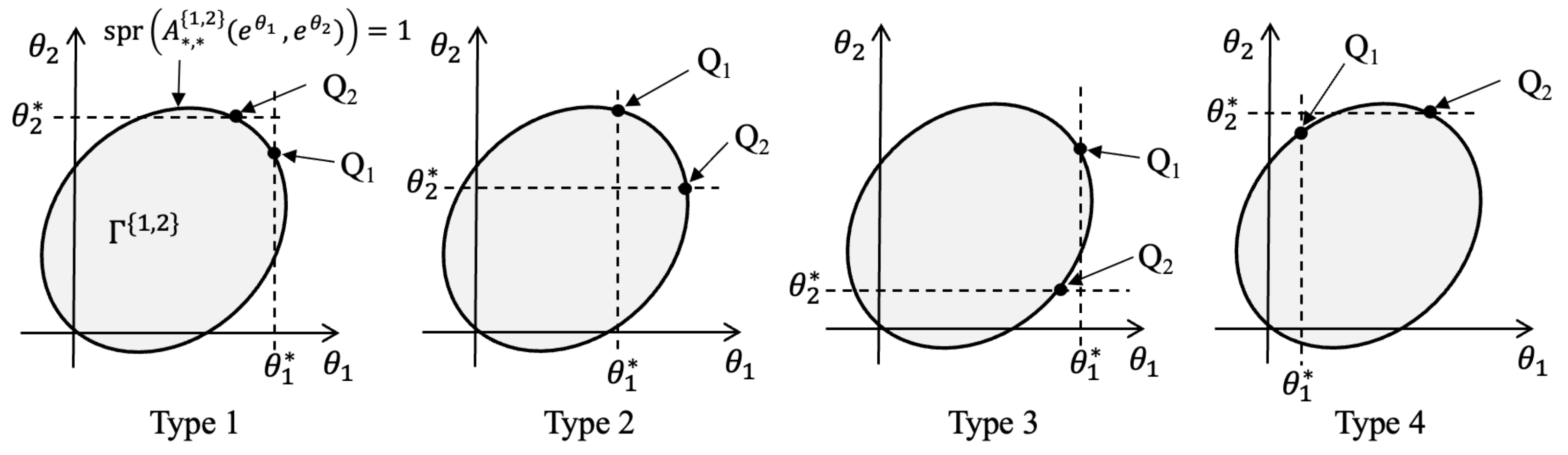} 
\caption{Classification}
\label{fig:classification}
\end{center}
\end{figure}

%
By Proposition 2.3 of \cite{Ozawa22}, for any direction vector $\bc=(c_1,c_2)\in\mathbb{N}^2$, the asymptotic decay rate in the direction $\bc$ is space homogeneous. Hence, we denote it by $\xi_{\bc}$, which satisfies, for any $(\bx,j)\in\mathbb{Z}_+^2\times S_0$, 
\begin{equation}
\xi_{\bc} = - \lim_{k\to\infty} \frac{1}{k} \log \nu_{(k\bc+\bx,j)}. 
\end{equation}
The asymptotic decay rate $\xi_{\bc}$ has already been obtained in \cite{Ozawa22}, and  as described in Section 4.1 of \cite{Ozawa22}, it is given as follows.
\begin{theorem} \label{th:decay_rate} 
Let $\bc=(c_1,c_2)$ be an arbitrary direction vector in $\mathbb{N}^2$.
\begin{itemize}
\item[] Type 1:
\[
\xi_{\bc} = \left\{ \begin{array}{ll}
c_1 \theta_1^* + c_2 \bar{\eta}_2(\theta_1^*) & \mbox{if $-\frac{c_1}{c_2}< \bar{\eta}'_2(\theta_1^*)$}, \cr
\theta_{\bc}^{max} & \mbox{if $\bar{\eta}'_2(\theta_1^*) \le -\frac{c_1}{c_2}\le \bar{\eta}'_1(\theta_2^*)^{-1}$}, \cr
c_1 \bar{\eta}_1(\theta_2^*) +c_2 \theta_2^* & \mbox{if $-\frac{c_1}{c_2}>\bar{\eta}'_1(\theta_2^*)^{-1}$}, 
\end{array} \right.
\]
where $\bar{\eta}'_2(x) = \frac{d}{dx} \bar{\eta}_2(x)$ and $\bar{\eta}'_1(x) = \frac{d}{dx} \bar{\eta}_1(x)$.
\item[] Type 2: 
\[
\xi_{\bc} = \left\{ \begin{array}{ll}
c_1 \theta_1^* + c_2 \bar{\eta}_2(\theta_1^*) & \mbox{if $-\frac{c_1}{c_2}\le \frac{\theta_2^*-\bar{\eta}_2(\theta_1^*)}{\bar{\eta}_1(\theta_2^*)-\theta_1^*}$}, \cr
c_1 \bar{\eta}_1(\theta_2^*) +c_2 \theta_2^* & \mbox{if $-\frac{c_1}{c_2}> \frac{\theta_2^*-\bar{\eta}_2(\theta_1^*)}{\bar{\eta}_1(\theta_2^*)-\theta_1^*}$}.
\end{array} \right.
\]
\item[] Type 3: $\xi_{\bc}=c_1 \bar{\eta}_1(\theta_2^*)+c_2 \theta_2^*$.
\item[] Type 4: $\xi_{\bc}=c_1 \theta_1^*+c_2 \bar{\eta}_2(\theta_1^*)$. 
\end{itemize}
\end{theorem}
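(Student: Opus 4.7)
My plan is to locate $\xi_{\bc}$ as the logarithm of the smallest-modulus singularity of the vector generating function $\bvarphi^{\bc}(z)=\sum_{k=0}^\infty z^k\bnu_{k\bc}$, and then read off the formula by comparing three candidate singular points drawn from the geometry of the three convergence domains $\Gamma^{\{1,2\}}$, $\Gamma^{\{1\}}$, and $\Gamma^{\{2\}}$. Space-homogeneity of the decay rate (Proposition 2.3 of \cite{Ozawa22}) reduces the problem to a single ray, and the radius of convergence of $\bvarphi^{\bc}(z)$ then determines $\xi_{\bc}$ via $\xi_{\bc}=\log R$, with $R$ the radius.

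Using a matrix-analytic representation, $\bvarphi^{\bc}(z)$ is a rational combination of blocks built from the interior kernel $A^{\{1,2\}}_{*,*}$, the G-matrix of the 2d-QBD, and the boundary blocks; its singularities fall into three candidate families: $z=e^{\theta_{\bc}^{max}}$ coming from the interior condition $\spr(A^{\{1,2\}}_{*,*}(z^{c_1},z^{c_2}))=1$; $z=\exp\bigl(c_1\theta_1^*+c_2\bar{\eta}_2(\theta_1^*)\bigr)$ coming from the boundary-induced MA-process $\{\bY^{\{1\}}_n\}$; and $z=\exp\bigl(c_1\bar{\eta}_1(\theta_2^*)+c_2\theta_2^*\bigr)$ coming from $\{\bY^{\{2\}}_n\}$. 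The decay rate $\xi_{\bc}$ is the logarithm of the smallest among the candidates that is genuinely visible in $\bvarphi^{\bc}(z)$, i.e.\ not cancelled by a compensating zero.

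For the candidate enumeration I would follow the approach of Ozawa--Kobayashi \cite{Ozawa18} used for axial directions: express $\bvarphi^{\bc}(z)$ in terms of the analytic continuation of the G-matrix function (whose Jordan-decomposition-based extension is developed later in Section~\ref{sec:G_analyticextension}) and match the three families of singularities to poles or branch points of the resulting expression. The remainder is geometric. In Type 1 all three singularities are visible, and comparison of the level-set slope $-c_1/c_2$ with the tangent slopes $\bar{\eta}_2'(\theta_1^*)$ and $\bar{\eta}_1'(\theta_2^*)^{-1}$ at the points $Q_1,Q_2$ on the arc selects the smallest and yields the three subcases. In Type 2 the unconstrained interior tangent point lies outside the admissible quadrant $\{\theta_1\le\theta_1^*,\theta_2\le\theta_2^*\}$, so the interior candidate is dominated by one of the two boundary candidates, and equating their exponents produces the secant-slope threshold $(\theta_2^*-\bar{\eta}_2(\theta_1^*))/(\bar{\eta}_1(\theta_2^*)-\theta_1^*)$. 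In Types 3 and 4 exactly one of $Q_1,Q_2$ lies strictly above the admissible quadrant, so one of the two boundary candidates drops out and the other forces the stated answer.

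The main obstacle will be the visibility / non-cancellation analysis at the candidate singularities. Using the Jordan block structure of the G-matrix function one has to verify that $\bvarphi^{\bc}(z)$ genuinely develops a pole or branch point at each active candidate (rather than a removable singularity), and that this remains true as $\bc$ varies continuously across the type-boundaries $\theta_1^*=\bar{\eta}_1(\theta_2^*)$ and $\bar{\eta}_2(\theta_1^*)=\theta_2^*$; the Jordan decomposition developed in Section~\ref{sec:G_analyticextension} is precisely the tool needed to handle the degenerate cases. Once visibility is established, selecting the minimum among the visible exponents is routine convex geometry.
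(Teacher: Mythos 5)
This theorem is not proved in the present paper: the text immediately preceding it states that the decay rate ``has already been obtained in \cite{Ozawa22}'' and the statement is simply quoted from Theorem 3.2 / Section 4.1 of that reference. So there is no in-paper proof to compare against. Your strategy --- identify $\xi_{\bc}$ with the logarithm of the first singularity of $\bvarphi^{\bc}(z)$, enumerate the three candidate singularities (interior tangency point and the two boundary points $\mathrm{Q}_1,\mathrm{Q}_2$), and select the minimum visible one by comparing the slope $-c_1/c_2$ with the tangent and secant slopes --- is consistent with the method actually used in \cite{Ozawa22} and with the machinery developed in Sections 3--4 of this paper, and the case analysis you describe for Types 1--4 does reproduce the stated formulas.

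As a proof, however, the sketch has concrete gaps beyond the acknowledged ``visibility'' step. First, the interior candidate is mischaracterized: $\theta_{\bc}^{max}$ is the support-function value $\sup\{c_1\theta_1+c_2\theta_2:(\theta_1,\theta_2)\in\Gamma^{\{1,2\}}\}$, realized as the branch point of the algebraic function obtained from $\spr(A^{\{1,2\}}_{*,*}(e^{\theta_1},e^{\theta_2}))=1$ together with $c_1\theta_1+c_2\theta_2=\theta$ (where the two roots $\eta^R_{\bc,\cdot}$ and $\eta^L_{\bc,\cdot}$ coalesce); it is \emph{not} the solution of $\spr(A^{\{1,2\}}_{*,*}(z^{c_1},z^{c_2}))=1$, which would correspond to exiting $\Gamma^{\{1,2\}}$ along the ray $t\mapsto(c_1t,c_2t)$ rather than to tangency of the level line. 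Second, nonnegativity of the coefficients gives via Cauchy--Hadamard only $\limsup_k k^{-1}\log\nu_{k\bc,j}=-\log R$; the theorem asserts existence of the limit, which requires either the exact asymptotic form at the singularity (the content of Lemma \ref{le:asympto_formula_ss} applied after the full singularity analysis) or a separate argument. Third, the non-cancellation analysis you defer is where essentially all the work lies: one must analytically continue $\bvarphi^{\bc}(z)$ past $\Delta_{e^{\xi_{\bc}}}$ off the positive axis and prove positivity of the residue or branch coefficient (the analogue of Proposition \ref{pr:gvA_positivity} and Propositions \ref{pr:varphic_limit1}--\ref{pr:varphic_limit2} here), and this is precisely what occupies \cite{Ozawa22} and the present paper; asserting that the Jordan decomposition ``is precisely the tool needed'' does not discharge it.
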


Under Assumption \ref{as:2dQBD_stable}, since $\Gamma^{\{1,2\}}\cap\Gamma^{\{1\}}\cap\Gamma^{\{2\}}$ includes positive points, we have $\xi_{\bc}>0$ for every $\bc\in\mathbb{N}^2$. 
%
The asymptotic decay function $h_{\bc}(k)$ in the direction $\bc$ is defined as the function that satisfies, for some positive vector $\bg_{\bc}$, 
\begin{equation}
\lim_{k\to\infty} \frac{\bnu_{k\bc}}{h_{\bc}(k)} = \bg_{\bc}. 
\end{equation}
It is given as follows.
\begin{theorem} \label{th:main_theorem}
Let $\bc$ be an arbitrary direction vector in $\mathbb{N}^2$.  Then, the asymptotic decay function $h_{\bc}(k)$ is given as
\begin{equation}
h_{\bc}(k) = 
\left\{ \begin{array}{ll}
k^{-\frac{1}{2}} e^{-\xi_{\bc} k} & \mbox{if $\bar{\eta}'_2(\theta_1^*) < -\frac{c_1}{c_2} < \bar{\eta}'_1(\theta_2^*)^{-1}$ in Type 1}, \cr
e^{-\xi_{\bc} k} & \mbox{otherwise}.
\end{array} \right. 
\label{eq:main_theorem}
\end{equation}
\end{theorem}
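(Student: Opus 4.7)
The plan is to locate and classify the dominant singularity of the vector generating function $\bvarphi^{\bc}(z) = \sum_{k=0}^\infty z^k \bnu_{k\bc}$ along the ray in direction $\bc$, and then transfer the singularity type to the coefficient asymptotics. Since by definition $e^{\xi_{\bc}}$ is the radius of convergence of $\bvarphi^{\bc}(z)$, the leading behaviour of $\bnu_{k\bc}$ is controlled by the behaviour of $\bvarphi^{\bc}(z)$ as $z \to e^{\xi_{\bc}}$. First I would derive a tractable closed form for $\bvarphi^{\bc}(z)$ by summing the stationary equation $\bnu P = \bnu$ along the ray $\{k \bc : k \ge 0\}$ together with its skip-free neighbours. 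This produces an expression for $\bvarphi^{\bc}(z)$ as a rational combination of $A^{\{1,2\}}_{*,*}(z^{c_1}, z^{c_2})$, the G-matrix function of the induced MA-process $\{\bY^{\{1,2\}}_n\}$ parametrised along the direction $\bc$, its reverse-direction counterpart, and boundary-resolvent factors built from the MA-kernels $\bar A^{\{1\}}_*$ and $\bar A^{\{2\}}_*$.

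Next I would analytically continue $\bvarphi^{\bc}(z)$ slightly past the circle $|z| = e^{\xi_{\bc}}$ using the Jordan-form analytic extension of the G-matrix function developed in Section \ref{sec:preliminary}, which removes the distinct-eigenvalue assumption of \cite{Ozawa18}. The resulting continuation has only two kinds of singular points inside a slightly larger disk: simple poles, arising from the boundary resolvents or from $(I - G(z))^{-1}$ failing to exist, and a branch point of order one at $z = e^{\theta_{\bc}^{max}}$, coming from eigenvalue coalescence of the G-matrix, which is forced by the log-convexity of $\spr(A^{\{1,2\}}_{*,*}(e^{\theta_1},e^{\theta_2}))$. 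Invoking Theorem \ref{th:decay_rate} and the Type 1--4 classification, I would then verify case by case that in Types 2, 3, 4 and in Type 1 outside the middle range $\bar{\eta}'_2(\theta_1^*) \le -c_1/c_2 \le \bar{\eta}'_1(\theta_2^*)^{-1}$, one has $\xi_{\bc} < \theta_{\bc}^{max}$ and the dominant singularity is a simple pole, while in the interior of the middle Type 1 range, $\xi_{\bc} = \theta_{\bc}^{max}$ and the dominant singularity coincides with the branch point.

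Finally, a residue calculation at a simple pole yields $\bnu_{k\bc} \sim \bg_{\bc}\, e^{-\xi_{\bc} k}$, so $h_{\bc}(k) = e^{-\xi_{\bc} k}$, and at the branch point, writing $\bvarphi^{\bc}(z) = \boldsymbol{a}(z) + \boldsymbol{b}(z)\bigl(1 - z e^{-\theta_{\bc}^{max}}\bigr)^{-1/2}$ in a slit neighbourhood with $\boldsymbol{a}, \boldsymbol{b}$ analytic, the Flajolet--Odlyzko transfer theorem for algebraic singularities of exponent $-1/2$ gives $\bnu_{k\bc} \sim \bg_{\bc}\, k^{-1/2} e^{-\theta_{\bc}^{max} k}$, which is the first line of \eqref{eq:main_theorem}. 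The hardest step will be the clean analytic continuation through the branch point and, above all, verifying that the Puiseux coefficient $\boldsymbol{b}(e^{\theta_{\bc}^{max}})$ is nonzero in the strict middle Type 1 regime but vanishes exactly at the two boundary directions $-c_1/c_2 = \bar{\eta}'_2(\theta_1^*)$ and $-c_1/c_2 = \bar{\eta}'_1(\theta_2^*)^{-1}$, the ``two boundary cases'' flagged before Proposition \ref{pr:power_term}; in those cases the square-root contribution cancels and only the simple-pole part survives, yielding $h_{\bc}(k) = e^{-\xi_{\bc} k}$ as claimed.
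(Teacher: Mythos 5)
Your overall strategy --- singularity analysis of $\bvarphi^{\bc}(z)$ near $z=e^{\xi_{\bc}}$ followed by a transfer theorem --- is indeed the paper's strategy, and the reduction you make (the ``otherwise'' cases are already settled, so only the Type 1 middle range with $\xi_{\bc}=\theta_{\bc}^{max}$ needs new work) matches the paper, which further reduces to $\bc=(1,1)$ via the block-state process and works with the compensation representation of $\bnu$ through the fundamental matrix $\Phi^{\{1,2\}}$ and an auxiliary skip-free process, rather than by summing the stationary equation along the ray as you suggest. That representational difference matters for carrying out the continuation, but it is not the main problem.

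The genuine gap is in your treatment of the two boundary cases. You posit the local form $\bvarphi^{\bc}(z)=\ba(z)+\bb(z)\bigl(1-ze^{-\theta_{\bc}^{max}}\bigr)^{-1/2}$ with $\ba,\bb$ analytic and claim that at $-c_1/c_2=\bar{\eta}_2'(\theta_1^*)$ or $-c_1/c_2=\bar{\eta}_1'(\theta_2^*)^{-1}$ the coefficient $\bb$ vanishes so that ``only the simple-pole part survives''; but in your ansatz there is no pole at all ($\ba$ is analytic), so vanishing of $\bb$ would give a bounded function and no exponential asymptotics of the claimed form. The actual mechanism is the opposite of a cancellation: at a boundary direction the Perron--Frobenius eigenvalue $\alpha_{s_0}(z)$ of the G-matrix function reaches, at $z=e^{\theta_{\bc}^{max}}$, exactly the convergence radius $e^{2\theta_2^*}$ (resp.\ $e^{2\theta_1^*}$) of the boundary generating function $\hat{\bnu}_{(0,*)}(w)$, which has a simple pole there. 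The composition $\hat{\bnu}_{(0,*)}(\alpha_{s_0}(z))$ therefore contributes an extra factor behaving like $(e^{\theta_{\bc}^{max}}-z)^{-1/2}$, and in the variable $\zeta=(e^{\theta_{\bc}^{max}}-z)^{1/2}$ the generating function acquires a pole of order two instead of one: the dominant singularity becomes $(e^{\theta_{\bc}^{max}}-z)^{-1}$, which \emph{dominates} the square-root term rather than replacing a cancelled one (Propositions \ref{pr:hata_analyticproperties} and \ref{pr:varphic_limit2}). The proof obligation is then to show that this order-two Puiseux coefficient is positive, not that the order-one coefficient vanishes. Separately, your ``hardest step'' of proving nonvanishing of the leading coefficient in the strict interior case is correctly identified but left unaddressed; the paper settles it by showing $\bg(e^{\eta_{\bc,1}^{max}},e^{\eta_{\bc,2}^{max}})\bv^A(e^{\eta_{\bc,1}^{max}},e^{\eta_{\bc,2}^{max}})>0$ on $\partial\Gamma^*$ via a contradiction with the known convergence domain $\calD$ of $\bvarphi(z,w)$ (Proposition \ref{pr:gvA_positivity}), an ingredient your outline does not supply.
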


Except for the case where $\bar{\eta}'_2(\theta_1^*) \le -\frac{c_1}{c_2} \le \bar{\eta}'_1(\theta_2^*)^{-1}$ in Type 1, Theorem \ref{th:main_theorem} has already been proved in \cite{Ozawa22}, see Theorem 3.2 therein. Hence, to this end, it suffices to prove the following proposition. 
\begin{proposition} \label{pr:power_term}
Assume Type 1 and set $\bc=(c_1,c_2)=(1,1)$. Then, the asymptotic decay function $h_{\bc}(k)$ is given as
\begin{equation}
h_{\bc}(k) =
\left\{ \begin{array}{ll}
k^{-\frac{1}{2}} e^{-\theta_{\bc}^{max} k} & \mbox{if $\bar{\eta}'_2(\theta_1^*) < -\frac{c_1}{c_2} =-1< \bar{\eta}'_1(\theta_2^*)^{-1}$}, \cr
e^{-\theta_{\bc}^{max} k} & \mbox{if $\bar{\eta}'_2(\theta_1^*) =-1$ or  $\bar{\eta}'_1(\theta_2^*)=-1$}.
\end{array} \right. 
\label{eq:power_term}
\end{equation}
\end{proposition}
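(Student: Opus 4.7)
The plan is to derive the asymptotics of $\bnu_{k\bc}$ from the nature of the singularities of the vector generating function
\[
\bvarphi^{\bc}(z) = \sum_{k=0}^{\infty} z^k \bnu_{k\bc}
\]
on its circle of convergence $|z| = e^{\xi_{\bc}} = e^{\theta_{\bc}^{max}}$, via Darboux/Flajolet--Odlyzko-type transfer theorems. The geometric picture driving the proof is that, in Type 1 with $\bc=(1,1)$ and $\xi_{\bc} = \theta_{\bc}^{max}$, the line $\theta_1 + \theta_2 = \theta_{\bc}^{max}$ is tangent to $\partial \Gamma^{\{1,2\}}$ at a unique point on the upper arc, and the side conditions in the proposition control precisely whether this tangent point lies strictly between $\mathrm{Q}_1$ and $\mathrm{Q}_2$ or coincides with $\mathrm{Q}_1$ or $\mathrm{Q}_2$. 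The classification of the singularity follows this geometry.

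First, I would express $\bvarphi^{\bc}(z)$ in closed form using a G-matrix function $G_{\bc}(z)$ associated with the direction $\bc$, by applying the matrix-analytic approach after a change of coordinates aligning the state space with $\bc=(1,1)$. The function $G_{\bc}(z)$ solves a matrix quadratic equation whose coefficients are Laurent polynomials in $z$, and whose spectrum is governed by $\spr(A^{\{1,2\}}_{*,*}(e^{\theta_1},e^{\theta_2}))$ restricted to lines of slope $-1$. Invoking the Jordan-decomposition technique developed in Section \ref{sec:G_analyticextension}, $G_{\bc}(z)$ extends analytically across the circle $|z| = e^{\theta_{\bc}^{max}}$ except at discrete points where two eigenvalues of the underlying matrix pencil collide or where additional singularities inherited from the boundary kernels of $\{\bY^{\{1\}}_n\}$ or $\{\bY^{\{2\}}_n\}$ appear.

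When $\bar{\eta}'_2(\theta_1^*) < -1 < \bar{\eta}'_1(\theta_2^*)^{-1}$, the tangent point lies strictly between $\mathrm{Q}_1$ and $\mathrm{Q}_2$, so the only candidate singularity of $\bvarphi^{\bc}(z)$ inside a disk $|z| \le e^{\theta_{\bc}^{max}} + \varepsilon$ is the branch point at $z_0 := e^{\theta_{\bc}^{max}}$ produced by the coalescence of the two eigenvalues of the matrix pencil that correspond to $\underline{\eta}_2(\theta_1)$ and $\bar{\eta}_2(\theta_1)$. I would establish a local representation of the form
\[
\bvarphi^{\bc}(z) = \bvarphi_0(z) + \frac{\bvarphi_1(z)}{\sqrt{1 - z/z_0}},
\]
with $\bvarphi_0$ and $\bvarphi_1$ vector-valued and analytic in a neighbourhood of $z_0$, and $\bvarphi_1(z_0)$ entry-wise positive; the transfer theorem then yields $\bnu_{k\bc} \sim \bg_{\bc}\, k^{-1/2} e^{-\theta_{\bc}^{max} k}$. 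For the boundary case $\bar{\eta}'_2(\theta_1^*) = -1$, the tangent point coincides with $\mathrm{Q}_1$, and an additional simple pole of $\bvarphi^{\bc}(z)$ at $z_0$ appears from the scalar boundary generating function of $\{\bY^{\{1\}}_n\}$ tied to $\theta_1^*$. This simple pole dominates the branch-point contribution in the coefficient asymptotics, giving pure exponential decay $h_{\bc}(k) = e^{-\theta_{\bc}^{max} k}$; the case $\bar{\eta}'_1(\theta_2^*) = -1$ is symmetric with $\mathrm{Q}_2$.

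The hardest part will be the rigorous derivation of this local expansion, specifically verifying that the coefficient vector $\bvarphi_1(z_0)$ of the $1/\sqrt{\cdot}$ term in the interior case is genuinely nonzero (so that the $k^{-1/2}$ factor actually appears) and identifying the precise exponent of $\sqrt{1 - z/z_0}$. Because $\bvarphi^{\bc}(z)$ is vector-valued, this requires tracking how the Perron-type positive left and right eigenvectors of the matrix pencil behave as the two real roots $\underline{\eta}_2$ and $\bar{\eta}_2$ coalesce, and showing that the resulting residue vector projects non-trivially onto the boundary generating vector. This nondegeneracy is precisely the confirmation that $l=1$ announced in the abstract's note. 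The parallel local analysis in the boundary cases — showing that the pole--branch-point collision leaves a genuine simple pole dominating the asymptotics — is a closely related secondary issue.
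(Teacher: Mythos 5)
Your proposal follows essentially the same route as the paper: a change of coordinates (the paper's auxiliary process $\{\hat{\bY}_n\}$) reducing the direction $(1,1)$ to a coordinate direction, the Jordan-decomposition analytic extension of the G-matrix function, a square-root singularity of $\bvarphi^{\bc}(z)$ at $z=e^{\theta_{\bc}^{max}}$ in the interior case versus an order-one pole in the boundary cases, followed by the Flajolet--Sedgewick transfer theorem and a Perron--Frobenius positivity argument for the leading coefficient. The only mechanical difference is that in the boundary cases the paper obtains the pole not as a separate simple pole dominating the branch point but as the product of two half-order singularities --- the pole of the boundary generating function $\hat{\bnu}_{(0,*)}$ at $e^{2\theta_2^*}$ composed with the square-root behaviour of the eigenvalue $\alpha_{s_0}(z)$, multiplied by the $(e^{\theta_{\bc}^{max}}-z)^{-1/2}$ factor coming from $\hat{\Phi}_{(0,0),*}(z)$ --- which yields the same conclusion.
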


From this proposition, we can obtain the same result for a general direction vector $\bc\in\mathbb{N}^2$, by using the block state process derived from the original 2d-QBD process; See Section 3.3 of \cite{Ozawa22}. We, therefore, prove the proposition in Section \ref{sec:gf_analytic_properties}. 
The asymptotic decay function is space-homogeneous with respect to the level of the 2d-QBD process, as follows. We prove this corollary in Section \ref{sec:decay_function_homogeneous}.
\begin{corollary} \label{co:decay_function_homogeneous}
Let $\bc$ be an arbitrary direction vector in $\mathbb{N}^2$. For every $\bx\in\mathbb{Z}_+^2$, the asymptotic function $h_{\bc}(k)$ in Theorem \ref{th:main_theorem} satisfies, for some positive vector $\bg_{\bc,\bx}$, 
\begin{equation}
\lim_{k\to\infty} \frac{\bnu_{k\bc+\bx}}{h_{\bc}(k)} = \bg_{\bc,\bx}. 
\end{equation}
\end{corollary}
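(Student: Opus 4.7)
The plan is to extend the singularity analysis used in the proof of Theorem~\ref{th:main_theorem} from the generating function $\bvarphi^{\bc}(z)=\sum_{k=0}^\infty z^k\bnu_{k\bc}$ to its shifted version
\[
\bvarphi^{\bc,\bx}(z)=\sum_{k=0}^\infty z^k\bnu_{k\bc+\bx}, \qquad \bx\in\mathbb{Z}_+^2,
\]
and then invoke the same transfer theorem that produced $h_{\bc}(k)$ for $\bnu_{k\bc}$. By the block-state reduction of Section 3.3 of \cite{Ozawa22}, I may take $\bc=(1,1)$; the general case then follows automatically.

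First I would derive a functional equation linking the shifted generating functions. For every $k$ large enough that $k\bc+\bx$ lies in the interior $\mathbb{B}^{\{1,2\}}$, the stationary balance equation gives $\bnu_{k\bc+\bx}=\sum_{\bi\in\{-1,0,1\}^2}\bnu_{k\bc+(\bx-\bi)}A^{\{1,2\}}_{\bi}$; summing with weights $z^k$ yields, for each $\bx$,
\[
\bvarphi^{\bc,\bx}(z)=\sum_{\bi\in\{-1,0,1\}^2}\bvarphi^{\bc,\bx-\bi}(z)A^{\{1,2\}}_{\bi}+r_{\bx}(z),
\]
where $r_{\bx}(z)$ is a polynomial gathering the finitely many small-$k$ terms that interact with the boundary (with the standard convention that $\bnu_{\by}=0$ for $\by\notin\mathbb{Z}_+^2$). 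The matrix-analytic representation of $\bvarphi^{\bc,\bzero}(z)$ constructed in Section~\ref{sec:gf_analytic_properties} via the G-matrix function and its reverse-direction counterpart from Section~\ref{sec:G_analyticextension} admits a direct $\bx$-parametrized analogue, obtained by running the same telescoping argument but with the initial state shifted by $\bx$; this produces a representation of $\bvarphi^{\bc,\bx}(z)$ whose singular factor at $z=e^{\xi_{\bc}}$ is of the same type (order-one branch point producing the $k^{-1/2}$ power term in Type~1, simple pole otherwise) as in $\bvarphi^{\bc,\bzero}(z)$, the $\bx$-dependence appearing only through an entire-in-$z$ matrix factor. Applying the same Cauchy integral / transfer argument as in the proof of Theorem~\ref{th:main_theorem} then delivers
\[
\lim_{k\to\infty}\frac{\bnu_{k\bc+\bx}}{h_{\bc}(k)}=\bg_{\bc,\bx},
\]
with an explicit formula for $\bg_{\bc,\bx}$ as the branch or residue coefficient of $\bvarphi^{\bc,\bx}(z)$.

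The step I expect to be the main obstacle is verifying that $\bg_{\bc,\bx}$ is strictly positive rather than merely nonnegative. Positivity of $\bg_{\bc,\bzero}=\bg_{\bc}$ in the proof of Theorem~\ref{th:main_theorem} should come from Perron--Frobenius positivity of the left and right eigenvectors of the relevant G-matrix function at the singular point. For general $\bx$, the coefficient $\bg_{\bc,\bx}$ will factor into the same Perron direction times a positive $\bx$-dependent scalar that encodes the weight with which the offset $\bx$ initiates the tail; showing this scalar is strictly positive should reduce to the irreducibility of $\{\bY_n\}$ guaranteed by Assumption~\ref{as:QBD_irreducible}, which forces every offset to contribute with positive weight. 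A secondary bookkeeping issue is keeping track of the polynomial corrections $r_{\bx}(z)$ across varying $\bx$, but these are analytic everywhere and therefore harmless at the singularity.
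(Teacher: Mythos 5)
Your overall strategy -- reduce the corollary to showing that the shifted generating function $\sum_{k\ge 0} z^k \bnu_{k\bc+\bx}$ has the same singularity type at $z=e^{\xi_{\bc}}$ as $\bvarphi^{\bc}(z)$, then transfer -- is the right one and matches the paper's intent. But the step you wave through, namely that the matrix-analytic representation ``admits a direct $\bx$-parametrized analogue'' in which the $\bx$-dependence enters only through a factor that is entire in $z$, is exactly the point that requires an argument, and as stated it fails for $\bx$ with positive components. The representation of $\bvarphi^{\bc}(z)$ in Section \ref{sec:gf_analytic_properties} is built from the fundamental matrix $\Phi^{\{1,2\}}$ of the free process via $\hat{\Phi}_{(x_1,x_2),*}(z)=z^{x_1}\hat{G}_{0,*}(z)^{x_2}\hat{\Phi}_{(0,0),*}(z)$, valid only for $x_2\ge 0$. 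Shifting the target lattice points from $k\bc$ to $k\bc+\bx$ replaces the offsets $(i_1,i_2)$ appearing in \eqref{eq:phic0_definition}--\eqref{eq:phic2_def} by $(i_1,i_2)-\bx$, and for $\bx\ge(1,1)$ this would force \emph{negative} powers of the G-matrix function, which are neither defined nor controlled near the branch point. Your balance-equation recursion linking $\bvarphi^{\bc,\bx}$ to the nine neighbours $\bvarphi^{\bc,\bx-\bi}$ does not close into a solvable system either, so it does not substitute for the missing representation.

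The paper's device for closing this gap is worth noting: extend $\bnu$ by zero to a distribution $\tilde{\bnu}$ on all of $\mathbb{Z}^2$, use the translation invariance $\Phi^{\{1,2\}}_{\bx',k\bc+\bx}=\Phi^{\{1,2\}}_{\bx'-\bx,k\bc}$ of the free fundamental matrix, and then replace $\bx$ by $\bx-k_0\bc$ with $k_0=\max\{x_1,x_2\}$, so that the effective offset is $\le\bzero$ and all G-matrix exponents $k_0-x_1,\,k_0-x_2$ are nonnegative. The two-sided generating function $\bar{\bvarphi}^{\bc}_{\bx-k_0\bc}(z)$ then differs from $z^{k_0}\bvarphi^{\bc}_{\bx}(z)$ by a polynomial, so the singularity analysis of Section \ref{sec:phic_expansion} applies verbatim. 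Your concern about strict positivity of $\bg_{\bc,\bx}$ is legitimate but is the lesser issue: once the representation is in place, positivity follows from the same Perron--Frobenius arguments used for $\bx=\bzero$ (Propositions \ref{pr:varphic_limit1}, \ref{pr:varphic_limit2} and \ref{pr:gvA_positivity}), since the leading coefficient again factors through the positive eigenvectors of the relevant matrices.
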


\begin{remark}
When the model is a 2d-RRW without a background process, the result of Theorem \ref{th:main_theorem} coincides with that of Theorem 1 of Malyshev \cite{Malyshev73} (also see Theorem 7.1 of \cite{Fayolle99} ). For the correspondence between them, see Section 4.1 of \cite{Ozawa22}. 
\end{remark}

%
%
\section{Preliminaries} \label{sec:preliminary}

Let $z$ and $w$ be complex valuables unless otherwise stated. 
For a positive number $r$, denote by $\Delta_r$ the open disk of center $0$ and radius $r$ on the complex plain, and $\partial \Delta_r$ the circle of the same center and radius.  
%
For $a,b\in\mathbb{R}_+$ such that $a<b$, let $\Delta_{a,b}$ be an open annular domain on $\mathbb{C}$ defined as $\Delta_{a,b} =\{z\in\mathbb{C}: a<|z|<b\}$. We denote by $\bar{\Delta}_{a,b}$ the closure of $\Delta_{a,b}$. 
For $r>0$, $\varepsilon>0$ and $\theta\in[0,\pi/2)$, define 
\[
\tilde{\Delta}_r(\varepsilon,\theta) = \{z\in\mathbb{C}: |z|<r+\varepsilon,\ z\ne r,\ |\arg(z-r)|>\theta \}. 
\]
For $r>0$, we denote by ``$\tilde{\Delta}_r\ni z\to r$" that $\tilde{\Delta}_r(\varepsilon,\theta)\ni z\to r$ for some $\varepsilon>0$ and some $\theta\in [0,\pi/2)$. 
In the rest of the paper, instead of proving that a function $f(z)$ is analytic in $\tilde{\Delta}_r(\varepsilon,\theta)$ for some $\varepsilon>0$ and $\theta\in[0,\pi/2)$, we often demonstrate that the function $f(z)$ is analytic in ${\Delta}_r$ and on $\partial\Delta_r\setminus\{r\}$.

In order to give general results, this section is described independently from other parts of the paper.

%
%
\subsection{Analytic extension of a G-matrix function} \label{sec:G_analyticextension}

%
First, we define a G-matrix function according to \cite{Ozawa18}. 
For $i,j\in\{-1,0,1\}$, let $A_{i,j}$ be a substochastic matrix with a finite dimension $s_0$, and define the following matrix functions:
\[
A_{*,j}(z) = \sum_{i\in\{-1,0,1\}} z^i A_{i,j},\ j=-1,0,1,\quad 
A_{*,*}(z,w)=\sum_{i,j\in\{-1,0,1\}} z^i w^j A_{i,j}. 
\]
We assume the following condition. 
\begin{assumption} \label{as:Aij_stochastic_pre}
$A_{*,*}(1,1)$ is stochastic. 
\end{assumption}
Let $\chi(z,w)$ be the spectral radius of $A_{*.*}(z,w)$, i.e., $\chi(z,w)=\spr(A_{*,*}(z,w))$, and $\Gamma$ be a domain on $\mathbb{R}^2$ defined as 
\[
\Gamma=\{(\theta_1,\theta_2)\in\mathbb{R}^2: \chi(e^{\theta_1},e^{\theta_2})<1\}.
\]
We assume the following condition. 
\begin{assumption} \label{as:Aij_irreducible_pre}
The Markov modulated random walk on $\mathbb{Z}^2\times\{1,2,...,s_0\}$ governed by $\{A_{i,j}: i,j\in\{-1,0,1\} \}$ is irreducible and aperiodic. 
\end{assumption}

Under this assumption, $A_{*,*}(1,1)$ is also irreducible and aperiodic. Furthermore, by Lemma 2.2 of \cite{Ozawa18}, $\Gamma$ is bounded. 
Since $\chi(e^{\theta_1},e^{\theta_2})$ is convex in $(\theta_1, \theta_2)\in\mathbb{R}^2$, the closure of $\Gamma$ is a convex set. Define extreme points $\theta_1^{min}$ and $\theta_1^{max}$ as follows:
\[
\theta_1^{min} = \inf_{(\theta_1,\theta_2)\in\Gamma} \theta_1,\quad 
\theta_1^{max} = \sup_{(\theta_1,\theta_2)\in\Gamma} \theta_1.
\]
For $\theta_1\in[\theta_1^{min}, \theta_1^{max}]$, let $\underline{\theta}_2(\theta_1)$ and $\bar{\theta}_2(\theta_1)$ be the two real solutions to equation $\chi(e^{\theta_1},e^{\theta_2})=1$, counting multiplicity, where $\underline{\theta}_2(\theta_1)\le\bar{\theta}_2(\theta_1)$. 
%
For $n\ge 1$, define the following set of index sequences: 
\begin{align*}
%
&\scrI_n = \biggl\{\bi_{(n)}\in\{-1,0,1\}^n:\ \sum_{l=1}^k i_l\ge 0\ \mbox{for $k\in\{1,2,...,n-1\}$}\ \mbox{and} \sum_{l=1}^n i_l=-1 \biggr\}, 
%
\end{align*}
where $\bi_{(n)}=(i_1,i_2,...,i_n)$, and define the following matrix function: 
\begin{align*}
%
&D_n(z) = \sum_{\bi_{(n)}\in\scrI_n} A_{*,i_1}(z) A_{*,i_2}(z) \cdots A_{*,i_n}(z). 
%
\end{align*}
%
Define a matrix function $G(z)$ as 
\begin{align*}
G(z) = \sum_{n=1}^\infty D_n(z). 
\end{align*}
By Lemma 4.1 of \cite{Ozawa18}, this matrix series absolutely converges entry-wise in $z\in\bar{\Delta}_{e^{\theta_1^{min}},e^{\theta_1^{max}}}$. We call this $G(z)$ the G-matrix function generated from $\{A_{i,j}: i,j\in\{-1,0,1\}\}$. 
For $z\in\bar{\Delta}_{e^{\theta_1^{min}}, e^{\theta_1^{max}}}$, $G(z)$ satisfies the inequality $|G(z)|\le G(|z|)$ and the following matrix quadratic equation: 
\begin{align}
A_{*,-1}(z)+A_{*,0}(z) G(z) +A_{*,1}(z) G(z)^2 = G(z). \label{eq:Gfunction_equation}
\end{align}
Furthermore, for $x\in[e^{\theta_1^{min}}, e^{\theta_1^{max}}]$, $G(x)$ is the minimum nonnegative solution to equation \eqref{eq:Gfunction_equation}. Hence, $G(z)$ is an extension of a usual G-matrix in the queueing theory, see, for example, \cite{Neuts94}. By Proposition 2.5 of \cite{Ozawa18}, we see that, for $x\in[e^{\theta_1^{min}}, e^{\theta_1^{max}}]$, the Perron-Frobenius eigenvalue of $G(x)$ is given by $e^{\underline{\theta}_2(\log x)}$, i.e., $\spr(G(x))=e^{\underline{\theta}_2(\log x)}$. 
By Lemma 4.1 of \cite{Ozawa18}, $G(z)$ satisfies
\begin{align}
I-A_{*,*}(z,w) &= w^{-1} \left( I-A_{*,0}(z)-w A_{*,1}(z)+A_{*,1}(z) G(z) \right) (w I-G(z)). 
\label{eq:Ass_Gz_relation}
\end{align}
%
%
%
By Lemma 4.2 of \cite{Ozawa18}, the following property holds true for $G(z)$.
\begin{lemma} \label{le:G_analytic1}
$G(z)$ is entry-wise analytic in the open annular domain $\Delta_{e^{\theta_1^{min}},e^{\theta_1^{max}}}$. 
\end{lemma}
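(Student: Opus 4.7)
The plan is to apply Weierstrass's theorem: each partial sum $S_N(z)=\sum_{n=1}^N D_n(z)$ is a finite sum of matrix-valued Laurent polynomials in $z$, hence entry-wise analytic on $\mathbb{C}\setminus\{0\}$, so if I can show that $\sum_{n=1}^\infty D_n(z)$ converges locally uniformly on the annular domain $\Delta_{e^{\theta_1^{min}},e^{\theta_1^{max}}}$, the limit $G(z)$ is automatically analytic there entry-wise.

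Fix a compact set $K\subset\Delta_{e^{\theta_1^{min}},e^{\theta_1^{max}}}$ and choose real numbers $a,b$ with $e^{\theta_1^{min}}<a<b<e^{\theta_1^{max}}$ and $K\subset\bar{\Delta}_{a,b}$. Since every matrix $A_{i,j}$ is non-negative, each Laurent coefficient of $D_n(z)$ is a non-negative matrix; applying the triangle inequality in each entry yields $|D_n(z)|\le D_n(|z|)$ entry-wise, which is exactly the same kind of domination as the already-recorded $|G(z)|\le G(|z|)$ bound. Moreover, for any integer $k$ and any $r\in[a,b]$ one has $r^k\le a^k+b^k$ (treat $k\ge 0$ and $k<0$ separately), whence $D_n(r)\le D_n(a)+D_n(b)$ entry-wise.

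By the convergence statement from Lemma 4.1 of \cite{Ozawa18} quoted just above, both $\sum_n D_n(a)$ and $\sum_n D_n(b)$ converge entry-wise, since $a,b\in[e^{\theta_1^{min}},e^{\theta_1^{max}}]$. Combining the two previous inequalities, $|D_n(z)|\le D_n(a)+D_n(b)$ entry-wise for every $z\in K$, so $\sum_n(D_n(a)+D_n(b))$ serves as an entry-wise Weierstrass $M$-majorant. Absolute and uniform convergence of $\sum_n D_n(z)$ on $K$ follow, and Weierstrass's theorem on uniform limits of analytic functions then delivers entry-wise analyticity of $G(z)$ throughout $\Delta_{e^{\theta_1^{min}},e^{\theta_1^{max}}}$.

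I do not foresee any serious obstacle in this argument; the only slightly delicate point is the simultaneous handling of positive and negative powers of $z$, since $D_n(z)$ is a genuine Laurent polynomial and not a power series. This is dispatched cleanly by the elementary inequality $r^k\le a^k+b^k$ on $[a,b]$, which lets one exploit the convergence at both boundary radii $a$ and $b$ at once.
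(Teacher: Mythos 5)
Your argument is correct. The chain of estimates is sound: each $D_n(z)$ is a finite product of matrix Laurent polynomials with nonnegative coefficient matrices, so $|D_n(z)|\le D_n(|z|)$ entry-wise by the triangle inequality; the elementary inequality $r^k\le a^k+b^k$ for $r\in[a,b]$ and $k\in\mathbb{Z}$ correctly handles the mixture of positive and negative powers; and the entry-wise convergence of $\sum_n D_n(a)$ and $\sum_n D_n(b)$, which is exactly the absolute convergence on $\bar{\Delta}_{e^{\theta_1^{min}},e^{\theta_1^{max}}}$ recorded just before the lemma, supplies a $z$-independent Weierstrass majorant on $\bar{\Delta}_{a,b}$. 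Since $0$ lies outside the annulus, each partial sum is analytic there, and the uniform limit on compacts is analytic by Weierstrass's theorem.

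The paper itself gives no proof of this lemma: it is imported wholesale as Lemma 4.2 of Ozawa and Kobayashi (2018), so there is no internal argument to compare against. What your write-up buys is a short, self-contained derivation from the two facts the paper does state explicitly (nonnegativity of the coefficient blocks and absolute convergence of $\sum_n D_n(x)$ for real $x$ in the closed interval), which makes the lemma independent of the external reference. The only point worth flagging is cosmetic: the domination $|D_n(z)|\le D_n(|z|)$ is the termwise version of, rather than a consequence of, the stated bound $|G(z)|\le G(|z|)$, so it is better presented as following directly from the nonnegativity of the Laurent coefficients of $D_n$, as you in fact do.
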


%
We give the eigenvalues of $G(z)$  according to \cite{Ozawa18}. Note that our final aim in this subsection is to give an analytic extension of $G(z)$ through its Jordan canonical form without assuming all the eigenvalues of $G(z)$ are distinct. In \cite{Ozawa18}, the eigenvalues were assumed to be distinct. 
Define a matrix function $L(z,w)$ as 
\[
L(z,w) = z w (I-A_{*,*}(z,w)). 
\]
Each entry of $L(z,w)$ is a polynomial in $z$ and $w$ with at most degree $2$ for each variable. 
We use a notation $\Xi$, defined as follows. Let $f(z,w)$ be an irreducible polynomial in $z$ and $w$ and assume its degree with respect to $w$ is $m\ge 1$. Let $a(z)$ be the coefficient of $w^m$ in $f(z,w)$. Define a point set $\Xi(f)$ as 
\begin{align*}
\Xi(f) 
&= \{ z\in\mathbb{C}: \mbox{$a(z)=0$ or ($f(z,w)=0$ and $f_w(z,w)=0$ for some $w\in\mathbb{C}$)} \}, 
\end{align*}
where $f_w(z,w)=(\partial/\partial w)f(z,w)$. Each point in $\Xi(f)$ is an algebraic singularity of the algebraic function $w=\alpha(z)$ defined by polynomial equation $f(z,w)=0$. For each point $z\in\mathbb{C}\setminus\Xi(f)$, $f(z,w)=0$ has just $m$ distinct solutions, which correspond to the $m$ branches of the algebraic function. 
Let $\phi(z,w)$ be a polynomial in $z$ and $w$ defined as 
\[
\phi(z,w)=\det L(z,w)
\]
and $m_\phi$ its degree with respect to $w$, where $s_0\le m_\phi \le 2 s_0$.  Let $\alpha_1(z)$, $\alpha_2(z)$, ..., $\alpha_{m_\phi}(z)$ be the $m_\phi$ branches of the algebraic function $w=\alpha(z)$ defined by the polynomial equation $\phi(z,w)=0$, counting multiplicity. 
We number the brunches so that they satisfy the following:
\begin{itemize}
\item[(1)] For every $z\in\bar{\Delta}_{e^{\theta_1^{min}},e^{\theta_1^{max}}}$ and for every $k\in\{1,2,...,s_0\}$, $|\alpha_k(z)| \le e^{\underline{\theta}_2(\log |z|)}$. 
\item[(2)] For every $z\in\bar{\Delta}_{e^{\theta_1^{min}},e^{\theta_1^{max}}}$ and for every $k\in\{s_0+1,s_0+2,...,m_\phi\}$, $|\alpha_k(z)| \ge e^{\bar{\theta}_2(\log |z|)}$. 
\item[(3)] For every $x\in[e^{\theta_1^{min}},e^{\theta_1^{max}}]$, $\alpha_{s_0}(x)=e^{\underline{\theta}_2(\log x)}$ and $\alpha_{s_0+1}(x)=e^{\bar{\theta}_2(\log x)}$. 
\end{itemize}
This is possible by Lemma 4.3 of \cite{Ozawa18}. By Lemmas 4.3 and 4.4 of \cite{Ozawa18}, the G-matrix function $G(z)$ satisfies the following property.
\begin{lemma} \label{le:Gmatrix_eigenvalues}
For every $z\in\bar{\Delta}_{e^{\theta_1^{min}},e^{\theta_1^{max}}}$, the eigenvalues of $G(z)$ are given by $\alpha_1(z)$, $\alpha_2(z)$, ..., $\alpha_{s_0}(z)$. 
\end{lemma}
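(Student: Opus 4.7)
The plan is to derive the eigenvalue characterization directly from the fundamental factorization identity \eqref{eq:Ass_Gz_relation}, and then use the spectral bound $\spr(G(z)) \le e^{\underline{\theta}_2(\log|z|)}$ to pin down which of the $m_\phi$ branches of the algebraic function $w=\alpha(z)$ are actually the eigenvalues.

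First I would take determinants on both sides of \eqref{eq:Ass_Gz_relation}. Since $L(z,w)=zw(I-A_{*,*}(z,w))$ and the matrices are $s_0\times s_0$, this yields
\begin{equation*}
\phi(z,w) \;=\; \det L(z,w) \;=\; z^{s_0}\,\det\!\bigl(I-A_{*,0}(z)-wA_{*,1}(z)+A_{*,1}(z)G(z)\bigr)\,\det\!\bigl(wI-G(z)\bigr).
\end{equation*}
Because the right factor is a polynomial of degree exactly $s_0$ in $w$ whose roots (counting multiplicity) are the eigenvalues of $G(z)$, this identity immediately shows that every eigenvalue of $G(z)$ is a root of $\phi(z,\cdot)=0$, i.e., coincides with $\alpha_k(z)$ for some $k\in\{1,\dots,m_\phi\}$, for every $z$ at which $G(z)$ is defined.

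Next I would single out \emph{which} $s_0$ branches occur. Using the entry-wise bound $|G(z)|\le G(|z|)$ stated just after the definition of $G(z)$, and the fact that $\spr(G(x))=e^{\underline{\theta}_2(\log x)}$ for real $x\in[e^{\theta_1^{min}},e^{\theta_1^{max}}]$, one obtains $\spr(G(z))\le e^{\underline{\theta}_2(\log|z|)}$ for every $z\in\bar\Delta_{e^{\theta_1^{min}},e^{\theta_1^{max}}}$. Consequently each of the $s_0$ eigenvalues of $G(z)$ has modulus at most $e^{\underline{\theta}_2(\log|z|)}$. Combined with the numbering convention (1)--(3), which makes $\alpha_1(z),\dots,\alpha_{s_0}(z)$ the branches bounded by $e^{\underline{\theta}_2(\log|z|)}$ and $\alpha_{s_0+1}(z),\dots,\alpha_{m_\phi}(z)$ those bounded below by $e^{\bar\theta_2(\log|z|)}$, a counting argument forces the $s_0$ eigenvalues of $G(z)$ to be exactly $\alpha_1(z),\dots,\alpha_{s_0}(z)$ with matching multiplicities.

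The main obstacle I anticipate is the boundary/coincidence behavior when $|z|$ equals $e^{\theta_1^{min}}$ or $e^{\theta_1^{max}}$, where $\underline{\theta}_2(\log|z|)=\bar\theta_2(\log|z|)$ so that a branch from the ``small'' group and one from the ``large'' group can meet on the circle $|w|=e^{\underline{\theta}_2(\log|z|)}$. At such points, deciding whether a given $\alpha_k(z)$ belongs to the first $s_0$ is a matter of labeling rather than of intrinsic modulus; this is handled by appealing to the consistent choice of branches provided by Lemma 4.3 of \cite{Ozawa18}, which ensures continuity of the labeling on the closed annulus. A second, less severe subtlety is that the determinant identity may vanish identically in $w$ at isolated $z$ (for example where $z=0$ or the left factor $\det(I-A_{*,0}(z)-wA_{*,1}(z)+A_{*,1}(z)G(z))$ is degenerate); at such exceptional points one argues by continuity from surrounding $z$, using the entry-wise analyticity of $G(z)$ in $\Delta_{e^{\theta_1^{min}},e^{\theta_1^{max}}}$ granted by Lemma \ref{le:G_analytic1}.
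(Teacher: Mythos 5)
Your argument is correct and is essentially the argument the paper delegates to Lemmas 4.3 and 4.4 of \cite{Ozawa18}: taking determinants in the factorization \eqref{eq:Ass_Gz_relation} shows that the eigenvalues of $G(z)$ are among the roots of $\phi(z,\cdot)=0$, and the bound $\spr(G(z))\le\spr(G(|z|))=e^{\underline{\theta}_2(\log|z|)}$ combined with the numbering convention (1)--(3) forces them to be exactly $\alpha_1(z),\dots,\alpha_{s_0}(z)$ with multiplicity. Your treatment of the boundary circles $|z|=e^{\theta_1^{min}}$, $|z|=e^{\theta_1^{max}}$, where $\underline{\theta}_2(\log|z|)=\bar{\theta}_2(\log|z|)$ and the two groups of branches can meet in modulus, via continuity of the labeling on the closed annulus, is the right fix and is consistent with how the cited lemmas arrange the branches.
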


%
Without loss of generality, we assume that, for some $n_\phi\in\mathbb{N}$ and $l_1,l_2,...,l_{n_\phi}\in\mathbb{N}$, the polynomial $\phi(z,w)$ is factorized as 
\begin{equation}
\phi(z,w) = f_1(z,w)^{l_1} f_2(z,w)^{l_2} \cdots f_{n_\phi}(z,w)^{l_{n_\phi}}, 
\label{eq:phi_fk}
\end{equation}
where $f_k(z,w),\,k=1,2,...,n_\phi$, are irreducible polynomials in $z$ and $w$ and they are relatively prime. Since the field of coefficients of polynomials is $\mathbb{C}$, this factorization is unique. 
For every $k\in\{1,2,...,m_\phi\}$, $\alpha_k(z)$ is a branch of the algebraic function $w=\alpha(z)$ defined by the polynomial equation $f_n(z,w)=0$ for some $n\in\{1,2,...,n_\phi\}$. We denote such $n$ by $q(k)$, i.e., $f_{q(k)}(z,\alpha_k(z))=0$. Since $\alpha_{s_0}(z)$ is the Perron-Frobenius eigenvalue of $G(z)$ when $z\in[e^{\theta_1^{min}},e^{\theta_1^{max}}]$, the multiplicity of $\alpha_{s_0}(z)$ is one and we have $l_{q(s_0)}=1$. 
Define a point set $\calE_1$ as 
\[
\calE_1 = \bigcup_{n=1}^{n_\phi} \Xi(f_n). 
\]
Since, for every $n$, the polynomial $f_n(z,w)$ is irreducible and not identically zero, the point set $\calE_1$ is finite. Every branch $\alpha_k(z)$ is analytic in $\mathbb{C}\setminus\calE_1$. 
Define a point set $\calE_2$ as
\begin{align*}
\calE_2 
&= \{z\in\mathbb{C}\setminus\calE_1: \mbox{$f_{n}(z,w)=f_{n'}(z,w)=0$ } \cr
&\qquad\qquad \mbox{for some $n,n'\in\{1,2,...,n_\phi\}$ such that $n\ne n'$ and for some $w\in\mathbb{C}$} \}.
\end{align*}
Since, for any $n,n'$ such that $n\ne n'$, $f_{n}(z,w)$ and $ f_{n'}(z,w)$ are relatively prime, the point set $\calE_2$ is finite. Note that every branch $\alpha_k(z)$ is analytic in a neighborhood of any $z_0\in\calE_2$. 
For every $k\in\{1,2,...,m_\phi\}$ and for every $z\in\mathbb{C}\setminus(\calE_1\cup\calE_2)$, the multiplicity of $\alpha_k(z)$ as a zero of $\det L(z,w)$ is equal to $l_{q(k)}$. This means that, for every $z\in\bar{\Delta}_{e^{\theta_1^{min}},e^{\theta_1^{max}}}\setminus(\calE_1\cup\calE_2)$, the multiplicity of the eigenvalue $\alpha_k(z)$ of $G(z)$ is $l_{q(k)}$, which does not depend on $z$. 
Define a positive integer $m_0$ as 
\[
m_0 = \sum_{k=1}^{s_0} \frac{1}{l_{q(k)}}.
\]
This $m_0$ is the number of different branches in $\{\alpha_i(z): i=1,2,...,s_0\}$ when $z\in\mathbb{C}\setminus(\calE_1\cup\calE_2)$. Denote the different branches by $\check{\alpha}_k(z),\, k=1,2,...,m_0,$ so that $\check{\alpha}_{m_0}(z)=\alpha_{s_0}(z)$. Instead of using $q(k)$, we define a function $\check{q}(k)$ so that $l_{\check{q}(k)}$ indicates the multiplicity of $\check{\alpha}_k(z)$ when $z\in\mathbb{C}\setminus(\calE_1\cup\calE_2)$. We always have $l_{\check{q}(m_0)}=1$. 

%
We give the Jordan normal form of $G(z)$. 
Define a domain $\Omega$ as $\Omega=\Delta_{e^{\theta_1^{min}},e^{\theta_1^{max}}}\setminus(\calE_1\cup\calE_2)$. 
For $k\in\{1,2,...,m_0\}$ and for $i\in\{1,2,...,l_{\check{q}(k)}\}$, define a positive integer $t_{k,i}$ as 
\[
t_{k,i} = \min_{z\in\Omega} \dim \Ker\, (\check{\alpha}_k(z) I-G(z))^i
\]
and a point set $\calG_{k,i}$ as 
\[
\calG_{k,i}=\{z\in\Omega: \dim \Ker\, (\check{\alpha}_k(z) I-G(z))^i > t_{k,i}\}.
\]
Since $\check{\alpha}_k(z)$ and $G(z)$ are analytic in $\Omega$, we see from the proof of Theorem S6.1 of \cite{Gohberg09} that each $\calG_{k,i}$ is an empty set or a set of discrete complex numbers. For $k\in\{1,2,...,m_0\}$ and $i\in\{1,2,...,l_{\check{q}(k)}\}$, define a nonnegative integer $s_{k,i}$ as 
\[
s_{k,i} = 2 t_{k,i} - t_{k,i+1} - t_{k,i-1},
\]
where $t_{k,0}=0$ and $t_{k,l_{\check{q}(k)}+1}=l_{\check{q}(k)}$. 
For $k\in\{1,2,...,m_0\}$, define a positive integer $m_{k,0}$ and point set $\calE^G_k$ as
\[
m_{k,0}=t_{k,1}, \qquad 
\calE^G_k = \bigcup_{i=1}^{l_{\check{q}(k)}} \calG_{k,i}. 
\]
When $z\in\Omega\setminus\calE^G_k$, this $m_{k,0}$ is the number of Jordan blocks of $G(z)$ with respect to the eigenvalue $\check{\alpha}_k(z)$ and, for $i\in\{1,2,...,l_{\check{q}(k)}\}$, $s_{k,i}$ is the number of Jordan blocks whose dimension is $i$. Hence, the Jordan normal form of $G(z)$ takes a common form in $z\in\Omega\setminus\bigcup_{k=1}^{m_0} \calE^G_k$. 
For $k\in\{1,2,...,m_0\}$ and for $i\in\{1,2,...,m_{k,0}\}$, denote by $m_{k,i}$ the dimension of the $i$-th Jordan block of $G(z)$ with respect to the eigenvalue $\check{\alpha}_k(z)$, where we number the Jordan blocks so that if $i\le i'$, $m_{k,i}\ge m_{k,i'}$. For each $k\in\{1,2,...,m_0\}$, they satisfy $\sum_{i=1}^{m_{k,0}} m_{k,i} = l_{\check{q}(k)}$. 
Denote by $J_n(\lambda)$ the $n$-dimensional Jordan block of eigenvalue $\lambda$. For $z\in\Omega\setminus\bigcup_{k=1}^{m_0} \calE^G_k$, the Jordan normal form of $G(z)$, $J^G(z)$, is given by
\begin{equation}
J^G(z) = \diag(J_{m_{k,i}}(\check{\alpha}_k(z)),\, k=1,2,...,m_0,\, i=1,2,...,m_{k,0}), 
\label{eq:Jordan_normal_Gz}
\end{equation}
where $m_{m_0,0}=1$ and $J_{m_{m_0,1}}(\check{\alpha}_{m_0}(z))=\alpha_{s_0}(z)$. Note that the matrix function $J^G(z)$ is well defined on $\mathbb{C}$ and analytic in $\mathbb{C}\setminus\calE_1$. 
An analytic extension of $G(z)$ is given by the following theorem. 
\begin{theorem} \label{th:Jordan_decomposition_G_pre} 
There exist vector functions: 
\[
\check{\bv}_{k,i,j}^L(z),\, k=1,2,...,m_0,\, i=1,2,...,m_{k,0},\, j=1,2,..., m_{k,i},
\]
such that they are analytic in $\mathbb{C}\setminus\calE_1$ and satisfy, for every $z\in\Delta_{e^{\theta_1^{min}},e^{\theta_1^{max}}}\setminus(\calE_1\cup\calE_0)$, 
\begin{equation}
G(z) = T^L(z) J^G(z) (T^L(z))^{-1}, 
\label{eq:Gz_analytic_extension}
\end{equation}
where $\calE_0$ is a set of discrete complex numbers and matrix function $T^L(z)$ is defined as 
\[
T^L(z) = 
\begin{pmatrix}
\check{\bv}_{k,i,j}^L(z),\, k=1,2,...,m_0,\, i=1,2,...,m_{k,0},\, j=1,2,..., m_{k,i}
\end{pmatrix}.
\]
\end{theorem}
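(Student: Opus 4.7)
The plan is to build the Jordan basis matrix $T^L(z)$ by first constructing it analytically at a generic base point and then analytically continuing throughout $\mathbb{C}\setminus\calE_1$, using the fact that in \eqref{eq:Ass_Gz_relation} the matrix $G(z)$ enters the analysis only through the factor $(\check{\alpha}_k(z)I-G(z))$, and that the related polynomial matrix $L(z,\check{\alpha}_k(z))$ is well defined and analytic on all of $\mathbb{C}\setminus\calE_1$. Each $\check{\alpha}_k(z)$ is already single-valued and analytic on $\mathbb{C}\setminus\calE_1$ as a branch of the irreducible factor $f_{\check{q}(k)}$, so this change of viewpoint effectively extends the natural domain of the construction beyond the annulus $\Delta_{e^{\theta_1^{min}},e^{\theta_1^{max}}}$ in which $G(z)$ itself is analytic.

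More concretely, I would fix a base point $z^*\in\Omega\setminus\bigcup_{k=1}^{m_0}\calE^G_k$, at which the Jordan structure of $G(z^*)$ matches \eqref{eq:Jordan_normal_Gz}, and select a complete Jordan basis $\{\check{\bv}_{k,i,j}^L(z^*)\}$ of $\mathbb{C}^{s_0}$ satisfying $(G(z^*)-\check{\alpha}_k(z^*)I)\check{\bv}_{k,i,j}^L(z^*)=\check{\bv}_{k,i,j-1}^L(z^*)$ with $\check{\bv}_{k,i,0}^L(z^*)=\bzero$. Near $z^*$, standard analytic perturbation theory (see the proof of Theorem S6.1 of \cite{Gohberg09}, or equivalently a Laurent expansion of $(\zeta I-G(z))^{-1}$ around $\zeta=\check{\alpha}_k(z)$ to read off the spectral projector onto the generalized eigenspace together with its associated nilpotent part) provides local analytic extensions of each vector in the chain. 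I would then analytically continue these vectors along arbitrary paths in $\mathbb{C}\setminus\calE_1$, with single-valuedness following by an induction on the chain depth $j$ from the single-valuedness of $\check{\alpha}_k(z)$ and suitable linear normalizations that pin down each eigenvector $\check{\bv}_{k,i,1}^L(z)$ uniquely from $L(z,\check{\alpha}_k(z))\check{\bv}_{k,i,1}^L(z)=\bzero$.

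The main obstacle will be to show analyticity across the problematic points in $\mathbb{C}\setminus\calE_1$ at which the Jordan structure \eqref{eq:Jordan_normal_Gz} degenerates, namely the points of $\calE_2$ where distinct branches coincide and the isolated points of $\calE^G_k$ where the geometric multiplicity of $\check{\alpha}_k(z)$ jumps. For these, I would argue that although the Jordan chain relations may break down in the limit, the chain vectors themselves remain bounded holomorphic functions defined by algebraic equations, so they extend across the isolated troublesome points by Riemann's removable singularity theorem applied componentwise. I would then define $\calE_0$ to contain these degenerate points lying in $\Delta_{e^{\theta_1^{min}},e^{\theta_1^{max}}}\setminus\calE_1$ together with the zero set of $\det T^L(z)$ in the annulus; since $\det T^L(z)$ is a holomorphic function on $\Delta_{e^{\theta_1^{min}},e^{\theta_1^{max}}}\setminus\calE_1$ that is nonzero at $z^*$ by construction, $\calE_0$ is discrete. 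Finally, the decomposition $G(z)=T^L(z)J^G(z)(T^L(z))^{-1}$ holds at $z^*$ by construction and in a neighborhood of $z^*$ by analyticity, hence on all of $\Delta_{e^{\theta_1^{min}},e^{\theta_1^{max}}}\setminus(\calE_1\cup\calE_0)$ by the identity theorem applied entrywise.
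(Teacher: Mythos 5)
There is a genuine gap at the heart of your continuation step. You propose to build the Jordan chains at a base point $z^*$ in the annulus and then ``analytically continue these vectors along arbitrary paths in $\mathbb{C}\setminus\calE_1$.'' But the chain relation $(G(z)-\check{\alpha}_k(z)I)\check{\bv}_{k,i,j}^L(z)=\check{\bv}_{k,i,j-1}^L(z)$, and likewise the spectral projector read off from a Laurent expansion of $(\zeta I-G(z))^{-1}$, are only defined where $G(z)$ itself is defined --- namely on $\bar{\Delta}_{e^{\theta_1^{min}},e^{\theta_1^{max}}}$. Outside that annulus $G(z)$ is precisely the object being constructed, so there is nothing to continue against. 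You do notice that the eigenvector equation can be rewritten as $L(z,\check{\alpha}_k(z))\bv=\bzero$ with $L$ polynomial and hence entire, but that only handles chain depth one. The paper's essential device, which your proposal is missing, is the quadratic factorization $L(z,w)=F_1(z,w)(wI-G(z))+F_2(z)(wI-G(z))^2$ and the block matrix $\Lambda^L_k(z)$ assembled from $L(z,\check{\alpha}_k(z))$, $F_1(z,\check{\alpha}_k(z))$ and $F_2(z)$, all analytic on $\mathbb{C}\setminus\calE_1$; Propositions \ref{pr:Ker_LambdaLG_equality} and \ref{pr:FA_invettible} then show $\Ker\Lambda^L_k(z)=\Ker\Lambda^G_k(z)$ on the annulus, so the \emph{entire} Jordan chain structure --- not just the eigenvectors --- is encoded by equations that make sense globally. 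Without this (or an equivalent), the higher generalized eigenvectors cannot be extended.

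Two further points are under-justified. First, $\mathbb{C}\setminus\calE_1$ is not simply connected, so continuation along arbitrary paths does not by itself yield single-valued functions; your appeal to ``suitable linear normalizations'' is not a proof (the normalizing functional can vanish somewhere), whereas the paper obtains globally single-valued analytic kernel bases directly from Theorem S6.1 of \cite{Gohberg09} applied to $\Lambda^L_k(z)$. Second, your removable-singularity argument at the points of $\calE_2\cup\bigcup_k\calE^G_k$ presumes the chain vectors stay bounded there, which is not automatic when the Jordan structure degenerates; the paper does not cross these points at all but simply absorbs them into the discrete exceptional set $\calE_0$ and asserts the decomposition only off $\calE_1\cup\calE_0$, which is all the theorem claims.
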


Since the proof of Theorem \ref{th:Jordan_decomposition_G_pre} is elementary and very lengthy, we give it in Appendix \ref{sec:Jordan_decomposition_G_app}. 
In Theorem \ref{th:Jordan_decomposition_G_pre}, $\{\check{\bv}_{k,i,j}^L(z)\}$ is the set of the generalized eigenvectors of $G(z)$, but we denote them with superscript $L$ since they are generated from the matrix function $L(z,w)$; see Appendix \ref{sec:Jordan_decomposition_G_app}. 
Define a point set $\calE_T^L$ as
\[
\calE_T^L = \{z\in\mathbb{C}\setminus\calE_1: \det\,T^L(z) = 0\}, 
\]
which is an empty set or a set of discrete complex numbers since $\det\,T^L(z)$ is not identically zero. Define a matrix function $\check{G}(z)$ as 
\begin{equation}
\check{G}(z) = T^L(z) J^G(z) (T^L(z))^{-1} = \frac{T^L(z) J^G(z)\, \adj(T^L(z))}{\det(T^L(z))}.
\label{eq:G_Jordan_decomposition}
\end{equation}
Then, it is entry-wise analytic in $\mathbb{C}\setminus(\calE_1\cup\calE_T^L)$. By Theorem \ref{th:Jordan_decomposition_G_pre} and the identity theorem for analytic functions, this $\check{G}(z)$ is an analytic extension of the matrix function $G(z)$. Hence, we denote $\check{G}(z)$ by $G(z)$. 
By Lemma \ref{le:G_analytic1}, $G(z)$ is entry-wise analytic in $\Delta_{e^{\theta_1^{min}},e^{\theta_1^{max}}}$. The following corollary asserts that $G(z)$ is also analytic on the outside boundary of $\Delta_{e^{\theta_1^{min}},e^{\theta_1^{max}}}$ except for the point $z=e^{\theta_1^{max}}$.
\begin{corollary} \label{co:G_analytic_boundary}
The extended G-matrix function $G(z)$ is entry-wise analytic on $\partial\Delta_{e^{\theta_1^{max}}}\setminus\{e^{\theta_1^{max}}\}$. 
\end{corollary}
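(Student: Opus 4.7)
The plan is to reduce the corollary to an assertion about the exceptional sets appearing in the analytic extension constructed in Theorem \ref{th:Jordan_decomposition_G_pre}. By that theorem and the rational expression (\ref{eq:G_Jordan_decomposition}), the extended matrix function $\check G(z)$ is entry-wise analytic on $\mathbb{C}\setminus(\calE_1\cup\calE_T^L)$ and agrees with the $G(z)$ of Lemma \ref{le:G_analytic1} on the annulus. Since both $\calE_1$ and $\calE_T^L$ are discrete in $\mathbb{C}$, their intersection with the compact circle $\partial\Delta_{e^{\theta_1^{max}}}$ is finite, and it suffices to prove
\[
(\calE_1\cup\calE_T^L)\cap\partial\Delta_{e^{\theta_1^{max}}}\subseteq\{e^{\theta_1^{max}}\}.
\]

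For the $\calE_1$ part, recall that $\calE_1=\bigcup_{n}\Xi(f_n)$ collects leading-coefficient zeros and algebraic branch points of the irreducible factors of $\phi(z,w)=\det L(z,w)$. On the positive real segment $[e^{\theta_1^{min}},e^{\theta_1^{max}}]$ the Perron--Frobenius theory gives the strict gap $\alpha_{s_0}(x)=e^{\underline{\theta}_2(\log x)}<e^{\bar{\theta}_2(\log x)}=\alpha_{s_0+1}(x)$ on $[e^{\theta_1^{min}},e^{\theta_1^{max}})$, with coalescence occurring only at $x=e^{\theta_1^{max}}$, producing the genuine branch point of Corollary \ref{co:G_analytic_boundary}'s statement. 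For $z_0\in\partial\Delta_{e^{\theta_1^{max}}}$ with $z_0\ne e^{\theta_1^{max}}$, I would invoke the complex Perron--Frobenius inequality for the irreducible aperiodic nonnegative matrix $A_{*,*}(|z_0|,w)$ (Assumption \ref{as:Aij_irreducible_pre}) to obtain $\spr(A_{*,*}(z_0,w))<\spr(A_{*,*}(|z_0|,w))$ for positive $w$. This strict drop prevents the small-root/large-root collision $\alpha_{s_0}(z_0)=\alpha_{s_0+1}(z_0)$ that would otherwise create a branch point, and an analogous argument (using the same aperiodicity together with the characterization of $\calE_1$ in terms of the resultant of $f_n$ and $f_{n,w}$) rules out the remaining potential algebraic singularities on the critical circle.

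For the $\calE_T^L$ part, having established that $\calE_1$ meets the circle only at $e^{\theta_1^{max}}$, I would argue as follows: on a small punctured disk around $z_0\in\partial\Delta_{e^{\theta_1^{max}}}\setminus\{e^{\theta_1^{max}}\}$, the distinct-branch count $m_0$ and the multiplicities $l_{\check q(k)}$ are locally constant, so by upper semicontinuity of kernel dimensions the common Jordan form $J^G(z)$ of (\ref{eq:Jordan_normal_Gz}) persists through $z_0$. A path-connectedness argument from inside the annulus (where $T^L(z)$ was built to be a basis of generalized eigenvectors) to $z_0$, combined with the analyticity of the vectors $\check{\bv}^L_{k,i,j}(z)$ on $\mathbb{C}\setminus\calE_1$, then forces $\det T^L(z_0)\ne 0$, i.e.\ $z_0\notin\calE_T^L$. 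Combining the two parts yields the corollary.

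The principal obstacle is the complex Perron--Frobenius step: one has to show cleanly that aperiodicity of the underlying MMRW forces the strict spectral-radius drop off the positive real axis on the critical circle, and then translate this spectral statement into a statement about the absence of algebraic singularities of all relevant factors $f_n(z,w)$ at such $z_0$. The subsequent Jordan-form persistence is essentially routine semicontinuity bookkeeping once the singularity set $\calE_1$ on the circle has been pinned down to $\{e^{\theta_1^{max}}\}$.
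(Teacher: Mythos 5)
Your reduction of the corollary to the inclusion $(\calE_1\cup\calE_T^L)\cap\partial\Delta_{e^{\theta_1^{max}}}\subseteq\{e^{\theta_1^{max}}\}$ is logically sufficient but is the wrong target: it is much stronger than what the corollary asserts, and I do not believe it can be established from the standing assumptions. The aperiodicity-based strict drop $\spr(A_{*,*}(z_0,w))<\spr(A_{*,*}(|z_0|,w))$ controls only the \emph{dominant} eigenvalue; it yields the strict modulus separation $|\alpha_k(z_0)|<\alpha_{s_0}(e^{\theta_1^{max}})<|\alpha_{k'}(z_0)|$ for $k\le s_0<k'$, i.e.\ it prevents a small branch from meeting a large branch at $z_0$. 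It says nothing about (a) a collision between two subdominant small branches $\alpha_k(z_0)=\alpha_{k'}(z_0)$ with $k,k'\le s_0$, (b) collisions among the large branches, or (c) the vanishing of the leading coefficient $a(z)$ of some irreducible factor $f_n$ at $z_0$ --- each of which places a point of $\calE_1$ on the critical circle. Your ``analogous argument'' for these cases has no content; indeed the whole apparatus of Section \ref{sec:G_analyticextension} (the sets $\calE_0,\calE_1,\calE_2,\calE^G_k,\calE_T^L$) exists precisely because such degeneracies cannot be excluded. The $\calE_T^L$ half has the same defect: Theorem S6.1 of \cite{Gohberg09} gives vector functions that are linearly independent \emph{as functions}, which is compatible with $\det T^L$ vanishing at isolated points of the circle; neither semicontinuity of kernel dimensions nor path-connectedness to the interior of the annulus forces $\det T^L(z_0)\ne0$. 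Note also that analyticity of the extension at a point of $\calE_1\cup\calE_T^L$ is not excluded (the singularity of the rational expression \eqref{eq:G_Jordan_decomposition} may be removable), which is a further sign that chasing these exceptional sets is not the right mechanism.

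The argument the paper has in mind (the proof of Lemma 4.7 of \cite{Ozawa18}) works directly with the fixed-point equation \eqref{eq:Gfunction_equation} rather than with the exceptional sets. At a boundary point $z_0$ with $|z_0|=e^{\theta_1^{max}}$, $z_0\ne e^{\theta_1^{max}}$, the value $G(z_0)$ exists because the defining series converges on the closed annulus, and it satisfies \eqref{eq:Gfunction_equation}. The strict separation obtained from aperiodicity makes $I-\alpha R(z_0)$ invertible for every eigenvalue $\alpha$ of $G(z_0)$ and $I-H(z_0)$ invertible (exactly the invertibility proved in Proposition \ref{pr:FA_invettible} for interior points, via the factorization \eqref{eq:Ass_Gz_relation}); hence the Fr\'echet derivative of the quadratic map $X\mapsto A_{*,-1}(z)+A_{*,0}(z)X+A_{*,1}(z)X^2-X$ at $(z_0,G(z_0))$ is invertible, and the analytic implicit function theorem supplies an analytic solution in a neighborhood of $z_0$ that coincides with $G$ on the annulus side, proving analyticity at $z_0$. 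Your Perron--Frobenius step is the correct ingredient, but it must feed into this implicit-function argument, not into an (unattainable) claim that the algebraic singularity set of $\det L(z,w)$ avoids the circle.
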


Since this corollary can be proved in a manner similar to that used in the proof of Lemma 4.7 of \cite{Ozawa18}, we omit it. 

%
Denote by $\check{\bu}^L_{m_0,1,1}(z)$ the last row of the matrix function $(T^L(z))^{-1}$, and define a diagonal matrix function $J_{s_0}(z)$ as $J_{s_0}(z) = \diag\!\begin{pmatrix} 0 & \cdots & 0 & \alpha_{s_0}(z) \end{pmatrix}$, where $\alpha_{s_0}(z)=\check{\alpha}_{m_0}(z)$. Then, since $m_{m_0,0}=1$ and $m_{m_0,1}=1$, we obtain the following decomposition of $G(z)$ from \eqref{eq:G_Jordan_decomposition}:
\begin{equation}
G(z) = G^\dagger(z) + \alpha_{s_0}(z) \check{\bv}^L_{m_0,1,1}(z) \check{\bu}^L_{m_0,1,1}(z),
\label{eq:G_desomposition_1}
\end{equation}
where 
\[
G^\dagger(z) = T^L(z) (J^G(z) - J_{s_0}(z)) (T^L(z))^{-1} .
\]
By the definition, $G(z)$ satisfies, for $n\ge 1$,  
\begin{equation}
G(z)^n = G^\dagger(z)^n + \alpha_{s_0}(z)^n \check{\bv}^L_{m_0,1,1}(z) \check{\bu}^L_{m_0,1,1}(z), 
\label{eq:G_desomposition_2}
\end{equation}
and $G^\dagger(z)$ satisfies, for $z\in\bar{\Delta}_{e^{\theta_1^{min}},e^{\theta_1^{max}}}$, $\spr(G^\dagger(z))\le\spr(G^\dagger(|z|))<\spr(G(|z|))=\alpha_{s_0}(|z|)$. Furthermore, in a neighborhood of $z=e^{\theta_1^{max}}$, we have $\spr(G^\dagger(z))<\alpha_{s_0}(e^{\theta_1^{max}})$. 
%
Since the point $z=e^{\theta_1^{max}}$ is a branch point of $\check{\alpha}_{m_0}(z)$ ($=\alpha_{s_0}(z)$), there exists a function $\tilde{\alpha}_{s_0}(\zeta)$ being analytic in a neighborhood of $\zeta=0$ and satisfying  
\[
\check{\alpha}_{m_0}(z) = \alpha_{s_0}(z) = \tilde{\alpha}_{s_0}((e^{\theta_1^{max}}-z)^{\frac{1}{2}}). 
\]
Let $\tilde{\bv}_{s_0}(\zeta)$ be a vector function satisfying 
\[
L(e^{\theta_1^{max}}-\zeta^2,\tilde{\alpha}_{s_0}(\zeta)) \tilde{\bv}_{s_0}(\zeta) = \bzero, 
\]
where $\tilde{\bv}_{s_0}(\zeta)$ is element-wise analytic in a neighborhood of $\zeta=0$. 
Denote by $\tilde{T}(\zeta)$ the matrix function given by replacing the last column of $T^L(e^{\theta_1^{max}}-\zeta^2)$ with $\tilde{\bv}_{s_0}(\zeta)$, and denote by $\tilde{\bu}_{s_0}(\zeta)$ the last row of $\tilde{T}(\zeta)^{-1}$. By the definition, $\tilde{T}(\zeta)$ as well as $\tilde{\bu}_{s_0}(\zeta)$ is entry-wise analytic in a neighborhood of $\zeta=0$. 
Define a diagonal matrix function $\tilde{J}_{s_0}(\zeta)$ as $\tilde{J}_{s_0}(\zeta) = \diag\!\begin{pmatrix} 0 & \cdots & 0 & \tilde{\alpha}_{s_0}(\zeta) \end{pmatrix}$.
%
For later use, we give the following lemma.
\begin{lemma} \label{le:G_decomposition}
There exists a matrix function $\tilde{G}(\zeta)$ being entry-wise analytic in a neighborhood of $\zeta=0$ and satisfying $G(z) = \tilde{G}((e^{\theta_1^{max}}-z)^{\frac{1}{2}})$ in a neighborhood of $z=e^{\theta_1^{max}}$. 
This $\tilde{G}(\zeta)$ is represented as 
\begin{equation}
\tilde{G}(\zeta) = \tilde{G}^\dagger(\zeta) + \tilde{\alpha}_{s_0}(\zeta) \tilde{\bv}_{s_0}(\zeta) \tilde{\bu}_{s_0}(\zeta),
\label{eq:tildeG_desomposition_1}
\end{equation}
where $\tilde{G}^\dagger(\zeta)$ is a matrix function being entry-wise analytic in a neighborhood of $\zeta=0$ and satisfying $G^\dagger(z)=\tilde{G}^\dagger((e^{\theta_1^{max}}-z)^{\frac{1}{2}})$ in a neighborhood of $z=e^{\theta_1^{max}}$. In a neighborhood of $\zeta=0$, $\spr(\tilde{G}^\dagger(\zeta))<\tilde{\alpha}_{s_0}(0)=\alpha_{s_0}(e^{\theta_1^{max}})$. 
\end{lemma}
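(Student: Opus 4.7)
My plan is to desingularize the square-root branch point at $z=e^{\theta_1^{max}}$ by the substitution $z=e^{\theta_1^{max}}-\zeta^2$ and transplant the Jordan decomposition \eqref{eq:G_Jordan_decomposition}/\eqref{eq:G_desomposition_1} to the $\zeta$-chart, using the analytic data $\tilde{\alpha}_{s_0}(\zeta)$, $\tilde{\bv}_{s_0}(\zeta)$, $\tilde{T}(\zeta)$, $\tilde{\bu}_{s_0}(\zeta)$ and $\tilde{J}_{s_0}(\zeta)$ already prepared in the statement. Since $\alpha_{s_0}(z)=\tilde{\alpha}_{s_0}((e^{\theta_1^{max}}-z)^{1/2})$ and the remaining eigenvalue branches $\check{\alpha}_k(z)$ with $k<m_0$ are analytic at $z=e^{\theta_1^{max}}$ (only $\alpha_{s_0}$ and the non-$G$-eigenvalue $\alpha_{s_0+1}$ coalesce there), the matrix $\tilde{J}(\zeta):=J^G(e^{\theta_1^{max}}-\zeta^2)$ is entry-wise analytic in a neighborhood of $\zeta=0$.

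I would then define $\tilde{G}(\zeta):=\tilde{T}(\zeta)\tilde{J}(\zeta)\tilde{T}(\zeta)^{-1}$, which is analytic in a neighborhood of $\zeta=0$ since $\tilde{T}(\zeta)$, $\tilde{T}(\zeta)^{-1}$ and $\tilde{J}(\zeta)$ all are. The key step is to identify $\tilde{G}(\zeta)=G(e^{\theta_1^{max}}-\zeta^2)$. For $\zeta\ne 0$ small, both $\check{\bv}^L_{m_0,1,1}(e^{\theta_1^{max}}-\zeta^2)$ (the last column of $T^L$) and $\tilde{\bv}_{s_0}(\zeta)$ (the last column of $\tilde{T}$) lie in the one-dimensional null space of $L(e^{\theta_1^{max}}-\zeta^2,\tilde{\alpha}_{s_0}(\zeta))$, so they agree up to a nonzero scalar $c(\zeta)$; because the last Jordan block of $J^G$ is the $1\times 1$ diagonal block $\tilde{\alpha}_{s_0}(\zeta)$, the similarity $T^L J^G (T^L)^{-1}$ is invariant under rescaling of this last column. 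Hence $\tilde{G}(\zeta)=T^L(e^{\theta_1^{max}}-\zeta^2) J^G(e^{\theta_1^{max}}-\zeta^2) T^L(e^{\theta_1^{max}}-\zeta^2)^{-1}=G(e^{\theta_1^{max}}-\zeta^2)$ for $\zeta\ne 0$ small, and continuity extends the identity to $\zeta=0$.

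Next, I would split $\tilde{J}(\zeta)=(\tilde{J}(\zeta)-\tilde{J}_{s_0}(\zeta))+\tilde{J}_{s_0}(\zeta)$ and set $\tilde{G}^\dagger(\zeta):=\tilde{T}(\zeta)(\tilde{J}(\zeta)-\tilde{J}_{s_0}(\zeta))\tilde{T}(\zeta)^{-1}$, which is analytic in a neighborhood of $\zeta=0$. A direct computation gives $\tilde{T}(\zeta)\tilde{J}_{s_0}(\zeta)\tilde{T}(\zeta)^{-1}=\tilde{\alpha}_{s_0}(\zeta)\tilde{\bv}_{s_0}(\zeta)\tilde{\bu}_{s_0}(\zeta)$, yielding \eqref{eq:tildeG_desomposition_1}. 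Comparing with \eqref{eq:G_desomposition_1} under the substitution $z=e^{\theta_1^{max}}-\zeta^2$ then forces $\tilde{G}^\dagger(\zeta)=G^\dagger(e^{\theta_1^{max}}-\zeta^2)$ in a neighborhood of $\zeta=0$. Finally, invoking the inequality $\spr(G^\dagger(z))<\alpha_{s_0}(e^{\theta_1^{max}})$ stated in the paragraph immediately preceding the lemma, continuity of the spectral radius gives $\spr(\tilde{G}^\dagger(\zeta))<\tilde{\alpha}_{s_0}(0)=\alpha_{s_0}(e^{\theta_1^{max}})$ for all sufficiently small $|\zeta|$.

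The most delicate point I anticipate is the identification $\tilde{G}(\zeta)=G(e^{\theta_1^{max}}-\zeta^2)$, namely the verification that replacing the last column of $T^L$ by an analytic representative $\tilde{\bv}_{s_0}$ of the same eigenline does not alter the similarity product. The argument hinges on the fact that $\alpha_{s_0}(z)$ corresponds to a $1\times 1$ Jordan block ($m_{m_0,0}=m_{m_0,1}=1$), ensuring the rank-one spectral projection $\tilde{\bv}_{s_0}(\zeta)\tilde{\bu}_{s_0}(\zeta)$ is independent of the normalization of $\tilde{\bv}_{s_0}$; a nontrivial Jordan block at $\alpha_{s_0}$ would demand additional care, but this is ruled out here since $\alpha_{s_0}(x)$ is the simple Perron-Frobenius eigenvalue of $G(x)$ on $[e^{\theta_1^{min}},e^{\theta_1^{max}}]$ and $\alpha_{s_0+1}$ is not an eigenvalue of $G$ at all.
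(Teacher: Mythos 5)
Your proposal is correct and follows essentially the same route as the paper, whose proof consists only of defining $\tilde{G}^\dagger(\zeta) = \tilde{T}(\zeta)\bigl(J^G(e^{\theta_1^{max}}-\zeta^2) - J_{s_0}(e^{\theta_1^{max}}-\zeta^2)\bigr)\tilde{T}(\zeta)^{-1}$ and citing \eqref{eq:G_desomposition_1}. You simply supply the details the paper leaves implicit, in particular the observation that replacing the last column of $T^L$ by the analytic representative $\tilde{\bv}_{s_0}(\zeta)$ of the same one-dimensional eigenline leaves the similarity product unchanged because the corresponding Jordan block is $1\times 1$.
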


\begin{proof}
Give $\tilde{G}^\dagger(\zeta)$ as 
\[
\tilde{G}^\dagger(\zeta) = \tilde{T}(\zeta) (J^G(e^{\theta_1^{max}}-\zeta^2) - J_{s_0}(e^{\theta_1^{max}}-\zeta^2)) \tilde{T}(\zeta)^{-1} .
\]
Then, by \eqref{eq:G_desomposition_1}, we obtain the results of the lemma. 
\end{proof}

%
The following limit with respect to $\alpha_{s_0}(z)$ is given by Proposition 5.5 of \cite{Ozawa18} (also see Lemma 10 of \cite{Kobayashi13}).
\begin{lemma} \label{le:alpha_s0_limit}
\begin{align}
\lim_{\tilde{\Delta}_{e^{\theta_1^{max}}}\ni z\to e^{\theta_1^{max}}} \frac{\alpha_{s_0}(e^{\theta_1^{max}})-\alpha_{s_0}(z)}{(e^{\theta_1^{max}}-z)^{\frac{1}{2}}} 
&= -\alpha_{s_0,1}
= \frac{\sqrt{2}}{\sqrt{-\bar{\zeta}_{1,w^2}(\underline{\zeta}_2(e^{\theta_1^{max}}))}} > 0, 
\label{eq:limit_eigen_G1}
\end{align}
where $z=\bar{\zeta}_1(w)$ is the larger one of the two real solutions to equation $\chi(z,w)=1$ and $\bar{\zeta}_{1,w^2}(w)=(d^2/d w^2)\, \bar{\zeta}_1(w)$.
\end{lemma}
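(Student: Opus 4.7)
The plan is to reduce the limit to a local Puiseux expansion of the algebraic function $\alpha_{s_0}$ at the extreme point $(x,y^*):=(e^{\theta_1^{max}},\underline{\zeta}_2(e^{\theta_1^{max}}))$, exploiting the geometric fact that $x$ is the maximum of the $z$-coordinate on the algebraic curve $C=\{(z,w):\chi(z,w)=1\}$.

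First I would record the identity $\chi(z,\alpha_{s_0}(z))=1$ for real $z$ in a neighborhood of $x$. By Lemma \ref{le:Gmatrix_eigenvalues} and the surrounding discussion, $\alpha_{s_0}(z)=\spr(G(z))=\underline{\zeta}_2(z)$ in this regime, and the right-hand side lies on $C$ by definition. Hence the graph of $\alpha_{s_0}$ is locally on $C$, which can alternatively be parametrised by $z=\bar{\zeta}_1(w)$ near $w=y^*$. Since $x$ is the maximum of the $z$-coordinate on $C$, the tangent to $C$ at $(x,y^*)$ is vertical, so $\chi_w(x,y^*)=0$ while $\chi_z(x,y^*)\neq 0$; the implicit function theorem then produces $\bar{\zeta}_1$ analytic near $y^*$ with $\bar{\zeta}_1(y^*)=x$ and $\bar{\zeta}_1'(y^*)=0$, and strict log-convexity of $\chi(e^{\theta_1},e^{\theta_2})$ (Lemma A.1 of \cite{Ozawa21}, combined with Assumption \ref{as:Aij_irreducible_pre}) forces $\bar{\zeta}_{1,w^2}(y^*)<0$.

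Next I would invert the Taylor expansion
\[
x-\bar{\zeta}_1(w)=-\tfrac12\bar{\zeta}_{1,w^2}(y^*)\,(w-y^*)^2\bigl(1+O(w-y^*)\bigr)
\]
by analytic square-root extraction in a slit neighborhood of $w=y^*$, yielding the two Puiseux branches
\[
w-y^*=\pm\frac{\sqrt{2}}{\sqrt{-\bar{\zeta}_{1,w^2}(y^*)}}\,(x-z)^{1/2}\bigl(1+O((x-z)^{1/2})\bigr),
\]
corresponding respectively to $\bar{\zeta}_2(z)$ and $\underline{\zeta}_2(z)$. Since $\alpha_{s_0}(z)=\underline{\zeta}_2(z)<y^*$ for real $z$ slightly less than $x$, the minus sign is the one relevant to $\alpha_{s_0}$, and dividing the expansion by $(x-z)^{1/2}$ delivers the claimed value on the positive real axis. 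The Puiseux series is analytic on the slit disk at $x$ with cut along the ray $\arg(z-x)=0$, and by construction the cone $\tilde{\Delta}_x(\varepsilon,\theta)$ is contained in such a slit disk once $\varepsilon,\theta$ are small, so the leading asymptotics persists for any complex approach within $\tilde{\Delta}_x$.

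The principal obstacle is establishing the strict inequality $\bar{\zeta}_{1,w^2}(y^*)<0$. Mere log-convexity of $\chi$ gives only $\bar{\zeta}_{1,w^2}(y^*)\le 0$, and were this derivative to vanish, the branch point at $x$ would be of higher Puiseux order and a different power of $x-z$ would emerge, invalidating the stated formula. Strictness is secured by Perron--Frobenius analyticity on the interior of $\Gamma$ together with Assumption \ref{as:Aij_irreducible_pre}, which rule out degenerate tangency of $\log\chi$ at the boundary extremum $(x,y^*)$; a secondary technical check is that the algebraic branch recovered from inverting $\bar{\zeta}_1$ is indeed $\alpha_{s_0}$ and not a different eigenvalue of $G(z)$, which is pinned down by the numbering convention $\alpha_{s_0}(x)=e^{\underline{\theta}_2(\log x)}$ together with continuity.
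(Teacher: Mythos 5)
The paper does not prove this lemma itself; it imports it from Proposition 5.5 of \cite{Ozawa18} (and Lemma 10 of \cite{Kobayashi13}), and the argument there is precisely the one you give: $\alpha_{s_0}$ coincides with $\underline{\zeta}_2$ on the curve $\chi(z,w)=1$, which near the extreme point $(e^{\theta_1^{max}},\underline{\zeta}_2(e^{\theta_1^{max}}))$ is parametrised by $z=\bar{\zeta}_1(w)$ with $\bar{\zeta}_1'=0$ and $\bar{\zeta}_{1,w^2}<0$, and inverting the quadratic expansion yields the square-root branch with the stated coefficient, the minus sign being selected because $\underline{\zeta}_2(z)$ lies below the critical value for real $z<e^{\theta_1^{max}}$. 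You also correctly isolate the only genuinely non-trivial ingredient, namely the strict inequality $\bar{\zeta}_{1,w^2}(\underline{\zeta}_2(e^{\theta_1^{max}}))<0$ (equivalently, non-degenerate curvature of $\partial\Gamma$ at the extreme point), which in the cited references follows from irreducibility of the modulated walk via the positivity of the asymptotic variance under the exponential change of measure.
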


%
Let $R(z)$ be the rate matrix function generated from $\{A_{i,j}: i,j=-1,0,1\}$; for the definition of $R(z)$, see Section 4.1 of \cite{Ozawa18}. Define a matrix function $N(z)$ as 
\[
N(z)=(I-A_{*,0}(z)-A_{*,1}(z)G(z))^{-1}.
\]
$N(z)$ is well defined for every $z\in\bar{\Delta}_{e^{\theta_1^{min}},e^{\theta_1^{max}}}$. 
The extended $G(z)$ satisfies the following property.
\begin{lemma} \label{le:G_limit}
\begin{align}
\lim_{\tilde{\Delta}_{e^{\theta_1^{max}}}\ni z\to e^{\theta_1^{max}}} \frac{G(e^{\theta_1^{max}})-G(z)}{(e^{\theta_1^{max}}-z)^{\frac{1}{2}}} 
&= -G_1 \cr
&=  -\alpha_{s_0,1} N(e^{\theta_1^{max}}) \bv^{R}(e^{\theta_1^{max}}) \bu^G_{s_0}(e^{\theta_1^{max}}) \ge O,\ \ne O, 
\label{eq:limit_G1}
\end{align}
where $\bu^G_{s_0}(e^{\theta_1^{max}})$ is the left eigenvector of $G(e^{\theta_1^{max}})$ with respect to the eigenvalue $e^{\underline{\theta}_2(\theta_1^{max})}=\alpha_{s_0}(e^{\theta_1^{max}})$, $\bv^{R}(e^{\theta_1^{max}})$ the right eigenvector of $R(e^{\theta_1^{max}})$ with respect to the eigenvalue $e^{-\bar{\theta}_2(\theta_1^{max})}=e^{-\underline{\theta}_2(\theta_1^{max})}$ and they satisfy  $\bu^G_{s_0}(e^{\theta_1^{max}}) N(e^{\theta_1^{max}}) \bv^{R}(e^{\theta_1^{max}}) =1$.
\end{lemma}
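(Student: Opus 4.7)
The plan is to reduce the statement to computing the derivative at $\zeta=0$ of the analytic extension provided by Lemma \ref{le:G_decomposition}. Writing $z_0=e^{\theta_1^{max}}$ and $\zeta=(z_0-z)^{\frac{1}{2}}$, that lemma yields a matrix function $\tilde{G}(\zeta)$, entry-wise analytic in a neighborhood of $\zeta=0$, with $G(z)=\tilde{G}(\zeta)$, so
\begin{equation*}
\lim_{\tilde{\Delta}_{z_0}\ni z\to z_0}\frac{G(z_0)-G(z)}{(z_0-z)^{\frac{1}{2}}}=\lim_{\zeta\to 0}\frac{\tilde{G}(0)-\tilde{G}(\zeta)}{\zeta}=-\tilde{G}'(0).
\end{equation*}
The existence of the limit is therefore automatic, and it remains to identify $\tilde{G}'(0)$ with $\alpha_{s_0,1}N(z_0)\bv^R(z_0)\bu^G_{s_0}(z_0)$.

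First I would substitute $z=z_0-\zeta^2$ into the matrix quadratic equation \eqref{eq:Gfunction_equation} and differentiate at $\zeta=0$. Since each $A_{*,i}(z_0-\zeta^2)$ carries a factor $-2\zeta$ in its derivative, all such contributions vanish at $\zeta=0$, leaving
\begin{equation*}
(I-A_{*,0}(z_0)-A_{*,1}(z_0)G(z_0))\tilde{G}'(0)=A_{*,1}(z_0)\tilde{G}'(0)G(z_0),
\end{equation*}
equivalently the Sylvester-type identity $\tilde{G}'(0)=N(z_0)A_{*,1}(z_0)\tilde{G}'(0)G(z_0)$. Its solutions are linear combinations of rank-one matrices $\bv\bu$, where $\bv$ is a right eigenvector of $N(z_0)A_{*,1}(z_0)$ for some eigenvalue $\lambda_1$ and $\bu$ is a left eigenvector of $G(z_0)$ for some eigenvalue $\lambda_2$ with $\lambda_1\lambda_2=1$. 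At $z_0$ the lower and upper branches $\underline{\theta}_2$ and $\bar{\theta}_2$ coincide, so $\alpha_{s_0}(z_0)$ is the simple strictly dominant eigenvalue of $G(z_0)$, while $\alpha_{s_0}(z_0)^{-1}$ is the simple strictly dominant eigenvalue of $N(z_0)A_{*,1}(z_0)$, which shares its non-zero spectrum with $R(z_0)=A_{*,1}(z_0)N(z_0)$. Combined with the Perron bounds $|\lambda_1|\le\alpha_{s_0}(z_0)^{-1}$ and $|\lambda_2|\le\alpha_{s_0}(z_0)$, the constraint $\lambda_1\lambda_2=1$ forces the unique pairing $(\alpha_{s_0}(z_0)^{-1},\alpha_{s_0}(z_0))$. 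Using that $N(z_0)A_{*,1}(z_0)\cdot N(z_0)\bv^R(z_0)=N(z_0)R(z_0)\bv^R(z_0)=\alpha_{s_0}(z_0)^{-1}N(z_0)\bv^R(z_0)$, I conclude $\tilde{G}'(0)=c\,N(z_0)\bv^R(z_0)\bu^G_{s_0}(z_0)$ for some scalar $c$.

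To pin down $c$, I would differentiate at $\zeta=0$ the eigenvalue identity $\tilde{G}(\zeta)\tilde{\bv}_{s_0}(\zeta)=\tilde{\alpha}_{s_0}(\zeta)\tilde{\bv}_{s_0}(\zeta)$, which holds by the Jordan-based construction of Lemma \ref{le:G_decomposition}, and premultiply by $\bu^G_{s_0}(z_0)$. The $\tilde{\bv}_{s_0}'(0)$ terms on both sides cancel because $\bu^G_{s_0}(z_0)G(z_0)=\alpha_{s_0}(z_0)\bu^G_{s_0}(z_0)$, leaving
\begin{equation*}
\bu^G_{s_0}(z_0)\tilde{G}'(0)\tilde{\bv}_{s_0}(0)=\tilde{\alpha}_{s_0}'(0)\,\bu^G_{s_0}(z_0)\tilde{\bv}_{s_0}(0)=\alpha_{s_0,1}\,\bu^G_{s_0}(z_0)\tilde{\bv}_{s_0}(0),
\end{equation*}
where I used Lemma \ref{le:alpha_s0_limit} for $\tilde{\alpha}_{s_0}'(0)=\alpha_{s_0,1}$. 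On the other hand, the rank-one ansatz together with the normalization $\bu^G_{s_0}(z_0)N(z_0)\bv^R(z_0)=1$ gives $\bu^G_{s_0}(z_0)\tilde{G}'(0)\tilde{\bv}_{s_0}(0)=c\,\bu^G_{s_0}(z_0)\tilde{\bv}_{s_0}(0)$. Since $\tilde{\bv}_{s_0}(0)$ spans the one-dimensional Perron eigenspace of $G(z_0)$, the pairing $\bu^G_{s_0}(z_0)\tilde{\bv}_{s_0}(0)\ne 0$, so $c=\alpha_{s_0,1}$ as required. The bound $-G_1\ge O$ entry-wise is then a consequence of $-\alpha_{s_0,1}>0$ (Lemma \ref{le:alpha_s0_limit}), entry-wise nonnegativity of $N(z_0)$, and Perron--Frobenius nonnegativity of $\bv^R(z_0)$ and $\bu^G_{s_0}(z_0)$; the nonvanishing $-G_1\ne O$ follows from their strict positivity under the standing irreducibility hypotheses.

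The principal obstacle is the spectral analysis at the critical point: verifying that the coalescence $\alpha_{s_0}(z_0)=\alpha_{s_0+1}(z_0)$ does not produce a Jordan block of size greater than one for $G(z_0)$ (and dually for $R(z_0)$), so that both Perron eigenvalues are simple and the Sylvester-equation solution space is really one-dimensional. This simplicity is what ultimately lets me read off $c$ by a single scalar equation, but it must be extracted by careful book-keeping of the multiplicities in the factorization $\phi(z,w)=\prod_n f_n(z,w)^{l_n}$ and the Jordan construction of Section \ref{sec:G_analyticextension}.
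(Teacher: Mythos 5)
Your argument is sound and reaches the stated formula by a genuinely different route from the paper, which gives no proof at all here and simply defers to Proposition 5.6 of Ozawa and Kobayashi (2018) (whose argument runs through the branch-by-branch spectral decomposition of $G(z)$). Your route --- substitute $z=e^{\theta_1^{max}}-\zeta^2$ into the fixed-point equation \eqref{eq:Gfunction_equation}, note that the chain-rule factor $-2\zeta$ annihilates the derivatives of the coefficient matrices at $\zeta=0$, and solve the resulting Sylvester equation $\tilde{G}'(0)=N(e^{\theta_1^{max}})A_{*,1}(e^{\theta_1^{max}})\tilde{G}'(0)G(e^{\theta_1^{max}})$ --- is more self-contained: it isolates exactly where the square-root singularity enters (only through $\tilde{\alpha}_{s_0}'(0)=\alpha_{s_0,1}$) and forces the rank-one shape of $G_1$ without tracking the non-dominant branches. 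The normalization step, differentiating $\tilde{G}(\zeta)\tilde{\bv}_{s_0}(\zeta)=\tilde{\alpha}_{s_0}(\zeta)\tilde{\bv}_{s_0}(\zeta)$ and pairing with $\bu^G_{s_0}$ so that the $\tilde{\bv}_{s_0}'(0)$ terms cancel, is the standard first-order eigenvalue perturbation identity and is applied correctly; the final positivity and nonvanishing claims are also fine since $N\ge I$ and the Perron vectors are nonnegative and nonzero.

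On the obstacle you flag at the end: on the $G$-side it is not actually open --- Lemma \ref{le:G_decomposition} already gives $\spr(\tilde{G}^\dagger(0))<\alpha_{s_0}(e^{\theta_1^{max}})$, so $\alpha_{s_0}(e^{\theta_1^{max}})$ is a simple, strictly dominant eigenvalue of $G(e^{\theta_1^{max}})$, which is all your kernel argument needs on that side (it forces $\lambda_2=\alpha_{s_0}(e^{\theta_1^{max}})$ and hence $\lambda_1=\alpha_{s_0}(e^{\theta_1^{max}})^{-1}$ exactly, regardless of other peripheral eigenvalues of $N A_{*,1}$). What genuinely remains is the simplicity of the eigenvalue $e^{-\bar{\theta}_2(\theta_1^{max})}$ of $R(e^{\theta_1^{max}})$ (equivalently of $N(e^{\theta_1^{max}})A_{*,1}(e^{\theta_1^{max}})$); without it the Sylvester kernel could have dimension greater than one and a single scalar equation would not determine $\tilde{G}'(0)$. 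This is the mirror image of the $G$-side fact and is available from the reverse-direction machinery of Section \ref{sec:Gmatrix_reverse}: the nonzero spectrum of $R$ consists of $\alpha_k^{-1}$ for $k>s_0$, with $\alpha_{s_0+1}$ the Perron branch of the reversed G-matrix, so $l_{q(s_0+1)}=1$ by the same Perron--Frobenius argument the paper uses to get $l_{q(s_0)}=1$. You should state this explicitly rather than leaving it as ``book-keeping,'' but it is a fillable step, not a flaw in the approach.
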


Since this lemma can be proved in a manner similar to that used in the proof of Proposition 5.6 of \cite{Ozawa18}, we omit it.

%
%
\subsection{G-matrix in the reverse direction} \label{sec:Gmatrix_reverse}

Let $A_{-1}$, $A_0$ and $A_1$ be square nonnegative matrices \textit{with a finite dimension}. Define a matrix function $A_*(z)$ and matrix $Q$ as
\begin{align}
&A_*(z) = z^{-1} A_{-1} + A_0 + z A_1, \label{eq:As_definition} \\
&Q = \begin{pmatrix}
A_0 & A_1 & & & \cr
A_{-1} & A_0 & A_1 & & \cr
& A_{-1} & A_0 & A_1 & \cr
& & \ddots & \ddots & \ddots 
\end{pmatrix}. \label{eq:Q_definitoin} 
\end{align}
We assume: 
\begin{itemize}
\item[(a1)] $Q$ is irreducible. 
\item[(a2)] The infimum of the maximum eigenvalue of $A_*(e^\theta)$ in $\theta\in\mathbb{
R}$ is less than or equal to $1$, i.e., 
$\inf_{\theta\in\mathbb{R}} \spr(A_*(e^\theta)) \le 1$.
\end{itemize}
Then, there exist two real solutions to equation $\spr(A_*(e^\theta))=1$, counting multiplicity, see comments to Condition 2.6 of \cite{Ozawa21}. We denote the solutions by $\underline{\theta}$ and $\bar\theta$, where $\underline{\theta}\le\bar\theta$. The rate matrix and G-matrix generated from the triplet $\{A_{-1},A_0,A_1\}$ also exist; we denote them by $R$ and $G$, respectively.  $R$ and $G$ are the minimal nonnegative solutions to the following matrix quadratic equations:
\begin{align}
&R = R^2 A_{-1} + R A_0 + A_1, \label{eq:R_definition} \\
&G = A_{-1} + A_0 G + A_1 G^2. \label{eq:G_definition}
\end{align}
We have
\begin{align}
& I-A_*(z) = (I-z R)(I-H)(I-z^{-1} G),  \label{eq:ImAs_fact} \\
& \spr(R)=e^{-\bar\theta},\quad \spr(G)=e^{\underline{\theta}} \label{eq:R_G_cp}, 
\end{align}
where $H=A_0+A_1 G$; see, for example, Lemma 2.2 of \cite{Ozawa21}.
We define a rate matrix and G-matrix \textit{in the reverse direction} generated from the triplet $\{A_{-1},A_0,A_1\}$, denoted by $R^r$ and $G^r$, as the minimal nonnegative solutions to the following matrix quadratic equations:
\begin{align}
&R^r = (R^r)^2 A_1 + R^r A_0 + A_{-1}, \label{eq:Rr_definition} \\
&G^r = A_1 + A_0 G^r + A_{-1} (G^r)^2. \label{eq:Gr_definition}
\end{align}
In other words,  $R^r$ and $G^r$ are, respectively, the rate matrix and G-matrix generated from the triplet by exchanging $A_{-1}$ and $A_1$.  
Since $z^{-1} A_1+A_0+z A_{-1}=A_*(z^{-1})$, we obtain, by \eqref{eq:ImAs_fact} and \eqref{eq:R_G_cp}, 
\begin{align}
& I-A_*(z^{-1}) = (I-z R^r)(I-H^r)(I-z^{-1} G^r),  \label{eq:ImAs_fact_reverse}\\
& \spr(R^r)=e^{\underline{\theta}},\quad \spr(G^r)=e^{-\bar\theta},
\end{align}
where $H^r=A_0+A_{-1}G^r$. 
We use the following property in the proof of Proposition \ref{pr:spr_hatU_eq1}. 
\begin{lemma} \label{le:G_Gr_v}
Let $\bv$ be the right eigenvector of $G$ with respect to the eigenvalue $e^{\underline{\theta}}$ and $\bv^r$ that of $G^r$ with respect to the eigenvalue $e^{-\bar{\theta}}$, i.e., $G\bv=e^{\underline{\theta}}\bv$ and $G^r\bv^r=e^{-\bar{\theta}}\bv^r$. 
If $\underline{\theta}=\bar\theta$, we have $\bv=\bv^r$, up to multiplication by a positive constant. 
\end{lemma}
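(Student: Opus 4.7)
The plan is to identify both $\bv$ and $\bv^r$ as right Perron eigenvectors of the same matrix $A_*(z^*)$, where $z^*:=e^{\underline{\theta}}=e^{\bar\theta}$. When $\underline{\theta}=\bar\theta$, the two real roots of $\spr(A_*(e^\theta))=1$ collapse to a single (double) root at $\theta^*:=\underline{\theta}=\bar\theta$, so log-convexity of $\spr(A_*(e^\theta))$ in $\theta$ forces $\theta^*$ to be its minimizer with minimum value $1$; hence $\spr(A_*(z^*))=1$. Irreducibility of $Q$ (assumption (a1)) transfers to the nonnegative matrix $A_*(z^*)=(z^*)^{-1}A_{-1}+A_0+z^*A_1$, since the three matrices share the zero pattern of $A_{-1}+A_0+A_1$, whose irreducible directed graph is read off directly from the block-tridiagonal structure of $Q$. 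Perron--Frobenius then gives that $1$ is a simple eigenvalue of $A_*(z^*)$ with a one-dimensional right eigenspace spanned by a strictly positive vector.

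Next, I would plug into the factorization \eqref{eq:ImAs_fact} at $z=z^*$: since $G\bv=e^{\underline{\theta}}\bv=z^*\bv$, the rightmost factor $I-(z^*)^{-1}G$ annihilates $\bv$, so $(I-A_*(z^*))\bv=\bzero$. Dually, evaluating \eqref{eq:ImAs_fact_reverse} at $z=(z^*)^{-1}$ --- for which the left-hand side again reduces to $I-A_*(z^*)$ --- and using $G^r\bv^r=e^{-\bar{\theta}}\bv^r=(z^*)^{-1}\bv^r$, the rightmost factor $I-z^*G^r$ kills $\bv^r$, so $(I-A_*(z^*))\bv^r=\bzero$.

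Finally, because $\spr(G)=e^{\underline{\theta}}$ and $\spr(G^r)=e^{-\bar{\theta}}$ by \eqref{eq:R_G_cp}, the vectors $\bv$ and $\bv^r$ can be chosen as (nonnegative, nonzero) Perron eigenvectors of $G$ and $G^r$, respectively. Both then lie in the one-dimensional right Perron eigenspace of $A_*(z^*)$, which forces $\bv=c\bv^r$ for some $c>0$. The only delicate point I anticipate is the passage from irreducibility of $Q$ to that of $A_*(z^*)$ --- a routine but essential graph-theoretic verification that underwrites the one-dimensionality of the target eigenspace.
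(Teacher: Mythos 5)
Your proof is correct and takes essentially the same route as the paper's: both use the factorizations \eqref{eq:ImAs_fact} and \eqref{eq:ImAs_fact_reverse} to show that $\bv$ and $\bv^r$ are right eigenvectors of the irreducible nonnegative matrix $A_*(e^{\underline{\theta}})$ for the eigenvalue $1=\spr(A_*(e^{\underline{\theta}}))$, and then invoke Perron--Frobenius uniqueness. The only difference is that the paper simply asserts the irreducibility of $A_*(e^{\underline{\theta}})$, whereas you spell out how it follows from assumption (a1).
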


\begin{proof}
By \eqref{eq:ImAs_fact} and \eqref{eq:ImAs_fact_reverse}, we have
\[
A_*(e^{\underline{\theta}})\bv=\bv,\quad 
A_*(e^{\bar{\theta}})\bv^r=A_*(e^{\underline{\theta}})\bv^r=\bv^r.
\]
Since $\spr(A_*(e^{\underline{\theta}}))=1$ and $A_*(e^{\underline{\theta}})$ is irreducible, the right eigenvector of $A_*(e^{\underline{\theta}})$ with respect to the eigenvalue of $1$ is unique, up to multiplication by a positive constant. This implies $\bv=\bv^r$. 
\end{proof}

%
%
\section{Proof of Proposition \ref{pr:power_term} and Corollary \ref{co:decay_function_homogeneous}} \label{sec:gf_analytic_properties}

%
%
\subsection{Methodology and outline of the proof} \label{sec:methodology}

Define the vector generating function of the stationary probabilities in direction $\bc\in\mathbb{N}^2$, $\bvarphi^{\bc}(z)$, as
\[
\bvarphi^{\bc}(z) = \sum_{k=0}^\infty z^k \bnu_{k\bc}. 
\]
Hereafter, we set $\bc=(1,1)$.
In order to obtain the asymptotic function of the stationary tail probabilities in the direction $\bc$, we apply the following lemma to $\bvarphi^{\bc}(z)$. 
\begin{lemma}[Theorem VI.4 of Flajolet and Sedgewick \cite{Flajolet09}] \label{le:asympto_formula_ss}
Let $f$ be a generating function of a sequence of real numbers $\{a_n,\, n\in\mathbb{Z}_+\}$, i.e., $f(z)=\sum_{n=0}^\infty a_n z^n$. If $f(z)$ is singular at $z=z_0>0$ and analytic in $\tilde{\Delta}_{z_0}(\varepsilon,\theta)$ for some $\varepsilon>0$ and some $\theta\in[0,\pi/2)$ and if it satisfies 
\begin{align}
\lim_{\tilde{\Delta}_{z_0}\ni z\to z_0} (z_0-z)^\alpha f(z) = c_0
\label{eq:lim_fz_alpha}
\end{align}
for $\alpha\in\mathbb{R}\setminus\{0,-1,-2,...\}$ and some nonzero constant $c_0\in\mathbb{R}$, then 
\begin{align}
\lim_{n\to\infty} \left( \frac{n^{\alpha-1}}{\Gamma(\alpha)} z_0^{-n} \right)^{-1} a_n = c
\label{eq:asymp_an}
\end{align}
for some real number $c$, where $\Gamma(z)$ is the gamma function. This means that the asymptotic function of the sequence $\{a_n\}$ is given by $n^{\alpha-1} z_0^{-n}$. 
\end{lemma}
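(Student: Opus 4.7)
The plan is to prove this via Cauchy's integral formula combined with a Hankel-contour deformation, the standard route in analytic combinatorics (cf.\ Chapter VI of Flajolet--Sedgewick \cite{Flajolet09}). By the preliminary rescaling $z\mapsto z/z_0$, it suffices to treat $z_0=1$: the rescaled series $\tilde f(w):=f(z_0 w)=\sum_{n\ge 0}(a_n z_0^n)w^n$ is singular at $w=1$ with $(1-w)^\alpha\tilde f(w)\to c_0 z_0^{-\alpha}$ as $w\to 1$ in $\tilde\Delta_1$, and identifying the asymptotics of $a_n z_0^n$ will yield $c=c_0 z_0^{-\alpha}$.

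Starting from Cauchy's formula for the coefficients of $\tilde f$ on a circle of radius $r<1$, I would deform the contour to a Hankel-type curve $\mathcal H_n$ lying entirely in the analyticity region $\tilde\Delta_1(\varepsilon,\theta)$. The curve $\mathcal H_n$ consists of an outer arc of radius $1+\delta$ with $0<\delta<\varepsilon$, two rectilinear segments at some angle $\theta'\in(\theta,\pi/2)$ to the real axis, and a small circle of radius $1/n$ around $w=1$ closing the pinched region. Cauchy's theorem justifies the deformation since $\tilde f$ is analytic throughout the swept region. The outer-arc contribution is bounded by $\sup_{|w|=1+\delta}|\tilde f(w)|\cdot(1+\delta)^{-n}$, which is exponentially smaller than the target $n^{\alpha-1}$.

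On the Hankel portion I substitute $w=1-t/n$, which (as $n\to\infty$) maps it onto a fixed Hankel contour $\mathcal H$ in the $t$-plane winding counterclockwise around $0$. Using the singular approximation $\tilde f(w)\approx c_0 z_0^{-\alpha}(1-w)^{-\alpha}=c_0 z_0^{-\alpha}n^{\alpha}t^{-\alpha}$, together with $(1-t/n)^{-n-1}\to e^{t}$ and $dw=-dt/n$, the principal part becomes
\[
c_0 z_0^{-\alpha}\,n^{\alpha-1}\cdot\frac{1}{2\pi i}\int_{\mathcal H}t^{-\alpha}e^{t}\,dt
\;=\;c_0 z_0^{-\alpha}\cdot\frac{n^{\alpha-1}}{\Gamma(\alpha)},
\]
by Hankel's integral representation $\frac{1}{\Gamma(\alpha)}=\frac{1}{2\pi i}\int_{\mathcal H}t^{-\alpha}e^{t}\,dt$, which holds precisely for $\alpha\notin\{0,-1,-2,\ldots\}$ (explaining the exclusion in the hypothesis).

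The main technical obstacle is justifying the passage of the limit under the integral sign, because the hypothesis $(1-w)^\alpha\tilde f(w)\to c_0 z_0^{-\alpha}$ supplies only a qualitative limit as $w\to 1$ in $\tilde\Delta_1$ and not a quantitative rate. To handle this I would split the Hankel portion into an inner piece with $|1-w|<\eta_n$ and an outer piece with $|1-w|\ge\eta_n$, where $\eta_n\to 0$ slowly enough that on the inner piece the error $\varepsilon(w):=(1-w)^\alpha\tilde f(w)-c_0 z_0^{-\alpha}$ is uniformly $o(1)$, while on the outer piece $\tilde f$ is bounded and the rescaled contribution is $o(n^{\alpha-1})$ by a direct estimate. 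A dominated-convergence argument on the inner piece, with majorant $M|t|^{-\mathrm{Re}\,\alpha}e^{\mathrm{Re}\,t}$, then delivers the stated limit.
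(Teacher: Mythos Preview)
The paper does not supply its own proof of this lemma: it is quoted verbatim as Theorem~VI.4 of Flajolet and Sedgewick~\cite{Flajolet09} and used as a black box, so there is no ``paper's proof'' to compare against. Your sketch is precisely the standard singularity-analysis argument from that reference (Cauchy's coefficient formula, deformation to a Pac-Man/Hankel contour in the $\tilde\Delta$-domain, the change of variables $w=1-t/n$, and Hankel's representation of $1/\Gamma(\alpha)$), and it is sound as an outline; you also correctly identify the one genuine technicality, namely controlling the error term uniformly along the contour when only the qualitative limit \eqref{eq:lim_fz_alpha} is assumed, and your inner/outer splitting with a slowly shrinking $\eta_n$ is the right fix. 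Your computation in fact yields the explicit constant $c=c_0 z_0^{-\alpha}$, which is sharper than the ``for some real number $c$'' in the lemma as stated.
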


For the purpose, we prove the following propositions in Section \ref{sec:phic_expansion}. 

\begin{proposition} \label{pr:varphic_analytic_domain}
Assume Type 1. If $\bar{\eta}_1'(\theta_2^*)\le -c_1/c_2=-1 \le 1/\bar{\eta}_2'(\theta_1^*)$, the vector function $\bvarphi^{\bc}(z)$ is element-wise analytic in $\tilde{\Delta}_{e^{\theta_{\bc}^{max}}}(\varepsilon,\theta)$ for some $\varepsilon>0$ and some $\theta\in[0,\pi/2)$.
\end{proposition}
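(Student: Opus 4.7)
\textbf{Proof plan for Proposition \ref{pr:varphic_analytic_domain}.}
The plan is to derive a matrix-analytic representation of $\bvarphi^{\bc}(z)$ along the diagonal direction $\bc=(1,1)$ in terms of the extended G-matrix function of Section \ref{sec:G_analyticextension}, and then track the singularities of each ingredient on $\bar{\Delta}_{e^{\theta_{\bc}^{max}}}$, using the Type 1 geometry to rule out new singularities on $\partial\Delta_{e^{\theta_{\bc}^{max}}}\setminus\{e^{\theta_{\bc}^{max}}\}$.

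First I would work from the two-variable generating function $\bPhi(z_1,z_2)=\sum_{\bx\in\mathbb{Z}_+^2} z_1^{x_1}z_2^{x_2}\bnu_{\bx}$, which satisfies the standard 2d-QBD functional equation
$\bPhi(z_1,z_2)(I-A^{\{1,2\}}_{*,*}(z_1,z_2)) = \bPhi_1(z_1)+\bPhi_2(z_2)+\bPhi_0$,
with axial boundary generating functions $\bPhi_1,\bPhi_2$ and a corner contribution $\bPhi_0$. The diagonal generating function is then obtained by a Cauchy extraction $\bvarphi^{\bc}(z)=\frac{1}{2\pi i}\oint_{|w|=1}\bPhi(zw,z/w)\frac{dw}{w}$; after substituting the factorization \eqref{eq:Ass_Gz_relation} of $I-A^{\{1,2\}}_{*,*}(zw,z/w)$ through the extended $G(\,\cdot\,)$ and $N(\,\cdot\,)=(I-A_{*,0}(\,\cdot\,)-A_{*,1}(\,\cdot\,)G(\,\cdot\,))^{-1}$ and evaluating by residues inside $|w|=1$, one arrives at a representation of $\bvarphi^{\bc}(z)$ as a matrix-analytic combination of $G(z)$, $N(z)$, and values of $\bPhi_1,\bPhi_2$ at analytic functions of $z$. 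The same block-state/contour construction is used in \cite{Ozawa22} to obtain the decay rate, and I would reuse it here.

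Next I would track singularities on $\bar{\Delta}_{e^{\theta_{\bc}^{max}}}$. By Corollary \ref{co:G_analytic_boundary}, $G(z)$ is element-wise analytic on $\partial\Delta_{e^{\theta_{\bc}^{max}}}\setminus\{e^{\theta_{\bc}^{max}}\}$, with a square-root branch point at $z=e^{\theta_{\bc}^{max}}$ governed by the Perron branch $\alpha_{s_0}(z)$ (Lemmas \ref{le:G_decomposition} and \ref{le:alpha_s0_limit}). Decomposition \eqref{eq:G_desomposition_1}--\eqref{eq:G_desomposition_2} isolates this Perron part from the strictly subdominant part $G^\dagger(z)$ satisfying $\spr(G^\dagger(z))<\alpha_{s_0}(|z|)$ in a neighborhood of the boundary circle. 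From this it follows that no non-Perron eigenvalue of $A_{*,0}(z)+A_{*,1}(z)G(z)$ can reach $1$ on $\partial\Delta_{e^{\theta_{\bc}^{max}}}\setminus\{e^{\theta_{\bc}^{max}}\}$, so $N(z)$ is analytic there. The axial generating functions $\bPhi_1(\zeta)$ and $\bPhi_2(\zeta)$ have their own singularities at $\zeta=e^{\theta_1^*}$ and $\zeta=e^{\theta_2^*}$, respectively; the condition $\bar{\eta}_1'(\theta_2^*)\le -1\le 1/\bar{\eta}_2'(\theta_1^*)$ in Type 1 is exactly what ensures that, for $z$ on the closed disk $\bar{\Delta}_{e^{\theta_{\bc}^{max}}}$, the arguments at which $\bPhi_1$ and $\bPhi_2$ are evaluated in the residue expression stay inside their domains of analyticity except when $z=e^{\theta_{\bc}^{max}}$ (which corresponds to the tangent point of $\Gamma^{\{1,2\}}$ in direction $\bc$).

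Combining these facts yields analyticity of $\bvarphi^{\bc}(z)$ on $\bar{\Delta}_{e^{\theta_{\bc}^{max}}}\setminus\{e^{\theta_{\bc}^{max}}\}$, which by the remark at the beginning of Section \ref{sec:preliminary} upgrades to analyticity on some $\tilde{\Delta}_{e^{\theta_{\bc}^{max}}}(\varepsilon,\theta)$. The hard part will be the last step: one must verify that the inner singularities of $\bPhi_1,\bPhi_2$ at $e^{\theta_1^*}$ and $e^{\theta_2^*}$ (which lie strictly inside $\Delta_{e^{\theta_{\bc}^{max}}}$ under the hypothesis) do not propagate into $\bvarphi^{\bc}(z)$; this requires showing, via the residue calculation, that any singular contribution from these axial points cancels against matching residues coming from zeros of $\det(wI-G(z))$, so that only the Perron branch at $z=e^{\theta_{\bc}^{max}}$ survives. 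A further subtlety is the boundary cases $\bar{\eta}_2'(\theta_1^*)=-1$ or $\bar{\eta}_1'(\theta_2^*)=-1$ (corresponding to $\mathrm{Q}_1$ or $\mathrm{Q}_2$ being the tangent point), where $G(z)$ still has its branch point at $z=e^{\theta_{\bc}^{max}}$ but one of the axial singularities coincides with it; here one must check that the singularities combine rather than compete, which is the mechanism behind the loss of the $k^{-1/2}$ factor in Proposition \ref{pr:power_term} but does not affect the analyticity statement being proved.
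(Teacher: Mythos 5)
Your plan takes a genuinely different route from the paper: you start from the two--variable functional equation for $\bPhi(z_1,z_2)$ and propose to extract the diagonal by a Cauchy contour integral followed by a residue evaluation of the matrix kernel (the classical Malyshev/kernel method). The paper instead works from the compensation identity \eqref{eq:varphic_eq1}--\eqref{eq:phic2_def}, which writes $\bvarphi^{\bc}(z)$ as boundary measures multiplied by the fundamental--matrix generating functions $\Phi^{\bc}_{\bx,*}(z)$ of the free process, and then performs the change of state space to the hat--process so that the diagonal becomes a coordinate direction; the boundary--circle analyticity is then read off from $\hat{G}_{0,*}(z)$, $\hat{G}^r_{0,*}(z)$, $\hat{U}(z)=\hat{A}^{\{1,2\}}_{*,-1}\hat{G}^r_{0,*}+\hat{A}^{\{1,2\}}_{*,0}+\hat{A}^{\{1,2\}}_{*,1}\hat{G}_{0,*}$ and $\hat{\Phi}_{(0,0),*}(z)=(I-\hat{U}(z))^{-1}$, together with the convergence of the series $\hat{\ba}(z,\hat{G}_{0,*}(z))$, which is exactly where the hypothesis $\bar{\eta}_1'(\theta_2^*)\le -1\le 1/\bar{\eta}_2'(\theta_1^*)$ enters (it forces $\spr(\hat{G}_{0,*}(z))<e^{2\theta_2^*}$ away from $z=e^{\theta_{\bc}^{max}}$). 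Your intuition about the role of the hypothesis is the same as the paper's; one simplification you miss is that analyticity \emph{inside} $\Delta_{e^{\theta_{\bc}^{max}}}$ is automatic by Pringsheim (nonnegative coefficients and $\xi_{\bc}=\theta_{\bc}^{max}$), so the only real content is the circle $\partial\Delta_{e^{\theta_{\bc}^{max}}}\setminus\{e^{\theta_{\bc}^{max}}\}$ and a sector beyond it.

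The gap is that the decisive steps of your route are asserted rather than proved, and they are precisely where the difficulty sits. (i) The contour representation $\frac{1}{2\pi i}\oint_{|w|=1}\bPhi(zw,z/w)\frac{dw}{w}$ (which, as written, yields $\bvarphi^{\bc}(z^2)$, not $\bvarphi^{\bc}(z)$) is only valid while $(\log|z|,\log|z|)\in\calD_{\bvarphi}$, and the diagonal point generically leaves $\calD$ well before $2\log|z|$ reaches $\theta_{\bc}^{max}$; continuing the representation up to the boundary circle requires deforming the contour across the zeros in $w$ of $\det(I-A^{\{1,2\}}_{*,*}(zw,z/w))$, a polynomial of degree up to $4s_0$ whose branches must be tracked and summed --- this residue sum is exactly what $\hat{G}_{0,*}(z)^{k}$ and $(I-\hat{U}(z))^{-1}$ package in closed form, so you would end up rebuilding the paper's machinery. (ii) Your claim that $N(z)$ is analytic on the punctured circle "follows" from the spectral decomposition of $G(z)$ is not a proof; the paper's analogue (Proposition \ref{pr:spr_hatU_eq1}) needs the reverse--direction G-matrix, the coincidence of eigenvectors when $\underline{\theta}=\bar{\theta}$ (Lemma \ref{le:G_Gr_v}), log-convexity of $\spr(\hat{U}(e^{\theta}))$, and a contradiction with the known convergence of $\hat{\Phi}_{(0,0),*}(z)$. (iii) The cancellation of the spurious singular contributions at $e^{\theta_1^*}$, $e^{\theta_2^*}$ in your residue representation is flagged by you as "must be verified" but never established; in the paper's representation this issue does not arise because the boundary generating function is evaluated at $\spr(\hat{G}_{0,*}(z))$, which the hypothesis keeps inside the domain of analyticity. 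As it stands the proposal is a plausible program, not a proof.
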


\begin{proposition} \label{pr:varphic_expansion}
Assume Type 1. If $\bar{\eta}_1'(\theta_2^*)\le -c_1/c_2=-1 \le 1/\bar{\eta}_2'(\theta_1^*)$, there exists a vector function $\tilde{\bvarphi}^{\bc}(\zeta)$ being meromorphic in a neighborhood of $\zeta=0$ and satisfying $\bvarphi^{\bc}(z)=\tilde{\bvarphi}^{\bc}((e^{\theta_{\bc}^{max}}-z)^{\frac{1}{2}})$ in a neighborhood of $z=e^{\theta_{\bc}^{max}}$. 
If $\bar{\eta}_1'(\theta_2^*)< -1 < 1/\bar{\eta}_2'(\theta_1^*)$, the point $\zeta=0$ is a pole of $\tilde{\bvarphi}^{\bc}(\zeta)$ with at most order one; if $\bar{\eta}_1'(\theta_2^*)=-1$ or  $\bar{\eta}_2'(\theta_1^*)=-1$, it is a pole of $\tilde{\bvarphi}^{\bc}(\zeta)$ with at most order two.
\end{proposition}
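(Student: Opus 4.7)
The plan is to obtain an explicit matrix-analytic representation of $\bvarphi^{\bc}(z)$, substitute $\zeta=(e^{\theta_{\bc}^{max}}-z)^{1/2}$ using the local expansion from Lemma~\ref{le:G_decomposition}, and then bound the order of vanishing of the resulting denominator at $\zeta=0$. I would begin by using the block-state reformulation of the 2d-QBD along direction $\bc=(1,1)$ described in Section~3.3 of \cite{Ozawa22}. Solving the stationary equation of the resulting one-dimensional QBD-like process by censoring and matrix-geometric manipulation produces a schematic representation
\begin{equation*}
\bvarphi^{\bc}(z)=\boldsymbol{\psi}(z)\bigl(I-U(z)\bigr)^{-1},
\end{equation*}
where $U(z)$ is a square matrix built from the interior G-matrix function $G(z)$ together with its reverse-direction counterpart $G^r(z)$ (Section~\ref{sec:Gmatrix_reverse}), and $\boldsymbol{\psi}(z)$ is a row vector assembled from the boundary stationary probabilities and the MA-kernel generating functions $\bar A^{\{1\}}_*$, $\bar A^{\{2\}}_*$. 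Both factors are entry-wise analytic on $\tilde\Delta_{e^{\theta_{\bc}^{max}}}(\varepsilon,\theta)$ by Proposition~\ref{pr:varphic_analytic_domain}, and $\spr(U(e^{\theta_{\bc}^{max}}))=1$ because the Perron--Frobenius eigenvalue of $G(e^{\theta_{\bc}^{max}})$ attains the critical value $\alpha_{s_0}(e^{\theta_{\bc}^{max}})$ at the $\bc$-supporting point of $\partial\Gamma^{\{1,2\}}$.

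Next, I would apply Lemma~\ref{le:G_decomposition} to write $G(z)=\tilde G((e^{\theta_{\bc}^{max}}-z)^{1/2})$ with $\tilde G(\zeta)=\tilde G^\dagger(\zeta)+\tilde\alpha_{s_0}(\zeta)\tilde\bv_{s_0}(\zeta)\tilde\bu_{s_0}(\zeta)$ entry-wise analytic at $\zeta=0$; the same argument applied to the reverse-direction triplet yields a parallel expansion for $G^r(z)$. Substituting $\zeta=(e^{\theta_{\bc}^{max}}-z)^{1/2}$ into the representation above produces entry-wise analytic $\tilde{\boldsymbol{\psi}}(\zeta)$ and $\tilde U(\zeta)$, and
\begin{equation*}
\tilde{\bvarphi}^{\bc}(\zeta)=\tilde{\boldsymbol{\psi}}(\zeta)\,\frac{\adj\bigl(I-\tilde U(\zeta)\bigr)}{\det\bigl(I-\tilde U(\zeta)\bigr)}
\end{equation*}
is meromorphic in a neighborhood of $\zeta=0$, with possible poles only at the zeros of $\det(I-\tilde U(\zeta))$ or of any denominator present in $\tilde{\boldsymbol{\psi}}(\zeta)$.

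For the pole-order count, Lemmas~\ref{le:alpha_s0_limit} and \ref{le:G_limit} give $\tilde\alpha_{s_0}(\zeta)=\tilde\alpha_{s_0}(0)-\alpha_{s_0,1}\zeta+O(\zeta^2)$ with $\alpha_{s_0,1}>0$, and show that the Perron--Frobenius branch of $\tilde G(\zeta)$ is the sole contributor of an eigenvalue of $\tilde U(0)$ equal to one; hence it produces a simple zero of $\det(I-\tilde U(\zeta))$ at $\zeta=0$. In the interior subcase $\bar\eta_1'(\theta_2^*)<-1<1/\bar\eta_2'(\theta_1^*)$, the $\bc$-supporting point of $\partial\Gamma^{\{1,2\}}$ lies strictly between $\mathrm{Q}_1$ and $\mathrm{Q}_2$, the axis kernels $\bar A^{\{1\}}_*$ and $\bar A^{\{2\}}_*$ remain subcritical at the relevant argument, no further singularity enters $\tilde{\boldsymbol{\psi}}(\zeta)$, and $\tilde{\bvarphi}^{\bc}$ has a pole of order at most one at $\zeta=0$. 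In the boundary subcase $\bar\eta_2'(\theta_1^*)=-1$, the optimum coincides with $\mathrm{Q}_1=(\theta_1^*,\bar\eta_2(\theta_1^*))\in\partial\Gamma^{\{1\}}$; the factor $(I-\bar A^{\{1\}}_*)^{-1}$ implicit in $\boldsymbol{\psi}(z)$ becomes singular at $z=e^{\theta_{\bc}^{max}}$, contributing one further simple zero to the combined denominator and hence a pole of order at most two. The subcase $\bar\eta_1'(\theta_2^*)=-1$ is handled symmetrically.

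The main obstacle is the pole-order bookkeeping in the boundary subcase: one must verify that the extra zero coming from $I-\bar A^{\{1\}}_*$ is \emph{not} cancelled by a compensating zero of $\adj(I-\tilde U(\zeta))\,\tilde{\boldsymbol{\psi}}(\zeta)$. This requires identifying the left null vector of $I-\tilde U(0)$ (which, via Lemma~\ref{le:G_Gr_v} and the reverse-direction structure, is proportional to the Perron eigenvector of $\tilde G(0)$) and verifying, via Assumptions~\ref{as:MAprocess_irreducible}--\ref{as:Y12_onZpmZmp_irreducible} and the strict-positivity statement in Lemma~\ref{le:G_limit}, that its pairings with the residual boundary data are strictly positive. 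The same non-cancellation check also guarantees that the pole order equals exactly one in the interior subcase.
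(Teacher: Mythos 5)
Your overall strategy is the one the paper itself follows: represent $\bvarphi^{\bc}(z)$ through the censored, matrix-geometric factorization $(I-\hat{U}(z))^{-1}$ built from the interior G-matrix function and its reverse-direction counterpart, substitute $\zeta=(e^{\theta_{\bc}^{max}}-z)^{\frac{1}{2}}$ via Lemma \ref{le:G_decomposition}, and count zeros of the resulting denominator at $\zeta=0$. The simple zero of $\det(I-\tilde{U}(\zeta))$ is exactly the content of Propositions \ref{pr:spr_hatU_eq1} and \ref{pr:hatPhi00s_analyticproperties}, so this is not a different route.

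There is, however, a genuine gap in your pole-order bookkeeping for the boundary subcases. The extra singularity when $\bar{\eta}_1'(\theta_2^*)=-1$ (or symmetrically $\bar{\eta}_2'(\theta_1^*)=-1$) does not come from a factor of $\boldsymbol{\psi}(z)$ ``becoming singular at $z=e^{\theta_{\bc}^{max}}$'' as a function of $z$: if the prefactor had a simple pole in the variable $z$ at $z=e^{\theta_{\bc}^{max}}$, it would contribute $\zeta^{-2}$ after the substitution, and together with the $\zeta^{-1}$ from $\det(I-\tilde{U}(\zeta))$ you would obtain order three, contradicting the claimed bound. What actually happens (Proposition \ref{pr:hata_analyticproperties}) is that the prefactor contains the composition $\hat{\bnu}_{(0,*)}(\alpha_{s_0}(z))$ of the boundary generating function with the Perron eigenvalue of the G-matrix function: $\hat{\bnu}_{(0,*)}(w)$ has a simple pole at $w=e^{2\theta_2^*}$ (Proposition \ref{pr:hatvarphi2_analyticproperties}), in the boundary case $\tilde{\alpha}_{s_0}(0)=e^{2\theta_2^*}$, and the composed function has a pole of order exactly one in $\zeta$ only because the expansion of $\tilde{\alpha}_{s_0}(\zeta)$ has a nonvanishing linear term in $\zeta$ (Lemma \ref{le:alpha_s0_limit}). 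You must make this composition explicit; ``one further simple zero of the combined denominator'' is ambiguous as to the variable, and the correct order hinges on the square-root nature of the branch point. A secondary remark: the non-cancellation check you identify as the main obstacle is not needed for this proposition, which asserts only upper bounds on the pole orders; exactness is deferred to Propositions \ref{pr:varphic_limit1} and \ref{pr:varphic_limit2}, where the relevant positivity (Proposition \ref{pr:gvA_positivity} and the positivity of $\hat{g}^\Phi$ and $\hat{g}_2^a$) is established.
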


By Proposition \ref{pr:varphic_expansion}, if $\bar{\eta}_1'(\theta_2^*)< -1 < 1/\bar{\eta}_2'(\theta_1^*)$, the Puiseux series of $\bvarphi^{\bc}(z)$ is represented as 
\begin{equation}
\bvarphi^{\bc}(z) = \sum_{k=-1}^\infty \bvarphi^{\bc}_{1,k} (e^{\theta_{\bc}^{max}}-z)^{\frac{k}{2}}, 
\label{eq:phic_Puiseux_1}
\end{equation}
where $\{\bvarphi^{\bc}_{1,k}\}$ is the sequence of coefficient vectors; if $\bar{\eta}_1'(\theta_2^*)=-1$ or  $\bar{\eta}_2'(\theta_1^*)=-1$, it is represented as 
\begin{equation}
\bvarphi^{\bc}(z) = \sum_{k=-2}^\infty \bvarphi^{\bc}_{2,k} (e^{\theta_{\bc}^{max}}-z)^{\frac{k}{2}}, 
\label{eq:phic_Puiseux_2}
\end{equation}
where $\{\bvarphi^{\bc}_{2,k}\}$ is the sequence of coefficient vectors. 
The coefficient vector $\bvarphi^{\bc}_{1,-1}$ in \eqref{eq:phic_Puiseux_1} satisfies the following. 
\begin{proposition} \label{pr:varphic_limit1}
Assume Type 1 and $\bar{\eta}_1'(\theta_2^*)< -1 < 1/\bar{\eta}_2'(\theta_1^*)$. Then, we have, for a positive vector $\bu_0^{\bc}$, 
\begin{equation}
\bvarphi^{\bc}_{1,-1}
= \lim_{\tilde{\Delta}_{e^{\theta_{\bc}^{max}}}\ni z\to e^{\theta_{\bc}^{max}}} (e^{\theta_{\bc}^{max}}-z)^{\frac{1}{2}} \bvarphi^{\bc}(z) 
= \bu_0^{\bc}.
\label{eq:varphic_limit1}
\end{equation}
\end{proposition}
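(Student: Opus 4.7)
The plan is to combine the explicit representation of $\bvarphi^{\bc}(z)$ established in the proof of Proposition \ref{pr:varphic_expansion} with the rank-one expansion of $G(z)$ at its branch point given by Lemma \ref{le:G_limit}, and then to read off positivity from the Perron-Frobenius structure of the resulting vector.

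First, Proposition \ref{pr:varphic_expansion} combined with the hypothesis $\bar{\eta}'_1(\theta_2^*)<-1<\bar{\eta}'_2(\theta_1^*)^{-1}$ guarantees that $\tilde{\bvarphi}^{\bc}(\zeta)=\bvarphi^{\bc}(e^{\theta_{\bc}^{max}}-\zeta^2)$ is meromorphic near $\zeta=0$ with a pole of at most order one, so the limit in \eqref{eq:varphic_limit1} exists as a finite vector equal to the residue at $\zeta=0$. I would substitute the decomposition of Lemma \ref{le:G_decomposition}, $\tilde{G}(\zeta)=\tilde{G}^\dagger(\zeta)+\tilde{\alpha}_{s_0}(\zeta)\tilde{\bv}_{s_0}(\zeta)\tilde{\bu}_{s_0}(\zeta)$, into that representation of $\bvarphi^{\bc}(z)$. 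Under the strict interior hypothesis the maximiser of $\theta_1+\theta_2$ on $\overline{\Gamma^{\{1,2\}}}$ lies strictly between the corner points $\mathrm{Q}_1$ and $\mathrm{Q}_2$, so every ingredient of the formula other than the $\tilde{\alpha}_{s_0}$-term is entry-wise analytic at $\zeta=0$; consequently the singular part of $\tilde{\bvarphi}^{\bc}(\zeta)$ factors through the rank-one perturbation, and the residue takes the shape $\bu_0^{\bc}=\alpha_{s_0,1}\,\kappa\,N(e^{\theta_{\bc}^{max}})\bv^{R}(e^{\theta_{\bc}^{max}})$, where $\kappa$ is a scalar inner product of $\bu^{G}_{s_0}(e^{\theta_{\bc}^{max}})$ with a nonnegative boundary-stationary-mass vector determined by $\bnu$.

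Strict positivity of $\bu_0^{\bc}$ would then follow factor by factor: $\alpha_{s_0,1}>0$ by Lemma \ref{le:alpha_s0_limit}; the eigenvectors $\bv^{R}(e^{\theta_{\bc}^{max}})$ and $\bu^{G}_{s_0}(e^{\theta_{\bc}^{max}})$ are strictly positive by Perron-Frobenius applied to the irreducible nonnegative matrices $R(e^{\theta_{\bc}^{max}})$ and $G(e^{\theta_{\bc}^{max}})$; and $N(e^{\theta_{\bc}^{max}})=(I-A_{*,0}(e^{\theta_{\bc}^{max}})-A_{*,1}(e^{\theta_{\bc}^{max}})G(e^{\theta_{\bc}^{max}}))^{-1}$ admits a Neumann expansion with nonnegative entries which, together with the irreducibility of the induced MA-processes, preserves strict positivity.

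The main obstacle is verifying that the scalar $\kappa$ does not vanish, since a hidden cancellation there would annihilate the whole leading coefficient and downgrade the expected $k^{-1/2}$ power. To rule this out I would run a Tauberian consistency check via Lemma \ref{le:asympto_formula_ss}: a zero residue would force $\bnu_{k\bc}$ to decay strictly faster than $k^{-1/2}e^{-\theta_{\bc}^{max}k}$, which is inconsistent with Theorem \ref{th:decay_rate} combined with the irreducibility-based lower bound $\nu_{(k\bc,j)}>0$ for every sufficiently large $k$ and $j\in S_0$. Equivalently, the Abel-Tauberian theorem applied coordinatewise to $\bvarphi^{\bc}_j(z)=\sum_k\nu_{(k\bc,j)}z^k$ yields $\bvarphi^{\bc}_{1,-1}\ge\bzero$, and the rank-one structure $N(e^{\theta_{\bc}^{max}})\bv^{R}(e^{\theta_{\bc}^{max}})\bu^{G}_{s_0}(e^{\theta_{\bc}^{max}})$ then upgrades nonnegativity of any single component to strict positivity of every component, yielding the required positive vector $\bu_0^{\bc}$.
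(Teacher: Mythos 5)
Your outline reproduces the analytic-structure half of the argument correctly (Proposition \ref{pr:varphic_expansion} plus the rank-one branch-point decomposition of the G-matrix function does localize the singular part and shows the limit exists and is nonnegative), but the step you yourself flag as ``the main obstacle'' --- showing the scalar $\kappa$ does not vanish --- is exactly where the proposal breaks down, and the fix you propose does not work. Your Tauberian consistency check argues that a zero residue would force $\bnu_{k\bc}$ to decay strictly faster than $k^{-1/2}e^{-\theta_{\bc}^{max}k}$, contradicting Theorem \ref{th:decay_rate}. But Theorem \ref{th:decay_rate} only pins down the \emph{exponential} rate $\xi_{\bc}=\theta_{\bc}^{max}$; a sequence behaving like $k^{-3/2}e^{-\theta_{\bc}^{max}k}$ (or any other subexponential correction) has the same decay rate and keeps every $\nu_{(k\bc,j)}$ strictly positive. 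So nothing in the known decay-rate result or in irreducibility excludes a vanishing leading Puiseux coefficient, and the nonnegativity you get from Abel--Tauber coordinatewise is consistent with $\bvarphi^{\bc}_{1,-1}=\bzero$. A secondary inaccuracy: the ``boundary-stationary-mass vector'' entering the inner product is not nonnegative --- in the paper it is $\bg(z,w)=\bvarphi_1(z)(A^{\{1\}}_{*,*}-I)+\bvarphi_2(w)(A^{\{2\}}_{*,*}-I)+\bnu_{(0,0)}(A^{\emptyset}_{*,*}-I)$, whose $(A-I)$ factors have negative diagonal entries, so positivity of the inner product cannot be read off from sign considerations alone.

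The paper closes this gap by an argument of a different nature. It introduces the two-variable generating function $\bvarphi_+(z,w)$, proves that its convergence domain is exactly $\calD$ (Lemma \ref{le:varphi_domain}), and deduces in Proposition \ref{pr:gvA_positivity} that for every point $(\theta_1,\theta_2)$ on the partial boundary $\partial\Gamma^*$ the point $z=e^{\theta_1}$ is a genuine (non-removable) pole of $\bvarphi_+(\cdot,e^{\theta_2})$; since the residue there is proportional to $\bg(e^{\theta_1},e^{\theta_2})\bv^A(e^{\theta_1},e^{\theta_2})$, that inner product must be strictly positive. Under the strict hypothesis $\bar{\eta}_1'(\theta_2^*)<-1<1/\bar{\eta}_2'(\theta_1^*)$ the maximizing point $(\eta_{\bc,1}^{max},\eta_{\bc,2}^{max})$ lies on $\partial\Gamma^*$, the boundary series $\bvarphi_1$, $\bvarphi_2$ converge there, and combining with the limit of $\Phi^{\bc}_{\bx,*}(z)$ in Proposition \ref{pr:Phics_limit} (which identifies the rank-one limit as $\hat g^\Phi\bv^A\bu_{\bc}$) gives $\bvarphi^{\bc}_{1,-1}=\hat g^\Phi\,\bg(e^{\eta_{\bc,1}^{max}},e^{\eta_{\bc,2}^{max}})\bv^A(e^{\eta_{\bc,1}^{max}},e^{\eta_{\bc,2}^{max}})\,\bu_{\bc}>\bzero^\top$. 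You would need to supply some such non-cancellation mechanism --- the domain-of-convergence/pole argument is the essential missing idea.
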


This proposition will be proved in Section \ref{sec:varphic_coefficient}, where $\bu_0^{\bc}$ is specified. Since $\bvarphi^{\bc}_{1,-1}$ is positive, we obtain, by Lemma \ref{le:asympto_formula_ss}, 
\[
h_{\bc}(k) = k^{-\frac{1}{2}} e^{-\theta_{\bc}^{max} k}.
\]
This completes the former half of the proof of Proposition \ref{pr:power_term}. 
The coefficient vector $\bvarphi^{\bc}_{2,-2}$ in \eqref{eq:phic_Puiseux_2} satisfies the following. 
\begin{proposition} \label{pr:varphic_limit2}
Assume Type 1. Then, we have, for some positive vectors $\bu_1^{\bc}$ and $\bu_2^{\bc}$, 
\begin{equation}
\bvarphi^{\bc}_{2,-2}
= \lim_{\tilde{\Delta}_{e^{\theta_{\bc}^{max}}}\ni z\to e^{\theta_{\bc}^{max}}} (e^{\theta_{\bc}^{max}}-z) \bvarphi^{\bc}(z) 
= \left\{ \begin{array}{ll}
\bu_1^{\bc} & \mbox{if $\bar{\eta}_1'(\theta_2^*)=-1$ and $\bar{\eta}_2'(\theta_1^*)<-1$}, \cr 
\bu_2^{\bc} & \mbox{if $\bar{\eta}_1'(\theta_2^*)<-1$ and $\bar{\eta}_2'(\theta_1^*)=-1$}, \cr
\bu_1^{\bc}+\bu_2^{\bc} & \mbox{if $\bar{\eta}_1'(\theta_2^*)=\bar{\eta}_2'(\theta_1^*)=-1$}. 
\end{array} \right.
\label{eq:varphic_limit2}
\end{equation}
\end{proposition}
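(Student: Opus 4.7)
The plan is to build directly on the framework already set up in the proof of Proposition \ref{pr:varphic_expansion}. By that proposition, in the boundary cases the change of variable $\zeta = (e^{\theta_{\bc}^{max}}-z)^{1/2}$ transforms $\bvarphi^{\bc}(z)$ into a function $\tilde{\bvarphi}^{\bc}(\zeta)$ that is meromorphic in a neighborhood of $\zeta=0$ with a pole of order at most two, so
\[
\bvarphi^{\bc}_{2,-2} \;=\; \lim_{\zeta\to 0}\,\zeta^{2}\,\tilde{\bvarphi}^{\bc}(\zeta) \;=\; \lim_{\tilde{\Delta}_{e^{\theta_{\bc}^{max}}}\ni z\to e^{\theta_{\bc}^{max}}}\, (e^{\theta_{\bc}^{max}}-z)\,\bvarphi^{\bc}(z).
\]
The task is therefore to identify the Laurent coefficient of $\zeta^{-2}$ in $\tilde{\bvarphi}^{\bc}(\zeta)$ and show it equals $\bu_1^{\bc}$, $\bu_2^{\bc}$, or $\bu_1^{\bc}+\bu_2^{\bc}$ with each $\bu_i^{\bc}$ strictly positive.

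First I would unpack the decomposition of $\bvarphi^{\bc}(z)$ used in the proof of Proposition \ref{pr:varphic_expansion}, which represents the generating function as a combination of boundary-axis generating functions coupled through the G-matrix function $G(z)$ (and its reverse-direction analogue $G^r$) of the $\bc$-directional induced MA-process, together with resolvent factors of the form $(I-M(z))^{-1}$ that encode the reflection at the $x_1$- and $x_2$-axes. By Lemma \ref{le:G_decomposition}, the G-matrix contribution to $\tilde{\bvarphi}^{\bc}(\zeta)$ is analytic with a term of order $\zeta$ coming from Lemma \ref{le:alpha_s0_limit}; this alone yields only a $\zeta^{-1}$ pole, as in Proposition \ref{pr:varphic_limit1}. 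A genuine $\zeta^{-2}$ pole can appear only if one of the resolvent factors itself becomes singular at $z=e^{\theta_{\bc}^{max}}$.

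Next I would localize these extra singularities geometrically. The condition $\bar{\eta}_2'(\theta_1^*)=-1$ is exactly the statement that the line $\theta_1+\theta_2=\theta_{\bc}^{max}$ is tangent to $\partial\Gamma^{\{1,2\}}$ at $\mathrm{Q}_1=(\theta_1^*,\bar{\eta}_2(\theta_1^*))$, which also lies on $\partial\Gamma^{\{1\}}$; this forces the matrix pencil governing the $x_1$-axis resolvent to lose rank at $z=e^{\theta_{\bc}^{max}}$, producing a simple pole whose residue is expressible through the Perron--Frobenius left/right eigenvectors of $\bar A^{\{1\}}_*(e^{\theta_1^*})$ and the stationary measure of the boundary Markov chain on the $x_1$-axis. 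Combining this simple pole in $z$ with the regular part of the G-matrix contribution (and using Lemma \ref{le:G_Gr_v} and the relation \eqref{eq:ImAs_fact} to identify the relevant eigenvectors) yields the $\zeta^{-2}$-coefficient $\bu_1^{\bc}$, and positivity follows from Assumptions \ref{as:QBD_irreducible} and \ref{as:MAprocess_irreducible} together with Perron--Frobenius. The vector $\bu_2^{\bc}$ in the case $\bar{\eta}_1'(\theta_2^*)=-1$ is obtained by the symmetric construction with the $x_2$-axis MA-kernel. When both tangency conditions hold, the two singular contributions are attached to distinct boundary resolvent factors and decouple at leading order, so they add, giving $\bu_1^{\bc}+\bu_2^{\bc}$.

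The main obstacle will be the bookkeeping in the second step: verifying that the tangency condition $\bar{\eta}_2'(\theta_1^*)=-1$ produces exactly a first-order (as opposed to a higher- or lower-order) singularity in the $x_1$-boundary resolvent, so that the combined pole order is precisely two, and then writing the residue in a closed form that makes positivity manifest. A closely related subtlety is to confirm that in the simultaneous tangency case the two residues $\bu_1^{\bc}$ and $\bu_2^{\bc}$ do not cancel; this is where the strict positivity established via Perron--Frobenius, together with the irreducibility assumptions on the lossy chains in Assumption \ref{as:Y12_onZpmZmp_irreducible}, is essential.
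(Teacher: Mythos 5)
Your plan follows essentially the same route as the paper: the same decomposition $\bvarphi^{\bc}=\bvarphi^{\bc}_0+\bvarphi^{\bc}_1+\bvarphi^{\bc}_2$, the same identification of the tangency conditions $\bar{\eta}_2'(\theta_1^*)=-1$ and $\bar{\eta}_1'(\theta_2^*)=-1$ as the source of the extra singularity in the corresponding axis term, Perron--Frobenius positivity of the residues, and additivity of the two contributions in the doubly tangent case. One correction to the mechanism, though, which matters for the bookkeeping you flag as the main obstacle: no additional resolvent factor "loses rank" at $z=e^{\theta_{\bc}^{max}}$ in the boundary cases. The resolvent $\hat{\Phi}_{(0,0),*}(z)=(I-\hat{U}(z))^{-1}$ is singular of order one in $\zeta=(e^{\theta_{\bc}^{max}}-z)^{1/2}$ in \emph{all} cases (Proposition \ref{pr:hatPhi00s_analyticproperties}); the extra order of the pole comes instead from the coefficient vector $\hat{\ba}(z,\hat{G}_{0,*}(z))$, more precisely from the axis generating function $\hat{\bnu}_{(0,*)}(w)$ evaluated at $w=\alpha_{s_0}(z)$. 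When $\bar{\eta}_1'(\theta_2^*)=-1$ one has $\alpha_{s_0}(e^{\theta_{\bc}^{max}})=e^{2\theta_2^*}$, the location of the simple pole of $\hat{\bnu}_{(0,*)}$, and since $\alpha_{s_0}(e^{\theta_{\bc}^{max}})-\alpha_{s_0}(z)\sim -\alpha_{s_0,1}\zeta$ by Lemma \ref{le:alpha_s0_limit}, this composition is singular of order one in $\zeta$ (not a simple pole in $z$, which would be order two in $\zeta$). The product of these two order-one singularities gives the order-two pole and the residue $\hat{g}_2^a\hat{g}^\Phi\hat\bu^G\hat{\bv}^{U}\hat{\bu}^U_1$; describing the axis contribution as "a simple pole in $z$ combined with the regular part of the G-matrix contribution" would miscount the order. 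With that repair your argument is the paper's.
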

This proposition will be proved in Section \ref{sec:phic_expansion}. Since $\bvarphi^{\bc}_{2,-2}$ is positive, we obtain, by Lemma \ref{le:asympto_formula_ss},
\[
h_{\bc}(k) = e^{-\theta_{\bc}^{max} k}.
\]
This completes the latter half of the proof of Proposition \ref{pr:power_term}. 

\begin{remark}
By Propositions \ref{pr:varphic_limit1} and \ref{pr:varphic_limit2}, the order of the pole of the vector function $\tilde{\bvarphi}^{\bc}(\zeta)$ in Proposition \ref{pr:varphic_expansion} at the point $\zeta=0$ is just one if $\bar{\eta}_1'(\theta_2^*)< -1 < 1/\bar{\eta}_2'(\theta_1^*)$; it is just two if $\bar{\eta}_1'(\theta_2^*)= -1$ or $\bar{\eta}_2'(\theta_1^*)=-1$.
\end{remark}

%
%
\subsection{Proof of Propositions \ref{pr:varphic_analytic_domain}, \ref{pr:varphic_expansion} and \ref{pr:varphic_limit2}} \label{sec:phic_expansion}

The direction vector $\bc$ is set as $\bc=(1,1)$. Notation follows \cite{Ozawa22}. 

%
Denote by $\Phi^{\{1,2\}}=(\Phi^{\{1,2\}}_{\bx,\bx'};\bx,\bx'\in\mathbb{Z}^2)$ the fundamental matrix of $P^{\{1,2\}}$, i.e., $\Phi^{\{1,2\}}=\sum_{n=0}^\infty (P^{\{1,2\}})^n$, where $P^{\{1,2\}}=(P^{\{1,2\}}_{\bx,\bx'};\bx,\bx'\in\mathbb{Z}^2)$ is the transition probability matrix of the induced MA-process $\{\bY^{\{1,2\}}_n\}$. For $\bx\in\mathbb{Z}^2$, define the matrix generating function of the blocks of $\Phi^{\{1,2\}}$ in direction $\bc$, $\Phi^{\bc}_{\bx,*}(z)$, as
\[
\Phi^{\bc}_{\bx,*}(z)
=  \sum_{k=-\infty}^\infty z^k \Phi^{\{1,2\}}_{\bx,k\bc}.
\]
According to equation (3.3) of \cite{Ozawa22}, we divide $\bvarphi^{\bc}(z)$ into three parts as follows: 
\begin{equation}
\bvarphi^{\bc}(z) = \bvarphi^{\bc}_0(z) + \bvarphi^{\bc}_1(z) + \bvarphi^{\bc}_2(z), 
\label{eq:varphic_eq1}
\end{equation}
where
\begin{align}
&\bvarphi^{\bc}_0(z) = \sum_{i_1,i_2\in\{-1,0,1\}} \bnu_{(0,0)} (A^\emptyset_{i_1,i_2}-A^{\{1,2\}}_{i_1,i_2}) \Phi^{\bc}_{(i_1,i_2),*}(z), 
\label{eq:phic0_definition}\\
&\bvarphi^{\bc}_1(z) = \sum_{k=1}^\infty\ \sum_{i_1,i_2\in\{-1,0,1\}} \bnu_{(k,0)} (A^{\{1\}}_{i_1,i_2}-A^{\{1,2\}}_{i_1,i_2}) \Phi^{\bc}_{(k+i_1,i_2),*}(z), \\
&\bvarphi^{\bc}_2(z) = \sum_{k=1}^\infty\ \sum_{i_1,i_2\in\{-1,0,1\}} \bnu_{(0,k)} (A^{\{2\}}_{i_1,i_2}-A^{\{1,2\}}_{i_1,i_2}) \Phi^{\bc}_{(i_1,k+i_2),*}(z). 
\label{eq:phic2_def}
\end{align}

%
According to \cite{Ozawa22}, we focus on $\bvarphi^{\bc}_2(z)$ and consider another skip-free MA-process generated from $\{\bY^{\{1,2\}}_n\}$. The MA-process is $\{\hat{\bY}_n\}=\{(\hat{\bX}_n,\hat{\bJ}_n)\}=\{(\hat{X}_{1,n},\hat{X}_{2,n}),(\hat{R}_n,\hat{J}_n)\}$, where $\hat{X}_{1,n}=X^{\{1,2\}}_{1,n}$, $\hat{X}_{2,n}$ and $\hat{R}_n$ are the quotient and remainder of $X^{\{1,2\}}_{2,n}-X^{\{1,2\}}_{1,n}$ divided by $2$, respectively, and $\hat{J}_n=J^{\{1,2\}}_n$. 
The state space of $\{\hat{\bY}_n\}$ is $\mathbb{Z}^2\times\{0,1\}\times S_0$ and the additive part $\{\hat{\bX}_n\}$ is skip free. From the definition, if $\hat{\bX}_n=(x_1,x_2)$ and $\hat{R}_n=r$ in the new MA-process, it follows that $X^{\{1,2\}}_{1,n}=x_1$,  $X^{\{1,2\}}_{2,n}=x_1+2 x_2+r$ in the original MA-process. Hence, $\hat{\bY}_n=(k,0,0,j)$ means $\bY^{\{1,2\}}_n=(k,k,j)$.
Denote by $\hat{P}=(\hat{P}_{\bx,\bx'};\bx,\bx'\in\mathbb{Z}^2)$ the transition probability matrix of $\{\hat{\bY}_n\}$, which is given as 
\[
\hat{P}_{\bx,\bx'} 
= \left\{ \begin{array}{ll} 
\hat{A}^{\{1,2\}}_{\bx'-\bx}, & \mbox{if $\bx'-\bx\in\{-1,0,1\}^2$}, \cr
O, & \mbox{otherwise}, 
\end{array} \right.
\]
where 
\begin{align*}
&\hat{A}^{\{1,2\}}_{-1,1} = \begin{pmatrix} A^{\{1,2\}}_{-1,1} & O \cr  A^{\{1,2\}}_{-1,0} & A^{\{1,2\}}_{-1,1} \end{pmatrix}, \quad
\hat{A}^{\{1,2\}}_{0,1} = \begin{pmatrix} O & O \cr A^{\{1,2\}}_{0,1} & O \end{pmatrix}, \quad
\hat{A}^{\{1,2\}}_{1,1} = \begin{pmatrix} O & O \cr O &O \end{pmatrix}, \cr
&\hat{A}^{\{1,2\}}_{-1,0} = \begin{pmatrix} A^{\{1,2\}}_{-1,-1} & A^{\{1,2\}}_{-1,0} \cr  O & A^{\{1,2\}}_{-1,-1} \end{pmatrix}, \quad
\hat{A}^{\{1,2\}}_{0,0} = \begin{pmatrix} A^{\{1,2\}}_{0,0} & A^{\{1,2\}}_{0,1} \cr A^{\{1,2\}}_{0,-1} & A^{\{1,2\}}_{0,0} \end{pmatrix}, \quad
\hat{A}^{\{1,2\}}_{1,0} = \begin{pmatrix} A^{\{1,2\}}_{1,1} & O \cr A^{\{1,2\}}_{1,0} & A^{\{1,2\}}_{1,1} \end{pmatrix}, \cr
&\hat{A}^{\{1,2\}}_{-1,-1} = \begin{pmatrix} O & O \cr  O & O \end{pmatrix}, \quad
\hat{A}^{\{1,2\}}_{0,-1} = \begin{pmatrix} O & A^{\{1,2\}}_{0,-1} \cr O & O \end{pmatrix}, \quad
\hat{A}^{\{1,2\}}_{1,-1} = \begin{pmatrix} A^{\{1,2\}}_{1,-1} & A^{\{1,2\}}_{1,0} \cr O & A^{\{1,2\}}_{1,-1} \end{pmatrix}.
\end{align*}
Denote by $\hat{\Phi}=(\hat{\Phi}_{\bx,\bx'};\bx,\bx'\in\mathbb{Z}^2)$ the fundamental matrix of $\hat{P}$, i.e., $\hat{\Phi}=\sum_{n=0}^\infty (\hat{P})^n$, and for $\bx=(x_1,x_2)\in\mathbb{Z}^2$, define a matrix generating function $\hat{\Phi}_{\bx,*}(z)$ as
\begin{equation}
\hat{\Phi}_{\bx,*}(z) 
= \sum_{k=-\infty}^\infty z^k \hat{\Phi}_{\bx,(k,0)}
= \begin{pmatrix} 
\Phi^{\bc}_{(x_1,x_1+2 x_2),*}(z) &  \Phi^{\bc}_{(x_1,x_1+2 x_2-1),*}(z) \cr 
\Phi^{\bc}_{(x_1,x_1+2 x_2+1),*}(z) &  \Phi^{\bc}_{(x_1,x_1+2 x_2),*}(z)
\end{pmatrix}.
\label{eq:hatPhixs_def}
\end{equation}
%
Define blocks $\hat{A}^{\{2\}}_{i_1,i_2},\,i_1,i_2\in\{-1,0,1\},$ as $\hat{A}^{\{2\}}_{-1,1} = \hat{A}^{\{2\}}_{-1,0} = \hat{A}^{\{2\}}_{-1,-1} = O$ and 
\begin{align*}
\hat{A}^{\{2\}}_{0,1} = \begin{pmatrix} O & O \cr A^{\{2\}}_{0,1} & O \end{pmatrix}, \quad
\hat{A}^{\{2\}}_{0,0} = \begin{pmatrix} A^{\{2\}}_{0,0} & A^{\{2\}}_{0,1} \cr A^{\{2\}}_{0,-1} & A^{\{2\}}_{0,0} \end{pmatrix}, \quad
\hat{A}^{\{2\}}_{0,-1} = \begin{pmatrix} O & A^{\{2\}}_{0,-1} \cr O & O \end{pmatrix}, \cr
\hat{A}^{\{2\}}_{1,1} = \begin{pmatrix} O & O \cr O &O \end{pmatrix}, \quad 
\hat{A}^{\{2\}}_{1,0} = \begin{pmatrix} A^{\{2\}}_{1,1} & O \cr A^{\{2\}}_{1,0} & A^{\{2\}}_{1,1} \end{pmatrix}, \quad 
\hat{A}^{\{2\}}_{1,-1} = \begin{pmatrix} A^{\{2\}}_{1,-1} & A^{\{2\}}_{1,0} \cr O & A^{\{2\}}_{1,-1} \end{pmatrix}.
\end{align*}
For $i_1,i_2\in\{-1,0,1\}$, define the following matrix generating functions:
\begin{align*}
&\hat{A}^{\{1,2\}}_{*,i_2}(z)=\sum_{i\in\{-1,0,1\}} z^i \hat{A}^{\{1,2\}}_{i,i_2},\quad
\hat{A}^{\{1,2\}}_{i_1,*}(z)=\sum_{i\in\{-1,0,1\}} z^i \hat{A}^{\{1,2\}}_{i_1,i},\\
&\hat{A}^{\{2\}}_{*,i_2}(z)=\sum_{i\in\{0,1\}} z^i \hat{A}^{\{2\}}_{i,i_2}, \quad
\hat{A}^{\{2\}}_{i_1,*}(z)=\sum_{i\in\{-1,0,1\}} z^i \hat{A}^{\{2\}}_{i_1,i}.
\end{align*}
Define a vector function $\hat{\bvarphi}_2(z) $ as
\begin{align}
\hat{\bvarphi}_2(z) 
&=\begin{pmatrix} \hat{\bvarphi}_{2,1}(z) & \hat{\bvarphi}_{2,2}(z) \end{pmatrix}
= \sum_{k=1}^\infty\ \sum_{i_1,i_2\in\{-1,0,1\}} \hat{\bnu}_{(0,k)} (\hat{A}^{\{2\}}_{i_1,i_2}-\hat{A}^{\{1,2\}}_{i_1,i_2}) \hat{\Phi}_{(i_1,k+i_2),*}(z), 
\label{eq:hatphic2_def}
\end{align}
where, for $\bx=(x_1,x_2)\in\mathbb{Z}_+^2$, $\hat{\bnu}_{\bx}=\begin{pmatrix} \bnu_{(x_1,x_1+2x_2)} & \bnu_{(x_1,x_1+2x_2+1)} \end{pmatrix}$ and hence, for $k\ge 0$, 
\[
\hat{\bnu}_{(0,k)}=\begin{pmatrix} \bnu_{(0,2 k)} & \bnu_{(0,2 k+1)} \end{pmatrix}. 
\]
By equation (3.9) of \cite{Ozawa22}, $\bvarphi^{\bc}_2(z)$ is represented as 
\begin{equation}
\bvarphi^{\bc}_2(z) 
= \hat{\bvarphi}_{2,1}(z) + \sum_{i_1,i_2\in\{-1,0,1\}} \bnu_{(0,1)} (A^{\{2\}}_{i_1,i_2}-A^{\{1,2\}}_{i_1,i_2}) \Phi^{\bc}_{(i_1,i_2+1),*}(z). 
\label{eq:phic2_hatphic2_relation}
\end{equation}
We, therefore, consider analytic properties of the vector function $\bvarphi^{\bc}_2(z)$ through $\hat\bvarphi_2(z)$ and $\hat{\Phi}_{\bx,*}(z)$. 

%
Let $\hat{G}_{0,*}(z)$ be the G-matrix function generated from the triplet $\{\hat{A}^{\{1,2\}}_{*,-1}(z),\hat{A}^{\{1,2\}}_{*,0}(z),\hat{A}^{\{1,2\}}_{*,1}(z)\}$. By equations (3.11) and (3.13)  of \cite{Ozawa22}, we have, for $x_2\ge 0$, 
\begin{equation}
\hat{\Phi}_{(x_1,x_2),*}(z) = z^{x_1} \hat{G}_{0,*}(z)^{x_2} \hat{\Phi}_{(0,0),*}(z), 
\label{eq:hatPhi_expression}
\end{equation}
and this leads us to 
\begin{align}
&\hat{\bvarphi}_2(z) = \sum_{k=1}^\infty\ \sum_{i_2\in\{-1,0,1\}} \hat{\bnu}_{(0,k)} (\hat{A}^{\{2\}}_{*,i_2}(z)-\hat{A}^{\{1,2\}}_{*,i_2}(z))\, \hat{G}_{0,*}(z)^{k+i_2} \hat{\Phi}_{(0,0),*}(z).
\label{eq:hatphic2_eq1}
\end{align}
Hence, analytic properties of the vector function $\hat{\bvarphi}_2(z)$ as well as the matrix function $\hat{\Phi}_{\bx,*}(z)$ can be clarified  through $\hat{G}_{0,*}(z)$ and $\hat{\Phi}_{(0,0),*}(z)$.

%
By \eqref{eq:hatphic2_eq1}, $\hat{\bvarphi}_2(z)$ is represented as 
\begin{equation}
\hat{\bvarphi}_2(z) = \hat{\ba}(z,\hat{G}_{0,*}(z)) \hat{\Phi}_{(0,0),*}(z), 
\label{eq:hatphic2_eq2}
\end{equation}
where
\begin{align*}
&\hat{\ba}(z,w) = \sum_{k=1}^\infty\ \hat{\bnu}_{(0,k)} \hat{D}(z,\hat{G}_{0,*}(z)) w^{k-1}, \\
&\hat{D}(z,w) = \hat A^{\{2\}}_{*,-1}(z)+\hat A^{\{2\}}_{*,0}(z) w+\hat A^{\{2\}}_{*,1}(z) w^2- I w.
\end{align*}
First, we consider $\hat{\Phi}_{(0,0),*}(z)$. 
%
Let $\hat{G}^r_{0,*}(z)$ be the G-matrix function in the reverse direction generated from the triplet $\{\hat{A}^{\{1,2\}}_{*,-1}(z),\hat{A}^{\{1,2\}}_{*,0}(z),\hat{A}^{\{1,2\}}_{*,1}(z)\}$, which means that $\hat{G}^r_{0,*}(z)$ is the G-matrix function generated from the triplet by exchanging $\hat{A}^{\{1,2\}}_{*,-1}(z)$ and $\hat{A}^{\{1,2\}}_{*,1}(z)$; see Section \ref{sec:Gmatrix_reverse}. Define a matrix function $\hat{U}(z)$ as
\begin{equation}
\hat{U}(z) = \hat{A}^{\{1,2\}}_{*,-1}(z)\hat{G}^r_{0,*}(z) + \hat{A}^{\{1,2\}}_{*,0}(z) + \hat{A}^{\{1,2\}}_{*,1}(z)\hat{G}_{0,*}(z). 
\label{eq:hatU_definition}
\end{equation}
Then, $\hat\Phi_{(0,0),*}(z)$ is given as
\begin{equation}
\hat\Phi_{(0,0),*}(z) = \sum_{n=0}^\infty \hat{U}(z)^n = (I- \hat{U}(z))^{-1}=\frac{\adj(I- \hat{U}(z))}{\det(I- \hat{U}(z))}. 
\label{eq:hatPhi00s_hatU_relation}
\end{equation}
For $\theta\in[\theta_{\bc}^{min},\theta_{\bc}^{max} ]$, let $(\eta^R_{\bc,1}(\theta),\eta^R_{\bc,2}(\theta))$ and $(\eta^L_{\bc,1}(\theta), \eta^L_{\bc,2}(\theta))$ be the two real roots of the simultaneous equations: 
\begin{equation}
\spr(A^{\{1,2\}}_{*,*}(e^{\theta_1},e^{\theta_2}))=1,\quad \theta_1+\theta_2=\theta, 
\label{eq:simuleq_11}
\end{equation}
counting multiplicity, where $\eta^L_{\bc,1}(\theta)\le \eta^R_{\bc,1}(\theta)$ and $\eta^L_{\bc,2}(\theta)\ge \eta^R_{\bc,2}(\theta)$. Note that  $\eta^L_{\bc,1}(\theta^{max}_{\bc})=\eta^R_{\bc,1}(\theta^{max}_{\bc})$ and $\eta^L_{\bc,2}(\theta^{max}_{\bc})=\eta^R_{\bc,2}(\theta^{max}_{\bc})$. 
By equations (3.18) and (3.32) of \cite{Ozawa22}, we have 
\begin{equation}
\spr(\hat{G}_{0,*}(e^\theta))=e^{2\eta^R_{\bc,2}(\theta)}. 
\end{equation}
Since the eigenvalues of $\hat{G}^r_{0,*}(z)$ are coincide with those of the rate matrix function generated from the same triplet $\{\hat{A}^{\{1,2\}}_{*,-1}(z),\hat{A}^{\{1,2\}}_{*,0}(z),\hat{A}^{\{1,2\}}_{*,1}(z)\}$, we have 
\begin{equation}
\spr(\hat{G}^r_{0,*}(e^\theta))=e^{-2\eta_{\bc,2}^L(\theta)}. 
\end{equation}
By Lemmas \ref{le:G_analytic1} and \ref{le:G_decomposition} and Corollary \ref{co:G_analytic_boundary}, $\hat{G}_{0,*}(z)$ and $\hat{G}^r_{0,*}(z)$ satisfy the following properties. 
\begin{proposition} \label{pr:G_Gr_analyticproperties}
\begin{itemize}
\item[(1)] The extended G-matrix functions $\hat{G}_{0,*}(z)$ and $\hat{G}^r_{0,*}(z)$ are entry-wise analytic in $\Delta_{e^{\theta_{\bc}^{min}},e^{\theta_{\bc}^{max}}}\cup\partial\Delta_{e^{\theta_{\bc}^{max}}}\setminus\{e^{\theta_{\bc}^{max}}\}$. The point $z=e^{\theta_{\bc}^{max}}$ is a common branch point of $\hat{G}_{0,*}(z)$ and $\hat{G}^r_{0,*}(z)$ with order one. 
\item[(2)] There exist matrix functions $\tilde{G}_{0,*}(\zeta)$ and $\tilde{G}^r_{0,*}(\zeta)$ being analytic in a neighborhood of $\zeta=0$ and satisfying $\hat{G}_{0,*}(z)=\tilde{G}_{0,*}((e^{\theta_{\bc}^{max}}-z)^\frac{1}{2})$ and $\hat{G}^r_{0,*}(z)=\tilde{G}^r_{0,*}((e^{\theta_{\bc}^{max}}-z)^\frac{1}{2})$, respectively, in a neighborhood of $z=e^{\theta_{\bc}^{max}}$. 
\end{itemize}
\end{proposition}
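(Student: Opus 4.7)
The plan is to apply the general machinery from Section \ref{sec:preliminary} -- specifically Lemma \ref{le:G_analytic1}, Corollary \ref{co:G_analytic_boundary}, and Lemma \ref{le:G_decomposition} -- to both $\hat{G}_{0,*}(z)$ and $\hat{G}^r_{0,*}(z)$. By construction, $\hat{G}_{0,*}(z)$ is the G-matrix function generated from the triplet $\{\hat{A}^{\{1,2\}}_{*,-1}(z),\hat{A}^{\{1,2\}}_{*,0}(z),\hat{A}^{\{1,2\}}_{*,1}(z)\}$ and $\hat{G}^r_{0,*}(z)$ from the same triplet with the roles of $\hat{A}^{\{1,2\}}_{*,-1}(z)$ and $\hat{A}^{\{1,2\}}_{*,1}(z)$ exchanged, so the abstract framework of Section \ref{sec:G_analyticextension} applies to both essentially verbatim.

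First I would verify that the hypotheses of that framework (Assumptions \ref{as:Aij_stochastic_pre} and \ref{as:Aij_irreducible_pre}) are satisfied by the blocks $\{\hat{A}^{\{1,2\}}_{i_1,i_2}\}$. Stochasticity of $\sum_{i_1,i_2}\hat{A}^{\{1,2\}}_{i_1,i_2}$ is inherited from the fact that $\hat{P}$ is a bona fide transition probability matrix, while irreducibility and aperiodicity of the associated MMRW follow from Assumption \ref{as:MAprocess_irreducible} on $\{\bY^{\{1,2\}}_n\}$ together with the explicit block structure of $\hat{A}^{\{1,2\}}_{i_1,i_2}$ written out above. The next piece of bookkeeping is to match up extreme values: under the coordinate change $\hat{X}_{1,n}=X^{\{1,2\}}_{1,n}$, $\hat{X}_{2,n}=\lfloor(X^{\{1,2\}}_{2,n}-X^{\{1,2\}}_{1,n})/2\rfloor$ that defines $\{\hat{\bY}_n\}$, the quantity $\theta_1^{max}$ of the general theory, applied to our triplet, coincides with $\theta_{\bc}^{max}=\sup\{\theta_1+\theta_2:(\theta_1,\theta_2)\in\Gamma^{\{1,2\}}\}$; this is the same observation used in \cite{Ozawa22} to obtain the formula $\spr(\hat{G}_{0,*}(e^\theta))=e^{2\eta^R_{\bc,2}(\theta)}$.

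With the hypotheses in place, part (1) is immediate: Lemma \ref{le:G_analytic1} gives analyticity of both $\hat{G}_{0,*}(z)$ and $\hat{G}^r_{0,*}(z)$ on the open annulus $\Delta_{e^{\theta_{\bc}^{min}},e^{\theta_{\bc}^{max}}}$, and Corollary \ref{co:G_analytic_boundary} extends analyticity to $\partial\Delta_{e^{\theta_{\bc}^{max}}}\setminus\{e^{\theta_{\bc}^{max}}\}$. The branch-point assertion in part (1) then reduces to part (2). For part (2) I would apply Lemma \ref{le:G_decomposition} in turn to each of the two triplets: this produces matrix functions $\tilde{G}_{0,*}(\zeta)$ and $\tilde{G}^r_{0,*}(\zeta)$, each entry-wise analytic in a neighborhood of $\zeta=0$, together with the desired representations $\hat{G}_{0,*}(z)=\tilde{G}_{0,*}((e^{\theta_{\bc}^{max}}-z)^{1/2})$ and $\hat{G}^r_{0,*}(z)=\tilde{G}^r_{0,*}((e^{\theta_{\bc}^{max}}-z)^{1/2})$ in a neighborhood of $z=e^{\theta_{\bc}^{max}}$.

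The main obstacle is confirming that the two branch points really do coincide. The Perron--Frobenius eigenvalues are $\spr(\hat{G}_{0,*}(e^\theta))=e^{2\eta^R_{\bc,2}(\theta)}$ and $\spr(\hat{G}^r_{0,*}(e^\theta))=e^{-2\eta^L_{\bc,2}(\theta)}$, respectively, and in each case the square-root singularity is forced by the coalescence of the two real roots of \eqref{eq:simuleq_11}. Since both $\eta^L_{\bc,2}$ and $\eta^R_{\bc,2}$ lose smoothness exactly at $\theta=\theta_{\bc}^{max}$ -- where $\eta^L_{\bc,2}(\theta_{\bc}^{max})=\eta^R_{\bc,2}(\theta_{\bc}^{max})$ -- the Perron--Frobenius eigenvalues of $\hat{G}_{0,*}(z)$ and $\hat{G}^r_{0,*}(z)$ simultaneously develop branch points of the same order at $z=e^{\theta_{\bc}^{max}}$, and Lemma \ref{le:G_decomposition} identifies that order as one, completing the proof.
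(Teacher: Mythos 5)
Your proposal is correct and follows essentially the same route as the paper, which offers no separate proof but simply asserts the proposition as a consequence of Lemma \ref{le:G_analytic1}, Corollary \ref{co:G_analytic_boundary} and Lemma \ref{le:G_decomposition} applied to the triplet $\{\hat{A}^{\{1,2\}}_{*,-1}(z),\hat{A}^{\{1,2\}}_{*,0}(z),\hat{A}^{\{1,2\}}_{*,1}(z)\}$ and to the same triplet with $\hat{A}^{\{1,2\}}_{*,-1}(z)$ and $\hat{A}^{\{1,2\}}_{*,1}(z)$ exchanged. Your additional bookkeeping (verifying Assumptions \ref{as:Aij_stochastic_pre}--\ref{as:Aij_irreducible_pre} for the hat-blocks, identifying the general theory's $\theta_1^{max}$ with $\theta_{\bc}^{max}$, and noting that the exchanged triplet yields the same annulus and hence the same branch point) is exactly the implicit content the paper leaves to the reader.
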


%
In order to investigate singularity of $\hat\Phi_{(0,0),*}(z)$ at $z=e^{\theta_{\bc}^{max}}$, we give the following proposition.
\begin{proposition} \label{pr:spr_hatU_eq1}
The maximum eigenvalue of $\hat{U}(e^{\theta_{\bc}^{max}})$ is 1, and it is simple.
\end{proposition}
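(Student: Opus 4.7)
The plan is to exploit the coalescence of the two Perron branches at $z = e^{\theta_{\bc}^{max}}$: at this point, the two real solutions of $\mathrm{spr}(\hat{A}^{\{1,2\}}_{*,*}(e^{\theta_{\bc}^{max}}, e^{\hat{\theta}_2})) = 1$ merge, and this rigidity propagates through $\hat{G}_{0,*}$, $\hat{G}^r_{0,*}$, and $\hat{U}$. The strategy is to produce an explicit positive right eigenvector of $\hat{U}(e^{\theta_{\bc}^{max}})$ for eigenvalue $1$, and then to close the argument by Perron--Frobenius.

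First, I would set $\theta_* := 2\eta^R_{\bc,2}(\theta_{\bc}^{max}) = 2\eta^L_{\bc,2}(\theta_{\bc}^{max})$, using the equality $\eta^R_{\bc,2}(\theta_{\bc}^{max}) = \eta^L_{\bc,2}(\theta_{\bc}^{max})$ at the tangent point. Combined with $\mathrm{spr}(\hat{G}_{0,*}(e^{\theta_{\bc}^{max}})) = e^{2\eta^R_{\bc,2}(\theta_{\bc}^{max})}$ and $\mathrm{spr}(\hat{G}^r_{0,*}(e^{\theta_{\bc}^{max}})) = e^{-2\eta^L_{\bc,2}(\theta_{\bc}^{max})}$, this gives $\mathrm{spr}(\hat{G}_{0,*}(e^{\theta_{\bc}^{max}})) = e^{\theta_*}$ and $\mathrm{spr}(\hat{G}^r_{0,*}(e^{\theta_{\bc}^{max}})) = e^{-\theta_*}$. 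In the notation of Section \ref{sec:Gmatrix_reverse} applied to the triplet $\{\hat{A}^{\{1,2\}}_{*,-1}(e^{\theta_{\bc}^{max}}), \hat{A}^{\{1,2\}}_{*,0}(e^{\theta_{\bc}^{max}}), \hat{A}^{\{1,2\}}_{*,1}(e^{\theta_{\bc}^{max}})\}$, this means $\underline{\theta} = \bar{\theta} = \theta_*$. The conditions (a1) and (a2) for that triplet are inherited from Assumption \ref{as:MAprocess_irreducible} (after noting that the boundary values of $\hat{G}_{0,*}$ and $\hat{G}^r_{0,*}$ are well-defined by Corollary \ref{co:G_analytic_boundary} and satisfy the quadratic equations \eqref{eq:G_definition} and \eqref{eq:Gr_definition} by continuity).

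Next, Lemma \ref{le:G_Gr_v} produces a common positive right eigenvector $\bv$ with $\hat{G}_{0,*}(e^{\theta_{\bc}^{max}})\bv = e^{\theta_*}\bv$ and $\hat{G}^r_{0,*}(e^{\theta_{\bc}^{max}})\bv = e^{-\theta_*}\bv$. A direct computation then gives
\begin{align*}
\hat{U}(e^{\theta_{\bc}^{max}})\bv
&= \hat{A}^{\{1,2\}}_{*,0}(e^{\theta_{\bc}^{max}})\bv + \hat{A}^{\{1,2\}}_{*,1}(e^{\theta_{\bc}^{max}})\hat{G}_{0,*}(e^{\theta_{\bc}^{max}})\bv + \hat{A}^{\{1,2\}}_{*,-1}(e^{\theta_{\bc}^{max}})\hat{G}^r_{0,*}(e^{\theta_{\bc}^{max}})\bv \\
&= \bigl(e^{-\theta_*}\hat{A}^{\{1,2\}}_{*,-1}(e^{\theta_{\bc}^{max}}) + \hat{A}^{\{1,2\}}_{*,0}(e^{\theta_{\bc}^{max}}) + e^{\theta_*}\hat{A}^{\{1,2\}}_{*,1}(e^{\theta_{\bc}^{max}})\bigr)\bv
= \bv,
\end{align*}
where the last equality uses that $\bv$ is the Perron right eigenvector of $\hat{A}^{\{1,2\}}_{*,*}(e^{\theta_{\bc}^{max}}, e^{\theta_*})$ with eigenvalue $1$ (this is derivable directly from the quadratic equation for $\hat{G}_{0,*}$ applied to $\bv$). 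Since $\hat{U}(e^{\theta_{\bc}^{max}})$ is nonnegative and $\bv > \bzero$, standard Perron--Frobenius theory gives $\mathrm{spr}(\hat{U}(e^{\theta_{\bc}^{max}})) = 1$.

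Finally, for simplicity I would verify that $\hat{U}(e^{\theta_{\bc}^{max}})$ is irreducible: this is where the main obstacle lies. Because $\hat{\Phi}_{(0,0),*}(1) = (I - \hat{U}(1))^{-1}$ can be read as the expected number of visits to level $\hat{x}_2 = 0$ of the lifted MA-process (with $\hat{x}_1$-displacement tracked by $z$), irreducibility of $\hat{U}$ should transfer from irreducibility of the lifted chain $\{\hat\bY_n\}$, itself a consequence of Assumption \ref{as:MAprocess_irreducible} applied to $\{\bY^{\{1,2\}}_n\}$ combined with the structure of the two-phase lift $(\hat{R}_n \in \{0,1\})$; one must check that every phase pair $(j, r) \in S_0 \times \{0,1\}$ communicates with every other via excursions that return to $\hat{x}_2 = 0$. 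Once this irreducibility is in hand, Perron--Frobenius yields that $1$ is the unique (and simple) Perron eigenvalue of $\hat{U}(e^{\theta_{\bc}^{max}})$, completing the proof. The delicate part is the communication argument for the lifted chain, since the parity of $X^{\{1,2\}}_{2,n} - X^{\{1,2\}}_{1,n}$ introduces a potential periodic obstruction that must be ruled out from the aperiodicity hypotheses.
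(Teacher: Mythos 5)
Your proof is correct and follows the same core construction as the paper: both apply Lemma \ref{le:G_Gr_v} at the tangency $\eta^L_{\bc,2}(\theta_{\bc}^{max})=\eta^R_{\bc,2}(\theta_{\bc}^{max})$ to obtain a common positive eigenvector $\bv$ of $\hat{G}_{0,*}(e^{\theta_{\bc}^{max}})$ and $\hat{G}^r_{0,*}(e^{\theta_{\bc}^{max}})$, compute $\hat{U}(e^{\theta_{\bc}^{max}})\bv=\hat{A}^{\{1,2\}}_{*,*}(e^{\theta_{\bc}^{max}},e^{2\eta^R_{\bc,2}(\theta_{\bc}^{max})})\bv=\bv$, and invoke irreducibility of $\hat{U}(e^{\theta_{\bc}^{max}})$ for simplicity. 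The one step you handle differently is the upper bound $\spr(\hat{U}(e^{\theta_{\bc}^{max}}))\le 1$: you obtain it from the Collatz--Wielandt inequality, using that $\bv$ is strictly positive (valid, since $\bv$ is the Perron vector of the irreducible matrix $\hat{A}^{\{1,2\}}_{*,*}(e^{\theta_{\bc}^{max}},e^{2\eta^R_{\bc,2}(\theta_{\bc}^{max})})$). The paper instead argues by contradiction: log-convexity of the entries of $\hat{G}_{0,*}(e^\theta)$ and $\hat{G}^r_{0,*}(e^\theta)$ makes $\spr(\hat{U}(e^\theta))$ convex in $\theta$, so a value greater than $1$ at $\theta_{\bc}^{max}$ would force $\spr(\hat{U}(e^{\theta_0}))=1$ for some $\theta_0<\theta_{\bc}^{max}$, contradicting the convergence of $\hat{\Phi}_{(0,0),*}(z)=(I-\hat{U}(z))^{-1}$ in the open annulus (Proposition 3.1 of \cite{Ozawa22}). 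Your route is shorter; the paper's does not rely on strict positivity of $\bv$ and reuses a convexity fact needed elsewhere. As for the irreducibility of $\hat{U}(e^{\theta_{\bc}^{max}})$, which you rightly identify as the delicate ingredient for simplicity: the paper asserts it without proof as well, so you are on equal footing there; your probabilistic reading of $(I-\hat{U})^{-1}$ as an occupation kernel of the lifted chain at level $\hat{x}_2=0$ is the natural way to fill the gap, and note that the lift is merely a relabeling of $\mathbb{Z}^2\times S_0$, so irreducibility of $\{\hat{\bY}_n\}$ itself is immediate from Assumption \ref{as:MAprocess_irreducible}; only the phase-communication through returns to level $0$ requires the argument you sketch.
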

\begin{proof}
By equation (3.30) of \cite{Ozawa22}, we have $\spr(\hat{A}^{\{1,2\}}_{*,*}(e^{\theta_{\bc}^{max}},e^{2\eta_{\bc,2}^R(\theta_{\bc}^{max})}))=1$. Let $\bv$ be the right eigenvector of $\hat{A}^{\{1,2\}}_{*,*}(e^{\theta_{\bc}^{max}},e^{2\eta_{\bc,2}^R(\theta_{\bc}^{max})})$ with respect to eigenvalue 1. Since $\spr(\hat{G}_{0,*}(e^{\theta_{\bc}^{max}}))=e^{2\eta_{\bc,2}^R(\theta_{\bc}^{max})}$ and $\spr(\hat{G}^r_{0,*}(e^{\theta_{\bc}^{max}}))=e^{-2\eta_{\bc,2}^L(\theta_{\bc}^{max})}=e^{-2\eta_{\bc,2}^R(\theta_{\bc}^{max})}$, we have, by Lemma \ref{le:G_Gr_v},  
\[
\hat{G}_{0,*}(e^{\theta_{\bc}^{max}})\bv=e^{2\eta_{\bc,2}^R(\theta_{\bc}^{max})} \bv,\quad 
\hat{G}^r_{0,*}(e^{\theta_{\bc}^{max}})\bv=e^{-2\eta_{\bc,2}^R(\theta_{\bc}^{max})} \bv. 
\]
Hence, 
\[
\hat{U}(e^{\theta_{\bc}^{max}}) \bv
= \hat{A}^{\{1,2\}}_{*,*}(e^{\theta_{\bc}^{max}},e^{2\eta_{\bc,2}^R(\theta_{\bc}^{max})}) \bv 
= 1.
\]
This means that the value of $1$ is an eigenvalue of $\hat{U}(e^{\theta_{\bc}^{max}})$, and we obtain $\spr(\hat{U}(e^{\theta_{\bc}^{max}}))\ge 1$. 

Suppose $\spr(\hat{U}(e^{\theta_{\bc}^{max}}))> 1$. In a manner similar to that used in the proof of Proposition 3.6 of \cite{Ozawa13}, we see that every entry of $\hat{G}_{0,*}(e^{\theta})$ and $\hat{G}^r_{0,*}(e^{\theta})$ is log-convex in $\theta\in\mathbb{R}$.
Hence, by \eqref{eq:hatU_definition}, every entry of $\hat{U}(e^\theta)$ is also log-convex, and $\spr(\hat{U}(e^\theta))$ is convex in $\theta\in\mathbb{R}$.
By this, there exists a positive number $\theta_0<\theta^{max}_{\bc}$ such that $\spr(\hat{U}(e^{\theta_0}))=1$, and $\hat\Phi_{(0,0),*}(z)$ diverges at $z=e^{\theta_0}<e^{\theta_{\bc}^{max}}$. This contradicts Proposition 3.1 of \cite{Ozawa22}, which asserts that $\hat\Phi_{(0,0),*}(z)$ absolutely converges in $z\in\Delta_{e^{\theta_{\bc}^{min}},e^{\theta_{\bc}^{max}}}$. 
Hence, $\spr(\hat{U}(e^{\theta_{\bc}^{max}}))\le 1$, and this implies that the maximum eigenvalue of $\hat{U}(e^{\theta_{\bc}^{max}})$ is $1$. 
Since $\hat{U}(e^{\theta_{\bc}^{max}})$ is irreducible, the eigenvalue of $1$ is simple. 
\end{proof}

%
Let $\hat{\lambda}^U(z)$ be the eigenvalue of $\hat{U}(z)$ satisfying $\hat{\lambda}^U(x)=\spr(\hat{U}(x))$ for $x\in[e^{\theta_{\bc}^{min}},e^{\theta_{\bc}^{max}}]$. Let $\hat{\bu}^{U}(z)$ and $\hat{\bv}^{U}(z)$ be the left and right eigenvectors of $\hat{U}(z)$ with respect to the eigenvalue $\hat{\lambda}^U(z)$, respectively, satisfying $\hat{\bu}^{U}(z)\hat{\bv}^{U}(z)=1$. 
Define a matrix function $\tilde{U}(\zeta)$ as
\[
\tilde{U}(\zeta) = \hat{A}^{\{1,2\}}_{*,-1}(e^{\theta_{\bc}^{max}}-\zeta^2)\tilde{G}^r_{0,*}(\zeta) + \hat{A}^{\{1,2\}}_{*,0}(e^{\theta_{\bc}^{max}}-\zeta^2) + \hat{A}^{\{1,2\}}_{*,1}(e^{\theta_{\bc}^{max}}-\zeta^2)\tilde{G}_{0,*}(\zeta). 
\]
By Proposition \ref{pr:G_Gr_analyticproperties}, $\tilde{U}(\zeta)$ is entry-wise analytic in a neighborhood of $\zeta=0$ and satisfies $\hat{U}(z)=\tilde{U}((e^{\theta_{\bc}^{max}}-z)^{\frac{1}{2}})$ in a neighborhood of $z=e^{\theta_{\bc}^{max}}$. 
Define a matrix function $\tilde\Phi_{(0,0),*}(\zeta)$ as
\begin{equation}
\tilde\Phi_{(0,0),*}(\zeta) = (I- \tilde{U}(\zeta))^{-1}=\frac{\adj(I- \tilde{U}(\zeta))}{\det(I- \tilde{U}(\zeta))}. 
\label{eq:tildePhi00s_def}
\end{equation}
$\hat{\Phi}_{(0,0),*}(z)$ and $\tilde\Phi_{(0,0),*}(\zeta)$ satisfy the following properties.
\begin{proposition} \label{pr:hatPhi00s_analyticproperties}
\begin{itemize}
\item[(1)] The matrix function $\hat{\Phi}_{(0,0),*}(z)$ is entry-wise analytic in $\Delta_{e^{\theta_{\bc}^{min}},e^{\theta_{\bc}^{max}}}\cup\partial\Delta_{e^{\theta_{\bc}^{max}}}\setminus\{e^{\theta_{\bc}^{max}}\}$.
\item[(2)] $\tilde\Phi_{(0,0),*}(\zeta)$ is entry-wise meromorphic in a neighborhood of $\zeta=0$, and the point $\zeta=0$ is a pole of $\tilde\Phi_{(0,0),*}(\zeta)$ with order one. $\hat{\Phi}_{(0,0),*}(z)$ is represented as $\hat{\Phi}_{(0,0),*}(z)=\tilde{\Phi}_{(0,0),*}((e^{\theta_{\bc}^{max}}-z)^{\frac{1}{2}})$ in a neighborhood of $z=e^{\theta_{\bc}^{max}}$. 
\item[(3)] $\hat{\Phi}_{(0,0),*}(z)$ satisfies
\begin{equation}
\lim_{\tilde{\Delta}_{e^{\theta_{\bc}^{max}}}\ni z\to e^{\theta_{\bc}^{max}}} (e^{\theta_{\bc}^{max}}-z)^{\frac{1}{2}}\hat{\Phi}_{(0,0),*}(z) 
= \hat{g}^\Phi \hat{\bv}^{U}(e^{\theta_{\bc}^{max}}) \hat{\bu}^{U}(e^{\theta_{\bc}^{max}}) 
> O, 
\label{eq:hatPhi00s_limit}
\end{equation}
where both $\hat{\bv}^{U}(e^{\theta_{\bc}^{max}})$ and $ \hat{\bu}^{U}(e^{\theta_{\bc}^{max}})$ are positive, 
\begin{align}
&\hat{g}^\Phi 
= -\left( \hat{\bu}^{U}(e^{\theta_{\bc}^{max}}) \bigl( \hat{A}^{\{1,2\}}_{*,-1}(e^{\theta_{\bc}^{max}})\hat{G}^r_{0,*,1}+\hat{A}^{\{1,2\}}_{*,1}(e^{\theta_{\bc}^{max}})\hat{G}_{0,*,1} \bigr) \hat{\bv}^{U}(e^{\theta_{\bc}^{max}}) \right)^{-1}>0,
\label{eq:hatgPhi}
\end{align}
and $\hat{G}^r_{0,*,1}$ and $\hat{G}_{0,*,1}$ are the limits of $\hat{G}^r_{0,*}(z)$ and $\hat{G}_{0,*}(z)$, respectively, given by Lemma \ref{le:G_limit}.
\end{itemize}
\end{proposition}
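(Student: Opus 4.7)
The proof is built on the representation $\hat\Phi_{(0,0),*}(z)=(I-\hat U(z))^{-1}$ from \eqref{eq:hatPhi00s_hatU_relation} together with its analytic continuation via the substitution $z=e^{\theta_{\bc}^{max}}-\zeta^2$ and the definition \eqref{eq:tildePhi00s_def} of $\tilde\Phi_{(0,0),*}(\zeta)$. By Proposition \ref{pr:G_Gr_analyticproperties}(1), the matrix function $\hat U(z)$, built from $\hat G_{0,*}(z)$, $\hat G^r_{0,*}(z)$, and the Laurent-polynomial matrices $\hat A^{\{1,2\}}_{*,i_2}(z)$ via \eqref{eq:hatU_definition}, is entry-wise analytic on $\Delta_{e^{\theta_{\bc}^{min}},e^{\theta_{\bc}^{max}}}\cup\partial\Delta_{e^{\theta_{\bc}^{max}}}\setminus\{e^{\theta_{\bc}^{max}}\}$; hence for part (1) it suffices to verify $\det(I-\hat U(z))\ne 0$ on this set. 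In the open annulus, the absolute convergence of $\sum_{n\ge 0}\hat U(z)^n=\hat\Phi_{(0,0),*}(z)$ (Proposition~3.1 of \cite{Ozawa22}) already gives $\spr(\hat U(z))<1$. For $z$ with $|z|=e^{\theta_{\bc}^{max}}$ and $z\ne e^{\theta_{\bc}^{max}}$, the entrywise bound $|\hat U(z)|\le \hat U(e^{\theta_{\bc}^{max}})$ (which holds because $\hat A^{\{1,2\}}_{*,i_2}$ has nonnegative coefficients and $\hat G_{0,*}$, $\hat G^r_{0,*}$ are built from nonnegative power series) together with a standard Wielandt-type argument using the irreducibility and aperiodicity of the induced MA-process (Assumption \ref{as:MAprocess_irreducible}) rules out $1$ being an eigenvalue of $\hat U(z)$, completing part (1).

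For part (2), Proposition \ref{pr:G_Gr_analyticproperties}(2) implies that $\tilde U(\zeta)$ is entry-wise analytic in a neighborhood of $\zeta=0$ with $\tilde U(0)=\hat U(e^{\theta_{\bc}^{max}})$. By Proposition \ref{pr:spr_hatU_eq1}, the eigenvalue $1$ of $\tilde U(0)$ is simple and equal to $\spr(\tilde U(0))$, so standard analytic perturbation theory for simple eigenvalues provides a holomorphic eigenvalue branch $\tilde\lambda^U(\zeta)$ of $\tilde U(\zeta)$ near $\zeta=0$ with $\tilde\lambda^U(0)=1$ and an analytic spectral projection. Thus $(I-\tilde U(\zeta))^{-1}$ is meromorphic near $\zeta=0$ with pole locations coinciding with the zeros of $1-\tilde\lambda^U(\zeta)$. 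To show the pole order at $\zeta=0$ is exactly one, I will compute
\[
\tilde\lambda^U{}'(0)=\hat\bu^U(e^{\theta_{\bc}^{max}})\,\tilde U'(0)\,\hat\bv^U(e^{\theta_{\bc}^{max}}).
\]
Since each $\hat A^{\{1,2\}}_{*,i_2}(e^{\theta_{\bc}^{max}}-\zeta^2)$ is even in $\zeta$ and hence contributes zero derivative at $\zeta=0$,
\[
\tilde U'(0)=\hat A^{\{1,2\}}_{*,-1}(e^{\theta_{\bc}^{max}})\,\hat G^r_{0,*,1}+\hat A^{\{1,2\}}_{*,1}(e^{\theta_{\bc}^{max}})\,\hat G_{0,*,1},
\]
where I identify $\tilde G_{0,*}'(0)=\hat G_{0,*,1}$ (and analogously for the reverse direction) by matching the expansion $\hat G_{0,*}(z)=\hat G_{0,*}(e^{\theta_{\bc}^{max}})+(e^{\theta_{\bc}^{max}}-z)^{1/2}\tilde G_{0,*}'(0)+O(e^{\theta_{\bc}^{max}}-z)$ with Lemma \ref{le:G_limit}. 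That lemma guarantees $-\hat G_{0,*,1}$ and $-\hat G^r_{0,*,1}$ are nonnegative and nonzero, while $\hat\bu^U(e^{\theta_{\bc}^{max}})$ and $\hat\bv^U(e^{\theta_{\bc}^{max}})$ are strictly positive by Perron--Frobenius applied to the irreducible $\hat U(e^{\theta_{\bc}^{max}})$; together with the nontriviality of $\hat A^{\{1,2\}}_{*,\pm 1}(e^{\theta_{\bc}^{max}})$ this forces $\tilde\lambda^U{}'(0)<0$. Hence $\zeta=0$ is a simple zero of $1-\tilde\lambda^U(\zeta)$, and $\tilde\Phi_{(0,0),*}(\zeta)$ has a pole of order exactly one at $\zeta=0$. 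The identity $\hat\Phi_{(0,0),*}(z)=\tilde\Phi_{(0,0),*}((e^{\theta_{\bc}^{max}}-z)^{1/2})$ near $z=e^{\theta_{\bc}^{max}}$ follows directly from the construction of $\tilde U$.

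For part (3), the spectral decomposition attached to the simple eigenvalue gives
\[
(I-\tilde U(\zeta))^{-1}=\frac{\hat\bv^U(e^{\theta_{\bc}^{max}})\,\hat\bu^U(e^{\theta_{\bc}^{max}})}{1-\tilde\lambda^U(\zeta)}+\text{(regular part)},
\]
with the normalization $\hat\bu^U(e^{\theta_{\bc}^{max}})\hat\bv^U(e^{\theta_{\bc}^{max}})=1$. Multiplying by $\zeta$, letting $\zeta\to 0$, and substituting $\zeta=(e^{\theta_{\bc}^{max}}-z)^{1/2}$ yields \eqref{eq:hatPhi00s_limit} with $\hat g^\Phi=-1/\tilde\lambda^U{}'(0)$, which is exactly \eqref{eq:hatgPhi} and is positive because $\tilde\lambda^U{}'(0)<0$; positivity of the outer product $\hat\bv^U\hat\bu^U$ is inherited from the positivity of the Perron eigenvectors.

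The main obstacle will be part (1) on the critical circle: converting the strict inequality $\spr(\hat U(z))<1$, which holds in the open annulus, into nonvanishing of $\det(I-\hat U(z))$ at every nonreal point of $\partial\Delta_{e^{\theta_{\bc}^{max}}}$ requires the full Wielandt/Perron--Frobenius argument, essentially using aperiodicity to exclude root-of-unity phase relations. A secondary technical point is the careful identification of $\tilde G_{0,*}'(0)$ and $\tilde G^r_{0,*}{}'(0)$ with $\hat G_{0,*,1}$ and $\hat G^r_{0,*,1}$, and the verification that $\hat A^{\{1,2\}}_{*,\pm 1}(e^{\theta_{\bc}^{max}})\,\hat G^{\bullet}_{0,*,1}$ is not identically zero---necessary to sharpen $\tilde\lambda^U{}'(0)\le 0$ to the strict inequality that makes $\hat g^\Phi$ finite and positive.
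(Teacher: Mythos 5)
Your proposal is correct and follows essentially the same route as the paper: analyticity of $\hat{U}(z)$ from Proposition \ref{pr:G_Gr_analyticproperties}, a Wielandt/Perron--Frobenius argument on the critical circle, simplicity of the eigenvalue $1$ of $\hat{U}(e^{\theta_{\bc}^{max}})$ from Proposition \ref{pr:spr_hatU_eq1}, and the identification of $\tilde{U}'(0)$ with $\hat{A}^{\{1,2\}}_{*,-1}(e^{\theta_{\bc}^{max}})\hat{G}^r_{0,*,1}+\hat{A}^{\{1,2\}}_{*,1}(e^{\theta_{\bc}^{max}})\hat{G}_{0,*,1}$ via Lemma \ref{le:G_limit}. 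The only difference is cosmetic: you extract the rank-one singular part of $(I-\tilde{U}(\zeta))^{-1}$ by analytic perturbation of the simple Perron eigenvalue, whereas the paper performs the equivalent residue computation through the adjugate/determinant representation and Seneta's Corollary 2.
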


\begin{proof}
By \eqref{eq:hatU_definition} and  Proposition \ref{pr:G_Gr_analyticproperties}, $\hat{U}(z)$ is entry-wise analytic in $\Delta_{e^{\theta_{\bc}^{min}},e^{\theta_{\bc}^{max}}}\cup\partial\Delta_{e^{\theta_{\bc}^{max}}}\setminus\{e^{\theta_{\bc}^{max}}\}$. Hence, by \eqref{eq:hatPhi00s_hatU_relation}, $\hat{\Phi}_{(0,0),*}(z)$ is entry-wise meromorphic in the same region. 
Recall that, under Assumption \ref{as:MAprocess_irreducible}, the induced MA-process $\{\bY^{\{1,2\}}_n\}$ is irreducible and aperiodic. Hence, in a manner similar to that used in the proof of Proposition 5.2 of \cite{Ozawa18}, we obtain by Proposition \ref{pr:spr_hatU_eq1} that, for every $z\in\Delta_{e^{\theta_{\bc}^{min}},e^{\theta_{\bc}^{max}}}\cup\partial\Delta_{e^{\theta_{\bc}^{max}}}\setminus\{e^{\theta_{\bc}^{max}}\}$, 
\[
\spr(\hat{U}(z)) < \spr(\hat{U}(|z|)) < \spr(\hat{U}(e^{\theta_{\bc}^{max}}))=1,
\]
and this leads us to $\det(I-\hat{U}(z))\ne 0$. This completes the proof of statement (1). 

By \eqref{eq:tildePhi00s_def}, $\tilde\Phi_{(0,0),*}(\zeta)$ is entry-wise meromorphic in a neighborhood of $\zeta=0$. Since $\tilde{U}(0)=\hat{U}(e^{\theta_{\bc}^{max}})$, we see by Proposition \ref{pr:spr_hatU_eq1} that $\det(I-\tilde{U}(0))=0$ and the multiplicity of zero of $\det(I-\tilde{U}(\zeta))$ at $\zeta=0$ is one. Hence, the point $\zeta=0$ is a pole of $\tilde\Phi_{(0,0),*}(\zeta)$ with order one. This completes the proof of statement (2). 

Define a function $\hat f(\lambda,z)$ as
\[
\hat f(\lambda,z) = \det(\lambda I-\hat{U}(z)). 
\]
By Corollary 2 of Seneta \cite{Seneta06} and Proposition \ref{pr:spr_hatU_eq1} (also see Proposition 5.11 of \cite{Ozawa18}), 
\begin{equation}
\adj(I-\hat{U}(e^{\theta_{\bc}^{max}})) 
= \hat f_\lambda(1,e^{\theta_{\bc}^{max}}) \hat{\bv}^{U}(e^{\theta_{\bc}^{max}}) \hat{\bu}^{U}(e^{\theta_{\bc}^{max}}), 
\end{equation}
where $\hat f_\lambda(\lambda,z)=\frac{\partial}{\partial \lambda} \hat f(\lambda,z)$ and both $\hat{\bv}^{U}(e^{\theta_{\bc}^{max}})$ and $\hat{\bu}^{U}(e^{\theta_{\bc}^{max}})$ are positive since $\hat{U}(e^{\theta_{\bc}^{max}})$ is irreducible.  
Furthermore, in a manner similar to that used in the proof of Proposition 5.9 of \cite{Ozawa18}, we obtain 
\begin{align}
&\lim_{\tilde{\Delta}_{e^{\theta_{\bc}^{max}}}\ni z\to e^{\theta_{\bc}^{max}}} (e^{\theta_{\bc}^{max}}-z)^{-\frac{1}{2}} \hat f(1,z)
= -c_0 \hat f_\lambda(1,e^{\theta_{\bc}^{max}}), 
\end{align}
where $c_0=\hat{\bu}^{U}(e^{\theta_{\bc}^{max}}) \bigl( \hat{A}^{\{1,2\}}_{*,-1}(e^{\theta_{\bc}^{max}})\hat{G}^r_{0,*,1}+\hat{A}^{\{1,2\}}_{*,1}(e^{\theta_{\bc}^{max}})\hat{G}_{0,*,1} \bigr)\hat{\bv}^{U}(e^{\theta_{\bc}^{max}})<0$ and, by Lemma \ref{le:G_limit}, both $\hat{G}_{0,*,1}$ and $\hat{G}^r_{0,*,1}$ are nonzero and nonpositive. 
By \eqref{eq:tildePhi00s_def}, this completes the proof of statement (3).
\end{proof}

%
By Proposition \ref{pr:hatPhi00s_analyticproperties}, the analytic properties of $\Phi^{\bc}_{\bx,*}(z)$ are obtained as follows. We will give the result corresponding to statement (3) of Proposition \ref{pr:hatPhi00s_analyticproperties} in Proposition \ref{pr:Phics_limit} of Section \ref{sec:varphic_coefficient}.
\begin{corollary} \label{co:Phics_analyticproperties}
Let $\bx=(x_1,x_2)$ be an arbitrary point in $\mathbb{Z}_+^2$. 
\begin{itemize}
\item[(1)] The matrix function $\Phi^{\bc}_{\bx,*}(z)$ is entry-wise analytic in $\Delta_{e^{\theta_{\bc}^{min}},e^{\theta_{\bc}^{max}}}\cup\partial\Delta_{e^{\theta_{\bc}^{max}}}\setminus\{e^{\theta_{\bc}^{max}}\}$.
\item[(2)]  There exists a matrix function $\tilde{\Phi}^{\bc}_{\bx,*}(\zeta)$ entry-wise meromorphic in a neighborhood of $\zeta=0$ such that the point $\zeta=0$ is a pole of $\tilde{\Phi}^{\bc}_{\bx,*}(\zeta)$ with order one. $\Phi^{\bc}_{\bx,*}(z)$ is represented as $\Phi^{\bc}_{\bx,*}(z)=\tilde{\Phi}^{\bc}_{\bx,*}((e^{\theta_{\bc}^{max}}-z)^{\frac{1}{2}})$ in a neighborhood of $z=e^{\theta_{\bc}^{max}}$. 
\end{itemize}
\end{corollary}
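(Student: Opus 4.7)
The plan is to realize $\Phi^{\bc}_{\bx,*}(z)$ as an extracted sub-block of $\hat{\Phi}_{(x'_1,x'_2),*}(z)$ through the correspondence \eqref{eq:hatPhixs_def}, and then exploit the factorization $\hat{\Phi}_{(x'_1,x'_2),*}(z)=z^{x'_1}\hat{G}_{0,*}(z)^{x'_2}\hat{\Phi}_{(0,0),*}(z)$ from \eqref{eq:hatPhi_expression} together with the analytic properties of $\hat{G}_{0,*}(z)$ and $\hat{\Phi}_{(0,0),*}(z)$ already established. Given $\bx=(x_1,y)\in\mathbb{Z}_+^2$, I would set $x'_1=x_1$ and pick $x'_2$ so that $y\in\{x_1+2x'_2-1,\,x_1+2x'_2,\,x_1+2x'_2+1\}$, which identifies $\Phi^{\bc}_{\bx,*}(z)$ as one of the four sub-blocks of $\hat{\Phi}_{(x_1,x'_2),*}(z)$; when $y\ge x_1$ such an $x'_2$ is non-negative and \eqref{eq:hatPhi_expression} applies directly. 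The case $y<x_1$ is handled by the space-homogeneity of $\{\bY^{\{1,2\}}_n\}$, which gives $\Phi^{\bc}_{\bx,*}(z)=z^{-k}\Phi^{\bc}_{\bx+k\bc,*}(z)$ for every $k\in\mathbb{N}$; choosing $k$ large enough that the second coordinate of $\bx+k\bc$ dominates its first reduces the problem to the previous case, and multiplication by $z^{-k}$ preserves both analyticity on $\partial\Delta_{e^{\theta_{\bc}^{max}}}\setminus\{e^{\theta_{\bc}^{max}}\}$ and the order of singularity at $z=e^{\theta_{\bc}^{max}}$.

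Part (1) then follows at once: $z^{x'_1}$ is entire, and by Proposition \ref{pr:G_Gr_analyticproperties}(1) together with Proposition \ref{pr:hatPhi00s_analyticproperties}(1) both $\hat{G}_{0,*}(z)$ and $\hat{\Phi}_{(0,0),*}(z)$ are entry-wise analytic in $\Delta_{e^{\theta_{\bc}^{min}},e^{\theta_{\bc}^{max}}}\cup\partial\Delta_{e^{\theta_{\bc}^{max}}}\setminus\{e^{\theta_{\bc}^{max}}\}$, so the product and every sub-block inherit this property. For part (2), I would define
\begin{equation*}
\tilde{\Phi}_{(x_1,x'_2),*}(\zeta):=(e^{\theta_{\bc}^{max}}-\zeta^2)^{x_1}\,\tilde{G}_{0,*}(\zeta)^{x'_2}\,\tilde{\Phi}_{(0,0),*}(\zeta),
\end{equation*}
where Proposition \ref{pr:G_Gr_analyticproperties}(2) gives analyticity of $\tilde{G}_{0,*}(\zeta)$ in a neighborhood of $\zeta=0$ and Proposition \ref{pr:hatPhi00s_analyticproperties}(2) gives that $\tilde{\Phi}_{(0,0),*}(\zeta)$ is meromorphic near $\zeta=0$ with a pole of order exactly one. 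Extracting the relevant sub-block then yields a candidate $\tilde{\Phi}^{\bc}_{\bx,*}(\zeta)$ that is meromorphic with a pole of order at most one at $\zeta=0$, and the identity $\Phi^{\bc}_{\bx,*}(z)=\tilde{\Phi}^{\bc}_{\bx,*}((e^{\theta_{\bc}^{max}}-z)^{1/2})$ follows from the analogous identities for $\hat{G}_{0,*}$ and $\hat{\Phi}_{(0,0),*}$.

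The remaining point is to rule out that this sub-block admits an analytic extension across $\zeta=0$, that is, to verify the pole order is exactly one. For this I would compute the leading Puiseux coefficient via Proposition \ref{pr:hatPhi00s_analyticproperties}(3): the limit of $(e^{\theta_{\bc}^{max}}-z)^{1/2}\hat{\Phi}_{(x_1,x'_2),*}(z)$ is proportional to $\hat{G}_{0,*}(e^{\theta_{\bc}^{max}})^{x'_2}\,\hat{\bv}^{U}(e^{\theta_{\bc}^{max}})\,\hat{\bu}^{U}(e^{\theta_{\bc}^{max}})$. The proof of Proposition \ref{pr:spr_hatU_eq1} shows that $\hat{\bv}^{U}(e^{\theta_{\bc}^{max}})$ is simultaneously a positive Perron eigenvector of $\hat{G}_{0,*}(e^{\theta_{\bc}^{max}})$ with eigenvalue $e^{2\eta^R_{\bc,2}(\theta_{\bc}^{max})}$, so this limit reduces to
\begin{equation*}
(e^{\theta_{\bc}^{max}})^{x_1}\,e^{2x'_2\eta^R_{\bc,2}(\theta_{\bc}^{max})}\,\hat{g}^\Phi\,\hat{\bv}^{U}(e^{\theta_{\bc}^{max}})\,\hat{\bu}^{U}(e^{\theta_{\bc}^{max}}),
\end{equation*}
which is entry-wise positive; in particular every $2\times 2$ sub-block is nonzero, confirming pole order exactly one. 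The main obstacle I anticipate is organizing the case distinction $y\ge x_1$ versus $y<x_1$ cleanly, but the translation identity above sidesteps the need to introduce a second, mirror-image hatted process.
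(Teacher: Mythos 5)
Your overall route is the paper's: identify $\Phi^{\bc}_{\bx,*}(z)$ as a sub-block of $\hat{\Phi}_{(x_1,x_2'),*}(z)$ via \eqref{eq:hatPhixs_def}, factor through \eqref{eq:hatPhi_expression}, and import the analytic/meromorphic properties of $\hat{G}_{0,*}(z)$ and $\hat{\Phi}_{(0,0),*}(z)$ from Propositions \ref{pr:G_Gr_analyticproperties} and \ref{pr:hatPhi00s_analyticproperties}; your explicit computation of the leading Puiseux coefficient to pin the pole order at exactly one is a legitimate strengthening (it is essentially the content of Proposition \ref{pr:Phics_limit} later in the paper). However, there is a genuine gap in your reduction of the case $x_2<x_1-1$. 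The translation identity $\Phi^{\bc}_{\bx,*}(z)=z^{-k}\Phi^{\bc}_{\bx+k\bc,*}(z)$ is correct, but with $\bc=(1,1)$ the shift $\bx\mapsto\bx+k\bc=(x_1+k,x_2+k)$ leaves the difference $x_2-x_1$ unchanged, so no choice of $k$ makes ``the second coordinate dominate the first.'' Since \eqref{eq:hatPhixs_def} with $x_2'\ge 0$ only reaches blocks $\Phi^{\bc}_{(x_1,y),*}(z)$ with $y\ge x_1-1$, and \eqref{eq:hatPhi_expression} is only available for $x_2'\ge 0$, points such as $\bx=(k,0)$ with $k\ge 2$ --- exactly the ones that occur in $\bvarphi^{\bc}_1(z)$ --- are not covered by your argument.

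To close this you cannot avoid a second construction: either introduce the mirror-image hatted process (grouping $X^{\{1,2\}}_{1,n}-X^{\{1,2\}}_{2,n}$ instead of $X^{\{1,2\}}_{2,n}-X^{\{1,2\}}_{1,n}$, with the roles of the two boundary faces exchanged), which is what the paper implicitly invokes when it treats $\bvarphi^{\bc}_1(z)$ ``in the same way'' as $\bvarphi^{\bc}_2(z)$, or establish a representation of $\hat{\Phi}_{(x_1,x_2'),*}(z)$ for $x_2'<0$ in terms of the rate-matrix function of the triplet $\{\hat{A}^{\{1,2\}}_{*,-1}(z),\hat{A}^{\{1,2\}}_{*,0}(z),\hat{A}^{\{1,2\}}_{*,1}(z)\}$ and prove for it the analogues of Propositions \ref{pr:G_Gr_analyticproperties} and \ref{pr:hatPhi00s_analyticproperties}. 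Either way, some additional analytic input beyond what you cite is required; the translation trick alone does not sidestep it.
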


\begin{proof}
Applying Propositions \ref{pr:G_Gr_analyticproperties} and \ref{pr:hatPhi00s_analyticproperties} to \eqref{eq:hatPhi_expression}, we see that, for every $\bx\in\mathbb{Z}_+^2$, $\hat{\Phi}_{\bx,*}(z)$ is entry-wise analytic in $\Delta_{e^{\theta_{\bc}^{min}},e^{\theta_{\bc}^{max}}}\cup\partial\Delta_{e^{\theta_{\bc}^{max}}}\setminus\{e^{\theta_{\bc}^{max}}\}$. Hence, by \eqref{eq:hatPhixs_def}, for every $\bx\in\mathbb{Z}_+^2$, $\Phi^{\bc}_{\bx,*}(z)$ is entry-wise analytic in the same region. This completes the proof of statement (1). 

Let  $\tilde{G}_{0,*}(\zeta)$ and $\tilde{\Phi}_{(0,0),*}(\zeta)$ be the matrix functions given in Propositions \ref{pr:G_Gr_analyticproperties} and \ref{pr:hatPhi00s_analyticproperties}, respectively. Define $\tilde{\Phi}_{(x_1,x_2),*}(\zeta)$ as 
\[
\tilde{\Phi}_{(x_1,x_2),*}(\zeta) = (e^{\theta_{\bc}^{max}}-\zeta^2)^{x_1} \tilde{G}_{0,*}(\zeta)^{x_2} \tilde{\Phi}_{(0,0),*}(\zeta).
\]
As mentioned in the proof of Proposition \ref{pr:varphic_expansion}, for every $\bx\in\mathbb{Z}_+^2$, $\tilde{\Phi}_{\bx,*}(\zeta)$ is entry-wise meromorphic in a neighborhood of $\zeta=0$.
The point $\zeta=0$ is a pole of $\tilde{\Phi}_{\bx,*}(\zeta)$ with order one, and $\tilde{\Phi}_{\bx,*}(\zeta)$ satisfies $\hat{\Phi}_{\bx,*}(z)=\tilde{\Phi}_{\bx,*}((e^{\theta_{\bc}^{max}}-z)^{\frac{1}{2}})$ in a neighborhood of $z=e^{\theta_{\bc}^{max}}$. Hence, by \eqref{eq:hatPhixs_def}, for every $\bx\in\mathbb{Z}_+^2$, there exists a matrix function $\tilde{\Phi}^{\bc}_{\bx,*}(\zeta)$ satisfying statement (2). 
\end{proof}

%
Let $\alpha_{s_0}(z)$ be the eigenvalue of $\hat{G}_{0,*}(z)$ that satisfies,  for $\theta\in[\theta_{\bc}^{min},\theta_{\bc}^{max}]$, $\alpha_{s_0}(e^{\theta})=\spr(\hat{G}_{0,*}(e^{\theta}))=e^{2\eta_{\bc,2}^R(\theta)}$.  Let $\hat\bu^G(z)$ and $\hat\bv^G(z)$ be the left and right eigenvectors of $\hat{G}_{0,*}(z)$ with respect to the eigenvalue $\alpha_{s_0}(z)$, satisfying $\hat\bu^G(z)\hat\bv^G(z)=1$. 
By Lemma \ref{le:G_decomposition}, $\tilde{G}_{0,*}(\zeta)$ in Proposition \ref{pr:G_Gr_analyticproperties} satisfies the following property. 
\begin{proposition} \label{pr:tildeG_decomposition}
There exists a matrix function $\tilde{G}_{0,*}^\dagger(\zeta)$ entry-wise analytic in a neighborhood of $\zeta=0$ such that $\tilde{G}_{0,*}(\zeta)$ is represented as 
\begin{equation}
\tilde{G}_{0,*}(\zeta) = \tilde{G}^\dagger_{0,*}(\zeta) + \tilde\alpha_{s_0}(\zeta) \tilde\bv^G(\zeta) \tilde\bu^G(\zeta), 
\label{eq:tildeG0s_decomposition}
\end{equation}
where function $\tilde\alpha_{s_0}(\zeta)$, row vector function $\tilde\bu^G(\zeta)$ and column vector $\tilde\bv^G(\zeta)$ are element-wise analytic in a neighborhood of $\zeta=0$ and satisfying  $\alpha_{s_0}(z)=\tilde\alpha_{s_0}((e^{\theta_{\bc}^{max}}-z)^{\frac{1}{2}})$, $\hat\bu^G(z)=\tilde\bu^G((e^{\theta_{\bc}^{max}}-z)^{\frac{1}{2}})$ and  $\hat\bv^G(z)=\tilde\bv^G((e^{\theta_{\bc}^{max}}-z)^{\frac{1}{2}})$, respectively, in a neighborhood of $z=e^{\theta_{\bc}^{max}}$.
In a neighborhood of $\zeta=0$, $\tilde{G}^\dagger_{0,*}(\zeta)$ satisfies $\spr(\tilde{G}^\dagger_{0,*}(\zeta))<\alpha_{s_0}(e^{\theta_{\bc}^{max}})=e^{2\eta^R_{\bc,2}(\theta_{\bc}^{max})}$. 
Furthermore, $\tilde{G}_{0,*}(\zeta)$ satisfies, for $n\ge 1$, 
\begin{equation}
\tilde{G}_{0,*}(\zeta)^n = \tilde{G}^\dagger_{0,*}(\zeta)^n + \tilde\alpha_{s_0}(\zeta)^n \tilde\bv^G(\zeta) \tilde\bu^G(\zeta). 
\label{eq:tildeG0s_decomposition_n}
\end{equation}
\end{proposition}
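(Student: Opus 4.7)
The plan is to reduce the proposition to a direct application of Lemma \ref{le:G_decomposition}, taking $\{A_{i,j}\}=\{\hat{A}^{\{1,2\}}_{i,j}\}$, $\theta_1^{max}=\theta_{\bc}^{max}$ and $G(z)=\hat{G}_{0,*}(z)$. This substitution is legitimate because $\hat{G}_{0,*}(z)$ is exactly the G-matrix function generated from the triplet $\{\hat{A}^{\{1,2\}}_{*,-1}(z),\hat{A}^{\{1,2\}}_{*,0}(z),\hat{A}^{\{1,2\}}_{*,1}(z)\}$ and (by Proposition \ref{pr:G_Gr_analyticproperties}) already has the branch-point behavior at $z=e^{\theta_{\bc}^{max}}$ that Lemma \ref{le:G_decomposition} requires. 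The lemma then furnishes, in one stroke, the entry-wise analytic matrix functions $\tilde{G}_{0,*}(\zeta)$ and $\tilde{G}^\dagger_{0,*}(\zeta)$, the analytic scalar $\tilde{\alpha}_{s_0}(\zeta)$, the analytic vectors $\tilde{\bv}_{s_0}(\zeta)$ and $\tilde{\bu}_{s_0}(\zeta)$, the decomposition $\tilde{G}_{0,*}(\zeta)=\tilde{G}^\dagger_{0,*}(\zeta)+\tilde{\alpha}_{s_0}(\zeta)\tilde{\bv}_{s_0}(\zeta)\tilde{\bu}_{s_0}(\zeta)$, and the bound $\spr(\tilde{G}^\dagger_{0,*}(0))<\tilde{\alpha}_{s_0}(0)=\alpha_{s_0}(e^{\theta_{\bc}^{max}})=e^{2\eta^R_{\bc,2}(\theta_{\bc}^{max})}$, which extends to a neighborhood of $\zeta=0$ by continuity of the spectral radius.

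Next I would identify $\tilde{\bv}_{s_0}(\zeta)$ and $\tilde{\bu}_{s_0}(\zeta)$ with the asserted analytic continuations of the Perron eigenvectors. By construction in Lemma \ref{le:G_decomposition}, $\tilde{\bv}_{s_0}(\zeta)$ satisfies $L(e^{\theta_{\bc}^{max}}-\zeta^2,\tilde{\alpha}_{s_0}(\zeta))\tilde{\bv}_{s_0}(\zeta)=\bzero$, so via \eqref{eq:Ass_Gz_relation} it is a right eigenvector of $\hat{G}_{0,*}(e^{\theta_{\bc}^{max}}-\zeta^2)$ for the eigenvalue $\tilde{\alpha}_{s_0}(\zeta)$; dually, as the last row of $\tilde{T}(\zeta)^{-1}$, $\tilde{\bu}_{s_0}(\zeta)$ is the corresponding left eigenvector. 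Since $\alpha_{s_0}(e^{\theta_{\bc}^{max}})$ is the simple Perron–Frobenius eigenvalue of $\hat{G}_{0,*}(e^{\theta_{\bc}^{max}})$, these eigenvectors are unique up to joint scalar normalization. Rescaling by an analytic scalar to enforce $\tilde{\bu}_{s_0}(\zeta)\tilde{\bv}_{s_0}(\zeta)=1$ preserves analyticity in a neighborhood of $\zeta=0$ and, after pullback, matches the normalization $\hat{\bu}^G(z)\hat{\bv}^G(z)=1$; the resulting functions are the desired $\tilde{\bv}^G(\zeta)$ and $\tilde{\bu}^G(\zeta)$.

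For the $n$-th power formula \eqref{eq:tildeG0s_decomposition_n}, I would exploit the Jordan structure behind Lemma \ref{le:G_decomposition}. Because $\tilde{\bv}^G(\zeta)$ is, up to the change of basis $\tilde{T}(\zeta)$, the last standard basis vector and $\tilde{G}^\dagger_{0,*}(\zeta)=\tilde{T}(\zeta)\bigl(J^G(e^{\theta_{\bc}^{max}}-\zeta^2)-J_{s_0}(e^{\theta_{\bc}^{max}}-\zeta^2)\bigr)\tilde{T}(\zeta)^{-1}$, one checks directly that $\tilde{G}^\dagger_{0,*}(\zeta)\tilde{\bv}^G(\zeta)=\bzero$ and $\tilde{\bu}^G(\zeta)\tilde{G}^\dagger_{0,*}(\zeta)=\bzero$. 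Combined with $\tilde{\bu}^G(\zeta)\tilde{\bv}^G(\zeta)=1$, a one-line induction expanding $(\tilde{G}^\dagger_{0,*}(\zeta)+\tilde{\alpha}_{s_0}(\zeta)\tilde{\bv}^G(\zeta)\tilde{\bu}^G(\zeta))^n$ collapses all cross terms and yields the stated formula.

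The main obstacle is notational rather than mathematical: the existence and analyticity claims are fully subsumed by Lemma \ref{le:G_decomposition}, but one must carefully verify that the eigenvector pair produced by the "last column/last row of $\tilde{T}(\zeta)^{\pm 1}$" construction there truly coincides, after the normalization $\tilde{\bu}^G\tilde{\bv}^G=1$, with the analytic continuation through the branch point of the Perron eigenvectors of $\hat{G}_{0,*}(z)$, and that $\tilde{G}^\dagger_{0,*}(\zeta)$ therefore annihilates them on both sides. Once this identification is explicit, the projector-like identity that powers the induction for \eqref{eq:tildeG0s_decomposition_n} is immediate, and the remaining assertions are direct translations of the corresponding statements in Lemma \ref{le:G_decomposition}.
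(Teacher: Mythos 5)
Your proposal is correct and follows essentially the same route as the paper, which also obtains Proposition \ref{pr:tildeG_decomposition} by directly invoking Lemma \ref{le:G_decomposition} (and the surrounding Jordan-decomposition machinery of Section \ref{sec:G_analyticextension}, in particular the analogue of \eqref{eq:G_desomposition_2} for the $n$-th power) applied to the G-matrix function generated from the triplet $\{\hat{A}^{\{1,2\}}_{*,-1}(z),\hat{A}^{\{1,2\}}_{*,0}(z),\hat{A}^{\{1,2\}}_{*,1}(z)\}$. Your extra care in identifying $\tilde{\bv}_{s_0}(\zeta)$, $\tilde{\bu}_{s_0}(\zeta)$ with the normalized Perron eigenvector continuations and in running the projector induction for \eqref{eq:tildeG0s_decomposition_n} is detail the paper leaves implicit, but it is the intended argument.
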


%
Let $\hat{\bnu}_{(0,*)}(z)$ be the vector generating function of $\{\hat{\bnu}_{(0,k)}\}$ defined as $\hat{\bnu}_{(0,*)}(z)=\sum_{k=1}^\infty  z^k \hat{\bnu}_{(0,k)}$. 
Define a matrix function $\hat{U}_2(z)$ as
\[
\hat{U}_2(z) = \hat{A}^{\{2\}}_{0,*}(z)+\hat{A}^{\{2\}}_{1,*}(z) \hat{G}_{*,0}(z), 
\]
and let $\hat\bu_2^U(z)$ and $\hat\bv_2^U(z)$ be the left and right eigenvectors of $\hat{U}_2(z)$ with respect to the maximum eigenvalue of $\hat{U}_2(z)$, satisfying $\hat\bu_2^U(z)\hat\bv_2^U(z)=1$. 
By Lemma 5.3 of \cite{Ozawa18} (also see Proposition 3.5 of \cite{Ozawa22}), $\hat{\bnu}_{(0,*)}(z)$ satisfies the following properties. 
\begin{proposition} \label{pr:hatvarphi2_analyticproperties}
\begin{itemize}
\item[(1)] The vector function $\hat{\bnu}_{(0,*)}(z)$ is element-wise analytic in $\Delta_{e^{2\theta_2^*}}\cup\partial\Delta_{e^{2\theta_2^*}}\setminus\{e^{2\theta_2^*}\}$. 
\item[(2)] If $\theta_2^*<\theta_2^{max}$,  $\hat{\bnu}_{(0,*)}(z)$ is element-wise meromorphic in a neighborhood of $z=e^{2\theta_2^*}$ and the point  $z=e^{2\theta_2^*}$ is a pole of $\hat{\bnu}_{(0,*)}(z)$ with order one. It satisfies, for some positive constant $\hat g_2$, 
\begin{equation}
\lim_{\tilde{\Delta}_{e^{2\theta_2^*}}\ni z\to e^{2\theta_2^*}} (e^{2\theta_2^*}-z) \hat\bvarphi_2(z) 
= \hat g_2 \hat\bu_2^U(e^{2\theta_2^*}), 
\label{eq:hatvarphi12_limit}
\end{equation}
where $\hat\bu_2^U(e^{2\theta_2^*})$ is positive. 
\end{itemize}
\end{proposition}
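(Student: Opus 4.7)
The plan is to invoke Lemma 5.3 of Ozawa \cite{Ozawa18} directly, after recognizing that $\hat{\bnu}_{(0,*)}(z)$ is essentially the boundary-state generating function of the one-dimensional QBD problem encoded by the lifted MA-process $\{\hat{\bY}_n\}$ on the $x_2$-axis. Concretely, since $\hat{\bnu}_{(0,k)} = (\bnu_{(0,2k)},\bnu_{(0,2k+1)})$, the sequence $\{\hat{\bnu}_{(0,k)}\}_{k\ge 1}$ is a paired-block rearrangement of $\{\bnu_{(0,n)}\}_{n\ge 2}$ and inherits its geometric decay rate. The asymptotic decay rate of $\{\bnu_{(0,n)}\}$ in $n$ is $\theta_2^*$ (Corollary 4.3 of Ozawa \cite{Ozawa13}), so that of $\{\hat{\bnu}_{(0,k)}\}$ in $k$ is $2\theta_2^*$, placing the dominant singularity of $\hat{\bnu}_{(0,*)}(z)$ at $z = e^{2\theta_2^*}$.

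Statement (1) follows from the first part of Lemma 5.3 of \cite{Ozawa18}: element-wise analyticity in $\Delta_{e^{2\theta_2^*}}$ is the absolute convergence statement, while analyticity on the boundary circle minus the positive real singularity follows from the strict inequality $\spr(\hat U_2(z)) < \spr(\hat U_2(|z|))$ for $|z|=e^{2\theta_2^*},\,z \ne e^{2\theta_2^*}$, the proof being entirely parallel to the corresponding argument for $\hat{U}(z)$ in Proposition \ref{pr:hatPhi00s_analyticproperties}(1). The irreducibility/aperiodicity needed for this strict inequality is inherited from Assumption \ref{as:MAprocess_irreducible}.

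Statement (2) is the second part of Lemma 5.3 of \cite{Ozawa18}, invoked in the regime $\theta_2^* < \theta_2^{max}$. In this regime the dominant singularity arises genuinely from the boundary kernel and not from a branch point of the interior G-matrix function $\hat G_{*,0}(z)$ (whose branch point sits at $z = e^{2\theta_2^{max}}$, strictly outside the disk of radius $e^{2\theta_2^*}$). Hence the factorization $\hat{\bnu}_{(0,*)}(z) \propto \cdot \, (I - \hat U_2(z))^{-1}$, combined with the adjugate-determinant decomposition used in the proof of Proposition \ref{pr:hatPhi00s_analyticproperties}(3), yields a simple pole at $z = e^{2\theta_2^*}$. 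The residue is, as in \eqref{eq:hatgPhi}, a positive multiple of the left Perron eigenvector $\hat{\bu}_2^U(e^{2\theta_2^*})$ of $\hat U_2(e^{2\theta_2^*})$; positivity of this eigenvector follows from irreducibility of $\hat U_2(e^{2\theta_2^*})$, and the scalar $\hat g_2$ is the corresponding positive normalization.

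The main obstacle I anticipate is proving the exact analogue of Proposition \ref{pr:spr_hatU_eq1} for $\hat{U}_2(z)$, namely that $\spr(\hat U_2(e^{2\theta_2^*})) = 1$ with the eigenvalue $1$ simple. This would be handled by the same three-step template used for $\hat U(z)$: first exhibit an eigenvector of $\hat G_{*,0}(e^{2\theta_2^*})$ producing $1$ as an eigenvalue of $\hat U_2(e^{2\theta_2^*})$; next rule out $\spr(\hat U_2(e^{2\theta_2^*})) > 1$ by log-convexity of $\theta \mapsto \spr(\hat U_2(e^\theta))$ combined with the absolute convergence of $\hat{\bnu}_{(0,*)}(z)$ on $\Delta_{e^{2\theta_2^*}}$ (here the hypothesis $\theta_2^* < \theta_2^{max}$ is essential to separate the two types of singularities); and finally apply irreducibility of $\hat U_2(e^{2\theta_2^*})$ to conclude simplicity.
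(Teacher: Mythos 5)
Your proposal matches the paper's own treatment: the paper offers no proof of this proposition beyond the sentence ``By Lemma 5.3 of \cite{Ozawa18} (also see Proposition 3.5 of \cite{Ozawa22})'', which is precisely the route you take. Your additional sketch --- transferring the unhatted result to the paired blocks $\hat{\bnu}_{(0,k)}=\begin{pmatrix}\bnu_{(0,2k)} & \bnu_{(0,2k+1)}\end{pmatrix}$ and running the spectral analysis of $\hat{U}_2$ along the template of Propositions \ref{pr:spr_hatU_eq1} and \ref{pr:hatPhi00s_analyticproperties} --- is consistent with the machinery the paper uses for the analogous interior quantities.
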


%
Define a vector function $\tilde{\ba}(\zeta,w)$ as 
\begin{equation}
\tilde{\ba}(\zeta,w) = \sum_{k=1}^\infty\ \hat{\bnu}_{(0,k)} \hat{D}(e^{\theta_{\bc}^{max}}-\zeta^2,\tilde{G}_{0,*}(\zeta)) w^{k-1}.
\label{eq:tildea_def}
\end{equation}
Then, the vector functions $\hat{\ba}(z,\hat{G}_{0,*}(z))$ in \eqref{eq:hatphic2_eq2} and $\tilde{\ba}(\zeta,\tilde{G}_{0,*}(\zeta))$ satisfy the following properties. 
\begin{proposition} \label{pr:hata_analyticproperties} 
Assume Type 1. 
\begin{itemize}
\item[(1)] If $\bar{\eta}_1'(\theta_2^*)\le -c_1/c_2=-1$, the vector function $\hat{\ba}(z,\hat{G}_{0,*}(z))$ is element-wise analytic in $\Delta_{e^{\theta_{\bc}^{min}},e^{\theta_{\bc}^{max}}}\cup\partial\Delta_{e^{\theta_{\bc}^{max}}}\setminus\{e^{\theta_{\bc}^{max}}\}$. 
\item[(2)] If $\bar{\eta}_1'(\theta_2^*)<-1$, $\tilde{\ba}(\zeta,\tilde{G}_{0,*}(\zeta))$ is element-wise analytic in a neighborhood of $\zeta=0$; if $\bar{\eta}_1'(\theta_2^*)=-1$, it is element-wise meromorphic in a neighborhood of $\zeta=0$ and the point $\zeta=0$ is a pole of it with order one.  The vector function $\hat{\ba}(z,\hat{G}_{0,*}(z))$ is represented as $\hat{\ba}(z,\tilde{G}_{0,*}(z))=\tilde{\ba}((e^{\theta_{\bc}^{max}}-z)^{\frac{1}{2}},\tilde{G}_{0,*}((e^{\theta_{\bc}^{max}}-z)^{\frac{1}{2}}))$ in a neighborhood of $z=e^{\theta_{\bc}^{max}}$. 
\item[(3)] If $\bar{\eta}_1'(\theta_2^*)=-1$, $\hat{\ba}(z,\hat{G}_{0,*}(z))$ satisfies, for a positive constant $\hat{g}_2^a$, 
\begin{align}
&\lim_{\tilde{\Delta}_{e^{\theta_{\bc}^{max}}}\ni z\to e^{\theta_{\bc}^{max}}} (e^{\theta_{\bc}^{max}}-z)^{\frac{1}{2}} \hat{\ba}(z,\hat{G}_{0,*}(z)) = \hat{g}_2^a \hat\bu^G(e^{\theta_{\bc}^{max}}) \ge \bzero^\top, \ne\bzero^\top . 
\label{eq:hata_limit}
\end{align}
\end{itemize}
\end{proposition}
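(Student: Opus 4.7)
The plan is to peel the Perron--Frobenius eigenspace of $\hat{G}_{0,*}(z)$ out of $\hat{\ba}(z,\hat{G}_{0,*}(z))$, routing all potentially singular behavior at $z=e^{\theta_{\bc}^{max}}$ into a single scalar factor, and then to analyze that factor using the explicit singularity of $\hat{\bnu}_{(0,*)}$ at $z=e^{2\theta_2^*}$ given by Proposition \ref{pr:hatvarphi2_analyticproperties}. Writing $D_0(z)=\hat{D}(z,\hat{G}_{0,*}(z))$ and substituting the $z$-coordinate version of the spectral decomposition of Proposition \ref{pr:tildeG_decomposition}, namely $\hat{G}_{0,*}(z)^{k-1}=\hat{G}^\dagger_{0,*}(z)^{k-1}+\alpha_{s_0}(z)^{k-1}\hat{\bv}^G(z)\hat{\bu}^G(z)$, into the defining series and collecting terms yields the fundamental identity
\[
\hat{\ba}(z,\hat{G}_{0,*}(z)) = \hat{\ba}^\dagger(z) + \frac{1}{\alpha_{s_0}(z)}\,\hat{\bnu}_{(0,*)}(\alpha_{s_0}(z))\,D_0(z)\,\hat{\bv}^G(z)\,\hat{\bu}^G(z),
\]
where $\hat{\ba}^\dagger(z)=\sum_{k=1}^\infty\hat{\bnu}_{(0,k)}D_0(z)\hat{G}^\dagger_{0,*}(z)^{k-1}$. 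All three assertions of the proposition will be read off this identity.

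For statement (1), I would show $\hat{\ba}^\dagger(z)$ is term-wise analytic on $\Delta_{e^{\theta_{\bc}^{min}},e^{\theta_{\bc}^{max}}}\cup\partial\Delta_{e^{\theta_{\bc}^{max}}}\setminus\{e^{\theta_{\bc}^{max}}\}$ with uniform convergence on compact subsets, using that $\limsup_k\|\hat{\bnu}_{(0,k)}\|^{1/k}=e^{-2\theta_2^*}$ from Proposition \ref{pr:hatvarphi2_analyticproperties}(1) together with the strict bound $\spr(\hat{G}^\dagger_{0,*}(z))<e^{2\theta_2^*}$ coming from Proposition \ref{pr:tildeG_decomposition} under the hypothesis $\bar\eta_1'(\theta_2^*)\le -1$, which forces $\alpha_{s_0}(e^{\theta_{\bc}^{max}})=e^{2\eta_{\bc,2}^R(\theta_{\bc}^{max})}\le e^{2\theta_2^*}$. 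For the second summand, analyticity of $\alpha_{s_0}$ on the same region from Proposition \ref{pr:G_Gr_analyticproperties}, combined with the Perron--Frobenius bound $|\alpha_{s_0}(z)|\le\alpha_{s_0}(|z|)$ and the strict inequality that holds off the positive real axis by aperiodicity (Assumption \ref{as:MAprocess_irreducible}), shows that $\alpha_{s_0}(z)$ meets the singular locus $\{e^{2\theta_2^*}\}$ of $\hat{\bnu}_{(0,*)}$ only at $z=e^{\theta_{\bc}^{max}}$.

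For statement (2), I would pass to the coordinate $\zeta=(e^{\theta_{\bc}^{max}}-z)^{1/2}$, in which each of $\tilde{G}^\dagger_{0,*}(\zeta)$, $\tilde\alpha_{s_0}(\zeta)$, $\tilde\bv^G(\zeta)$, $\tilde\bu^G(\zeta)$ and $\tilde{D}_0(\zeta)=\hat{D}(e^{\theta_{\bc}^{max}}-\zeta^2,\tilde{G}_{0,*}(\zeta))$ is analytic at $\zeta=0$ by Propositions \ref{pr:G_Gr_analyticproperties} and \ref{pr:tildeG_decomposition}, so that $\tilde{\ba}^\dagger(\zeta)$ is analytic at $0$ as well. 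When $\bar\eta_1'(\theta_2^*)<-1$, $\tilde\alpha_{s_0}(0)<e^{2\theta_2^*}$ and hence $\hat{\bnu}_{(0,*)}(\tilde\alpha_{s_0}(\zeta))$ is analytic at $0$; when $\bar\eta_1'(\theta_2^*)=-1$, $\tilde\alpha_{s_0}(0)=e^{2\theta_2^*}$ and Lemma \ref{le:alpha_s0_limit} gives $\tilde\alpha_{s_0}'(0)=-\alpha_{s_0,1}\ne 0$, so $e^{2\theta_2^*}-\tilde\alpha_{s_0}(\zeta)$ vanishes to exact order one at $\zeta=0$; composing with the simple pole of $\hat{\bnu}_{(0,*)}$ at $e^{2\theta_2^*}$ from Proposition \ref{pr:hatvarphi2_analyticproperties}(2) produces exactly a simple pole of $\tilde{\ba}(\zeta,\tilde{G}_{0,*}(\zeta))$ at $\zeta=0$.

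For statement (3), substituting the residue expansion $\hat{\bnu}_{(0,*)}(w)\sim \hat{g}_2\,\hat{\bu}_2^U(e^{2\theta_2^*})/(e^{2\theta_2^*}-w)$ together with $e^{2\theta_2^*}-\tilde\alpha_{s_0}(\zeta)=\alpha_{s_0,1}\zeta+O(\zeta^2)$ into the decomposition and letting $\zeta\to 0$ yields $\hat{g}_2^a=\hat{g}_2(\alpha_{s_0,1}\,e^{2\theta_2^*})^{-1}\hat{\bu}_2^U(e^{2\theta_2^*})D_0(e^{\theta_{\bc}^{max}})\hat{\bv}^G(e^{\theta_{\bc}^{max}})$ as the scalar prefactor of $\hat{\bu}^G(e^{\theta_{\bc}^{max}})$, and the eigenvector identity $\hat{G}_{0,*}(e^{\theta_{\bc}^{max}})\hat{\bv}^G=e^{2\theta_2^*}\hat{\bv}^G$ simplifies $D_0(e^{\theta_{\bc}^{max}})\hat{\bv}^G$ to $e^{2\theta_2^*}(\hat{A}^{\{2\}}_{*,*}(e^{\theta_{\bc}^{max}},e^{2\theta_2^*})-I)\hat{\bv}^G(e^{\theta_{\bc}^{max}})$. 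The main obstacle is the strict positivity of $\hat{g}_2^a$: since $\hat{g}_2$, $\alpha_{s_0,1}$ and the entries of $\hat{\bu}_2^U(e^{2\theta_2^*})$ are all positive, the task reduces to showing that the scalar $\hat{\bu}_2^U(e^{2\theta_2^*})(\hat{A}^{\{2\}}_{*,*}(e^{\theta_{\bc}^{max}},e^{2\theta_2^*})-I)\hat{\bv}^G(e^{\theta_{\bc}^{max}})$ is nonzero and of the correct sign. I would attack this by combining the Perron--Frobenius characterization of $\hat{\bu}_2^U$ as the left eigenvector of $\hat{U}_2$ with a log-convexity argument on $\spr(\hat{U}_2(e^\theta))$ in the spirit of Proposition \ref{pr:spr_hatU_eq1}, ruling out the degenerate cancellation that would be required at the boundary case $\bar\eta_1'(\theta_2^*)=-1$.
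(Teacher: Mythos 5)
Your decomposition is exactly the one the paper uses (its equation \eqref{eq:tildea_decomposition}), and statements (1)--(3) are then read off in the same way: spectral-radius bounds for the $\hat{G}^\dagger_{0,*}$ part, the simple pole of $\hat{\bnu}_{(0,*)}$ at $e^{2\theta_2^*}$ composed with the order-one branch point of $\alpha_{s_0}$ via Lemma \ref{le:alpha_s0_limit}, and the reduction of positivity to $\hat{\bu}_2^U(e^{2\theta_2^*})\hat{D}(e^{\theta_{\bc}^{max}},e^{2\theta_2^*})\hat{\bv}^G(e^{\theta_{\bc}^{max}})>0$, which the paper likewise only defers to the argument of Lemma 5.5(1) of \cite{Ozawa18}. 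Two harmless slips: the rank-one splitting of $\hat{G}_{0,*}(z)^{k-1}$ fails at $k=1$, so your identity is missing the everywhere-analytic correction term $-\hat{\bnu}_{(0,1)}\hat{D}(z,\hat{G}_{0,*}(z))\hat{\bv}^G(z)\hat{\bu}^G(z)$, and in the paper's convention $\alpha_{s_0,1}<0$, so $e^{2\theta_2^*}-\tilde{\alpha}_{s_0}(\zeta)=-\alpha_{s_0,1}\zeta+O(\zeta^2)$ and the prefactor is $(-\alpha_{s_0,1})^{-1}\hat{g}_2>0$; neither affects the conclusions.
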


\begin{proof}
By Proposition 4.2 of \cite{Ozawa18}, if $\bar{\eta}_1'(\theta_2^*)\le -1$, we have for $z\in\Delta_{e^{\theta_{\bc}^{min}},e^{\theta_{\bc}^{max}}}\cup\partial\Delta_{e^{\theta_{\bc}^{max}}}\setminus\{e^{\theta_{\bc}^{max}}\}$ that $|\alpha_{s_0}(z)| < \alpha_{s_0}(e^{\theta_{\bc}^{max}})=e^{2\eta^R_{\bc,2}(\theta_{\bc}^{max})}\le e^{2\theta_2^*}$, and this implies $\spr(\hat{G}_{0,*}(z))<e^{2\theta_2^*}$. 
Hence, by Lemma 3.2 of \cite{Ozawa18} and Proposition \ref{pr:hatvarphi2_analyticproperties}, the vector function $\hat{\ba}(z,\hat{G}_{0,*}(z))$ is element-wise analytic in $\Delta_{e^{\theta_{\bc}^{min}},e^{\theta_{\bc}^{max}}}\cup\partial\Delta_{e^{\theta_{\bc}^{max}}}\setminus\{e^{\theta_{\bc}^{max}}\}$. 
This completes the proof of statement (1). 

By Proposition \ref{pr:tildeG_decomposition}, we have 
\begin{align}
\tilde{\ba}(\zeta,\tilde{G}_{0,*}(\zeta))
&= \tilde{\ba}(\zeta,\tilde{G}^\dagger_{0,*}(\zeta)) \cr
&\qquad + (\tilde\alpha_{s_0}(\zeta)^{-1} \hat{\bnu}_{(0,*)}(\tilde\alpha_{s_0}(\zeta))-\hat{\bnu}_{(0,1)})  \hat{D}(e^{\theta_{\bc}^{max}}-\zeta^2,\tilde\alpha_{s_0}(\zeta) ) \tilde\bv^G(\zeta) \tilde\bu^G(\zeta). 
\label{eq:tildea_decomposition}
\end{align}
If $\bar{\eta}_1'(\theta_2^*)\le -1$, $\spr(\tilde{G}^\dagger_{0,*}(\zeta))<e^{2\eta^R_{\bc,2}(\theta_{\bc}^{max})}\le e^{2\theta_2^*}$ in a neighborhood of $\zeta=0$. Hence, the vector function $\tilde{\ba}(\zeta,\tilde{G}^\dagger_{0,*}(\zeta))$ is element-wise analytic in a neighborhood of $\zeta=0$. 
If $\bar{\eta}_1'(\theta_2^*)<-1$, $\tilde\alpha_{s_0}(0)=\alpha_{s_0}(e^{\theta_{\bc}^{max}})=e^{2\eta^R_{\bc,2}(\theta_{\bc}^{max})}< e^{2\theta_2^*}$, and this implies $|\tilde\alpha_{s_0}(\zeta)|< e^{2\theta_2^*}$ in a neighborhood of $\zeta=0$. Hence, by Proposition \ref{pr:hatvarphi2_analyticproperties}, the vector function $\tilde{\ba}(\zeta,\tilde{G}_{0,*}(\zeta))$ as well as $\hat{\bnu}_{(0,*)}(\tilde\alpha_{s_0}(\zeta))$ is element-wise analytic in a neighborhood of $\zeta=0$. 
If $\bar{\eta}_1'(\theta_2^*)=-1$, $\tilde\alpha_{s_0}(0)=e^{2\eta^R_{\bc,2}(\theta_{\bc}^{max})}=e^{2\theta_2^*}$. Hence, by Proposition \ref{pr:hatvarphi2_analyticproperties}, the vector function $\tilde{\ba}(\zeta,\tilde{G}_{0,*}(\zeta))$ as well as $\hat{\bnu}_{(0,*)}(\tilde\alpha_{s_0}(\zeta))$ is meromorphic in a neighborhood of $\zeta=0$ and the point $\zeta=0$ is a pole of it with order one, where we use the fact that the limit given by statement (3) is nonzero. 
This completes the proof of statement (2). 

If $\bar{\eta}_1'(\theta_2^*)=-1$, $\alpha_{s_0}(e^{\theta_{\bc}^{max}})=e^{2\eta^R_{\bc,2}(\theta_{\bc}^{max})}=e^{2\theta_2^*}$. Hence, by Lemma \ref{le:alpha_s0_limit} and Proposition \ref{pr:hatvarphi2_analyticproperties}, we have 
\begin{align*}
&\lim_{\tilde{\Delta}_{e^{\theta_{\bc}^{max}}}\ni z\to e^{\theta_{\bc}^{max}}} (e^{\theta_{\bc}^{max}}-z)^{\frac{1}{2}} \hat{\bnu}_{(0,*)}(\alpha_{s_0}(z)) \cr
&\qquad\qquad = \lim_{\tilde{\Delta}_{e^{\theta_{\bc}^{max}}}\ni z\to e^{\theta_{\bc}^{max}}}  \frac{(e^{\theta_{\bc}^{max}}-z)^{\frac{1}{2}}}{\alpha_{s_0}(e^{\theta_{\bc}^{max}})-\alpha_{s_0}(z)} (\alpha_{s_0}(e^{\theta_{\bc}^{max}})-\alpha_{s_0}(z)) \hat{\bnu}_{(0,*)}(\alpha_{s_0}(z)) \cr
&\qquad\qquad = (-\alpha_{s_0,1})^{-1} \hat g_2 \hat\bu_2^U(e^{2\theta_2^*}), 
\end{align*}
where $\alpha_{s_0,1}$ is the limit of $\alpha_{s_0}(z)$ given by \eqref{eq:limit_eigen_G1} and it is negative. 
By \eqref{eq:tildea_decomposition}, this leads us to 
\begin{align}
&\lim_{\tilde{\Delta}_{e^{\theta_{\bc}^{max}}}\ni z\to e^{\theta_{\bc}^{max}}} (e^{\theta_{\bc}^{max}}-z)^{\frac{1}{2}} \hat{\ba}(z,\hat{G}_{0,*}(z)) \cr
&\qquad\qquad = (-\alpha_{s_0,1})^{-1} \hat g_2 e^{-2\theta_2^*} \hat\bu_2^U(e^{2\theta_2^*}) D(e^{\theta_{\bc}^{max}},e^{2\theta_2^*}) \hat\bv^G(e^{\theta_{\bc}^{max}}) \hat\bu^G(e^{\theta_{\bc}^{max}}). 
\label{eq:hata_limit1}
\end{align}
From this, we see that $\hat{g}_2^a \hat\bu^G(e^{\theta_{\bc}^{max}})$ in \eqref{eq:hata_limit} is given by the right-hand side of \eqref{eq:hata_limit1}. 
In a manner similar to that used in the proof of Lemma 5.5 (part (1)) of \cite{Ozawa18}, we see that 
\[
\hat\bu_2^U(e^{2\theta_2^*}) D(e^{\theta_{\bc}^{max}},e^{2\theta_2^*}) \hat\bv^G(e^{\theta_{\bc}^{max}})>0,
\]
and hence, $\hat{g}_2^a$ is also positive. 
This completes the proof of statement (3). 
\end{proof}

%
Finally, we give the proof of Propositions \ref{pr:varphic_analytic_domain}, \ref{pr:varphic_expansion} and \ref{pr:varphic_limit2}. 
%
\begin{proof}[Proof of Proposition \ref{pr:varphic_analytic_domain}]
Assume Type 1 and $\bar{\eta}_1'(\theta_2^*)\le -c_1/c_2=-1\le 1/\bar{\eta}_2'(\theta_1^*)$. Since $\bvarphi^{\bc}(z)$ is a probability vector generating function, it is automatically analytic element-wise in $\Delta_{e^{\theta_{\bc}^{max}}}$. 
Hence, we prove $\bvarphi^{\bc}(z)$ is element-wise analytic on $\partial\Delta_{e^{\theta_{\bc}^{max}}}\setminus\{e^{\theta_{\bc}^{max}}\}$. For the purpose, we use equations \eqref{eq:varphic_eq1},  \eqref{eq:phic0_definition}, \eqref{eq:phic2_hatphic2_relation}, \eqref{eq:hatPhi_expression} and \eqref{eq:hatphic2_eq2}.

By Propositions \ref{pr:G_Gr_analyticproperties}, \ref{pr:hatPhi00s_analyticproperties} and \ref{pr:hata_analyticproperties}, $\hat{G}_{0,*}(z)$, $\hat{\ba}(z,\hat{G}_{0,*}(z))$ and $\hat{\Phi}_{(0,0),*}(z)$ are element-wise analytic on $\partial\Delta_{e^{\theta_{\bc}^{max}}}\setminus\{e^{\theta_{\bc}^{max}}\}$. Hence, by \eqref{eq:hatPhi_expression} and \eqref{eq:hatphic2_eq2}, $\hat{\Phi}_{\bx,*}(z)$ and $\hat{\bvarphi}_2(z)$ are also analytic element-wise on $\partial\Delta_{e^{\theta_{\bc}^{max}}}\setminus\{e^{\theta_{\bc}^{max}}\}$. 
By Corollary \ref{co:Phics_analyticproperties}, $\Phi^{\bc}_{\bx,*}(z)$ is entry-wise analytic on $\partial\Delta_{e^{\theta_{\bc}^{max}}}\setminus\{e^{\theta_{\bc}^{max}}\}$, and by \eqref{eq:phic2_hatphic2_relation}, $\bvarphi^{\bc}_2(z)$ is element-wise analytic on $\partial\Delta_{e^{\theta_{\bc}^{max}}}\setminus\{e^{\theta_{\bc}^{max}}\}$. 
In the same way, we can see that if $\bar{\eta}_2'(\theta_1^*)\le -1$, $\bvarphi^{\bc}_1(z)$ is element-wise analytic on $\partial\Delta_{e^{\theta_{\bc}^{max}}}\setminus\{e^{\theta_{\bc}^{max}}\}$. 
By \eqref{eq:phic0_definition}, the analytic property of $\Phi^{\bc}_{\bx,*}(z)$ implies that $\bvarphi^{\bc}_0(z)$ is element-wise analytic on $\partial\Delta_{e^{\theta_{\bc}^{max}}}\setminus\{e^{\theta_{\bc}^{max}}\}$. 
As a result, we see by \eqref{eq:varphic_eq1} that $\bvarphi^{\bc}(z)$ is element-wise analytic on $\partial\Delta_{e^{\theta_{\bc}^{max}}}\setminus\{e^{\theta_{\bc}^{max}}\}$. 
This completes the proof. 
\end{proof}

%
\begin{proof}[Proof of Proposition \ref{pr:varphic_expansion}]
Assuming Type 1 and $\bar{\eta}_1'(\theta_2^*)\le -c_1/c_2=-1\le 1/\bar{\eta}_2'(\theta_1^*)$, we also use equations \eqref{eq:varphic_eq1},  \eqref{eq:phic0_definition}, \eqref{eq:phic2_hatphic2_relation}, \eqref{eq:hatPhi_expression} and \eqref{eq:hatphic2_eq2}.

First, we consider about $\bvarphi^{\bc}_0(z)$. 
By Corollary \ref{co:Phics_analyticproperties}, for $\bx=(x_1,x_2)\in\mathbb{Z}_+^2$, there exists a matrix function $\tilde{\Phi}^{\bc}_{\bx,*}(\zeta)$ being entry-wise meromorphic in a neighborhood of $\zeta=0$ and satisfying $\Phi^{\bc}_{\bx,*}(z)=\tilde{\Phi}^{\bc}_{\bx,*}((e^{\theta_{\bc}^{max}}-z)^{\frac{1}{2}})$ in a neighborhood of $z=e^{\theta_{\bc}^{max}}$. The point $\zeta=0$ is a pole of $\tilde{\Phi}^{\bc}_{\bx,*}(\zeta)$ with order one. 
Define $\tilde{\bvarphi}^{\bc}_0(\zeta)$ as 
\[
\tilde{\bvarphi}^{\bc}_0(\zeta) = \sum_{i_1,i_2\in\{-1,0,1\}} \bnu_{(0,0)} (A^\emptyset_{i_1,i_2}-A^{\{1,2\}}_{i_1,i_2}) \tilde{\Phi}^{\bc}_{(i_1,i_2),*}(\zeta),
\]
which satisfies the same analytic properties as $\tilde{\Phi}^{\bc}_{\bx,*}(\zeta)$. It also satisfies $\bvarphi^{\bc}_0(z)=\tilde{\bvarphi}^{\bc}_0((e^{\theta_{\bc}^{max}}-z)^{\frac{1}{2}})$ in a neighborhood of $z=e^{\theta_{\bc}^{max}}$.

Next, we consider about $\bvarphi^{\bc}_2(z)$.  Define $\tilde{\bvarphi}_2(\zeta)$ as
\[
\tilde{\bvarphi}_2(\zeta) = \tilde{\ba}(\zeta,\tilde{G}_{0,*}(\zeta)) \tilde{\Phi}_{(0,0),*}(\zeta).
\]
By Propositions \ref{pr:hatPhi00s_analyticproperties} and \ref{pr:hata_analyticproperties} and \eqref{eq:hatphic2_eq2}, $\tilde{\bvarphi}_2(\zeta)$ is entry-wise meromorphic in a neighborhood of $\zeta=0$ and satisfying $\hat{\bvarphi}_2(z)=\tilde{\bvarphi}_2((e^{\theta_{\bc}^{max}}-z)^{\frac{1}{2}})$ in a neighborhood of $z=e^{\theta_{\bc}^{max}}$. If $\bar{\eta}_1'(\theta_2^*)<-1$, the point $\zeta=0$ is a pole of $\tilde{\bvarphi}_2(\zeta)$ with at most order one; if $\bar{\eta}_1'(\theta_2^*)=-1$, it is a pole of $\tilde{\bvarphi}_2(\zeta)$ with at most order two. 
Represent $\tilde{\bvarphi}_2(\zeta)$ in block form as  $\tilde{\bvarphi}_2(\zeta)=\begin{pmatrix} \tilde{\bvarphi}_{2,1}(\zeta) & \tilde{\bvarphi}_{2,2}(\zeta) \end{pmatrix}$ and define $\tilde{\bvarphi}^{\bc}_2(\zeta)$ as 
\[
\tilde{\bvarphi}^{\bc}_2(\zeta) 
= \tilde{\bvarphi}_{2,1}(\zeta) + \sum_{i_1,i_2\in\{-1,0,1\}} \bnu_{(0,1)} (A^{\{2\}}_{i_1,i_2}-A^{\{1,2\}}_{i_1,i_2}) \tilde{\Phi}^{\bc}_{(i_1,i_2+1),*}(\zeta). 
\]
Then, the vector function $\tilde{\bvarphi}^{\bc}_2(\zeta)$ is element-wise meromorphic in a neighborhood of $\zeta=0$, and by \eqref{eq:phic2_hatphic2_relation}, it satisfies $\bvarphi^{\bc}_2(z)=\tilde{\bvarphi}^{\bc}_2((e^{\theta_{\bc}^{max}}-z)^{\frac{1}{2}})$ in a neighborhood of $z=e^{\theta_{\bc}^{max}}$. 
If $\bar{\eta}_1'(\theta_2^*)<-1$, the point $\zeta=0$ is a pole of $\tilde{\bvarphi}^{\bc}_2(\zeta)$ with at most order one; if $\bar{\eta}_1'(\theta_2^*)=-1$, it is a pole of $\tilde{\bvarphi}^{\bc}_2(\zeta)$ with at most order two. 

Finally, we consider about $\bvarphi^{\bc}(z)$. 
In the same way as that used for $\bvarphi^{\bc}_2(z)$, we can see that there exists a vector function $\tilde{\bvarphi}^{\bc}_1(\zeta)$ being element-wise meromorphic in a neighborhood of $\zeta=0$ and satisfying $\bvarphi^{\bc}_1(z)=\tilde{\bvarphi}^{\bc}_1((e^{\theta_{\bc}^{max}}-z)^{\frac{1}{2}})$ in a neighborhood of $z=e^{\theta_{\bc}^{max}}$. If $\bar{\eta}_2'(\theta_1^*)<-1$, the point $\zeta=0$ is a pole of $\tilde{\bvarphi}^{\bc}_1(\zeta)$ with at most order one; if $\bar{\eta}_2'(\theta_1^*)=-1$, it is a pole of $\tilde{\bvarphi}^{\bc}_1(\zeta)$ with at most order two. 
Define $\tilde{\bvarphi}^{\bc}(\zeta)$ as 
\[
\tilde{\bvarphi}^{\bc}(\zeta) = \tilde{\bvarphi}^{\bc}_0(\zeta) + \tilde{\bvarphi}^{\bc}_1(\zeta) + \tilde{\bvarphi}^{\bc}_2(\zeta). 
\]
Then, the vector function $\tilde{\bvarphi}^{\bc}(\zeta)$ is element-wise meromorphic in a neighborhood of $\zeta=0$, and by \eqref{eq:varphic_eq1}, it satisfies $\bvarphi^{\bc}(z)=\tilde{\bvarphi}^{\bc}((e^{\theta_{\bc}^{max}}-z)^{\frac{1}{2}})$ in a neighborhood of $z=e^{\theta_{\bc}^{max}}$. 
If $\bar{\eta}_1'(\theta_2^*)< -c_1/c_2=-1< 1/\bar{\eta}_2'(\theta_1^*)$, the point $\zeta=0$ is a pole of $\tilde{\bvarphi}^{\bc}(\zeta)$ with at most order one; if $\bar{\eta}_1'(\theta_2^*)=-1$ or $\bar{\eta}_2'(\theta_1^*)=-1$, it is a pole of $\tilde{\bvarphi}^{\bc}(\zeta)$ with at most order two. 
This completes the proof.
\end{proof}

%
\begin{proof}[Proof of Proposition \ref{pr:varphic_limit2}]
Assume Type 1.  
By Corollary \ref{co:Phics_analyticproperties} and \eqref{eq:phic0_definition}, 
\begin{equation}
\lim_{\tilde{\Delta}_{e^{\theta_{\bc}^{max}}}\ni z\to e^{\theta_{\bc}^{max}}} (e^{\theta_{\bc}^{max}}-z) \varphi_0^{\bc}(z) = \bzero^\top.
\label{eq:varphic0_limit_1}
\end{equation}
%
If $\bar{\eta}_1'(\theta_2^*)=-1$, by Propositions \ref{pr:hatPhi00s_analyticproperties} and \ref{pr:hata_analyticproperties} and equations \eqref{eq:phic2_hatphic2_relation} and \eqref{eq:varphic0_limit_1}, representing $\hat{\bu}^{U}(e^{\theta_{\bc}^{max}})$ in block form as $\hat{\bu}^{U}(e^{\theta_{\bc}^{max}})=\begin{pmatrix} \hat{\bu}^U_1(e^{\theta_{\bc}^{max}}) & \hat{\bu}^U_2(e^{\theta_{\bc}^{max}}) \end{pmatrix}$, we obtain
\begin{equation}
\lim_{\tilde{\Delta}_{e^{\theta_{\bc}^{max}}}\ni z\to e^{\theta_{\bc}^{max}}} (e^{\theta_{\bc}^{max}}-z) \bvarphi^{\bc}_2(z) 
=\bu_2^{\bc}
= \hat{g}_2^a \hat{g}^\Phi \hat\bu^G(e^{\theta_{\bc}^{max}}) \hat{\bv}^{U}(e^{\theta_{\bc}^{max}}) \hat{\bu}^U_1(e^{\theta_{\bc}^{max}}) > \bzero^\top, 
\label{eq:varphic2_limit_1}
\end{equation}
where $\hat\bu^G(e^{\theta_{\bc}^{max}})$ is nonzero and nonnegative and other terms on the right-hand side of the equation are positive; if $\bar{\eta}_1'(\theta_2^*)<-1$, we have 
\begin{equation}
\lim_{\tilde{\Delta}_{e^{\theta_{\bc}^{max}}}\ni z\to e^{\theta_{\bc}^{max}}} (e^{\theta_{\bc}^{max}}-z) \bvarphi^{\bc}_2(z) 
= \bzero^\top.
\label{eq:varphic2_limit_2}
\end{equation}
In a manner similar to that used for $\bvarphi^{\bc}_2(z)$, we see that if $\bar{\eta}_2'(\theta_1^*)=-1$, then for some positive vector $\bu_1^{\bc}$, 
\begin{equation}
\lim_{\tilde{\Delta}_{e^{\theta_{\bc}^{max}}}\ni z\to e^{\theta_{\bc}^{max}}} (e^{\theta_{\bc}^{max}}-z) \bvarphi^{\bc}_1(z) 
=\bu_1^{\bc}, 
\label{eq:varphic1_limit_1}
\end{equation}
and if $\bar{\eta}_1'(\theta_2^*)<-1$, 
\begin{equation}
\lim_{\tilde{\Delta}_{e^{\theta_{\bc}^{max}}}\ni z\to e^{\theta_{\bc}^{max}}} (e^{\theta_{\bc}^{max}}-z) \bvarphi^{\bc}_1(z) 
= \bzero^\top.
\label{eq:varphic1_limit_2}
\end{equation}
As a result, by \eqref{eq:varphic_eq1}, \eqref{eq:varphic2_limit_1},  \eqref{eq:varphic2_limit_2},  \eqref{eq:varphic1_limit_1} and  \eqref{eq:varphic1_limit_2},  we obtain \eqref{eq:varphic_limit2} in Proposition \ref{pr:varphic_limit2}.
\end{proof}

%
%
%
\subsection{Proof of Proposition \ref{pr:varphic_limit1}} \label{sec:varphic_coefficient}

In this subsection, we also set the direction vector $\bc=(1,1)$.

Recall that  $P=(P_{\bx,\bx'}; \bx,\bx'\in\mathbb{N}_+^2)$ and $\bnu=(\bnu_{\bx}, \bx\in\mathbb{N}_+^2)$ are the transition probability matrix and stationary distribution of the original 2d-QBD process $\{\bY_n\}$, respectively. Define vector generating functions of the stationary provabilities: 
\begin{align*}
&\bvarphi(z,w) = \sum_{i=0}^\infty \sum_{j=0}^\infty \bnu_{(i,j)} z^i w^j,\quad 
\bvarphi_+(z,w) = \sum_{i=1}^\infty \sum_{j=1}^\infty \bnu_{(i,j)} z^i w^j, \\
&\bvarphi_1(z) = \sum_{i=1}^\infty  \bnu_{(i,0)} z^i,\quad 
\bvarphi_2(w) = \sum_{j=1}^\infty  \bnu_{(0,j)} w^j,
\end{align*}
where 
\begin{equation}
\bvarphi(z,w) = \bvarphi_+(z,w) + \bvarphi_1(z) + \bvarphi_2(w) + \bnu_{(0,0)}.
\end{equation}
For $\alpha\in\scrI_2=\{\emptyset, \{1\}, \{2\}, \{1,2\} \}$, define the matrix generating functions of the transition probability blocks $A^\alpha_{i,j}$ as
\[
A^\alpha_{*,*}(z,w) = \sum_{i,j\in\{-1,0,1\}} A^\alpha_{i,j} z^i w^j,
\]
where $A^{\{1,2\}}_{*,*}(z,w)$ has already been defined in Section \ref{sec:mainresults}. 
%
\begin{figure}[t]
\begin{center}
\includegraphics[width=60mm,trim=0 0 0 0]{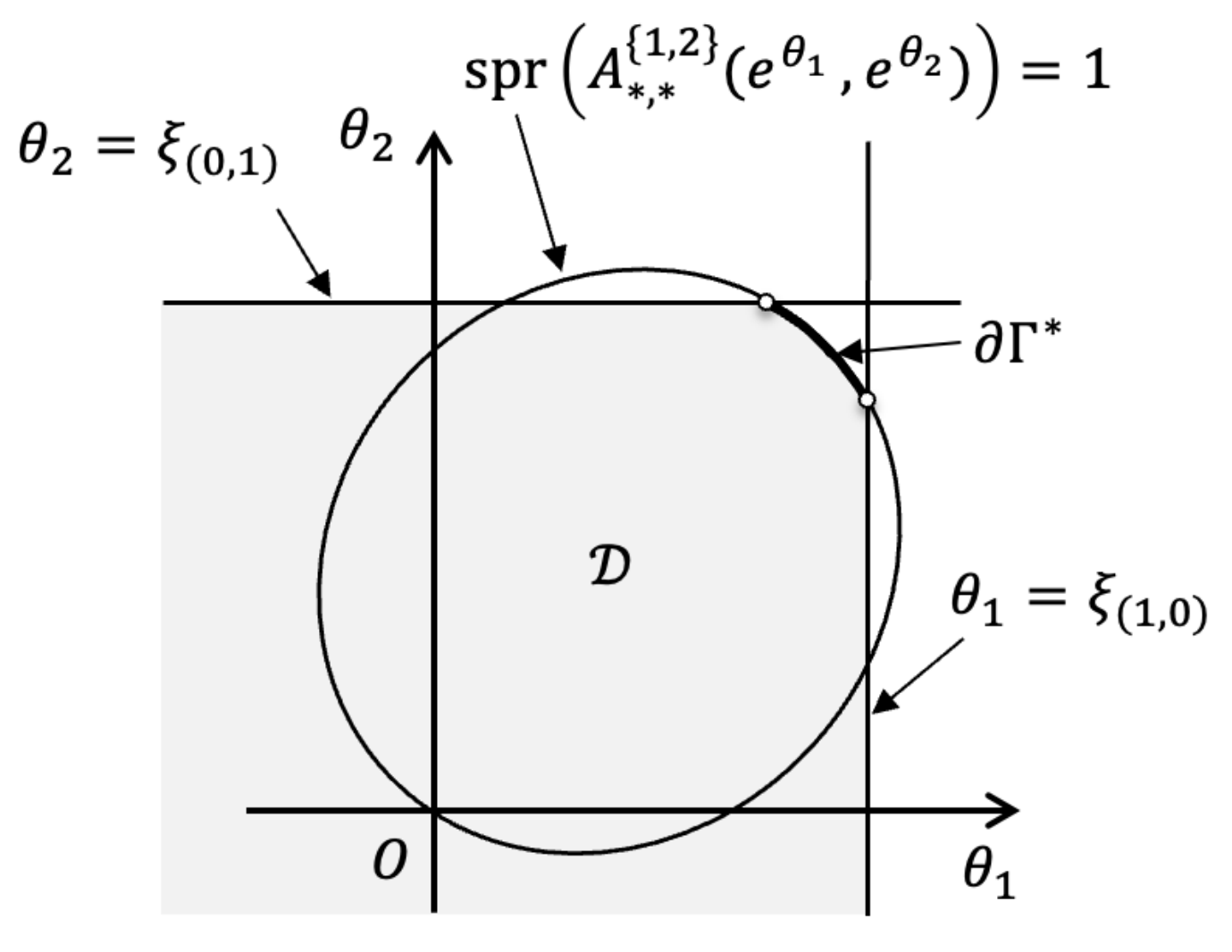} 
\caption{Domain $\calD$ of Type 1}
\label{fig:fig43}
\end{center}
\end{figure}
%
Define the following domains (see Fig.\ \ref{fig:fig43}): 
\begin{align*}
&\calD_{\bvarphi} = \mbox{the interior of $\{(\theta_1,\theta_2)\in\mathbb{R}^2: \bvarphi(e^{\theta_1},e^{\theta_2})<\infty$, element-wise\}}, \\
&\calD_0 =  \{(\theta_1,\theta_2)\in\mathbb{R}^2: \theta_1<\xi_{(1,0)},\ \theta_2<\xi_{(0,1)}  \}, \\
&\calD^{\{1,2\}} = \{(\theta_1,\theta_2)\in\mathbb{R}^2: \mbox{$(\theta_1,\theta_2)<(\theta_1',\theta_2')$ for some $(\theta_1',\theta_2')\in\Gamma^{\{1,2\}}$} \}, \\
&\Gamma = \Gamma^{\{1,2\}} \cap \Gamma^{\{1\}} \cap  \Gamma^{\{2\}} = \{(\theta_1,\theta_2)\in\Gamma^{\{1,2\}}: \theta_1<\theta_1^*,\ \theta_2<\theta_2^* \}, \\
&\calD = \{(\theta_1,\theta_2)\in\mathbb{R}^2: \mbox{$(\theta_1,\theta_2)<(\theta_1',\theta_2')$ for some $(\theta_1',\theta_2')\in\Gamma$} \}, 
\end{align*}
where $\calD_0$ is the intersection of the convergence domains of $\bvarphi_1(e^{\theta_1})$ and $\bvarphi_2(e^{\theta_2})$ and satisfies $\calD\subset \calD_0$. 
For a point set $S$, we denote by $\partial S$ the boundary of $S$. For example, $\partial \Gamma^{\{1,2\}}=\{(\theta_1,\theta_2)\in\mathbb{R}^2: A^{\{1,2\}}_{*,*}(e^{\theta_1},e^{\theta_2})=1\}$. 
Define a partial boundary of $\Gamma^{\{1,2\}}$, $\partial\Gamma^*$, as 
\[
\partial\Gamma^* = \partial\calD^{\{1,2\}} \cap \calD_0 =\partial\calD \cap \calD_0, 
\]
which is empty if the 2d-QBD process is not of Type 1 (see Fig.\ \ref{fig:fig43}). 
The convergence domain $\calD_{\bvarphi}$ is given as follows.
\begin{lemma} \label{le:varphi_domain}
$\calD_{\bvarphi}=\calD$.
\end{lemma}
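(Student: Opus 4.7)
The plan is to establish the two set inclusions $\calD \subseteq \calD_\bvarphi$ and $\calD_\bvarphi \subseteq \calD$ by leveraging a functional equation for the generating functions. Summing the stationary equation $\bnu P = \bnu$ against $z^{x_1} w^{x_2}$ and regrouping by the partition $\mathbb{Z}_+^2 = \bigcup_{\alpha\in\scrI_2} \mathbb{B}^\alpha$ yields an identity of the form
\begin{equation*}
\bvarphi_+(z,w)\,\bigl[I - A^{\{1,2\}}_{*,*}(z,w)\bigr] = \bb(z,w),
\end{equation*}
where $\bb(z,w)$ is a boundary term assembled from $\bnu_{(0,0)}$, $\bvarphi_1(z)$, $\bvarphi_2(w)$ and the matrix generating functions $A^\alpha_{*,*}(z,w)$.

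For the inclusion $\calD \subseteq \calD_\bvarphi$, I would pick $(\theta_1,\theta_2)\in\calD$ and choose $(\theta_1',\theta_2')\in\Gamma$ with $\theta_i < \theta_i'$ for $i=1,2$. Since $(\theta_1',\theta_2')\in\Gamma^{\{1\}}\cap\Gamma^{\{2\}}$ forces $\theta_1' < \theta_1^* = \xi_{(1,0)}$ and $\theta_2' < \theta_2^* = \xi_{(0,1)}$, the one-dimensional decay results of \cite{Ozawa13} ensure that $\bvarphi_1(e^{\theta_1'})$ and $\bvarphi_2(e^{\theta_2'})$ are element-wise finite, whence $\bb(e^{\theta_1'},e^{\theta_2'})$ is finite. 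From $(\theta_1',\theta_2')\in\Gamma^{\{1,2\}}$ we obtain $\spr(A^{\{1,2\}}_{*,*}(e^{\theta_1'},e^{\theta_2'})) < 1$, so $I-A^{\{1,2\}}_{*,*}(e^{\theta_1'},e^{\theta_2'})$ has a nonnegative Neumann-series inverse, and the functional equation yields $\bvarphi_+(e^{\theta_1'},e^{\theta_2'}) < \infty$ element-wise. Combining with the boundary contributions gives $\bvarphi(e^{\theta_1'},e^{\theta_2'}) < \infty$ element-wise, and monotonicity of the series in $(\theta_1,\theta_2)$ then gives $\bvarphi(e^{\theta_1},e^{\theta_2}) < \infty$, so $(\theta_1,\theta_2)\in\calD_\bvarphi$.

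For the reverse inclusion $\calD_\bvarphi \subseteq \calD$, I first note that $\bvarphi_1$ and $\bvarphi_2$ are termwise dominated by $\bvarphi$, so element-wise finiteness at $(\theta_1,\theta_2)\in\calD_\bvarphi$, combined with the openness of $\calD_\bvarphi$ and the one-dimensional characterizations $\xi_{(1,0)}=\theta_1^*$ and $\xi_{(0,1)}=\theta_2^*$, gives $(\theta_1,\theta_2)\in\calD_0$. To show $(\theta_1,\theta_2)\in\calD^{\{1,2\}}$, I argue by contradiction: if $(\theta_1,\theta_2)\notin\calD^{\{1,2\}}$, then $\spr(A^{\{1,2\}}_{*,*}(e^{\theta_1},e^{\theta_2}))\ge 1$, and letting $\bv$ be the right Perron-Frobenius eigenvector with eigenvalue $\lambda\ge 1$, pairing the functional equation with $\bv$ produces $(1-\lambda)\,\bvarphi_+(e^{\theta_1},e^{\theta_2})\,\bv = \bb(e^{\theta_1},e^{\theta_2})\,\bv$. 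The left-hand side is nonpositive, while the right-hand side is strictly positive by the irreducibility of $\{\bY_n\}$ (Assumption \ref{as:QBD_irreducible}) together with the strict positivity of $\bnu_{(0,0)}$ and $\bv$, yielding the desired contradiction.

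The main obstacle is establishing the strict positivity of $\bb(e^{\theta_1},e^{\theta_2})\bv$ in the second inclusion, because $\bb$ is a signed combination of boundary contributions. I expect this to be handled either by regrouping $\bb$ using the row-sum/stochasticity identities satisfied by the $A^\alpha_{*,*}$ blocks (so that $\bb$ is recast as a manifestly nonnegative combination minus a controllable remainder), or by passing to a limit point on $\partial \Gamma^{\{1,2\}}$ where the positivity structure can be read off directly from the one-step transitions emanating from $\bnu_{(0,0)}$.
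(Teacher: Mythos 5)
Your argument for the hard inclusion $\calD_{\bvarphi}\subset\calD^{\{1,2\}}$ has a genuine gap exactly where you flag it: the sign of $\bg(e^{\theta_1},e^{\theta_2})\bv$ (your $\bb\bv$) is the crux, and neither of your proposed fixes works as stated. The vector $\bg$ is a signed combination of boundary terms, and at $(\theta_1,\theta_2)=(0,0)$ one has $\lambda=1$, $\bv=\bone$ and $\bg(1,1)\bone=0$, so there is no generic stochasticity identity forcing $\bg\bv>0$; positivity genuinely depends on where the point sits relative to $\Gamma^{\{1,2\}}$ and $\calD_0$. The paper does prove such a positivity statement (Proposition \ref{pr:gvA_positivity}, $\bg\bv^A>0$ on $\partial\Gamma^*$), but its proof works by showing that boundary points of the convergence domain are poles of $\bvarphi_+$ --- which presupposes Lemma \ref{le:varphi_domain}. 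So invoking that mechanism here would be circular, and without it your contradiction does not close (for $\lambda>1$ you would still need $\bg\bv\ge0$, which is not established either). The paper avoids the issue entirely: it proves $\calD_{\bvarphi}\subset\calD^{\{1,2\}}$ by observing that $\bvarphi(e^{\theta_1},e^{\theta_2})\ge\bvarphi^{\bc'}(e^{c_1'\theta_1+c_2'\theta_2})$ for every direction $\bc'\in\mathbb{N}^2$, so finiteness forces $c_1'\theta_1+c_2'\theta_2\le\xi_{\bc'}\le\theta_{\bc'}^{max}$; letting $\bc'$ range over all directions and using convexity of $\Gamma^{\{1,2\}}$ (the duality argument of Theorem 5.2 of \cite{Ozawa21}) gives the inclusion. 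That route uses the already-established directional decay rates (Theorem \ref{th:decay_rate}) instead of any positivity of boundary terms.

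Two smaller points. First, your argument for $\calD\subset\calD_{\bvarphi}$ is circular as written: the identity $\bvarphi_+(z,w)(I-A^{\{1,2\}}_{*,*}(z,w))=\bg(z,w)$ is derived under the assumption that $\bvarphi(z,w)$ converges absolutely, so it cannot be "solved" to conclude that $\bvarphi_+$ is finite. The standard repair is a truncation/iteration bound (this is the content of Lemma 3.1 of \cite{Ozawa18}, which the paper simply cites). Second, the step $\calD_{\bvarphi}\subset\calD_0$ via domination of $\bvarphi_1,\bvarphi_2$ by $\bvarphi$ matches the paper and is fine.
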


\begin{proof}
By the definition, we have $\calD=\calD^{\{1,2\}}\cap\calD_0$ and $\calD_{\bvarphi}\subset\calD_0$. Furthermore, by Lemma 3.1 of \cite{Ozawa18}, we have $\calD\subset\calD_{\bvarphi}$. Hence, in order to prove the lemma, it suffices to prove $\calD_{\bvarphi}\subset\calD^{\{1,2\}}$, which implies $\calD_{\bvarphi}\subset\calD$.
For every $\bc'=(c_1',c_2')\in\mathbb{N}^2$, if $(\theta_1,\theta_2)\in\calD_{\bvarphi}$, then we have 
\[
\infty > \bvarphi(e^{\theta_1},e^{\theta_2}) \ge \bvarphi^{\bc'}(e^{c_1'\theta_1+c_2'\theta_2})
\]
and this implies 
\[
c_1'\theta_1+c_2'\theta_2 \le \theta_{\bc'}^{max} = \sup\{ c_1'\theta_1'+c_2'\theta_2': (\theta_1',\theta_2')\in\Gamma^{\{1,2\}} \}. 
\]
Hence, in a manner similar to that used in the proof of Theorem 5.2 of \cite{Ozawa21}, since $\bc'$ is arbitrary, we obtain $\calD_{\bvarphi}\subset\calD^{\{1,2\}}$. 
This completes the proof. 
\end{proof}

Note that the convergence domain of $\bvarphi_+(e^{\theta_1},e^{\theta_2})$ coincides with that of $\bvarphi(e^{\theta_1},e^{\theta_2})$ and it is given by $\calD$.  
%
We define complex domains $\Omega$ and $\Omega_0$ as 
\begin{align*}
&\Omega = \{(z,w)\in\mathbb{C}^2: (\log |z|,\log |w|)\in\calD\},\\
&\Omega_0 = \{(z,w)\in\mathbb{C}^2: (\log |z|,\log |w|)\in\calD_0\}, 
\end{align*}
where $\Omega\subset\Omega_0$. 
Define a vector function $\bg(z,w)$ as 
\begin{equation}
\bg(z,w) = \bvarphi_1(z)(A^{\{1\}}_{*,*}(z,w)-I) + \bvarphi_2(w)(A^{\{2\}}_{*,*}(z,w)-I) + \bnu_{(0.0)}(A^\emptyset_{*,*}(z,w)-I).
\end{equation}
Then, if $(z,w)\in\Omega$, series $\bvarphi(z,w)$ absolutely converges element-wise and we have, by the stationary equation $\bnu P=\bnu$ (see Sectin 3.1 of \cite{Ozawa18}), 
\begin{equation}
\bvarphi_+(z,w)(I-A^{\{1,2\}}_{*,*}(z,w)) = \bg(z,w).
\label{eq:varphiAp_identity}
\end{equation}
Using this equation, we analytically extend $\bvarphi_+(z,w)$ outer $\Omega$. From \eqref{eq:varphiAp_identity}, we obtain
\begin{equation}
\bvarphi_+(z,w) = \frac{\bg(z,w)\,\adj(I-A^{\{1,2\}}_{*,*}(z,w))}{\det(I-A^{\{1,2\}}_{*,*}(z,w))}.
\label{eq:varphiAp_expression}
\end{equation}
Both the numerator and denominator of the right hand side of \eqref{eq:varphiAp_expression} are analytic in $\Omega_0$. Hence, by the identity theorem, $\bvarphi_+(z,w)$ can analytically be extended over $\Omega_0$ and the extended $\bvarphi_+(z,w)$ is meromorphic in $\Omega_0$, where zeros of $\det(I-A^{\{1,2\}}_{*,*}(z,w))$ may be poles of $\bvarphi_+(z,w)$. 

%
For positive real numbers $x$ and $y$, denote by $\chi^A(x,y)$ the maximum eigenvalue (Perron-Frobenius eigenvalue) of $A^{\{1,2\}}_{*,*}(x,y)$ and by $\bu^A(x,y)$ and $\bv^A(x,y)$ the left and right eigenvectors with respect to the eigenvalue, respectively, satisfying $\bu^A(x,y) \bv^A(x,y) =1$. Since $A^{\{1,2\}}_{*,*}(x,y)$ is irreducible, the eigenvalue $\chi^A(x,y)$ is simple and the eigenvectors  $\bu^A(x,y)$ and $\bv^A(x,y)$ are positive. 
The vector function $\bg(z,w)$ satisfies the following property. 
\begin{proposition} \label{pr:gvA_positivity}
For every $(\theta_1,\theta_2)\in\partial\Gamma^*$, $\bg(e^{\theta_1},e^{\theta_2})\bv^A(e^{\theta_1},e^{\theta_2})>0$. 
\end{proposition}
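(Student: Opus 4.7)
The plan is to establish $\bg(e^{\theta_1^0},e^{\theta_2^0})\bv^A(e^{\theta_1^0},e^{\theta_2^0})\ge 0$ via a boundary limit of the identity \eqref{eq:varphiAp_identity}, and then exclude the value $0$ by a one-variable Pringsheim argument that would contradict Lemma \ref{le:varphi_domain}. Fix $\btheta^0=(\theta_1^0,\theta_2^0)\in\partial\Gamma^*$ and write $\bv^A_0=\bv^A(e^{\theta_1^0},e^{\theta_2^0})$, $\bu^A_0=\bu^A(e^{\theta_1^0},e^{\theta_2^0})$. Since $A^{\{1,2\}}_{*,*}(e^{\theta_1^0},e^{\theta_2^0})$ is irreducible with simple Perron-Frobenius eigenvalue equal to $1$, both $\bv^A_0$ and $\bu^A_0$ are strictly positive, and $\chi^A(z,w),\bv^A(z,w),\bu^A(z,w)$ continue analytically to a complex neighborhood of $(e^{\theta_1^0},e^{\theta_2^0})$ by standard analytic perturbation theory for simple eigenvalues.

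In that neighborhood I would use the spectral decomposition
\[
(I-A^{\{1,2\}}_{*,*}(z,w))^{-1}=\frac{\bv^A(z,w)\bu^A(z,w)}{1-\chi^A(z,w)}+S(z,w),
\]
with $S(z,w)$ holomorphic. Multiplying \eqref{eq:varphiAp_identity} on the right by $\bv^A_0$ and rearranging gives
\[
(1-\chi^A(z,w))\,\bvarphi_+(z,w)\bv^A_0 = (\bg(z,w)\bv^A(z,w))(\bu^A(z,w)\bv^A_0) + (1-\chi^A(z,w))\bg(z,w)S(z,w)\bv^A_0.
\]
Letting $(z,w)\to(e^{\theta_1^0},e^{\theta_2^0})$ along the positive real axis from inside $\Gamma^{\{1,2\}}$, the right-hand side tends to $\bg(e^{\theta_1^0},e^{\theta_2^0})\bv^A_0$. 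On the left, the 2d-QBD is irreducible and positive recurrent by Assumptions \ref{as:QBD_irreducible} and \ref{as:2dQBD_stable}, so $\bnu_\bx>0$ for every $\bx\in\mathbb{Z}_+^2$; combined with $\bv^A_0>0$, the power series $\bvarphi_+(z,w)\bv^A_0$ has strictly positive coefficients, and hence is positive on the real-axis approach; meanwhile $1-\chi^A(z,w)>0$ in the interior of $\Gamma^{\{1,2\}}$. Therefore the left-hand side is nonnegative in the limit, giving $\bg(e^{\theta_1^0},e^{\theta_2^0})\bv^A_0\ge 0$.

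To upgrade this to strict positivity I would argue by contradiction: suppose $\bg(e^{\theta_1^0},e^{\theta_2^0})\bv^A_0=0$. Since $\log\chi^A$ is strictly convex and $\btheta^0$ lies in the smooth interior of $\partial\Gamma^{\{1,2\}}$ (bounded away from the tangent extremes $\theta_i^{min},\theta_i^{max}$ because $\btheta^0\in\partial\Gamma^*$), $1-\chi^A(z,w)$ has a simple zero at $(e^{\theta_1^0},e^{\theta_2^0})$; this zero is matched by that of $\bg(z,w)\bv^A(z,w)$, so the quotient is holomorphic at the point, and consequently $\bvarphi_+(z,w)\bv^A_0$ extends analytically to a full complex bidisc around $(e^{\theta_1^0},e^{\theta_2^0})$. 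Restricting to the slice $w=e^{\theta_2^0}$, the function $z\mapsto\bvarphi_+(z,e^{\theta_2^0})\bv^A_0$ is a one-variable power series with strictly positive coefficients whose radius of convergence equals $e^{\theta_1^0}$ (by Lemma \ref{le:varphi_domain}, convexity of $\calD$, and the fact $\theta_1^0=\bar{\eta}_1(\theta_2^0)<\theta_1^*$), so Pringsheim's theorem forces $z=e^{\theta_1^0}$ to be a singularity—contradicting the analytic extension just obtained.

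The main obstacle I anticipate is verifying that the zero of $1-\chi^A(z,w)$ at $\btheta^0$ is of order exactly one, so that the presumed simple zero of $\bg\bv^A$ fully cancels the polar singularity of the resolvent; this amounts to showing that the gradient of $\chi^A$ at $\btheta^0$ is nonzero, which follows from strict log-convexity of $\chi^A$ together with the position of $\btheta^0$ in the smooth interior of the upper-right arc of $\partial\Gamma^{\{1,2\}}$, but deserves a careful verification when the argument is written out in detail.
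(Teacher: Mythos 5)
Your proof is correct in substance and follows essentially the same route as the paper's: both arguments restrict to the one-variable slice $w=e^{\theta_2^0}$, exploit the simplicity of the Perron root of $A^{\{1,2\}}_{*,*}$ (via the eigenprojection of the resolvent in your version, via Seneta's adjugate formula $\adj(I-A^{\{1,2\}}_{*,*})=f_\lambda\,\bv^A\bu^A$ in the paper's) together with the simplicity of the zero of $z\mapsto 1-\chi^A(z,e^{\theta_2^0})$ at $z=e^{\theta_1^0}$, and then derive a contradiction from the assumption $\bg\bv^A=0$ using the fact, supplied by Lemma \ref{le:varphi_domain}, that $z=e^{\theta_1^0}$ is a genuine singularity of the slice. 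You invoke Pringsheim; the paper notes instead that the slice is meromorphic with a pole at $e^{\theta_1^0}$ by \eqref{eq:varphiAp_expression}, so a removable singularity is impossible — the two dichotomies are interchangeable here. The nondegeneracy you flag at the end ($\chi^A_x>0$ at the point) is exactly what the paper verifies: $\theta_1^0=\bar{\eta}_1(\theta_2^0)$ is the \emph{larger} root in $\theta_1$ of $\chi^A(e^{\theta_1},e^{\theta_2^0})=1$ with $\theta_2^0$ strictly between $\theta_2^{min}$ and $\theta_2^{max}$ on $\partial\Gamma^*$, so convexity of $\theta_1\mapsto\chi^A(e^{\theta_1},e^{\theta_2^0})$ gives a strictly positive derivative there.

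One intermediate assertion is wrong, though harmless to the argument: from the vanishing of $\bg\bv^A$ at the single point $(e^{\theta_1^0},e^{\theta_2^0})$ you conclude that $\bg(z,w)\bv^A(z,w)/(1-\chi^A(z,w))$, and hence $\bvarphi_+(z,w)\bv^A(e^{\theta_1^0},e^{\theta_2^0})$, extends holomorphically to a full complex bidisc around that point. In two complex variables $1-\chi^A$ vanishes on an entire hypersurface through $(e^{\theta_1^0},e^{\theta_2^0})$, and a zero of the numerator at one point of that hypersurface cannot cancel the polar divisor; bidisc holomorphy would require $\bg\bv^A$ to vanish on the whole local zero set of $1-\chi^A$, which is not what you have. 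Fortunately you only ever use the restriction to the line $w=e^{\theta_2^0}$, where the one-variable cancellation (zero of the numerator against a simple zero of the denominator) is legitimate and yields holomorphy of the slice at $z=e^{\theta_1^0}$, which is all Pringsheim needs. Delete the bidisc claim and pass directly to the slice, and the proof is sound.
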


\begin{proof}
Assume Type 1 and let $(\theta_1,\theta_2)$ be an arbitrary point on $\partial\Gamma^*$. 
Set $w=e^{\theta_2}$. Then, each element of $\bvarphi_+(z, e^{\theta_2})$ is a function of one variable, and it is meromorphic in $\Delta_{e^{\xi_{(1,0)}}}\setminus\{0\}$. Furthermore, the point $z=e^{\theta_1}$ is a pole of it since the convergence domain of $\bvarphi_+(e^{\theta_1'},e^{\theta_2'})$ is $\calD$ and $\partial\Gamma^*\subset\partial\calD$. 
Define a function $f(\lambda,z,w)$ as $f(\lambda,z,w)=\det(\lambda I-A^{\{1,2\}}_{*,*}(z,w))$, and $f_{\lambda}(\lambda,z,w)$ and $f_z(\lambda,z,w)$ as $f_{\lambda}(\lambda,z,w)=\frac{\partial}{\partial \lambda} f(\lambda,z,w)$ and $f_z(\lambda,z,w)=\frac{\partial}{\partial z} f(\lambda,z,w)$, respectively. 
For positive numbers $x$ and $y$, since $A^{\{1,2\}}_{*,*}(x,y)$ is irreducible, the eigenvalue $\chi^A(x,y)$ is simple and differentiable in $x$ and $y$. Define $\chi^A_x(x,y)$ as $\chi^A_x(x,y)=\frac{\partial}{\partial x} \chi^A(x,y)$. 
By Corollary 2 of \cite{Seneta06}, we have
\begin{equation}
\adj(I-A^{\{1,2\}}_{*,*}(e^{\theta_1},e^{\theta_2})) = f_{\lambda}(1,e^{\theta_1},e^{\theta_2}) \bv^A(e^{\theta_1},e^{\theta_2}) \bu^A(e^{\theta_1},e^{\theta_2}).
\end{equation}
In a manner similar to that used in the proof of Proposition 5.3 of \cite{Ozawa18}, we obtain
\begin{equation}
\lim_{x\uparrow e^{\theta_1}} (e^{\theta_1}-x)^{-1} f(1,x,e^{\theta_2}) = \chi^A_x(e^{\theta_1},e^{\theta_2}) f_{\lambda}(1,e^{\theta_1},e^{\theta_2}), 
\end{equation}
where $ \chi^A_x(e^{\theta_1},e^{\theta_2})$ is positive since $\chi^A(e^{\theta_1'},e^{\theta_2'})$ is convex in $(\theta_1',\theta_2')$ and, under Assumption \ref{as:2dQBD_stable}, 
\[
\chi^A(e^{\theta_1},e^{\theta_2})=1>\min_{(\theta_1',\theta_2')\in\Gamma^{\{1,2\}}} \chi^A(e^{\theta_1'},e^{\theta_2'}).
\] 
Hence, by \eqref{eq:varphiAp_expression},  we obtain, 
\begin{equation}
\lim_{x\uparrow e^{\theta_1}} (e^{\theta_1}-x) \bvarphi_+(x,e^{\theta_2}) 
= \chi^A_x(e^{\theta_1},e^{\theta_2})^{-1} \bg(e^{\theta_1},e^{\theta_2}) \bv^A(e^{\theta_1},e^{\theta_2}) \bu^A(e^{\theta_1},e^{\theta_2}), 
\end{equation}
where $\chi^A_x(e^{\theta_1},e^{\theta_2})$ and $\bu^A(e^{\theta_1},e^{\theta_2})$ are positive. 
Since the point $z=e^{\theta_1}$ is a zero of $f(1,z,e^{\theta_2})$ with order one, if $\bg(e^{\theta_1},e^{\theta_2}) \bv^A(e^{\theta_1},e^{\theta_2})=0$, the point $z=e^{\theta_1}$ becomes a removable singularity of each element of $\bvarphi_+(z, e^{\theta_2})$, and this contradicts that the point $z=e^{\theta_1}$ is a pole of each element of $\bvarphi_+(z, e^{\theta_2})$. 
Hence, we have $\bg(e^{\theta_1},e^{\theta_2}) \bv^A(e^{\theta_1},e^{\theta_2})>0$.
\end{proof}

%
Denote $\eta_{\bc,1}^{max}=\eta^R_{\bc,1}(\theta_{\bc}^{max})$ and $\eta_{\bc,2}^{max}=\eta^R_{\bc,2}(\theta_{\bc}^{max})$. Since $\bc=(1,1)$, they satisfy $\theta_{\bc}^{max}=\eta_{\bc,1}^{max}+\eta_{\bc,2}^{max}$.
The limit of $\Phi^{\bc}_{\bx,*}(z)$ is given as follows. 
\begin{proposition} \label{pr:Phics_limit} 
Let $\bx=(x_1,x_2)$ be an arbitrary point in $\mathbb{Z}_+^2$. Then, the matrix function $\Phi^{\bc}_{\bx,*}(z)$ satisfies, for a positive row vector $\bu_{\bc}$, 
\begin{align}
&\lim_{\tilde{\Delta}_{e^{\theta_{\bc}^{max}}}\ni z\to e^{\theta_{\bc}^{max}}} (e^{\theta_{\bc}^{max}}-z)^{\frac{1}{2}} \Phi^{\bc}_{(x_1,x_2),*}(z) \cr
&\qquad\qquad\qquad= \hat{g}^\Phi e^{x_1 \eta_{\bc,1}^{max}+x_2 \eta_{\bc,2}^{max}} \bv^A(e^{\eta_{\bc,1}^{max}},e^{\eta_{\bc,2}^{max}}) \bu_{\bc} 
> O, 
\label{eq:hatPhix1x2s_limit}
\end{align}
where the positive constant $\hat{g}^\Phi$ is given by \eqref{eq:hatgPhi}. 
\end{proposition}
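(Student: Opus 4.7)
The plan is to combine the factorization \eqref{eq:hatPhi_expression}, $\hat{\Phi}_{(x_1,x_2),*}(z) = z^{x_1}\hat{G}_{0,*}(z)^{x_2}\hat{\Phi}_{(0,0),*}(z)$ (valid for $x_2\ge 0$), with the limit of Proposition \ref{pr:hatPhi00s_analyticproperties}(3), and then translate back to $\Phi^{\bc}_{(x_1,x_2),*}(z)$ via the block relation \eqref{eq:hatPhixs_def}. Since $\hat{G}_{0,*}(z)$ is continuous at $z=e^{\theta_{\bc}^{max}}$ by Proposition \ref{pr:G_Gr_analyticproperties}, Proposition \ref{pr:hatPhi00s_analyticproperties}(3) directly yields
\begin{equation*}
\lim_{\tilde{\Delta}_{e^{\theta_{\bc}^{max}}}\ni z\to e^{\theta_{\bc}^{max}}} (e^{\theta_{\bc}^{max}}-z)^{\frac{1}{2}}\hat{\Phi}_{(x_1,x_2),*}(z) = \hat{g}^\Phi e^{x_1\theta_{\bc}^{max}}\hat{G}_{0,*}(e^{\theta_{\bc}^{max}})^{x_2}\hat{\bv}^U(e^{\theta_{\bc}^{max}})\hat{\bu}^U(e^{\theta_{\bc}^{max}}).
\end{equation*}
As observed in the proof of Proposition \ref{pr:spr_hatU_eq1}, $\hat{\bv}^U(e^{\theta_{\bc}^{max}})$ is a right eigenvector of $\hat{G}_{0,*}(e^{\theta_{\bc}^{max}})$ with eigenvalue $e^{2\eta_{\bc,2}^{max}}$, so this collapses to $\hat{g}^\Phi e^{x_1\theta_{\bc}^{max}+2x_2\eta_{\bc,2}^{max}}\hat{\bv}^U(e^{\theta_{\bc}^{max}})\hat{\bu}^U(e^{\theta_{\bc}^{max}})$.

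Next I would extract $\Phi^{\bc}_{(x_1,x_2),*}(z)$ from the $2\times 2$ block structure \eqref{eq:hatPhixs_def}: for any $(x_1,x_2)\in\mathbb{Z}_+^2$ there is a unique $\hat{x}_2\in\mathbb{Z}$ and block index $\delta\in\{-1,0,1\}$ with $x_2 = x_1+2\hat{x}_2+\delta$, and $\Phi^{\bc}_{(x_1,x_2),*}(z)$ appears as a specific block of $\hat{\Phi}_{(x_1,\hat{x}_2),*}(z)$. Using $\theta_{\bc}^{max}=\eta_{\bc,1}^{max}+\eta_{\bc,2}^{max}$, the exponent rearranges as $x_1\theta_{\bc}^{max}+2\hat{x}_2\eta_{\bc,2}^{max} = x_1\eta_{\bc,1}^{max}+(x_2-\delta)\eta_{\bc,2}^{max}$; the missing factor $e^{\delta\eta_{\bc,2}^{max}}$ and the cancellation of the $\delta$-dependence will be supplied by the explicit block structure of $\hat{\bv}^U(e^{\theta_{\bc}^{max}})$ and $\hat{\bu}^U(e^{\theta_{\bc}^{max}})$.

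The crucial step is identifying the blocks of these two eigenvectors with $\bv^A(e^{\eta_{\bc,1}^{max}},e^{\eta_{\bc,2}^{max}})$ and $\bu^A(e^{\eta_{\bc,1}^{max}},e^{\eta_{\bc,2}^{max}})$. Writing the eigenvectors with blocks indexed by the parity coordinate $r\in\{0,1\}$, I would verify by a direct computation on the explicit definitions of $\{\hat{A}^{\{1,2\}}_{i_1,i_2}\}$ that the vectors whose blocks are $\hat{\bv}_r = e^{r\eta_{\bc,2}^{max}}\bv^A$ and $\hat{\bu}_r = e^{-r\eta_{\bc,2}^{max}}\bu^A$ are, respectively, right and left eigenvectors of $\hat{A}^{\{1,2\}}_{*,*}(e^{\theta_{\bc}^{max}},e^{2\eta_{\bc,2}^{max}})$ with eigenvalue one. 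Under the substitution $(z,w)=(z_1z_2,z_2^2)$ with $z_j=e^{\eta_{\bc,j}^{max}}$, each block equation reduces, after cancellation of a parity factor $z_2^{r-r'}$, to $A^{\{1,2\}}_{*,*}(z_1,z_2)\bv^A=\bv^A$ (and its left analog), which holds because $(\eta_{\bc,1}^{max},\eta_{\bc,2}^{max})\in\partial\Gamma^{\{1,2\}}$ gives $\chi^A(z_1,z_2)=1$. Irreducibility of $\hat{A}^{\{1,2\}}_{*,*}(e^{\theta_{\bc}^{max}},e^{2\eta_{\bc,2}^{max}})$ then forces $\hat{\bv}^U$ and $\hat{\bu}^U$ to coincide with these forms up to positive scalars, and a short case check over $\delta\in\{-1,0,1\}$ confirms that the exponent shift $e^{\delta\eta_{\bc,2}^{max}}$ from $\hat{\bv}_r$ and the compensating shift from $\hat{\bu}_r$ together kill the $\delta$-dependence, yielding the same $\bu_{\bc}$ (a positive scalar multiple of $\bu^A$) in every block. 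Strict positivity of the limit then follows from positivity of $\bv^A$ and $\bu^A$.

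The main technical obstacle will be the eigenvector identification above, which demands a careful block-matrix computation with $\{\hat{A}^{\{1,2\}}_{i_1,i_2}\}$. A secondary issue is the case $\hat{x}_2<0$ (arising when $x_2<x_1-1$), for which the factorization \eqref{eq:hatPhi_expression} must be replaced by its reverse-direction analog involving $\hat{G}^r_{0,*}(z)$; since $\underline{\theta}=\bar{\theta}$ holds for $\hat{G}_{0,*}$ at the critical point, Lemma \ref{le:G_Gr_v} ensures that $\hat{\bv}^U(e^{\theta_{\bc}^{max}})$ is also a right eigenvector of $\hat{G}^r_{0,*}(e^{\theta_{\bc}^{max}})$ (with eigenvalue $e^{-2\eta_{\bc,2}^{max}}$), so the same asymptotic form emerges after the sign flip in the exponent.
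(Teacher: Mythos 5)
Your proposal is correct and follows the same skeleton as the paper's proof: the factorization \eqref{eq:hatPhi_expression}, the limit from Proposition \ref{pr:hatPhi00s_analyticproperties}(3), collapsing $\hat{G}_{0,*}(e^{\theta_{\bc}^{max}})^{x_2}$ on the eigenvector $\hat{\bv}^{U}(e^{\theta_{\bc}^{max}})$, and then reading off the individual blocks via \eqref{eq:hatPhixs_def}. You differ in one sub-step: to pin down the block structure of $\hat{\bv}^{U}$ and $\hat{\bu}^{U}$, you verify directly that the vector with parity blocks $e^{r\eta_{\bc,2}^{max}}\bv^A(e^{\eta_{\bc,1}^{max}},e^{\eta_{\bc,2}^{max}})$ is the Perron right eigenvector of $\hat{A}^{\{1,2\}}_{*,*}(e^{\theta_{\bc}^{max}},e^{2\eta_{\bc,2}^{max}})$ via the substitution $(z,w)=(z_1z_2,z_2^2)$, whereas the paper first deduces $\bv_2=e^{\eta_{\bc,2}^{max}}\bv_1$ and $\bu_2=e^{-\eta_{\bc,2}^{max}}\bu_1$ from the self-consistency of the block limits at $\bx=(0,0)$ and $\bx=(0,1)$ (equality of the two diagonal blocks of $\hat{\Phi}_{(0,0),*}$) and only afterwards identifies $\bv_1=\bv^A$ through the matrix $B_*(z,w)$ and Remark 2.4 of \cite{Ozawa21}. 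Both routes are sound; yours is more self-contained but requires the block computation you acknowledge, while the paper's trades it for the consistency argument plus a citation. One point where your version is actually more careful than the paper's: you explicitly treat the case $x_2<x_1-1$ (i.e.\ $\hat{x}_2<0$) via the reverse-direction factorization with $\hat{G}^r_{0,*}(z)$ and Lemma \ref{le:G_Gr_v}; the paper covers only second coordinates $\ge x_1-1$ through \eqref{eq:Phics_limit1}--\eqref{eq:Phics_limit2} and passes over the remaining cases with ``since $x_1$ and $x_2$ are arbitrary,'' even though the proposition is later applied to $\Phi^{\bc}_{(k+i_1,i_2),*}$ with $k$ large, so your extra step is genuinely needed.
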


\begin{proof}
By \eqref{eq:hatPhi_expression} and \eqref{eq:hatPhi00s_limit}, we obtain that, for every $\bx=(x_1,x_2)\in\mathbb{Z}_+^2$, 
\begin{equation}
\lim_{\tilde{\Delta}_{e^{\theta_{\bc}^{max}}}\ni z\to e^{\theta_{\bc}^{max}}} (e^{\theta_{\bc}^{max}}-z)^{\frac{1}{2}}\hat{\Phi}_{(x_1,x_2),*}(z) 
= \hat{g}^\Phi (e^{\theta_{\bc}^{max}})^{x_1} \hat{G}_{0,*}(e^{\theta_{\bc}^{max}})^{x_2} \hat{\bv}^{U}(e^{\theta_{\bc}^{max}}) \hat{\bu}^{U}(e^{\theta_{\bc}^{max}}). 
\end{equation}
By the proof of Proposition \ref{pr:spr_hatU_eq1}, $\hat{\bv}^{U}(e^{\theta_{\bc}^{max}})$ is also the left eigenvector of $\hat{G}_{0,*}(e^{\theta_{\bc}^{max}})$, satisfying $\hat{G}_{0,*}(e^{\theta_{\bc}^{max}})\hat{\bv}^{U}(e^{\theta_{\bc}^{max}})=e^{2\eta_{\bc,2}^{max}} \hat{\bv}^{U}(e^{\theta_{\bc}^{max}})$. Since $e^{\theta_{\bc}^{max}}=e^{\eta_{\bc,1}^{max}+\eta_{\bc,2}^{max}}$, we have
\begin{equation}
\lim_{\tilde{\Delta}_{e^{\theta_{\bc}^{max}}}\ni z\to e^{\theta_{\bc}^{max}}} (e^{\theta_{\bc}^{max}}-z)^{\frac{1}{2}}\hat{\Phi}_{(x_1,x_2),*}(z) 
= \hat{g}^\Phi e^{\eta_{\bc,1}^{max} x_1+\eta_{\bc,2}^{max}(x_1+2x_2)} \hat{\bv}^{U}(e^{\theta_{\bc}^{max}}) \hat{\bu}^{U}(e^{\theta_{\bc}^{max}}). 
\label{eq:hatPhix1x2s_limit2}
\end{equation}
Denote $\hat{\bu}^{U}(e^{\theta_{\bc}^{max}})=\begin{pmatrix} \bu_1 & \bu_2 \end{pmatrix}$ and $\hat{\bv}^{U}(e^{\theta_{\bc}^{max}})=\begin{pmatrix} \bv_1 \cr \bv_2 \end{pmatrix}$, respectively. By \eqref{eq:hatPhixs_def} and \eqref{eq:hatPhix1x2s_limit2}, setting $\bx=(0,0)$, we obtain
\begin{equation}
\lim_{\tilde{\Delta}_{e^{\theta_{\bc}^{max}}}\ni z\to e^{\theta_{\bc}^{max}}} (e^{\theta_{\bc}^{max}}-z)^{\frac{1}{2}} 
\begin{pmatrix} 
\Phi^{\bc}_{(0,0),*}(z) &  \Phi^{\bc}_{(0,-1),*}(z) \cr 
\Phi^{\bc}_{(0,1),*}(z) &  \Phi^{\bc}_{(0,0),*}(z)
\end{pmatrix}
= \hat{g}^\Phi 
\begin{pmatrix} 
\bv_1 \bu_1 & \bv_1 \bu_2 \cr
\bv_2 \bu_1 & \bv_2 \bu_2
\end{pmatrix}.
\end{equation}
Hence, we have $\bv_1 \bu_1=\bv_2 \bu_2$, and this leads us to $\bv_2=c \bv_1$ and $\bu_2=c^{-1} \bu_1$, where $c=\bu_1\bv_2/(\bu_2\bv_2)$. 
Analogously, by \eqref{eq:hatPhixs_def} and \eqref{eq:hatPhix1x2s_limit2}, setting $\bx=(0,1)$, we obtain $e^{2\eta_{\bc,2}^{max}}\bv_1\bu_2=\bv_2\bu_1$, and this leads us to $c=e^{\eta_{\bc,2}^{max}}$. 
As a result, by \eqref{eq:hatPhixs_def} and \eqref{eq:hatPhix1x2s_limit2}, we have, for every $\bx=(x_1,x_2)\in\mathbb{Z}_+^2$,
\begin{align}
&\lim_{\tilde{\Delta}_{e^{\theta_{\bc}^{max}}}\ni z\to e^{\theta_{\bc}^{max}}} (e^{\theta_{\bc}^{max}}-z)^{\frac{1}{2}} \Phi^{\bc}_{(x_1,x_1+2x_2),*}(z)
= \hat{g}^\Phi e^{\eta_{\bc,1}^{max} x_1+\eta_{\bc,2}^{max}(x_1+2x_2)} \bv_1\bu_1, 
\label{eq:Phics_limit1} \\
&\lim_{\tilde{\Delta}_{e^{\theta_{\bc}^{max}}}\ni z\to e^{\theta_{\bc}^{max}}} (e^{\theta_{\bc}^{max}}-z)^{\frac{1}{2}} \Phi^{\bc}_{(x_1,x_1+2x_2+1),*}(z)
= \hat{g}^\Phi e^{\eta_{\bc,1}^{max} x_1+\eta_{\bc,2}^{max}(x_1+2x_2+1)} \bv_1\bu_1. 
\label{eq:Phics_limit2}
\end{align}
Since, in \eqref{eq:Phics_limit1} and \eqref{eq:Phics_limit2}, $x_1$ and $x_2$ are arbitrary, the same result holds for $\Phi^{\bc}_{(x_1',x_2'),*}(z)$, where $x_1'$ and $x_2'$ are nonnegative integers. Furthermore, we see that $\bu_{\bc}$ in \eqref{eq:hatPhix1x2s_limit} is given by $\bu_1$. 

We identify $\bv_1$. Define matrix functions:
\begin{align*}
&B_{-2}(z)=A_{1,-1} z,\quad
B_{-1}(z)=A_{0,-1} + A_{1,0} z,\quad 
B_0(z)=A_{-1,-1} z^{-1}+ A_{0,0} + A_{1,1} z,\cr
&B_1(z)=A_{-1,0} z^{-1}+ A_{0,1},\quad 
B_2(z)=A_{-1,1} z^{-1}, 
\end{align*}
and 
\[
B_*(z,w) = \sum_{j=-2}^2 B_j(z) w^j, 
\]
where we have the following  (see Section 3 of \cite{Ozawa20w}): 
\begin{equation}
B_*(e^{\theta_1},e^{\theta_2}) = A^{\{1,2\}}_{*,*}(e^{\theta_1-\theta_2},e^{\theta_2}).
\end{equation}
By Remark 2.4 of \cite{Ozawa21}, if $\hat\bv=\begin{pmatrix} \hat\bv_1 \cr \hat\bv_2 \end{pmatrix}$ is the right eigenvector of $\hat{A}^{\{1,2\}}_{*,*}(e^{\theta_1},e^{\theta_2})$ with respect to eigenvalue $\lambda$, then $\hat\bv_2=e^{\theta_2/2} \hat\bv_1$ and $\hat\bv_1$ is the right eigenvector of $B_*(e^{\theta_1},e^{\theta_2/2})$ with respect to the eigenvalue $\lambda$, and vice versa. 
By the proof of Proposition \ref{pr:spr_hatU_eq1}, we already know that $\hat{\bv}^{U}(e^{\theta_{\bc}^{max}})$ is the right eigenvector of $\hat{A}^{\{1,2\}}_{*,*}(e^{\theta_{\bc}^{max}},e^{2\eta_{\bc,2}^{max}})$ with respect to the eigenvalue $1$. 
Hence, we have
\begin{equation}
\bv_1 = B_*(e^{\theta_{\bc}^{max}},e^{\eta_{\bc,2}^{max}})\bv_1=A^{\{1,2\}}_{*,*}(e^{\eta_{\bc,1}^{max}},e^{\eta_{\bc,2}^{max}})\bv_1. 
\end{equation}
This implies $\bv_1=\bv^A(e^{\eta_{\bc,1}^{max}},e^{\eta_{\bc,2}^{max}})$. 
\end{proof}

The coefficient vector $\bvarphi^{\bc}_{1,-1}$ in the Puiseux series of $\bvarphi^{\bc}(z)$ is given as follows.
\begin{proof}[Proof of Proposition \ref{pr:varphic_limit1}]
Assume Type 1. 
If $\bar{\eta}_1'(\theta_2^*) < -c_1/c_2=-1 < 1/\bar{\eta}_2'(\theta_1^*)$, $\bvarphi_1(z)$ element-wise converges at $z=e^{\eta_{\bc,1}^{max}}<e^{\xi_{(1,0)}}$ and $\bvarphi_2(w)$ at $w=e^{\eta_{\bc,2}^{max}}<e^{\xi_{(0,1)}}$. Hence, applying \eqref{eq:hatPhix1x2s_limit} to expression \eqref{eq:varphic_eq1} of $\bvarphi^{\bc}(z)$, we obtain, for a positive row vector $\bu_{\bc}$, 
\begin{equation}
\bvarphi^{\bc}_{1,-1} 
= \lim_{\tilde{\Delta}_{e^{\theta_{\bc}^{max}}}\ni z\to e^{\theta_{\bc}^{max}}} (e^{\theta_{\bc}^{max}}-z)^{\frac{1}{2}} \bvarphi^{\bc}(z)
 =  \hat{g}^\Phi \bg(e^{\eta_{\bc,1}^{max}},e^{\eta_{\bc,2}^{max}}) \bv^A(e^{\eta_{\bc,1}^{max}},e^{\eta_{\bc,2}^{max}}) \bu_{\bc}, 
\label{eq:varphic1m1}
\end{equation}
where $\bu_{\bc}$ is given by $\bu_1$ in the proof of Proposition \ref{pr:Phics_limit}. The row vector $\bu_0^{\bc}$ in Proposition \ref{pr:varphic_limit1} is given by the right hand side of \eqref{eq:varphic1m1}, which is positive since $\bg(e^{\eta_{\bc,1}^{max}},e^{\eta_{\bc,2}^{max}}) \bv^A(e^{\eta_{\bc,1}^{max}},e^{\eta_{\bc,2}^{max}})>0$ by Proposition \ref{pr:gvA_positivity}. 
\end{proof}

%
%
%
\subsection{Proof of Corollary \ref{co:decay_function_homogeneous}} \label{sec:decay_function_homogeneous}

Define a probability distribution $\tilde{\bnu}=(\tilde{\bnu}_{\bx},\bx\in\mathbb{Z}^2)$ on $\mathbb{Z}^2\times S_0$ as 
\[
\tilde{\bnu}_{\bx} = \left\{ \begin{array}{ll} 
\bnu_{\bx}, & \bx\in\mathbb{Z}_+^2, \cr
\bzero^\top, & \mbox{otherwise}.
\end{array} \right.
\]
This $\tilde{\bnu}$ is the stationary distribution of a certain Markov chain on the state space $\mathbb{Z}^2\times S_0$, see Section 2.2 of \cite{Ozawa22}. For $\bx\in\mathbb{Z}^2$, the expression of $\tilde{\bnu}_{\bx}$ is given by (2.8) of \cite{Ozawa22}. 
For $\bc\in\mathbb{N}^2$ and $\bx,\bx'\in\mathbb{Z}^2$, define a vector generating function $\bar{\bvarphi}^{\bc}_{\bx}$ and matrix generating function $\Phi^{\bc}_{\bx',*+\bx}(z)$ as
\[
\bar{\bvarphi}^{\bc}_{\bx}(z) = \sum_{k=-\infty}^\infty z^k \tilde{\bnu}_{k\bc+\bx},\quad 
\Phi^{\bc}_{\bx',*+\bx}(z) = \sum_{k=-\infty}^\infty z^k \Phi^{\{1,2\}}_{\bx',k\bc+\bx}. 
\]
Since $\Phi^{\{1,2\}}_{\bx',k\bc+\bx}=\Phi^{\{1,2\}}_{\bx'-\bx,k\bc}$, we have $\Phi^{\bc}_{\bx',*+\bx}(z)=\Phi^{\bc}_{\bx'-\bx,*}(z)$. Hence, by (2.8) of \cite{Ozawa22}, we have
\begin{equation}
\bar{\bvarphi}^{\bc}_{\bx}(z) = \bar{\bvarphi}^{\bc}_{0,\bx}(z) + \bar{\bvarphi}^{\bc}_{1,\bx}(z) + \bar{\bvarphi}^{\bc}_{2,\bx}(z), 
\label{eq:varphicx_definition}
\end{equation}
where
\begin{align}
&\bar{\bvarphi}^{\bc}_{0,\bx}(z) = \sum_{i_1,i_2\in\{-1,0,1\}} \bnu_{(0,0)} (A^\emptyset_{i_1,i_2}-A^{\{1,2\}}_{i_1,i_2}) \Phi^{\bc}_{(i_1,i_2)-\bx,*}(z), 
\label{eq:varphicx0_definition}\\
&\bar{\bvarphi}^{\bc}_{1,\bx}(z) = \sum_{k=1}^\infty\ \sum_{i_1,i_2\in\{-1,0,1\}} \bnu_{(k,0)} (A^{\{1\}}_{i_1,i_2}-A^{\{1,2\}}_{i_1,i_2}) \Phi^{\bc}_{(k+i_1,i_2)-\bx,*}(z), \\
&\bar{\bvarphi}^{\bc}_{2,\bx}(z) = \sum_{k=1}^\infty\ \sum_{i_1,i_2\in\{-1,0,1\}} \bnu_{(0,k)} (A^{\{2\}}_{i_1,i_2}-A^{\{1,2\}}_{i_1,i_2}) \Phi^{\bc}_{(i_1,k+i_2)-\bx,*}(z). 
\label{eq:varphicx2_definition}
\end{align}
For $\bc\in\mathbb{N}^2$ and $\bx\in\mathbb{Z}_+^2$, define the vector generating function $\bvarphi^{\bc}_{\bx}(z)$ of vector sequence $\{\bnu_{k\bc+\bx}: k\in\mathbb{Z}_+\}$ as
\[
\bvarphi^{\bc}_{\bx}(z) =  \sum_{k=0}^\infty z^k \bnu_{k\bc+\bx}.
\]
The asymptotic decay function of the sequence $\{\bnu_{k\bc+\bx}: k\in\mathbb{Z}_+\}$ is obtained through the analytic property of the vector function $\bvarphi^{\bc}_{\bx}(z)$.

\begin{proof}[Proof of Corollary \ref{co:decay_function_homogeneous}]
Assume $\bc=(1,1)$. 
Let $\bx=(x_1,x_2)$ be a point in $\mathbb{Z}_+^2$ and set $k_0=\max\{x_1,x_2\}$, then $\bx-k_0\bc\le \bzero$. We have
\begin{equation}
\bar{\bvarphi}^{\bc}_{\bx-k_0\bc}(z) 
= \sum_{k=-\infty}^\infty z^k \tilde{\bnu}_{k\bc+\bx-k_0\bc}
= \sum_{k=-\min\{x_1,x_2\}}^{-1} z^{k+k_0} \bnu_{k\bc+\bx} + z^{k_0} \bvarphi^{\bc}_{\bx}(z).
\label{eq:varphicx_eq1}
\end{equation}
Since the first term and function $z^{k_0}$ on the right hand side of \eqref{eq:varphicx_eq1} are  analytic in $\mathbb{C}$, the analytic property of the vector function $\bvarphi^{\bc}_{\bx}(z)$ coincides with that of $\bar{\bvarphi}^{\bc}_{\bx-k_0\bc}(z)$. 
Furthermore, $\bar{\bvarphi}^{\bc}_{\bx-k_0\bc}(z)$ is given by \eqref{eq:varphicx_eq1} and we have 
\[
\hat\Phi^{\bc}_{(i_1,k+i_2)-(\bx-k_0\bc),*}(z) 
= (z^{i_1} \hat{G}_{0,*}(z)^{k+i_2}) (z^{k_0-x_1} \hat{G}_{0,*}(z)^{k_0-x_2}) \hat{\Phi}_{(0,0),*}(z), 
\]
where $k_0-x_1\ge 0$ and $k_0-x_2\ge 0$. 
Hence, in a manner similar to that used for the vector function $\bvarphi^{\bc}(z)$, we see that the vector function $\bar{\bvarphi}^{\bc}_{\bx-k_0\bc}(z)$ is analytic in $\Delta_{e^{\xi_{\bc}}}\cup\partial\Delta_{e^{\xi_{\bc}}}\setminus\{e^{\xi_{\bc}}\}$ and its singularity at the point $e^{\xi_{\bc}}$ coincides with that of $\bvarphi^{\bc}(z)$. 
For a general direction vector $\bc\in\mathbb{N}^2$, we can also see that the same results hold true for $\bar{\bvarphi}^{\bc}_{\bx-k_0\bc}(z)$ by using the block state process derived from the original 2d-QBD process; See Section 3.3 of \cite{Ozawa22}. 
This completes the proof. 
\end{proof}

%
%

\section{Concluding remark} \label{sec:conclusion}

Here we consider a topic with respect to an occupation measure. 
Recall that $P^{\{1,2\}}=(P^{\{1,2\}}_{\bx,\bx'}; \bx,\bx'\in\mathbb{Z}^2)$  is the transition probability matrix of the induced MA-process $\{\bY^{\{1,2\}}_n\}$ and $\Phi^{\{1,2\}}=(\Phi^{\{1,2\}}_{\bx,\bx'}; \bx,\bx'\in\mathbb{Z}^2)$ the fundamental matrix of $P^{\{1,2\}}$. Let $h_{\bc}^\Phi(k)$ be the asymptotic decay function of the matrix sequence $\{\Phi^{\{1,2\}}_{\bx,k\bc}; k\in\mathbb{N}\}$, i.e., for some positive matrix $C_{\bx}$, 
\begin{equation}
\lim_{k\to\infty} \Phi^{\{1,2\}}_{\bx,k\bc}/h_{\bc}^\Phi(k) = C_{\bx}. 
\end{equation}
By using the block state process derived from the original 2d-QBD process, we can see that Proposition \ref{co:Phics_analyticproperties} also holds for every direction vector $\bc$ in $\mathbb{N}^2$. Hence, we obtain 
\begin{equation}
h_{\bc}^\Phi(k) = k^{-\frac{1}{2}} e^{-\theta_{\bc}^{max} k}. 
\end{equation}
Furthermore, recall that $P^+$ is a partial matrix of $P^{\{1,2\}}$ given by restricting the state space of the additive part to the positive quadrant, i.e., $P^+=(P^{\{1,2\}}_{\bx,\bx'}; \bx,\bx'\in\mathbb{N}^2)$. $P^+$ is also a partial matrix of the transition probability matrix of the original 2d-QBD process, $P=(P_{\bx,\bx'}; \bx,\bx'\in\mathbb{Z}_+^2)$, i.e., $P^+=(P_{\bx,\bx'}; \bx,\bx'\in\mathbb{N}^2)$. 
Let $\tilde{Q}=(\tilde{Q}_{\bx,\bx'}; \bx,\bx'\in\mathbb{N}^2)$ be the fundamental matrix of $P^+$, i.e., $\tilde{Q}=\sum_{n=0}^\infty (P^+)^n$. For $j,j'\in S_0$, denote by $\tilde{q}_{(\bx,j),(\bx',j')}$ the $(j,j')$-entry of $\tilde{Q}_{\bx,\bx'}$. The entries of $\tilde{Q}$ are called an occupation measure in \cite{Ozawa21}. 
By Theorem 5.1 of \cite{Ozawa21}, the asymptotic decay rate of the matrix sequence $\{\tilde{Q}_{\bx,k \bc}; k\in\mathbb{N}\}$ is given by $e^{\theta_{\bc}^{max}}$, i.e., 
\begin{equation}
-\lim_{k\to\infty} \frac{1}{k} \log \tilde{q}_{(\bx,j),(k\bc,j')} = \theta_{\bc}^{max}, 
\end{equation}
which coincides with that of the matrix sequence $\{\Phi^{\{1,2\}}_{\bx,k\bc}; k\in\mathbb{N}\}$. 
One question, therefore, arises: Does the asymptotic decay function of the matrix sequence $\{\tilde{Q}_{\bx,k \bc}; k\in\mathbb{N}\}$ coincide with that of the matrix sequence $\{\Phi^{\{1,2\}}_{\bx,k\bc}; k\in\mathbb{N}\}$? The answer to the question seems not to be so obvious.

%
%

%
%
\appendix

%
%
\section{Proof of Theorem \ref{th:Jordan_decomposition_G_pre}} \label{sec:Jordan_decomposition_G_app} 

First, we give the generalized eigenvectors of $G(z)$ for $z\in\Delta_{e^{\theta_1^{min}},e^{\theta_1^{max}}}\setminus\calE_1$, then analytically extend them to $z\in\mathbb{C}\setminus\calE_1$. Set $\Omega=\Delta_{e^{\theta_1^{min}},e^{\theta_1^{max}}}\setminus(\calE_1\cup\calE_2)$. 

For each $k\in\{1,2,...,m_0\}$ and for each $z\in\Omega\setminus\bigcup_{k=1}^{m_0} \calE^G_k$, since the Jordan normal form of $G(z)$ is given by \eqref{eq:Jordan_normal_Gz}, there exist linearly independent vectors called the generalized eigenvectors of $G(z)$ with respect to the eigenvalue $\check{\alpha}_k(z)$, $\check{\bv}_{k,i,j}(z),\,i=1,2,...,m_{k,0},\,j=1,2,...,m_{k,i},$ satisfying
\begin{equation}
(\check{\alpha}_k(z) I-G(z)) \check{\bv}_{k,i,j}(z) = \check{\bv}_{k,i,j+1}(z), 
\end{equation}
where $\check{\bv}_{k,i,m_{k,i}+1}(z)=\bzero$. For each $i$, $\check{\bv}_{k,i,j}(z),\,j=1,2,...,m_{k,i},$ are called a Jordan sequence of the generalized eigenvectors. 
Using the Jordan sequences, we define $l_{\check{q}(k)}\times 1$ block vectors, $\bv_{k,i,j}(z),\,i=1,2,...,m_{k,0},\,j=1,2,...,m_{k,i},$ as 
\[
\bv_{k,i,j}(z) = \vec\!\begin{pmatrix} \check{\bv}_{k,i,j}(z)  & \check{\bv}_{k,i,j+1}(z) & \cdots & \check{\bv}_{k,i,m_{k,i}}(z) & \bzero & \cdots & \bzero \end{pmatrix}, 
\]
where, for a matrix $A=\begin{pmatrix} \ba_1 & \ba_2 & \cdots & \ba_n \end{pmatrix}$, $\vec(A)$ is the column vector given by 
\[
\vec(A)=\begin{pmatrix} \ba_1 \cr \ba_2 \cr \vdots \cr \ba_n \end{pmatrix}.
\]
We also define a vector space $\mathbb{V}^G_k(z)$ as 
\[
\mathbb{V}^G_k(z) = \mathrm{span}\,\{\bv_{k,i,j}(z): i=1,2,...,m_{k,0},\,j=1,2,...,m_{k,i} \}.
\]
Note that the generalized eigenvectors $\check{\bv}_{k,i,j}(z)$ are not unique but $\mathbb{V}^G_k(z)$ is. Since the generalized eigenvectors are linearly independent, $\bv_{k,i,j}(z)$ are also linearly independent and we have 
\[
\dim \mathbb{V}^G_k(z) = \sum_{i=1}^{m_{k,0}} m_{k,i} =  l_{\check{q}(k)}. 
\]
%
For $k\in\{1,2,...,m_0\}$, define an $l_{\check{q}(k)}\times l_{\check{q}(k)}$ block matrix function $\Lambda^G_k(z)$ as
{\small \[
\Lambda^G_k(z) 
= \begin{pmatrix}
\check{\alpha}_k(z) I -G(z) & -I &   \cr
& \check{\alpha}_k(z) I -G(z) & -I &   \cr
& \qquad\qquad\quad\ddots & \qquad\qquad\quad\ddots & \cr 
& & \check{\alpha}_k(z) I -G(z) & -I \cr
& & & \check{\alpha}_k(z) I -G(z)
\end{pmatrix}.
\] }
We give the following proposition.
\begin{proposition} \label{pr:Ker_LambdaGk}
For each $k\in\{1,2,...,m_0\}$ and for each $z\in\Omega\setminus\bigcup_{k=1}^{m_0} \calE^G_k$, 
\begin{equation}
\mathrm{Ker}\, \Lambda^G_k(z) = \mathbb{V}^G_k(z).
\label{eq:Ker_LambdaGk}
\end{equation}
\end{proposition}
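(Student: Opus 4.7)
The proof is essentially two inclusions plus a dimension count. The block bidiagonal structure of $\Lambda_k^G(z)$ will turn the kernel equation into a recursion that reduces everything to a single generalized eigenspace of $G(z)$.

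First I would verify $\mathbb{V}^G_k(z)\subseteq \Ker \Lambda_k^G(z)$ by direct substitution. Let $\bv_{k,i,j}(z)=\vec\!\begin{pmatrix} \check{\bv}_{k,i,j}(z) & \check{\bv}_{k,i,j+1}(z) & \cdots & \check{\bv}_{k,i,m_{k,i}}(z) & \bzero & \cdots & \bzero \end{pmatrix}$. From the defining relation $(\check{\alpha}_k(z) I-G(z))\check{\bv}_{k,i,j'}(z)=\check{\bv}_{k,i,j'+1}(z)$ with $\check{\bv}_{k,i,m_{k,i}+1}(z)=\bzero$, multiplying block-row by block-row of $\Lambda_k^G(z)$ through $\bv_{k,i,j}(z)$ produces only consecutive cancellations of the form $\check{\bv}_{k,i,j'+1}(z)-\check{\bv}_{k,i,j'+1}(z)=\bzero$, so $\Lambda_k^G(z)\bv_{k,i,j}(z)=\bzero$. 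Since this holds for every admissible $(i,j)$ and the vectors $\bv_{k,i,j}(z)$ are linearly independent, we obtain the first inclusion.

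Next I would prove the reverse inclusion by dimension counting. Let $\bv=\vec\!\begin{pmatrix} \bw_1 & \bw_2 & \cdots & \bw_{l_{\check{q}(k)}} \end{pmatrix}\in \Ker \Lambda_k^G(z)$. The bidiagonal structure forces the recursion $\bw_{j+1}=(\check{\alpha}_k(z) I-G(z))\bw_j$ for $j=1,\dots,l_{\check{q}(k)}-1$ together with $(\check{\alpha}_k(z) I-G(z))\bw_{l_{\check{q}(k)}}=\bzero$. Hence $\bw_j=(\check{\alpha}_k(z) I-G(z))^{j-1}\bw_1$ and $(\check{\alpha}_k(z) I-G(z))^{l_{\check{q}(k)}}\bw_1=\bzero$, so the map $\bv\mapsto \bw_1$ gives a linear injection
\[
\Ker \Lambda_k^G(z) \hookrightarrow \Ker (\check{\alpha}_k(z) I-G(z))^{l_{\check{q}(k)}}.
\]
This map is clearly surjective because any $\bw_1$ in the target generates a valid $\bv$ via the recursion, so $\dim \Ker \Lambda_k^G(z)=\dim \Ker (\check{\alpha}_k(z) I-G(z))^{l_{\check{q}(k)}}$.

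Finally I would invoke the facts already established in Section \ref{sec:G_analyticextension}: for $z\in\Omega\setminus\bigcup_{k=1}^{m_0}\calE_k^G$, the eigenvalue $\check{\alpha}_k(z)$ of $G(z)$ has algebraic multiplicity $l_{\check{q}(k)}$, and the Jordan structure takes its common form \eqref{eq:Jordan_normal_Gz}, so the generalized eigenspace $\Ker (\check{\alpha}_k(z) I-G(z))^{l_{\check{q}(k)}}$ has dimension exactly $l_{\check{q}(k)}=\sum_{i=1}^{m_{k,0}} m_{k,i}=\dim \mathbb{V}^G_k(z)$. Combined with the first inclusion, this forces equality. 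The only delicate point — and the step I would write most carefully — is the justification that the generalized eigenspace dimension is exactly $l_{\check{q}(k)}$ on the stated open set; but this is immediate from the constancy of the Jordan form on $\Omega\setminus\bigcup_{k=1}^{m_0}\calE_k^G$ that was set up just before the statement. No additional obstacles are expected.
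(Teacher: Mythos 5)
Your proof is correct and follows essentially the same route as the paper: the forward inclusion by direct substitution of the shifted Jordan chains, and the reverse inclusion via the recursion $\bw_{j+1}=(\check{\alpha}_k(z)I-G(z))\bw_j$ forced by the block bidiagonal structure of $\Lambda^G_k(z)$. Your dimension count through $\Ker\,(\check{\alpha}_k(z)I-G(z))^{l_{\check{q}(k)}}$ is in fact a more explicit justification of the step the paper states tersely (``this implies $\bv\in\mathbb{V}^G_k(z)$''), and the facts you invoke --- constancy of the Jordan form on $\Omega\setminus\bigcup_{k=1}^{m_0}\calE^G_k$ and $\dim\mathbb{V}^G_k(z)=l_{\check{q}(k)}$ --- are both established in the text immediately preceding the proposition.
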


\begin{proof}
Assume $\bv\in\mathbb{V}^G_k(z)$. Then, by the definition of $\mathbb{V}^G_k(z)$, we have $\Lambda^G_k(z) \bv = \bzero$ and $\bv\in \mathrm{Ker}\, \Lambda^G_k(z) $. 
For $\bv=\vec\begin{pmatrix} \bv_1 & \bv_2 & \cdots & \bv_{l_{\check{q}(k)}} \end{pmatrix}$, assume $\Lambda^G_k(z) \bv = \bzero$. If there exists an index $i$ such that $\bv_i=\bzero$, then by the assumption, for  every $j$ such that $i\le j\le l_{\check{q}(k)}$, we have $\bv_j=\bzero$, and this implies $\bv\in\mathbb{V}^G_k(z)$.
\end{proof}

By Theorem S6.1 of \cite{Gohberg09}, since the matrix function $\Lambda^G_k(z)$ is entry-wise analytic in $\Delta_{e^{\theta_1^{min}},e^{\theta_1^{max}}}\setminus\calE_1$, there exist $l_{\check{q}(k)}$ vector functions $\bv^G_{k,i}(z)$, $i=1,2,...,l_{\check{q}(k)}$, that are element-wise analytic and linearly independent in $\Delta_{e^{\theta_1^{min}},e^{\theta_1^{max}}}\setminus\calE_1$ and satisfy
\[
\Lambda^G_k(z) \bv^G_{k,i}(z) = \bzero,\ i=1,2,...,l_{\check{q}(k)}.
\]
Hence, for each $z\in\Omega\setminus\bigcup_{k=1}^{m_0} \calE^G_k$, $\bv^G_{k,i}(z)\in\mathbb{V}^G_k(z)$. 
We select vectors composing the Jordan sequences with respect to the eigenvalue $\check\alpha_k(z)$ from $\{\bv^G_{k,i}(z),\, i=1,2,...,l_{\check{q}(k)}\}$. Represent each $\bv^G_{k,i}(z)$ in block form as 
\[
\bv^G_{k,i}(z) = \vec\!\begin{pmatrix} \bv^G_{k,i,1}(z)  & \bv^G_{k,i,2}(z) & \cdots & \bv^G_{k,i,l_{\check{q}(k)}}(z) \end{pmatrix}. 
\]
From the proof of Proposition \ref{pr:Ker_LambdaGk}, we see that, for every $i\in\{1,2,...,l_{\check{q}(k)}\}$, there exists a positive integer $\mu_{k,i}$ such that $\bv^G_{k,i,j}(z)\ne\bzero$ for every $j\in\{1,2,...,\mu_{k,i}\}$ and  $\bv^G_{k,i,j}(z)=\bzero$ for every $j\in\{\mu_{k,i}+1,\mu_{k,i}+2,...,l_{\check{q}(k)}\}$. 
Renumber the elements of $\{\bv^G_{k,i}(z)\}$ so that if $i\le i'$, then $\mu_{k,i}\ge \mu_{k,i'}$. 
Define a set of vector functions, $\check{\mathbb{V}}_k$, according to the following procedure.
\begin{itemize}
\item[(S1)] Set $\check{\mathbb{V}}_k=\emptyset$ and $i=1$. 
\item[(S2)] If $\bv^G_{k,i,\mu_{k,i}}(z)$ is linearly independent of $\{\bv^G_{k,i',\mu_{k,i'}}(z): \bv^G_{k,i'}(z)\in \check{\mathbb{V}}_k\}$, append $\bv^G_{k,i}(z)$ to $\check{\mathbb{V}}_k$. 
\item[(S3)] If $i=l_{\check{q}(k)}$, stop the procedure; otherwise add $1$ to $i$ and go to (S2).
\end{itemize}

\begin{proposition} \label{pr:equality_mk0}
For $k\in\{1,2,...,m_0\}$, the number of elements of $\check{\mathbb{V}}_k$ is $m_{k,0}$. 
\end{proposition}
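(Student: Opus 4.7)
The plan is to prove $|\check{\mathbb{V}}_k|=m_{k,0}$ by matching upper and lower bounds. I work throughout at points $z\in\Omega\setminus\bigcup_j \calE^G_j$, where the Jordan structure of $G(z)$ at the eigenvalue $\check{\alpha}_k(z)$ is the generic one. Write $T(z):=\check{\alpha}_k(z) I - G(z)$ and $E(z):=\Ker T(z)$; then $\dim E(z)=m_{k,0}$ on this domain.

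The upper bound $|\check{\mathbb{V}}_k|\le m_{k,0}$ follows at once. Reading off the $\mu_{k,i}$-th block-row of the equation $\Lambda^G_k(z)\bv^G_{k,i}(z)=\bzero$ yields $T(z)\bv^G_{k,i,\mu_{k,i}}(z)=\bv^G_{k,i,\mu_{k,i}+1}(z)$, and the right-hand side vanishes by the defining maximality of $\mu_{k,i}$, so every tail $\bv^G_{k,i,\mu_{k,i}}(z)$ lies in $E(z)$. The greedy rule retains only vectors whose tails are linearly independent of those already selected, and the tails live in an $m_{k,0}$-dimensional space.

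The lower bound $|\check{\mathbb{V}}_k|\ge m_{k,0}$ is the substantive half and amounts to showing that the tails $\{\bv^G_{k,i,\mu_{k,i}}(z)\}_{i=1}^{l_{\check{q}(k)}}$ span $E(z)$. First I would introduce the filtration
\[
H_p(z) := \{\bv\in\mathbb{V}^G_k(z): \bv_q=\bzero \text{ for every } q>p\},\qquad p=1,\dots,l_{\check{q}(k)},
\]
and use the chain relations $T(z)\bv_q=\bv_{q+1}$ inside the kernel to identify $H_p(z)$ with $\Ker T(z)^p$ via projection onto the first block; this gives $\dim H_p(z)=\sum_{j=1}^{m_{k,0}}\min(p,m_{k,j})$, so that $\dim H_p(z)-\dim H_{p-1}(z)$ counts the Jordan blocks of $G(z)$ of size at least $p$, and in particular $\dim H_1(z)=m_{k,0}$.

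The crucial additional ingredient is a structural property of the analytic basis produced by Theorem S6.1 of \cite{Gohberg09} applied to $\Lambda^G_k(z)$: the basis can be arranged so that the cardinalities $r_p:=|\{i:\mu_{k,i}=p\}|$ coincide with the Jordan multiplicities $s_{k,p}$, and so that the tails collected from $\{i:\mu_{k,i}\ge p\}$ span the subspace $T(z)^{p-1}\Ker T(z)^p\subseteq E(z)$, whose dimension is the number of Jordan blocks of size $\ge p$. A descending induction on $p$ from $\max_j m_{k,j}$ down to $1$ then shows that the greedy procedure contributes exactly $s_{k,p}$ vectors when it processes level $p$, totalling $\sum_p s_{k,p}=m_{k,0}$.

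The hard part will be establishing this structural property. A completely arbitrary analytic basis of $\mathbb{V}^G_k(z)$ need not satisfy it: one can manufacture bases (even analytic ones, e.g.\ constant in $z$) whose tails all happen to lie in a proper subspace of $E(z)$, so that the greedy rule would output strictly fewer than $m_{k,0}$ vectors. What must be invoked is the local canonical (Smith) form guaranteed by Theorem S6.1, from which one extracts a basis whose partial-multiplicity pattern reproduces the Jordan partition $(m_{k,1},\dots,m_{k,m_{k,0}})$ of $G(z)$ at $\check{\alpha}_k(z)$; that rigidity is what forces the tails to exhaust $E(z)$.
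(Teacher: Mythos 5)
Your proposal stops short of a proof at exactly the step you yourself label ``the hard part'': the structural property you want from Theorem S6.1 of \cite{Gohberg09} --- that the analytic basis of $\Ker\,\Lambda^G_k(z)$ can be arranged so that its height pattern reproduces the Jordan partition $(m_{k,1},\dots,m_{k,m_{k,0}})$ and the tails of the vectors of height at least $p$ span $(\check{\alpha}_k(z)I-G(z))^{p-1}\Ker\,(\check{\alpha}_k(z)I-G(z))^{p}$ --- is asserted but never established, and Theorem S6.1 by itself only supplies \emph{some} analytic basis, with no such rigidity. Your upper bound $|\check{\mathbb{V}}_k|\le m_{k,0}$ is correct and is exactly the paper's. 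For the lower bound the paper takes a completely different, two-line route: it claims that if $|\check{\mathbb{V}}_k|<m_{k,0}$ then $\dim\,\mathrm{span}\{\bv^G_{k,i}(z):i=1,\dots,l_{\check{q}(k)}\}<\dim\,\mathbb{V}^G_k(z)$, which contradicts the linear independence of the $\bv^G_{k,i}(z)$ together with Proposition \ref{pr:Ker_LambdaGk}.

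You should know, though, that the reservation you raise about arbitrary bases is substantive and bears on the paper's own argument, whose key implication is stated without justification and appears to fail in general. Identify $\mathbb{V}^G_k(z)$ with the generalized eigenspace through the first block and write $T$ for the restriction of $\check{\alpha}_k(z)I-G(z)$ to it; take $T$ with one Jordan chain $e_1\mapsto e_2\mapsto\bzero$ and one chain $e_3\mapsto\bzero$, so that $l_{\check{q}(k)}=3$ and $m_{k,0}=2$. The basis $e_1$, $e_1+e_3$, $e_2$ is ordered by non-increasing heights $2,2,1$, all three tails equal $e_2$, and the greedy rule (S1)--(S3) retains a single vector; yet the three corresponding elements of $\Ker\,\Lambda^G_k(z)$ are linearly independent, so no contradiction of the paper's type arises. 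Moreover, no subset of this basis generates a complete Jordan basis, so the defect cannot be cured by changing the selection rule alone: the basis itself must be constructed to respect the height filtration. Your plan therefore points in the right direction --- the missing lemma is precisely the compatibility of the analytic basis with the Jordan filtration --- but as written it is a programme rather than a proof, and carrying it out would also be what is needed to make the paper's own argument rigorous.
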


\begin{proof}
Since, for every $i\in\{1,2,...,l_{\check{q}(k)}\}$, $(\check{\alpha}_k(z) I -G(z)) \bv^G_{k,i,\mu_{k,i}} = \bzero$ and $\mathrm{dim}\,\mathrm{Ker}\, (\check{\alpha}_k(z) I -G(z)) = m_{k,0}$, the number of elements of $\check{\mathbb{V}}_k$ is less than or equal to $m_{k,0}$. If it is strictly less than $m_{k,0}$, we have 
\[
\mathrm{dim}\,\mathrm{Ker}\, \Lambda^G_k(z) 
=\mathrm{dim}\,\mathrm{span}\,\{\bv^G_{k,i}(z),\, i=1,2,...,l_{\check{q}(k)}\} 
< \mathrm{dim}\,\mathbb{V}^G_k(z). 
\]
This contradicts \eqref{eq:Ker_LambdaGk}, and we see that the number of elements of $\check{\mathbb{V}}_k$ is just $m_{k,0}$. 
\end{proof}

Denote by $\check{\bv}^G_{k,1}(z), \check{\bv}^G_{k,2}(z), ..., \check{\bv}^G_{k,m_{k,0}}(z)$ the elements of $\check{\mathbb{V}}_k$. For each $\check{\bv}^G_{k,i}(z)$, define $\check{\mu}_{k,i}$ in a manner similar to that used for defining $\mu_{k,i}$. We assume $\check{\bv}^G_{k,i}(z),\,i=1,2,...,m_{k,0},$ are numbered so that if $i\le i'$, then $\check{\mu}_{k,i}\ge \check{\mu}_{k,i'}$.

\begin{proposition} \label{pr:equality_muki_mki}
For $k\in\{1,2,...,m_0\}$ and for $i\in\{1,2,...,m_{k,0}\}$, $\check{\mu}_{k,i}=m_{k,i}$
\end{proposition}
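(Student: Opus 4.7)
The plan is to work at a fixed generic $z \in \Omega \setminus \bigcup_{k=1}^{m_0} \calE^G_k$ and recast the claim as a statement in linear algebra on the generalized eigenspace $K := \Ker(\check{\alpha}_k(z) I - G(z))^{m_{k,1}}$, which has dimension $l_{\check{q}(k)}$. The block structure of $\Lambda^G_k(z)$ forces every $\bv^G_{k,i}(z) \in \Ker\,\Lambda^G_k(z)$ to be a single Jordan tail $\bv^G_{k,i,j} = (\check{\alpha}_k(z) I - G(z))^{j-1} v_1^{(i)}$ driven by a unique first block $v_1^{(i)} \in K$. Under this identification, $\mu_{k,i}$ equals the grade of $v_1^{(i)}$ (the smallest $n$ with $(\check{\alpha}_k I - G)^{n} v_1^{(i)} = 0$) and the top block $\bv^G_{k,i,\mu_{k,i}}$ is the honest eigenvector $(\check{\alpha}_k I - G)^{\mu_{k,i}-1} v_1^{(i)}$ lying in $E := \Ker(\check{\alpha}_k I - G)$. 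The selection procedure (S1)--(S3) becomes a greedy choice, in nonincreasing grade order, of a maximal subfamily of $\{v_1^{(i)}\}$ whose tops are linearly independent in $E$.

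I would then introduce the decreasing filtration of the eigenspace $E$,
\[
V_p \;:=\; E \cap \Image(\check{\alpha}_k I - G)^{p-1} \;=\; (\check{\alpha}_k I - G)^{p-1}\,\Ker(\check{\alpha}_k I - G)^{p},
\]
and read off from the Jordan normal form \eqref{eq:Jordan_normal_Gz} that $\dim V_p = \sigma_p := \#\{i' : m_{k,i'} \geq p\}$. A direct calculation shows that the top of any $\bv^G_{k,i}$ of grade $\mu_{k,i} = q$ sits inside $V_q$, so the selected tops with $\check{\mu}_{k,i} \geq p$ are linearly independent vectors of $V_p$, yielding the upper bound $\#\{i : \check{\mu}_{k,i} \geq p\} \leq \sigma_p$ for every $p \geq 1$. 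Summing over $p$ gives $\sum_i \check{\mu}_{k,i} \leq \sum_p \sigma_p = l_{\check{q}(k)}$.

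The heart of the proof is the matching lower bound $\sum_i \check{\mu}_{k,i} \geq l_{\check{q}(k)}$, equivalent to the Jordan chains generated by the greedy-selected starters $\check{v}_1^{(1)}, \ldots, \check{v}_1^{(m_{k,0})}$ spanning the whole generalized eigenspace $K$. Because the procedure is a maximum-weight base in the linear matroid whose independent sets are subfamilies of $\{v_1^{(i)}\}$ with linearly independent tops and whose weights are the grades $\mu_{k,i}$, it attains the largest possible $\sum \check{\mu}_{k,i}$ over all such $m_{k,0}$-subsets. I would exhibit a witness of weight $l_{\check{q}(k)}$ by expanding the natural Jordan starters $\bv_{k,i',1}$ (of grade $m_{k,i'}$) in the basis $\{\bv^G_{k,i}\}$ and peeling off, downward along the filtration $V_p$, one representative of each Jordan block from among the $\bv^G_{k,i}$ of grade at least $m_{k,i'}$. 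Combined with the upper bounds this forces $\#\{i : \check{\mu}_{k,i} \geq p\} = \sigma_p$ for every $p$, so the two nonincreasing sequences $(\check{\mu}_{k,i})_{i=1}^{m_{k,0}}$ and $(m_{k,i})_{i=1}^{m_{k,0}}$ have the same conjugate partition and therefore coincide entry-by-entry.

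The hardest step will be the witness construction. Linear independence of $\{\bv^G_{k,i}\}$ on its own is insufficient -- one can write down bases of $\Ker\,\Lambda^G_k(z)$ whose tops fail to span $E$, in which case the greedy count would drop strictly below $m_{k,0}$ and even Proposition \ref{pr:equality_mk0} would collapse. So the peeling argument has to invoke the canonical structure of the analytic basis supplied by Theorem S6.1 of \cite{Gohberg09} (the same ingredient that powers Proposition \ref{pr:equality_mk0}); turning that structural information into the explicit chain-peeling at the generic point $z$ is where the real work lies.
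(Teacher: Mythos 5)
Your reduction to linear algebra on the generalized eigenspace $K$, the filtration $V_p = E\cap\Image\,(\check{\alpha}_k(z)I-G(z))^{p-1}$ with $\dim V_p=\sigma_p$, and the resulting bound $\#\{i:\check{\mu}_{k,i}\ge p\}\le\sigma_p$ are correct, and in fact give a cleaner justification of the inequality $\check{\mu}_{k,i}\le m_{k,i}$ than the one-sentence argument in the paper. But the proposal stops exactly where the proposition lives: the matching lower bound $\sum_i\check{\mu}_{k,i}\ge l_{\check{q}(k)}$ is announced as ``the heart of the proof,'' reduced to a ``witness construction,'' and then left undone --- ``where the real work lies'' describes a gap, it does not fill one. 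The matroid-greedy framing cannot close it on its own: greedy optimality only says that (S1)--(S3) attain the maximum weight over subfamilies with linearly independent tops, and, as your own remark makes precise, for a general basis of $\Ker\,\Lambda^G_k(z)$ that maximum can be strictly smaller than $l_{\check{q}(k)}$ (already for a nilpotent part of type $(2,1)$, a basis of the form $\{e_2+e_3,\,e_2,\,e_1\}$ with $Ne_2=e_1$, $Ne_1=Ne_3=0$ has all tops proportional to $e_1$, so the greedy returns a single chain of length $2<3$ and even Proposition \ref{pr:equality_mk0} would fail). Appealing to ``the canonical structure of the analytic basis supplied by Theorem S6.1 of Gohberg et al.'' does not rescue this as stated: that theorem produces \emph{some} analytic basis of the kernel, with no Jordan-compatible normalization, so there is no identified structural property for your peeling argument to use. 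Until the witness of weight $l_{\check{q}(k)}$ is actually constructed, the proof is incomplete at its decisive step.

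For comparison, the paper handles the lower bound by contradiction: taking the first index $i_0$ with $\check{\mu}_{k,i_0}<m_{k,i_0}$, it exhibits a vector of $\mathbb{V}^G_k(z)$ (in effect the starter of the true $i_0$-th Jordan chain, whose blocks are nonzero up to position $m_{k,i_0}$ and zero afterwards) asserted to be linearly independent of $\{\bv^G_{k,i}(z)\}$, which contradicts $\Ker\,\Lambda^G_k(z)=\mathbb{V}^G_k(z)$ from Proposition \ref{pr:Ker_LambdaGk}. Your proposal aims at the same fact from the dual side (the selected chains must span $K$ rather than a putative vector must lie outside the span), and your observation that linear independence of the $\bv^G_{k,i}(z)$ does not by itself force the selected tops to exhaust $E$ is a sharp and legitimate concern that bears on that step too; but raising the concern is not the same as resolving it, and your write-up resolves neither direction.
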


\begin{proof}
For each $i\in\{1,2,...,m_{k,0}\}$, $\{\check{\bv}^G_{k,i,1}(z), \check{\bv}^G_{k,i,2}(z), ..., \check{\bv}^G_{k,i,\check{\mu}_{k,i}}(z)\}$ is a Jordan sequence of the generalized eigenvectors of $G(z)$ with respect to the eigenvalue $\check{\alpha}_k(z)$. Hence, considering the procedure defining $\check{\bv}^G_{k,i}(z)$, we see that, for every $i\in\{1,2,...,m_{k,0}\}$, $\check{\mu}_{k,i}\le m_{k,i}$. 
Suppose there exists some $i_0\in\{1,2,...,m_{k,0}\}$ such that $\check{\mu}_{k,i}=m_{k,i}$ for every $i\in\{1,2,...,i_0-1\}$ and $\check{\mu}_{k,i_0}<m_{k,i_0}$. Then, there exists a vector $\bv=\vec\begin{pmatrix} \bv_1 & \bv_2 & \cdots & \bv_{m_{k,i_0}} & \bzero & \cdots & \bzero \end{pmatrix}$ in $\mathbb{V}^G_k(z)$ such that $\bv_i\ne\bzero$ for every $i\in\{1,2,...,m_{k,i_0}\}$ and $\bv$ is linearly independent of $\{\bv^G_{k,i}(z),\, i=1,2,...,l_{\check{q}(k)}\}$. 
By the same reason as that used in the proof of Proposition \ref{pr:equality_mk0}, this contradicts  \eqref{eq:Ker_LambdaGk} and, for every $i\in\{1,2,...,m_{k,0}\}$, $\check{\mu}_{k,i}$ must be $m_{k,i}$.
\end{proof}

From this proposition, we see that, for $z\in\Omega\setminus\bigcup_{k=1}^{m_0} \calE^G_k$, $\{\check{\bv}^G_{k,i,j}(z): k=1,2,...,m_0, i=1,2,...,m_{k,0}, j=1,2,..., m_{k,i}\}$ is the set of generalized eigenvectors corresponding to the Jordan normal form \eqref{eq:Jordan_normal_Gz}. Define a matrix function $T^G(z)$ as
\[
T^G(z) = 
\begin{pmatrix}
\check{\bv}^G_{k,i,j}(z),\, k=1,2,...,m_0,\, i=1,2,...,m_{k,0},\, j=1,2,..., m_{k,i}
\end{pmatrix},
\]
which is entry-wise analytic in $\Delta_{e^{\theta_1^{min}},e^{\theta_1^{max}}}\setminus\calE_1$. Define a point set $\calE_T^G$ as
\[
\calE_T^G = \{z\in\Delta_{e^{\theta_1^{min}},e^{\theta_1^{max}}}\setminus\calE_1: \det\,T^G(z) = 0\}, 
\]
which is an empty set or a set of discrete complex numbers. Then, for $z\in\Omega\setminus(\bigcup_{k=1}^{m_0} \calE^G_k\cup\calE_T^G)$, we obtain the Jordan decomposition of $G(z)$ as
\begin{equation}
G(z) = T^G(z) J^G(z) (T^G(z))^{-1}. 
\label{eq:Gz_Jordan_dec}
\end{equation}
Since $G(z)$ is entry-wise analytic in $\Delta_{e^{\theta_1^{min}},e^{\theta_1^{max}}}$, we see by the identity theorem for analytic functions that the right hand side of \eqref{eq:Gz_Jordan_dec} is also entry-wise analytic in the same domain. 

%
Next, we analytically extend $\check{\bv}^G_{k,i,j}(z),\, k=1,2,...,m_0,\, i=1,2,...,m_{k,0},\, j=1,2,..., m_{k,i}$. 
Define matrix functions $F_1(z,w)$ and $F_2(z)$ as 
\[
F_1(z,w) = z (I-A_{*,0}(z)-2w A_{*,1}(z)), \quad
F_2(z) = z A_{*,1}(z), 
\]
where $F_1(z,w)$ is entry-wise analytic on $\mathbb{C}^2$ and $F_2(z)$ on $\mathbb{C}$. By \eqref{eq:Ass_Gz_relation}, we have 
\begin{equation}
L(z,w) = F_1(z,w) (w I-G(z)) + F_2(z) (w I-G(z))^2. 
\label{eq:Lzw_Gz_relation}
\end{equation}
For $k\in\{1,2,...,m_0\}$, define an $l_{\check{q}(k)}\times l_{\check{q}(k)}$ block matrix function $\Lambda^L_k(z)$ as
{\small \[
\Lambda^L_k(z) 
= \begin{pmatrix}
L(z,\check{\alpha}_k(z)) & -F_1(z,\check{\alpha}_k(z)) &  -F_2(z)  \cr
& L(z,\check{\alpha}_k(z)) & -F_1(z,\check{\alpha}_k(z)) &  -F_2(z)  \cr
& \qquad\qquad\quad\ddots & \qquad\qquad\quad\ddots & \qquad\qquad\quad\ddots \cr 
& & L(z,\check{\alpha}_k(z)) & -F_1(z,\check{\alpha}_k(z)) & -F_2(z)  \cr
& & & L(z,\check{\alpha}_k(z)) & -F_1(z,\check{\alpha}_k(z)) \cr
& & & & L(z,\check{\alpha}_k(z))
\end{pmatrix}, 
\] }
which is entry-wise analytic in $\mathbb{C}\setminus\calE_1$.

\begin{proposition} \label{pr:Ker_LambdaLG_equality}
For every $k\in\{1,2,...,m_0\}$ and for every $z\in\\Delta_{e^{\theta_1^{min}},e^{\theta_1^{max}}}$, 
\begin{equation}
\mathrm{Ker}\, \Lambda^L_k(z) = \mathrm{Ker}\, \Lambda^G_k(z). 
\end{equation}
\end{proposition}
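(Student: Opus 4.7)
The plan is to prove the two inclusions separately by exploiting the right-divisor factorization of the matrix polynomial $L(z, w)$ induced by the G-matrix equation.

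For the easy inclusion $\mathrm{Ker}\, \Lambda^G_k(z) \subset \mathrm{Ker}\, \Lambda^L_k(z)$, I start from the polynomial identity $L_0(z) + L_1(z) G(z) + L_2(z) G(z)^2 = O$ (where $L(z, w) = L_0 + L_1 w + L_2 w^2$ with $L_0 = -z A_{*,-1}$, $L_1 = z(I - A_{*,0})$, $L_2 = -z A_{*,1}$), which follows directly from \eqref{eq:Gfunction_equation}. Expanding $L(z, w) - L(z, G(z))$ gives the factorization $L(z, w) = [F_1(z, w) + F_2(z)(wI - G(z))](wI - G(z))$ already recorded in \eqref{eq:Lzw_Gz_relation}. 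Writing $M(z) = \check\alpha_k(z) I - G(z)$ and assembling the $l_{\check q(k)} \times l_{\check q(k)}$ block upper bidiagonal matrix $R(z)$ with diagonal blocks $F_1(z, \check\alpha_k(z)) + F_2(z) M(z)$ and superdiagonal blocks $F_2(z)$, a direct block multiplication gives $\Lambda^L_k(z) = R(z) \Lambda^G_k(z)$, yielding the inclusion.

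For the reverse inclusion, I take $\bv = \vec(\bv_1, \ldots, \bv_l) \in \mathrm{Ker}\, \Lambda^L_k(z)$ with $l = l_{\check q(k)}$ and introduce the defect blocks $\bu_j := M(z) \bv_j - \bv_{j+1}$ for $j < l$ and $\bu_l := M(z) \bv_l$, which are exactly the block entries of $\Lambda^G_k(z) \bv$. Using $L = (F_1 + F_2 M) M$, the equations $\Lambda^L_k \bv = \bzero$ rearrange into the triangular system $(F_1(z, \check\alpha_k(z)) + F_2(z) M(z)) \bu_l = \bzero$ together with $(F_1(z, \check\alpha_k(z)) + F_2(z) M(z)) \bu_j = -F_2(z) \bu_{j+1}$ for $j = l-1, \ldots, 1$. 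Thus the inclusion reduces to showing $\bu_j = \bzero$ for every $j$, which follows by downward induction from the invertibility of $F_1(z, \check\alpha_k(z)) + F_2(z) M(z)$.

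The main obstacle is this invertibility. Taking determinants in the factorization gives $\det L(z, w) = \det(F_1(z, w) + F_2(z)(wI - G(z))) \cdot \det(wI - G(z))$; on the open annulus, for $z \notin \calE_1 \cup \calE_2$, the multiplicity of $\check\alpha_k(z)$ as a zero in $w$ of $\det L(z, w)$ equals $l_{\check q(k)}$, which is also the multiplicity of $\check\alpha_k(z)$ as a zero of $\det(wI - G(z))$ by Lemma \ref{le:Gmatrix_eigenvalues}. Hence $\det(F_1 + F_2 M) \ne 0$ at $w = \check\alpha_k(z)$, providing the needed invertibility. Equivalently, the zeros of $\det(F_1(z, w) + F_2(z)(wI - G(z)))$ in $w$ are exactly the large eigenvalues $\alpha_{s_0+1}(z), \ldots, \alpha_{m_\phi}(z)$, which are disjoint from the small ones $\alpha_1(z), \ldots, \alpha_{s_0}(z)$ throughout the open annulus by the strict inequality $\underline{\theta}_2(\log|z|) < \bar{\theta}_2(\log|z|)$. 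The finitely many exceptional points in $\calE_1 \cup \calE_2$ are handled by an analytic continuation argument based on the entry-wise analyticity of $\Lambda^L_k(z)$ and $\Lambda^G_k(z)$ in $\mathbb{C} \setminus \calE_1$.
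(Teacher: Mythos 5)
Your proof is correct and follows the same skeleton as the paper's: both rest on the right-divisor factorization \eqref{eq:Lzw_Gz_relation}, the invertibility of $F_1(z,\check{\alpha}_k(z))+F_2(z)(\check{\alpha}_k(z)I-G(z))$, and a block-triangular induction over the Jordan chain. Two points differ in execution. First, your identity $\Lambda^L_k(z)=R(z)\Lambda^G_k(z)$ with $R(z)$ block upper bidiagonal and invertible packages the paper's two separate directions (a forward computation for $\mathrm{Ker}\,\Lambda^G_k\subset\mathrm{Ker}\,\Lambda^L_k$ and a downward induction for the converse) into a single matrix factorization; this is a cleaner organization of the same computation. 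Second, the key invertibility is established differently: the paper's Proposition \ref{pr:FA_invettible} factors $F_1+F_2(\alpha_k I-G)$ as $z(I-\alpha_k(z)R(z))(I-H(z))$ using the rate matrix and known spectral bounds, whereas you divide determinants, $\det L(z,w)=\det\bigl(F_1(z,w)+F_2(z)(wI-G(z))\bigr)\det(wI-G(z))$, and count multiplicities. Your second, ``equivalently'' formulation --- the zeros of the quotient factor are exactly $\alpha_{s_0+1}(z),\dots,\alpha_{m_\phi}(z)$, separated in modulus from $\alpha_1(z),\dots,\alpha_{s_0}(z)$ by the strict inequality $\underline{\theta}_2(\log|z|)<\bar{\theta}_2(\log|z|)$ on the open annulus --- is the one to keep, since it works at every point of the open annulus, including those of $\calE_1\cup\calE_2$. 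Your first multiplicity-counting version only applies off $\calE_1\cup\calE_2$, and the proposed fallback by ``analytic continuation'' of the kernels is not a valid argument by itself: the dimension of the kernel of an analytic matrix family can jump upward at isolated points, so equality of kernels off a discrete set does not transfer to the exceptional points without the direct invertibility argument there. Since the separation-of-branches version already covers all $z\in\Delta_{e^{\theta_1^{min}},e^{\theta_1^{max}}}$, simply drop the continuation remark.
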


Before proving this proposition, we give another one.
\begin{proposition} \label{pr:FA_invettible}
For every $k\in\{1,2,...,s_0\}$ and $z\in\Delta_{e^{\theta_1^{min}},e^{\theta_1^{max}}}$, 
\[
F_1(z,\alpha_k(z))+F_2(z)(\alpha_k(z)I-G(z)) = z \left( I-A_{*,0}(z)-\alpha_k(z) A_{*,1}(z)+A_{*,1}(z) G(z) \right) 
\]
is regular (invertible).
\end{proposition}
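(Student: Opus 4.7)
The plan is to reduce invertibility to a statement about the spectrum of $N(z) A_{*,1}(z)$, where $N(z) = (I - A_{*,0}(z) - A_{*,1}(z) G(z))^{-1}$ is the matrix function already known to exist throughout $\bar\Delta_{e^{\theta_1^{min}}, e^{\theta_1^{max}}}$. Writing the matrix in question as $z(N(z)^{-1} - \alpha_k(z) A_{*,1}(z)) = z N(z)^{-1}\bigl(I - \alpha_k(z) N(z) A_{*,1}(z)\bigr)$, invertibility reduces (since $z \ne 0$ on the annulus and $N(z)$ is invertible) to showing that $\alpha_k(z) \mu \ne 1$ for every eigenvalue $\mu$ of $N(z) A_{*,1}(z)$. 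The condition is automatic when $\mu = 0$, so it suffices to identify the nonzero eigenvalues and check they avoid $\alpha_k(z)^{-1}$.

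To pin down the nonzero spectrum of $N(z) A_{*,1}(z)$, I will rewrite \eqref{eq:Ass_Gz_relation} in the form
\[
w(I - A_{*,*}(z,w)) = N(z)^{-1}\bigl(I - w N(z) A_{*,1}(z)\bigr)(wI - G(z))
\]
and take determinants, obtaining
\[
\phi(z,w) = z^{s_0}\, \det N(z)^{-1}\, \det\bigl(I - w N(z) A_{*,1}(z)\bigr)\, \det(wI - G(z)).
\]
By Lemma \ref{le:Gmatrix_eigenvalues}, the factor $\det(wI - G(z))$ contributes precisely the $s_0$ inner branches $\alpha_1(z), \ldots, \alpha_{s_0}(z)$, so the remaining $m_\phi - s_0$ roots of $\phi(z, \cdot)$ must come from $\det(I - w N(z) A_{*,1}(z))$. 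Consequently the nonzero eigenvalues of $N(z) A_{*,1}(z)$ are exactly $\{\alpha_{s_0+j}(z)^{-1} : j = 1, \ldots, m_\phi - s_0\}$, the reciprocals of the outer branches.

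It then remains to show $\alpha_k(z) \ne \alpha_{s_0+j}(z)$ for every $k \in \{1, \ldots, s_0\}$ and $j \in \{1, \ldots, m_\phi - s_0\}$ at every $z$ in the \emph{open} annulus. This is where openness is crucial: for $z \in \Delta_{e^{\theta_1^{min}}, e^{\theta_1^{max}}}$ the value $\log|z|$ lies strictly interior to $[\theta_1^{min}, \theta_1^{max}]$, so the two real solutions of $\chi(e^{\log|z|}, e^{\theta_2}) = 1$ satisfy $\underline{\theta}_2(\log|z|) < \bar{\theta}_2(\log|z|)$ strictly. Combined with properties (1) and (2) of the branch numbering this yields the magnitude separation
\[
|\alpha_k(z)| \le e^{\underline{\theta}_2(\log|z|)} < e^{\bar{\theta}_2(\log|z|)} \le |\alpha_{s_0+j}(z)|,
\]
whence $\alpha_k(z) \ne \alpha_{s_0+j}(z)$, completing the proof.

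The main conceptual point to be careful about is the strict inequality $\underline{\theta}_2(\log|z|) < \bar{\theta}_2(\log|z|)$ on the open annulus; it follows from the characterization of $\theta_1^{min}$, $\theta_1^{max}$ as the extreme projections of $\bar\Gamma$ onto the $\theta_1$-axis, which forces the two branches to coalesce only on the boundary. Once this is in hand, the argument is purely algebraic.
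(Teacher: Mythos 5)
Your proof is correct and follows essentially the same route as the paper: the paper factors the matrix as $(I-\alpha_k(z)R(z))(I-H(z))$ with $H(z)=N(z)^{-1}$ and $R(z)=A_{*,1}(z)N(z)$, invoking Lemma 4.3 of \cite{Ozawa18} for the nonzero spectrum of $R(z)$ (which coincides with that of your $N(z)A_{*,1}(z)$, since $AB$ and $BA$ share nonzero eigenvalues), and then uses exactly your strict modulus separation $|\alpha_k(z)|\le \alpha_{s_0}(|z|)<|\alpha_{s_0+j}(z)|$ on the open annulus; your determinant identity merely re-derives that spectral lemma in place of citing it. (One remark: the right-hand side of the proposition as printed should read $-A_{*,1}(z)G(z)$, which is what $F_1(z,\alpha_k(z))+F_2(z)(\alpha_k(z)I-G(z))$ actually expands to and what both your factorization $z\,N(z)^{-1}\bigl(I-\alpha_k(z)N(z)A_{*,1}(z)\bigr)$ and the paper's own proof work with.)
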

\begin{proof}
Let $R(z)$ be the rate matrix function generated from $\{A_{i,j}: i,j=-1,0,1\}$; for the definition of $R(z)$, see Section 4.1 of \cite{Ozawa18}. By Lemma 4.3 of \cite{Ozawa18}, nonzero eigenvalues of $R(z)$ are given by $\alpha_k(z)^{-1},\,k=s_0+1,s_0+2,...,m_\phi$. Since, for every $k\in\{1,2,...,s_0\}$, $k'\in\{s_0+1,s_0+2,...,m_\phi\}$ and $z\in\Delta_{e^{\theta_1^{min}},e^{\theta_1^{max}}}$, $|\alpha_k(z)|\le\alpha_{s_0}(|z|)<|\alpha_{k'}(z)|$, $I-\alpha_k(z) R(z)$ is regular. 
Define a matrix function $H(z)$ as $H(z)=A_{*,0}(z)+A_{*,1}(z)G(z)$, then by Corollary 4.1 of \cite{Ozawa18}, $I-H(z)$ is regular in $\Delta_{e^{\theta_1^{min}},e^{\theta_1^{max}}}$. 
By Lemma 4.1 of \cite{Ozawa18}, we have 
\begin{align}
I-A_{*,0}(z)-\alpha_k(z) A_{*,1}(z) - A_{*,1}(z) G(z) 
&= (I-\alpha_k(z) R(z))(I-H(z)),
\end{align}
where the right hand side of the equation is regular. This completes the proof. 
\end{proof}

%
\begin{proof}[Proof of Proposition \ref{pr:Ker_LambdaLG_equality}]
Assume a vector $\bv=\vec\begin{pmatrix} \bv_1 & \bv_2 & \cdots & \bv_{l_{\check{q}(k)}} \end{pmatrix}$ satisfies $\Lambda^L_k(z)\bv=0$. Then, we have, for $i\in\{1,2,...,l_{\check{q}(k)}\}$, 
\begin{equation}
L(z,\check{\alpha}_k(z)) \bv_i =  F_1(z,\check{\alpha}_k(z)) \bv_{i+1} + F_2(z) \bv_{i+2}, 
\end{equation}
where $\bv_{l_{\check{q}(k)}+1}=\bv_{l_{\check{q}(k)}+2}=\bzero$. We prove  by induction that this $\bv$ satisfies, for every $i\in\{1,2,...,l_{\check{q}(k)}\}$, $(\check{\alpha}_k(z) I-G(z)) \bv_i = \bv_{i+1}$. 
Let $i_0$ be the maximum integer less than or equal to $l_{\check{q}(k)}$ that satisfies, for every $i\in\{i_0+1,i_0+2,...,l_{\check{q}(k)}\}, \bv_i=\bzero$. Then, we have $L(z,\check{\alpha}_k(z)) \bv_{i_0}=\bzero$. By \eqref{eq:Lzw_Gz_relation}, we have 
\begin{equation}L(z,\check{\alpha}_k(z)) = ( F_1(z,\check{\alpha}_k(z))+F_2(z)(\check{\alpha}_k(z)I-G(z)) ) (\check{\alpha}_k(z)I-G(z)). 
\label{eq:L_G_relation}
\end{equation}
Hence, by Proposition \ref{pr:FA_invettible}, we obtain $(\check{\alpha}_k(z)I-G(z))\bv_{i_0}=\bzero=\bv_{i_0+1}$. 
Assume the assumption of induction holds for a positive integer $i$ less than or equal to $i_0$. Then, 
\begin{align}
L(z,\check{\alpha}_k(z)) \bv_{i-1}
&= F_1(z,\check{\alpha}_k(z))\bv_i+F_2(z)\bv_{i+1} \cr
&= (F_1(z,\check{\alpha}_k(z))+F_2(z)(\check{\alpha}_k(z)I-G(z))) \bv_i, 
\label{eq:L_F12_relation}
\end{align}
and by \eqref{eq:L_G_relation}, \eqref{eq:L_F12_relation} and Proposition \ref{pr:FA_invettible}, we obtain $(\check{\alpha}_k(z)I-G(z))\bv_{i-1}=\bv_{i}$. 
Hence, $\bv$ satisfies, for every $i\in\{1,2,...,l_{\check{q}(k)}\}$, $(\check{\alpha}_k(z) I-G(z)) \bv_i = \bv_{i+1}$, and this leads us to $\Lambda^G_k(z) \bv=\bzero$. 

%
Next, assume a vector $\bv=\vec\begin{pmatrix} \bv_1 & \bv_2 & \cdots & \bv_{l_{\check{q}(k)}} \end{pmatrix}$ satisfies $\Lambda^G_k(z)\bv=0$. Then, we have, for $i\in\{1,2,...,l_{\check{q}(k)}\}$, $(\check{\alpha}_k(z) I-G(z)) \bv_i = \bv_{i+1}$, where $\bv_{l_{\check{q}(k)}+1}=\bzero$. By \eqref{eq:L_G_relation}, this $\bv$ satisfies, for every $i\in\{1,2,...,l_{\check{q}(k)}\}$, 
\begin{align}
L(z,\check{\alpha}_k(z)) \bv_i
&= F_1(z,\check{\alpha}_k(z)) \bv_{i+1} +F_2(z)(\check{\alpha}_k(z)I-G(z)) \bv_{i+1} \cr
&= F_1(z,\check{\alpha}_k(z))\bv_{i+1}+F_2(z)\bv_{i+2}, 
\label{eq:L_F12_relation2}
\end{align}
and this implies $\Lambda^L_k(z)\bv=0$. 
\end{proof}

%
\begin{proof}[Proof of Theorem \ref{th:Jordan_decomposition_G_pre}]
Let $k$ be an arbitrary integer in $\{1,2,...,m_0\}$. 
By Propositions \ref{pr:Ker_LambdaGk} and \ref{pr:Ker_LambdaLG_equality}, we have 
\[
\mathrm{dim}\,\mathrm{Ker}\, \Lambda^L_k(z)=l_{\check{q}(k)}, 
\]
except for some discrete points in $\mathbb{C}$. 
Hence, by Theorem S6.1 of \cite{Gohberg09}, since the matrix function $\Lambda^L_k(z)$ is entry-wise analytic in $\mathbb{C}\setminus\calE_1$, there exist $l_{\check{q}(k)}$ vector functions $\bv^L_{k,i}(z)$, $i=1,2,...,l_{\check{q}(k)}$, that are element-wise analytic and linearly independent in $\mathbb{C}\setminus\calE_1$ and satisfy
\[
\Lambda^L_k(z) \bv^L_{k,i}(z) = \bzero,\,i=1,2,...,l_{\check{q}(k)}.
\]
By Proposition \ref{pr:Ker_LambdaLG_equality}, for each $i$, $\bv^L_{k,i}(z)$ also satisfies $\Lambda^G_k(z) \bv^L_{k,i}(z) = \bzero$ for every $z\in\Delta_{e^{\theta_1^{min}},e^{\theta_1^{max}}}\setminus\calE_1$. Hence, by the identity theorem, we see that $\bv^L_{k,i}(z)$ is an analytic extension of $\bv^G_{k,i}(z)$. 
According to the same procedure as that used for selecting $\{\check{\bv}^G_{k,i}(z),\,i=1,2,...,m_{k,0}\}$ from $\{\bv^G_{k,i}(z),\,i=1,2,...,l_{\check{q}(k)}\}$, we select $m_{k,0}$ vectors from $\{\bv^L_{k,i}(z),\,i=1,2,...,l_{\check{q}(k)}\}$ and denote them by $\{\check{\bv}^L_{k,i}(z),\,i=1,2,...,m_{k,0}\}$.
For each $i$, $\check{\bv}^L_{k,i}(z)$ is represented in block form as 
\[
\check{\bv}^L_{k,i}(z) = \vec\!\begin{pmatrix} \check{\bv}^L_{k,i,1}(z)  & \check{\bv}^L_{k,i,2}(z) & \cdots & \check{\bv}^L_{k,i,m_{k,i}}(z) & \bzero & \cdots & \bzero \end{pmatrix}. 
\]
Define a matrix function $T^L(z)$ as
\[
T^L(z) = 
\begin{pmatrix}
\check{\bv}_{k,i,j}^L(z),\, k=1,2,...,m_0,\, i=1,2,...,m_{k,0},\, j=1,2,..., m_{k,i}
\end{pmatrix},
\]
which is entry-wise analytic in $\mathbb{C}\setminus\calE_1$. Since each $\check{\bv}_{k,i,j}^L(z)$ is an analytic extension of $\check{\bv}_{k,i,j}^G(z)$, we have, for $z\in\Omega\setminus(\bigcup_{k=1}^{m_0} \calE^G_k\cup\calE_T^G)$, 
\[
G(z) = T^L(z) J^G(z) (T^L(z))^{-1}, 
\]
which is \eqref{eq:Gz_analytic_extension}. 
Set $\calE_0=\calE_2\cup(\bigcup_{k=1}^{m_0} \calE^G_k)\cup\calE_T^G$, then $\calE_0$ is a set of discrete complex numbers and we have $\Omega\setminus(\bigcup_{k=1}^{m_0} \calE^G_k\cup\calE_T^G) = \Delta_{e^{\theta_1^{min}},e^{\theta_1^{max}}}\setminus(\calE_1\cup\calE_0)$.
This completes the proof of Theorem \ref{th:Jordan_decomposition_G_pre}. 
\end{proof}


\begin{thebibliography}{99}
%
\bibitem{Bini05}
Bini, D.A., Latouche, G.\ and Meini, B.,
Oxford University Press, Oxford (2005).
%
%
%
%
%
%
\bibitem{Fayolle99} 
Fayolle, G,\ Iasnogorodski, R.\ and Malyshev, V., 
\textit{Random Walks in the Quarter-Plane: Algebraic Methods, Boundary Value Problems and Applications}, 
Springer-Verlag, Berlin (1999).
%
\bibitem{Flajolet09}
Flajolet, P.\ and Sedgewick, R.,
\textit{Analytic Combinatorics}, 
Cambridge University Press, Cambridge (2009).
%
\bibitem{Gohberg09}
Gohberg, I., Lancaster, P.\ and Rodman, L., 
\textit{Matrix Polynomials},  
SIAM, Philadelphia (2009).
%
%
%
%
%
%
%
%
\bibitem{Kobayashi13}
Kobayashi, M.\ and Miyazawa, M., 
Revisit to the tail asymptotics of the double QBD process: Refinement and complete solutions for the coordinate and diagonal directions, 
\textit{Matrix-Analytic Methods in Stochastic Models} (2013), 145-185.
%
%
\bibitem{Latouche99} 
Latouche, G.\ and Ramaswami, V., 
{\it Introduction to Matrix Analytic Methods in Stochastic Modeling}, 
SIAM, Philadelphia (1999).
%
%
%
\bibitem{Malyshev73}
Malyshev, V.A., 
Asymptotic behavior of the stationary probabilities for two-dimensional positive random walks, 
\textit{Siberian Mathematical Journal} \textbf{14(1)} (1973), 109--118.
%
%
%
%
%
%
%
%
\bibitem{Neuts94}
Neuts, M.F.,  
\textit{Matrix-Geometric Solutions in Stochastic Models}, 
Dover Publications, New York (1994).
%
\bibitem{Neuts89}
Neuts, M.F., 
\textit{Structured stochastic matrices of M/G/1 type and their applications}, 
Marcel Dekker, New York (1989).
%
\bibitem{Ney87}
Ney, P.\ and Nummelin, E., 
Markov additive processes I. Eigenvalue properties and limit theorems, 
\textit{The Annals of Probability} \textbf{15(2)} (1987), 561--592. 
%
%
\bibitem{Ozawa13}
Ozawa, T.,
Asymptotics for the stationary distribution in a discrete-time two-dimensional quasi-birth-and-death process, 
\textit{Queueing Systems} \textbf{74} (2013), 109--149.
%
\bibitem{Ozawa18}
Ozawa, T.\ and Kobayashi, M., 
Exact asymptotic formulae of the stationary distribution of a discrete-time two-dimensional QBD process, 
\textit{Queueing Systems}  \textbf{90} (2018), 351-403. 
%
%
\bibitem{Ozawa19}
Ozawa, T., 
Stability condition of a two-dimensional QBD process and its application to estimation of efficiency for two-queue models, 
\textit{Performance Evaluation} \textbf{130} (2019), 101--118.%
%
\bibitem{Ozawa20w}
Ozawa, T., 
Asymptotic property of the occupation measures in a two-dimensional skip-free Markov modulated random walk (2020).
arXiv:2001.00700
%
\bibitem{Ozawa21}
Ozawa, T., 
Asymptotic properties of the occupation measure in a multidimensional skip-free Markov modulated random walk,
\textit{Queueing Systems}  \textbf{97} (2021), 125--161. 
%
\bibitem{Ozawa22}
Ozawa, T., 
Tail Asymptotics in any direction of the stationary distribution in a two-dimensional discrete-time QBD process, 
\textit{Queueing Systems}  \textbf{102} (2022), 227--267. 
%
\bibitem{Seneta06}
E.\ Seneta: 
{\it Non-negative Matrices and Markov Chains, revised printing}. 
Springer-Verlag, New York (2006).
%
\end{thebibliography}
\end{document}